\newcommand{\bbr}{\mathbb{R}}
\newcommand{\margnote}[1]{
\ifthenelse{\boolean{shownotes}}%
{\marginpar{\raggedright\tiny\texttt{#1}}}%
{}%
}
\newcommand{\hole}[1]{
\ifthenelse{\boolean{shownotes}}%
{\begin{center} \fbox{ \rule {.25cm}{0cm} \rule[-.1cm]{0cm}{.4cm}
\parbox{.85\textwidth}{\begin{center} \texttt{#1}\end{center}} \rule
{.25cm}{0cm}}\end{center}} {} }
\title[Global solution to full N-S equation]{Compressible Navier-Stokes equation with BV initial data \\ \small{PART II. Global stability}}
\author[Wang]{Haitao Wang}
\address{Haitao Wang, Institute of Natural Sciences and School of Mathematical Sciences, LSC-MOE,  Shanghai Jiao
	Tong University, Shanghai, China}
\email{haitallica@sjtu.edu.cn}
\author[Yu]{Shih-Hsien Yu}
\address{Shih-Hsien Yu, Institute of Mathematics, Academia Sinica,
	Taipei, Taiwan}
\email{matysh@gate.sinica.edu.tw}
\author[Zhang]{Xiongtao Zhang}
\address{Xiongtao Zhang, Center for Mathematical Sciences\newline
 Huazhong University of Science and Technology, Luoyu Road 1037, Wuhan 430074, China}
\email{xtzhang@hust.edu.cn}
\numberwithin{equation}{section}
\newtheorem{theorem}{Theorem}[section]
\newtheorem{lemma}{Lemma}[section]
\newtheorem{corollary}{Corollary}[section]
\newtheorem{proposition}{Proposition}[section]
\newtheorem{remark}{Remark}[section]
\newtheorem{definition}{Definition}[section]
\newcommand{\R}{\mathbb R}
\newcommand{\bbm}{\mathbb M}
\newcommand{\bbn}{\mathbb N}
\newcommand{\bbg}{\mathbb G}
\newcommand{\bbz} {\mathbb Z}
\newcommand{\bq}{\begin{equation}}
\newcommand{\eq}{\end{equation}}
\newcommand{\scrD}{\mathscr D}
\newcommand{\vertiii}[1]{{\left\vert\kern-0.25ex\left\vert\kern-0.25ex\left\vert #1 
		\right\vert\kern-0.25ex\right\vert\kern-0.25ex\right\vert}}
\let\oldtocsection=\tocsection
 \let\oldtocsubsection=\tocsubsection
 \let\oldtocsubsubsection=\tocsubsubsection
 \renewcommand{\tocsection}[2]{\hspace{0em}\oldtocsection{#1}{#2}}
\renewcommand{\tocsubsection}[2]{\hspace{1em}\oldtocsubsection{#1}{#2}}
\renewcommand{\tocsubsubsection}[2]{\hspace{2em}\oldtocsubsubsection{#1}{#2}}
\begin{document}
\allowdisplaybreaks

\date{\today}

\subjclass[]{}
\keywords{}

\begin{abstract} 
	In previous work \cite{W-Y-Z-local}, we studied the local well-posedness of weak solution to the 1-D full compressible Navier-Stokes equation with initial data of small total variation. Specifically, the local existence, the regularity, and the uniqueness in certain function space of the weak solution have been established. The basis for previous study is the precise construction of fundamental solution for heat equation with BV conductivity. In this paper, we continue to investigate the global stability and the time asymptotic behavior of the weak solution. The main step is to construct the ``effective Green's function'', which is the combination of the heat kernel with BV coefficient in short time and the Green's function around  constant state in long time. The former one captures the quasi-linear nature of the system, while the latter one respects the dissipative structure. Then the weak solution is written into an integral system in terms of this ``effective Green's function'', and the time asymptotic behavior is established based on a priori estimate. 
\end{abstract}

\maketitle \centerline{\date}

\tableofcontents

%
%
%
%
\section{Introduction}\label{sec:1}
\setcounter{equation}{0}

Consider 1-D full compressible Navier-Stokes equation (NS for short) in Lagrangian coordinate.  More precisely, let $v$ be the specific volume, $u$ be the velocity, $p$ be the pressure, $e$ be the specific internal energy, $\theta$ be the absolute temperature, $\mu$ be the viscosity of $u$, and $\kappa$ be the heat conductivity of $\theta$, the full NS equation in Lagrangian coordinates reads,
\bq\label{NS}
\left\{\begin{aligned}
	&v_t-u_x=0\\
	&u_t+p_x=\left(\frac{\mu u_x}{v}\right)_x\\
	&(e+\frac{1}{2}u^2)_t+(pu)_x=\left(\frac{\kappa}{v}\theta_x+\frac{\mu}{v}uu_x\right)_x.
\end{aligned}
\right.
\eq
The above equation \eqref{NS} can be derived from the original compressible NS equation in Euler coordinates, we refer the readers to \cite{Sm} for details. In order to close the equation \eqref{NS}, we will consider in this paper the polytropic ideal gas which has the constitutive relations below,
\bq\label{constitution}
p(v,\theta)= \frac{K\theta}{v},\quad e=c_v \theta,
\eq
where $K$ is a positive constant and $c_v$ is the heat capacity which is also a positive constant. Note the second equality states that the internal energy  depends only on the temperature for ideal gas. To solve Cauchy problem, the system \eqref{NS} is supplemented with prescribed initial data $(v_0,u_0,\theta_0)$.

If $(v,u,e)$ is a classical solution to \eqref{NS}, then \eqref{NS} is equivalent to the following non-conservative form in terms of function $(v,u,\theta)$,
\bq\label{NS2}
\left\{\begin{aligned}
	&v_t-u_x=0\\
	&u_t+p_x=\left(\frac{\mu u_x}{v}\right)_x\\
	&\theta_t+\frac{p}{c_v}u_x-\frac{\mu}{c_vv}\left(u_x\right)^2=\left(\frac{\kappa}{c_vv}\theta_x\right)_x.
\end{aligned}
\right.
\eq
However, for weak solution, due to the nonlinear term $u^2$,  the equivalence between \eqref{NS} and \eqref{NS2} is not obvious.

In previous work \cite{W-Y-Z-local}, we consider \eqref{NS2} with initial data being a perturbation around constant state $(1,0,1)$ and satisfying the following condition.
\bq\label{D1}
\|v^*_0\|_{BV}+\|u^*_0\|_{BV}+\|\theta^*_0\|_{BV}+\|v^*_0\|_{L^1}+\|u^*_0\|_{L^1}+\|\theta^*_0\|_{L^1}< \delta\ll 1.
\eq
Here
\[
(v,u,\theta)=(v^*+1,u^*,\theta^*+1).
\] 

The local solution is constructed through the following iteration scheme,
\begin{equation}
	\label{NS-lin-2}
	\left\{
	\begin{aligned}
		& v^{n+1}_t - u^{n+1}_x=0,\\
		& u^{n+1}_t + p(v^n,\theta^n)_x = \left(\frac{\mu u^{n+1}_x}{v^n}\right)_x,\\
		& \theta^{n+1}_t  + \frac{p(v^n,\theta^n)}{c_v} u^n_x - \frac{\mu}{c_v v^n}(u^n_x)^2 = \left(\frac{\kappa}{c_v v^n} \theta^{n+1}_x\right)_x.
	\end{aligned}
	\right.
\end{equation}
It is noted the second and third equalities in \eqref{NS-lin-2} are indeed linear inhomogeneous heat equation with BV variable conductivity. The fundamental solution  $H(x,t;y,s;\rho)$ for heat equation with BV coefficient $\rho(x,t)$, i.e.,
\begin{equation}\label{eq:fun}
	\partial_t H(x,t;y,s;\rho)=\partial_x \bigl( \rho(x,t) \partial_x H(x,t;y,s;\rho)		\bigr),\quad H(x,s;y,s;\rho)=\delta(x-y), \quad t>s,
\end{equation}
is explicitly constructed in \cite{L-Y-2020}. Relying on  fundamental solution $H(x,t;y,s;\rho)$, the system \eqref{NS-lin-2} is solved by the following integral representation:
\begin{equation}
	\label{NS-lin-2-sol}
	\left\{
	\begin{aligned}
		v^{n+1}(x,t)& = v_0(x)+ \int_{0}^{t} u_x^{n+1}(x,s) ds,\\
		u^{n+1}(x,t)& = \int_{\bbr} H(x,t;y,0;\frac{\mu}{v^n}) u_0(y) dy + \int_{0}^{t}\int_{\bbr } H_y(x,t;y,s;\frac{\mu}{v^n}) p(v^n,\theta^n)(y,s) dy ds,\\
		\theta^{n+1}(x,t) & = \int_{\bbr} H(x,t;y,0;\frac{\kappa }{c_v v^n}) \theta_0(y) dy + \int_{0}^{t} \int_{\bbr} H(x,t;y,s;\frac{\kappa}{c_v v^n}) \Bigl[-\frac{p(v^n,\theta^n)}{c_v} u^n_y + \frac{\mu}{c_v} \frac{(u^n_y)^2}{v^n}		\Bigr](y,s) dy ds.
	\end{aligned}
	\right.
\end{equation}
Thanks to the pointwise estimates of heat kernel, the comparison estimates when varying the conductivity coefficient constructed in \cite{L-Y-2020}, as well as some new estimates concerning the H\"{o}lder continuity of heat kernel, in \cite{W-Y-Z-local}, we proved the convergence of iteration scheme \eqref{NS-lin-2-sol} in short time, thus obtained the weak solution to system \eqref{NS2}. As the convergence obtained is in strong sense, passing limit  $n\to \infty$ in \eqref{NS-lin-2-sol}, one has the integral representation of the solution $(v,u,\theta)$ to \eqref{NS2}. By exploiting the refined estimates of heat kernel, we further get more descriptions of regularity of the weak solution. For instance, we showed that  both $u_t$ and $\theta_t$ are in $L^1\cap L^\infty$ for $t>0$, and the BV norm of $v(\cdot,t)$ is H\"{o}lder continuous in time. These properties specify the function space which the weak solution belongs to. More importantly, we are able to prove the continuous dependence on initial data of weak solution in that function space, and thus the uniqueness of weak solution. The main results in \cite{W-Y-Z-local} is summarized as follows:

\begin{proposition}[Local Existence, \cite{W-Y-Z-local}]\label{prop:local}
	Suppose initial data $(v_0,u_0,\theta_0)$ satisfy
	\begin{equation}\label{eq:initial}
		\left\|v_0-1\right\|_{L^1_x}+\left\|v_0\right\|_{BV}+\left\|u_0\right\|_{L^1_x}+\left\|u_0\right\|_{BV}+\left\|\theta_0-1\right\|_{L^1_x}+\left\|\theta_0\right\|_{BV}\leq \delta,
	\end{equation}
	for $\delta$ sufficiently small. Then there exist positive constants $t_\sharp$ and $C_\sharp$ such that, the system \eqref{NS2} admits a  weak solution $(v,u,\theta)$ for $t\in(0,t_\sharp)$ satisfying the following properties:
	\begin{enumerate}
		\item The solution has the following regularity estimates: 
		\begin{equation}\label{local-small-strong}
			\left\{\begin{aligned}
				&\delta>0,\quad 0<t<t_\sharp\ll 1,\\
				&\max\left\{\begin{aligned}&\left\|u(\cdot,t)\right\|_{L^1_x},\ \left\|u(\cdot,t)\right\|_{L^\infty_x},\ \left\|u_x(\cdot,t)\right\|_{L^1_x},\ \sqrt{t}\left\|u_x(\cdot,t)\right\|_{L^\infty_x},\\ &\sqrt{t}\left\|u_t(\cdot,t)\right\|_{L^1_x},\ t\left\|u_t(\cdot,t)\right\|_{L^\infty_x}\end{aligned}\right\}\leq 2C_\sharp\delta,\\
				&\max\left\{\begin{aligned}&\left\|\theta(\cdot,t)-1\right\|_{L^1_x},\ \left\|\theta(\cdot,t)-1\right\|_{L^\infty_x},\ \left\|\theta_x(\cdot,t)\right\|_{L^1_x},\ \sqrt{t}\left\|\theta_x(\cdot,t)\right\|_{L^\infty_x},\\ &\sqrt{t}\left\|\theta_t(\cdot,t)\right\|_{L^1_x},\ t\left\|\theta_t(\cdot,t)\right\|_{L^\infty_x}\end{aligned}\right\}\leq 2C_\sharp\delta,\\
				&\max\left\{\int_{\bbr\setminus\mathscr{D}}|v_x(x,t)|dx,\ \left\|v(\cdot,t)-1\right\|_{L^1_x},\ \left\|v(\cdot,t)-1\right\|_{L^\infty_x},\ \sqrt{t}\left\|v_t(\cdot,t)\right\|_{L^\infty_x}\right\}\leq 2C_\sharp\delta,\\
				&v^*=v^*_c+v^*_d,\quad  v^*_d(x,t)=\sum_{z<x,z\in\mathscr{D}}v^*\Big|_{z^-}^{z^+}h(x-z),\quad \text{$v_c^*$ is AC},\\
				&\left|v(\cdot,t)\Big|_{x=z^-}^{x=z^+}\right|\leq 2\left|v^*_0(\cdot)\Big|_{x=z^-}^{x=z^+}\right|,\quad z\in\mathscr{D},
			\end{aligned}
			\right.
		\end{equation}
		where $h(x)$ is the Heaviside step function, and $AC$ means absolutely continuous. 
		\item The fluxes of $u$ and $\theta$ are both global Lipschitz continuous with respect to $x$ for any $t>0$, where the flux of $u$ and $\theta$ in \eqref{NS2} are defined as follows, 
		\[ \bigl[\mbox{flux of } u \bigr]=\frac{\mu u_x}{v}-p,\qquad \bigl[\mbox{flux of } \theta\bigr]=\frac{\kappa}{c_vv}\theta_x-\int_{-\infty}^x\left(\frac{p}{c_v}u_z-\frac{\mu}{c_vv}\left(u_z\right)^2\right)dz.\] 
		\item The specific volume $v(x,t)$  has the following H\"older continuity in time properties for $ 0\leq s<t$,
		\begin{equation}\label{eq:v-conti}
			\left\{\begin{aligned}
				&\int_{\bbr\setminus\mathscr{D}}|v_x(x,t)-v_x(x,s)|dx\leq 2C_\sharp\delta\frac{(t-s)|\log(t-s)|}{\sqrt{t}},\\
				&\sum_{z\in\mathscr{D}}\left|v_x(\cdot,t)\Big|_{z^-}^{z^+}-v_x(\cdot,s)\Big|_{z^-}^{z^+}\right|\leq  2C_\sharp\delta\frac{t-s}{\sqrt{t}},\\
				&\|v(\cdot,t)-v(\cdot,s)\|_{L^\infty}\leq 2C_\sharp\delta\frac{t-s}{\sqrt{t}},\\
				&\|v(\cdot,t)-v(\cdot,s)\|_{L^1}\leq 2C_\sharp\delta(t-s).
			\end{aligned}
			\right.
		\end{equation}
		Here $\int_{\bbr\setminus \scrD}$ is defined in
		\eqref{eq:int-bv}, stands for integration of absolutely
		continuous part of  a BV function.
	\end{enumerate}
\end{proposition}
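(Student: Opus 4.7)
The plan is to construct $(v,u,\theta)$ as the short-time limit of the Picard iterates \eqref{NS-lin-2}--\eqref{NS-lin-2-sol}. I would fix $C_\sharp$ large and $t_\sharp\ll 1$ small so that the set $\mathcal X_{t_\sharp}$ of triples satisfying every inequality in \eqref{local-small-strong} is nonempty and closed under the map $(v^n,u^n,\theta^n)\mapsto(v^{n+1},u^{n+1},\theta^{n+1})$. On $\mathcal X_{t_\sharp}$ the BV coefficients $\rho^n=\mu/v^n$ and $\kappa/(c_v v^n)$ are bounded above and below, so the kernel $H(x,t;y,s;\rho^n)$ of \cite{L-Y-2020} enjoys Gaussian pointwise bounds together with the one-derivative smoothing estimate $|H_x|\ls(t-s)^{-1}$ and the time-derivative bound $|H_t|\ls(t-s)^{-3/2}$.

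Feeding these into \eqref{NS-lin-2-sol} closes the invariance. The $L^1$ and $L^\infty$ bounds on $u^{n+1}$ and $\theta^{n+1}-1$ follow from the mass-preserving and maximum-principle properties of $H$ applied to $(u_0,\theta_0-1)$, together with source terms whose integrals over $(0,t)$ pick up an $\sqrt{t_\sharp}$ gain; the weighted bounds on $u_x,\theta_x,u_t,\theta_t$ come similarly from the derivative kernel estimates, with the quadratic source $(u^n_y)^2/v^n$ controlled by $\sqrt t\|u^n_x\|_{L^\infty}\cdot\|u^n_x\|_{L^1}\ls\delta^2 t^{-1/2}$, which is integrable in $s$. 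For $v^{n+1}=v_0+\int_0^t u^{n+1}_x\,ds$, the jump set is frozen at $\mathscr{D}$ because $u^{n+1}$ is continuous, and the decomposition $v^*=v^*_c+v^*_d$ is preserved with jump amplitudes changing by $O(t_\sharp\delta)$, which is absorbed by the factor $2$ in \eqref{local-small-strong}.

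For convergence, the differences of consecutive iterates satisfy a linearised system whose source contains $H(\cdot;\rho^n)-H(\cdot;\rho^{n-1})$; by the coefficient-comparison estimate of \cite{L-Y-2020} this difference is bounded by $\|\rho^n-\rho^{n-1}\|$ times a Gaussian kernel, yielding a contraction in a norm weaker than the invariance norm (for instance dropping the $v_x$ bound in favour of the $L^\infty$ bound on $v-1$) and hence a strongly convergent subsequence whose limit $(v,u,\theta)$ solves \eqref{NS2} by passage to the limit in \eqref{NS-lin-2-sol}. The flux regularity in item (2) is then read off by rewriting the $u$- and $\theta$-equations as $\partial_x[\text{flux}]$ and integrating once in $x$ against bounded time-derivative data, while the Hölder-in-time bounds \eqref{eq:v-conti} come from the identity $v(\cdot,t)-v(\cdot,s)=\int_s^t u_x\,d\tau$ and sharpened estimates for $H_x(x,t;y,\tau;\rho)-H_x(x,s;y,\tau;\rho)$.

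The hardest point is the $|\log(t-s)|$ factor in the first line of \eqref{eq:v-conti}: the Hölder continuity of the AC part of $v_x$ requires a temporal modulus for $H_x$ sharper than the Gaussian bound, which is precisely the new kernel estimate proved in \cite{W-Y-Z-local}. Combined with the decomposition $v^*=v^*_c+v^*_d$, so that jumps are carried entirely by $h(x-z)$ and the AC part retains parabolic regularity, this yields the logarithmic loss. Handling simultaneously the BV-in-$x$ nature of $\mu/v$ and this sharper temporal modulus, without losing the smallness required to close the nonlinear iteration, is the delicate point that makes the entire construction rest on the refined heat-kernel technology rather than on standard Duhamel estimates.
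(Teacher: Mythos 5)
Your overall route is the same one this paper relies on: Proposition \ref{prop:local} is not proved here but imported from the companion work \cite{W-Y-Z-local}, and the strategy the introduction attributes to that paper --- solving the iteration \eqref{NS-lin-2-sol} with the BV-coefficient heat kernel of \cite{L-Y-2020}, closing the bounds \eqref{local-small-strong} by the Gaussian pointwise estimates, obtaining contraction from the coefficient-comparison estimates, and using the sharpened temporal modulus of $H_x$ for the logarithmic bound in \eqref{eq:v-conti} --- is exactly what you reconstruct.

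One step, however, would fail as written: the per-jump estimate $\left|v(\cdot,t)\Big|_{x=z^-}^{x=z^+}\right|\leq 2\left|v^*_0(\cdot)\Big|_{x=z^-}^{x=z^+}\right|$ in the last line of \eqref{local-small-strong} cannot be deduced from your claim that the jump amplitudes change by $O(t_\sharp\delta)$ and that this is ``absorbed by the factor $2$''. The additive error $O(t_\sharp\delta)$ is in general far larger than an individual initial jump (which may be arbitrarily small), so no choice of $t_\sharp$ turns it into a multiplicative bound. What is needed is that the change of the jump at a fixed $z\in\mathscr{D}$ is proportional to the jump at that same point: since $\theta^n$ and $u^{n+1}$ are continuous in $x$ and the flux $\frac{\mu u^{n+1}_x}{v^n}-p(v^n,\theta^n)$ is continuous across $z$ (this is the flux-regularity mechanism recorded in item (2) and Remark \ref{R2.1}), the jump of $u^{n+1}_x$ at $z$ is controlled by a bounded multiple of $\left|v^n(\cdot,\tau)\Big|_{z^-}^{z^+}\right|$, and then $v^{n+1}(\cdot,t)\Big|_{z^-}^{z^+}=v_0\Big|_{z^-}^{z^+}+\int_0^t u^{n+1}_x\Big|_{z^-}^{z^+}\,d\tau$ yields a Gronwall-type inequality for the jump amplitude, which stays below twice its initial value for $t<t_\sharp$. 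The same proportionality is what actually keeps the discontinuity set frozen at $\mathscr{D}$ (jumps of $u_x$ can only occur where $v^n$ jumps), rather than the mere continuity of $u^{n+1}$ that you invoke. Apart from this localized point, your sketch is consistent with the cited construction.
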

\begin{remark}\label{rmk:prop-1}
	The regularity of $u_t$ and $\theta_t$ indeed provide the regularity of the fluxes in \eqref{NS2}. In other words, although $u_x$, $\theta_x$ and $v$ might be discontinuous with respect to $x$ individually, the fluxes as combinations of these discontinuous quantities are Lipschitz continuous and differentiable a.e. Therefore, all the derivatives in the equation \eqref{NS2} are in strong sense a.e., 
	which also implies $(v,u, e + u^2/2)$ is a weak solution to the original system \eqref{NS}, where $e$ is defined through \eqref{constitution}. This shows for the solution constructed in Proposition \ref{prop:local}, systems \eqref{NS} and \eqref{NS2} are equivalent.
\end{remark}

It immediately follows that the solution $(v,u,\theta)$ constructed in Proposition \ref{prop:local} belongs to the following function space:
\begin{equation}\label{K-108}
\left\{\begin{aligned}
	&v(x,t)-1\in C\left([0,t_\sharp);L^1(\bbr)\cap L^\infty(\bbr)\cap BV\right),\\
	&u(x,t)\in L^\infty\left(0,t_\sharp;W^{1,1}(\bbr)\cap L^{\infty}(\bbr)\right),\quad \sqrt{t}u_x(x,t)\in L^\infty\left(0,t_\sharp;L^\infty(\bbr)\right),\\
	&\sqrt{t}u_t(x,t)\in L^\infty\left(0,t_\sharp;L^1(\bbr)\right),\quad tu_t(x,t)\in L^\infty\left(0,t_\sharp;L^\infty(\bbr)\right),\\
	&\theta(x,t)\in L^\infty\left(0,t_\sharp;W^{1,1}(\bbr)\cap L^{\infty}(\bbr)\right),\quad \sqrt{t}\theta_x(x,t)\in L^\infty\left(0,t_\sharp;L^\infty(\bbr)\right),\\
	&\sqrt{t}\theta_t(x,t)\in L^\infty\left(0,t_\sharp;L^1(\bbr)\right),\quad t\theta_t(x,t)\in L^\infty\left(0,t_\sharp;L^\infty(\bbr)\right).
\end{aligned}
\right.
\end{equation}
It is worth mentioning that, in the above space, $v$ is continuous in $L^1$, $L^\infty$ and $BV$ sense around $t=0$, which exactly holds for the solution constructed in Proposition \ref{prop:local} due to \eqref{eq:v-conti}.

Moreover, based on the above regularity estimates,  the uniqueness of the constructed weak solution in Proposition \ref{prop:local} is also obtained. Actually, owning to the continuity of $v$ in $L^1\cap L^\infty \cap BV$ in short time, one can construct heat kernel $H(x,t;y,s;\frac{\mu}{v})$ and $H(x,t;y,s;\frac{\kappa}{c_v v})$, which yields an integral representation of $(v,u,\theta)$, then the uniqueness essentially follows from comparison estimates. Precisely, we have

\begin{proposition}[Stability and uniqueness, \cite{W-Y-Z-local}]\label{T3.3}
	Suppose there are two solutions $(v^a,u^a,\theta^a)$ and $(v^b,u^b,\theta^b)$ to the Navier-Stokes equation \eqref{NS2} with the regularity properties stated in \eqref{K-108},  and their initial data  both satisfy for small $\delta_*$,
	\[\|v_0\|_{BV}+\|u_0\|_{BV}+\|\theta_0\|_{BV}+\|v_0-1\|_{L^1}+\|u_0\|_{L^1}+\|\theta_0-1\|_{L^1}<\delta_*\ll 1,\]
	Then, there exists  positive constants $t_*$ and  $C_\flat$ such that for $0<t<t_*$,
	\begin{align*}
		& \vertiii{v^a-v^b}_{\infty} + \vertiii{v^a-v^b}_{1} +\vertiii{v^a-v^b}_{BV}	\notag \\
		&+ \vertiii{u^a-u^b}_{\infty} + \vertiii{u^a-u^b}_{1} + \vertiii{\frac{\sqrt{\tau}}{|\log \tau|}\left(u_x^a-u_x^b\right)}_{\infty} +  \vertiii{\frac{u^a_x-u^b_x}{|\log \tau|}}_{1} \notag \\
		&+ \vertiii{\frac{\theta^a-\theta^b}{|\log \tau|}}_{\infty} + \vertiii{\theta^a-\theta^b}_{1} + \vertiii{\frac{\sqrt{\tau}}{|\log \tau|}\left(\theta^a_x-\theta^b_x\right)}_{\infty} +  \vertiii{\frac{\theta^a_x-\theta^b_x}{|\log \tau|}}_{1} \notag \\
		&\leq C_\flat\Big(\|\theta^a_0-\theta^b_0\|_{L^\infty_x}+\|\theta^a_0-\theta^b_0\|_{L^1_x}\\
		&\hspace{0.4cm}+\|u^a_0-u^b_0\|_{L^\infty_x}+\|u^a_0-u^b_0\|_{L^1_x}+\|v^a_0-v^b_0\|_{L^1_x}+\|v^a_0-v^b_0\|_{L^\infty_x}+\|v^a_0-v^b_0\|_{BV}\Big),
	\end{align*}
	where 
	\[
	\vertiii{v^a-v^b}_{\infty}\equiv \sup_{t\in\left(0,t_*\right)} \lVert v^a(\cdot,t)-v^b(\cdot,t) \rVert_{L^\infty_x}
	\]
	and similarly for other $\vertiii{\cdot}$ norms. This immediately implies the uniqueness of the  weak solution.
\end{proposition}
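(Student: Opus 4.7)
The plan is to leverage the integral representation that is already available for any solution in the class \eqref{K-108}, and then perform a difference-and-compare argument whose core is the ``perturbation-of-coefficient'' estimates for the heat kernel $H(x,t;y,s;\rho)$ built in \cite{L-Y-2020}. First, I would use the fact that for both $(v^a,u^a,\theta^a)$ and $(v^b,u^b,\theta^b)$ the specific volume is continuous in $L^1\cap L^\infty\cap BV$ with the H\"older-in-time modulus of \eqref{eq:v-conti}. This regularity is precisely what is needed to construct the two BV-coefficient heat kernels $H^a:=H(\cdot,\cdot;\cdot,\cdot;\mu/v^a)$, $H^b:=H(\cdot,\cdot;\cdot,\cdot;\mu/v^b)$ and their thermal analogues $K^a,K^b$. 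Applying the integral representation in \eqref{NS-lin-2-sol} with the iteration index dropped (i.e.\ passing $n\to\infty$ in both solutions separately, as was done to obtain the local solution) yields closed integral formulas for $u^a,u^b,\theta^a,\theta^b$, together with $v=v_0+\int_0^t u_x\,ds$.

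Next, I would subtract the two representations. The difference splits naturally as
\begin{equation*}
  u^a-u^b \;=\; \underbrace{\int H^a(u^a_0-u^b_0)\,dy + \int_0^t\!\!\int H^a_y\,[p(v^a,\theta^a)-p(v^b,\theta^b)]\,dyds}_{\text{same-kernel differences}} \;+\; \underbrace{\int (H^a-H^b)u^b_0\,dy+\cdots}_{\text{kernel perturbation}},
\end{equation*}
and similarly for $\theta^a-\theta^b$. The same-kernel pieces are controlled by the pointwise Gaussian bounds on $H$, $H_y$, $K$, $K_y$ and by the differences $p(v^a,\theta^a)-p(v^b,\theta^b)$, $(u^a_x)^2-(u^b_x)^2=(u^a_x-u^b_x)(u^a_x+u^b_x)$, etc., which are linear in the unknown differences with coefficients of size $O(\delta_*)$; the factor $u^a_x+u^b_x=O(\delta_*/\sqrt{\tau})$ is exactly what forces the $\sqrt{\tau}/|\log\tau|$ weight on $u^a_x-u^b_x$ in the norm. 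The kernel-perturbation pieces are handled by the comparison estimates from \cite{L-Y-2020}, which bound $|H^a-H^b|$, $|H^a_y-H^b_y|$ by Gaussian-type kernels multiplied by a norm of $\mu/v^a-\mu/v^b$, i.e.\ by a quantity controlled by $\vertiii{v^a-v^b}_{BV}+\vertiii{v^a-v^b}_\infty+\vertiii{v^a-v^b}_1$. For $v$ itself I would simply integrate $v^a-v^b=(v^a_0-v^b_0)+\int_0^t(u^a_x-u^b_x)\,ds$ in $x$ and in $BV$, using the differentiated representation of $u^a-u^b$ to bound the $BV$-norm of the time integral.

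With these ingredients in hand, the argument is closed by a standard a priori / bootstrap estimate. Plugging the pointwise bounds into the definition of each $\vertiii{\cdot}$ norm and bounding each convolution integral by the Gaussian-in-time estimates, one obtains an inequality of the schematic form
\begin{equation*}
  \mathcal{N}(t_*) \;\le\; C\,\mathcal{N}_0 \;+\; C\bigl(\delta_*+\omega(t_*)\bigr)\,\mathcal{N}(t_*),
\end{equation*}
where $\mathcal{N}(t_*)$ is the full LHS norm on $(0,t_*)$, $\mathcal{N}_0$ is the RHS involving initial data, and $\omega(t_*)\to 0$ as $t_*\to 0$. Choosing $\delta_*$ and $t_*$ small absorbs the second term on the right and yields the stated inequality. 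Uniqueness is then the special case $\mathcal{N}_0=0$, propagated from $(0,t_*)$ to the full common lifespan by a standard iteration of time intervals.

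The main obstacle is bookkeeping around the logarithmic weights. The endpoint integrability that forces the $|\log\tau|$ factors on the $u$- and $\theta$-differences arises when the kernel-perturbation terms are differentiated in $x$: the comparison estimate for $H^a_y-H^b_y$ loses one order of time-decay compared with $H^a-H^b$, and the resulting $\int_0^t (t-s)^{-1}\,ds$ divergence is exactly cured by a logarithm. Building a norm scheme whose weights are self-consistent across all four difference equations --- so that every convolution closes in the same scale, including the $(u_x)^2$ nonlinearity in $\theta_t$ and the Lipschitz fluxes --- is where the real work sits, but it is a direct transcription of the weight assignment already used in Proposition \ref{prop:local}.
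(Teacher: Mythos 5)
Your proposal follows essentially the same route the paper indicates for this result (whose detailed proof lives in \cite{W-Y-Z-local}): use the $L^1\cap L^\infty\cap BV$ continuity of $v$ to build the two BV-coefficient heat kernels for $\mu/v$ and $\kappa/(c_v v)$, subtract the resulting integral representations, control the kernel-perturbation terms by the comparison estimates of Lemmas \ref{L2.2} and \ref{lem:heat-comp} (whose $|\log(t-t_0)|$ losses are exactly the source of the logarithmic weights in the norm), and close with a weighted bootstrap in $\delta_*$ and $t_*$, with uniqueness as the zero-data case. No genuine gap; this is the paper's argument.
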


In this paper, we are curious about whether the local-in-time solution constructed in Propositions \ref{prop:local} and \ref{T3.3} can be extended to a global one and what the time asymptotic behavior it has. The heat kernel with BV conductivity is effective for local well-posedness since it accurately captures the quasi-linear nature of the system \eqref{NS}. On the other hand, concerning large time behavior, the hyperbolic-parabolic structure plays a key role, which is, however, not reflected in the heat kernel. This motivates us to introduce a new ingredient, Green's function for linearized system around equilibrium state.  

In order to preserve the conservative form of equation \eqref{NS}, we define the following state variables,
\bq\label{var rela}
E=e+\frac{1}{2}u^2,\quad U=(v,u,E),\quad p(v,e(E,u))\simeq\frac{E-\frac{1}{2}u^2}{v},\quad e_u=-u,\quad e_E=1.
\eq
Then, the equation \eqref{NS} can be rewritten as following conservation form equation with variables in \eqref{var rela},
\bq\label{main}
\left\{\begin{aligned}
	&v_t-u_x=0\\
	&u_t+p_vv_x+p_ee_uu_x+p_ee_EE_x=\left(\frac{\mu u_x}{v}\right)_x\\
	&E_t+up_vv_x+(p+up_ee_u)u_x+up_ee_EE_x=\left(\frac{\kappa\theta_ee_u+\mu u}{v}u_x+\frac{\kappa\theta_ee_E}{v}E_x\right)_x.
\end{aligned}
\right.
\eq
We can also write the system into a more compact form as follows,
\[U_t+F(U)_x=(B(U)U_x)_x \Longleftrightarrow U_t+F'(U)U_x=(B(U)U_x)_x.\]
where $U$, $F$, $F'(U)$ and $B$ are defined as below,
\[U=\begin{pmatrix}v\\u\\E\end{pmatrix},\quad F(U)=\begin{pmatrix}-u\\p\\pu\end{pmatrix},\quad F'(U)=\begin{pmatrix}0 &-1 &0\\p_v& -p_eu& p_e\\p_vu & p-p_eu^2& p_eu\end{pmatrix},\quad B(U)=\begin{pmatrix}0 &0 &0\\0& \frac{\mu}{v} & 0\\0 & \left(\frac{\mu }{v}-\frac{\kappa\theta_e}{v}\right)u & \frac{\kappa\theta_e}{v}\end{pmatrix}.\]
Now we consider the linearization of equation \eqref{main} around a constant state $\bar{U}$. More precisely, let $U=\bar{U}+V$, then we have 
\bq\label{A2}
V_t+F'(\bar{U})V_x-B(\bar{U})V_{xx}=[N_1(V;\bar{U})+N_2(V;\bar{U})]_x,
\eq
where $N_1$ and $N_2$ are nonlinear terms from the hyperbolic and parabolic parts respectively,
\bq\label{A3}
N_1(V;\bar{U})=-[F(\bar{U}+V)-F(\bar{U})-F'(\bar{U})V],\quad N_2(V;\bar{U})=B(U)-B(\bar{U}).
\eq
The linearization of the nonlinear system \eqref{main} is obtained if we get rid of the right hand side nonlinear terms in \eqref{A2}. For the linearized equation, we follow the methodology in \cite{L-Y-2004,L-Y-2020} to obtain the pointwise estimates of Green's function. More precisely, we  use the spectral analysis to extract the singularities from the high frequency modes (see Theorem \ref{T4.1} and Theorem \ref{T4.2} for details). Then, for the regular part, we apply the long-short wave decomposition and weighted energy estimate to construct the desired results (see Theorem \ref{T4.3} for details).

With  heat kernel with BV coefficient and Green's function in hand, we combine them together to take care both local and global structure of system \eqref{NS} simultaneously. We interpolate the  heat kernel and  Green's function together to construct the ``effective Green's function'', which yields the representation of the solutions to \eqref{main}. As we already proved the regularity of the weak solution to \eqref{NS}, thus the equation \eqref{main} is equivalent to \eqref{NS} and \eqref{NS2}, which means we actually obtain another representation of the solutions to \eqref{NS} and \eqref{NS2}. The effective Green's function can be viewed as ``global nonlinear Green's function'' in the sense that it respects both local and large time structure of the system. The representations via effective Green's function have two benefits: in short time, the heat kernels for variable conductivity coefficients lead to small errors and avoid the non-integrable singularities; while for large time, the dissipation structure of the Green's function guarantees the global estimates. Eventually, we apply the continuity argument to obtain the global well-posedness theory (see Theorem \ref{T5.1} for details).  The large time behavior of the solution is given as below:

\begin{theorem}[Main Theorem: Large Time Behavior]
Suppose initial data $(v_0,u_0,\theta_0)$ of Navier-Stokes equation \eqref{NS2} satisfy
\begin{equation*}
	\left\|v_0-1\right\|_{L^1_x}+\left\|v_0\right\|_{BV}+\left\|u_0\right\|_{L^1_x}+\left\|u_0\right\|_{BV}+\left\|\theta_0-1\right\|_{L^1_x}+\left\|\theta_0\right\|_{BV}\leq \delta,
\end{equation*}
for $\delta$ sufficiently small. Then the solution constructed in Propositions \ref{prop:local} and \ref{T3.3} exists globally in time, and there exists positive constant $C_{\sharp}$ such that, 
\begin{align}
	&\left\| \sqrt{t+1} (v(\cdot,t)-1)\right\|_{L^\infty_x} + \left\|\sqrt{t+1} u(\cdot,t) \right\|_{L^\infty_x} + \left\| \sqrt{t+1} (\theta(\cdot,t)-1)\right\|_{L^\infty_x} \notag\\
	&\hspace{5cm}+ \left\|\sqrt{t} u_x(\cdot,t) \right\|_{L^\infty_x}+ \left\|\sqrt{t} \theta_x(\cdot,t) \right\|_{L^\infty_x} \nonumber \leq C_{\sharp} \delta~ \mbox{ for } t\in(0,+\infty). \nonumber
\end{align}
\end{theorem}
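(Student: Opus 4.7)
The plan is to extend the local solution constructed in Propositions \ref{prop:local} and \ref{T3.3} to $[0,\infty)$ by a continuity argument driven by an integral representation via the ``effective Green's function''. Writing $V = U - \bar U$ with $\bar U = (1,0,c_v)$, the perturbation equation \eqref{A2} carries quadratic nonlinearities $N_1, N_2$ as defined in \eqref{A3}. I would work with the bootstrap norm
\bq
M(T) \equiv \sup_{0<t<T}\Bigl\{\sqrt{t+1}\,\|V(\cdot,t)\|_{L^\infty_x} + \sqrt{t+1}\,\|V(\cdot,t)\|_{L^1_x} + \sqrt{t}\,\|(u_x,\theta_x)(\cdot,t)\|_{L^\infty_x}\Bigr\},
\eq
and prove that, whenever $M(T) \le 2C_\sharp\delta$ on $[0,T]$, in fact $M(T) \le C_\sharp\delta$. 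Combined with the local existence of Proposition \ref{prop:local} (used at each new ``initial time'') and the uniqueness/continuous-dependence of Proposition \ref{T3.3}, this closes the continuation argument and gives the announced global bound.

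Next I would write
\bq
V(x,t) = \int_\bbr G^{\mathrm{eff}}(x,t;y,0)V_0(y)\,dy + \int_0^t\!\!\int_\bbr \pa_y G^{\mathrm{eff}}(x,t;y,s)\bigl[N_1(V)+N_2(V)\bigr](y,s)\,dy\,ds,
\eq
where $G^{\mathrm{eff}}$ interpolates between the BV-coefficient heat kernel $H(x,t;y,s;\rho)$ of \eqref{eq:fun} on the layer $t-s \lesssim 1$ and the constant-coefficient Green's function of $V_t + F'(\bar U)V_x - B(\bar U)V_{xx}=0$ for $t-s \gtrsim 1$. For the linear part I would invoke the pointwise bounds produced by Theorems \ref{T4.1}--\ref{T4.3} (three Huygens-type diffusion waves along the characteristic speeds, plus heat-kernel tails), yielding $(t+1)^{-1/2}(\|V_0\|_{L^1} + \|V_0\|_{BV})$ in $L^\infty_x$. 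For the nonlinear integrals, the quadratic structure gives $\|N_1(V)(\cdot,s)\|_{L^1_x\cap L^\infty_x} \ls M(T)^2(s+1)^{-1/2}$ and similar bounds for $N_2$; convolving against $\pa_y G^{\mathrm{eff}}$ (whose kernel carries an $(t-s)^{-1/2}$ factor) and splitting the $s$-integral at $s=t/2$---using Huygens cancellation on the small-$s$ half to avoid logarithmic losses and smallness of $(s+1)^{-1/2}$ on the large-$s$ half---produces $O(M(T)^2)(t+1)^{-1/2}$, absorbable for $\delta$ small.

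The hardest point will be the short-time layer $t-s \ll 1$, where $v$ is genuinely BV with a discrete jump set $\scrD$ and the constant-coefficient piece of $G^{\mathrm{eff}}$ is no longer accurate. There I would keep the BV-coefficient heat kernels $H(x,t;y,s;\mu/v)$ and $H(x,t;y,s;\kappa/(c_vv))$ inside $G^{\mathrm{eff}}$, and control the transition between the two regimes by the comparison estimates of \cite{L-Y-2020} combined with the H\"older-in-time estimate \eqref{eq:v-conti} for $v$. A related subtlety is that $\pa_y N_2$ contains $v_y$, producing jump-supported Dirac masses on $\scrD$; these are handled by pairing with $\pa_y H$ in the short-time window, where the pointwise estimates of \cite{L-Y-2020} remain uniform in the jump amplitudes, and by appealing to the Lipschitz continuity of the fluxes in Proposition \ref{prop:local}(2), which shows that the singular combinations actually enter the equation inside quantities that are Lipschitz in $x$. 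Once these short-time difficulties are dealt with, the bootstrap closes and delivers the claimed $(t+1)^{-1/2}$ decay of $V$ and $t^{-1/2}$ decay of its spatial derivatives.
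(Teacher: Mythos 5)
Your overall architecture (effective Green's function interpolating the BV-coefficient heat kernel near $\tau=t$ with the constant-state Green's function for earlier times, plus a bootstrap/continuity argument) is the same as the paper's, but there are two concrete problems with the way you set it up. First, the bootstrap norm $M(T)$ contains $\sqrt{t+1}\,\|V(\cdot,t)\|_{L^1_x}$; this cannot be propagated, since already for the linear evolution $\|{\int}\bbg(x-y,t)V_0(y)\,dy\|_{L^1}$ is merely bounded by $\|V_0\|_{L^1}$ and does not decay (e.g.\ $\int(v-1)\,dx$ is conserved), so $M(T)\to\infty$ even at the linear level. The paper's functional $\mathcal{G}$ keeps the $L^1$ norms unweighted, and — crucially — also carries $\|v(\cdot,t)-1\|_{BV}$, which your norm omits. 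Without uniform smallness of $\|v(\cdot,t)\|_{BV}$ you cannot invoke the hypotheses \eqref{2-2} needed to construct and estimate $H(x,t;y,s;\mu/v)$ in the short-time layer, nor restart Proposition \ref{prop:local} at time $T$ to continue the solution and to obtain the bounds on $u_\tau,\theta_\tau,v_\tau$ that are used in the derivative estimates.

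Second, your Duhamel formula with a purely quadratic source $[N_1+N_2]_x$ is not a valid identity for $G^{\mathrm{eff}}$: since the interpolated kernel does not solve the backward linear equation exactly, the cutoff derivative $\mathcal{X}'$ and the mismatch between $H$ and $\bbg$ generate remainder terms that are \emph{linear} in the perturbation (in the paper these are the terms $\frac{1}{\nu_0}\mathcal{X}'(\bbg_{22}-H)u$, $(K\partial_y\bbg_{23}-\partial_y\bbg_{21})u$, $H_y\,K(\theta-v)/v$, etc.\ inside $\mathcal{R}^u_2,\mathcal{R}^u_3$). These cannot be absorbed by "$\delta$ small" as you claim for the quadratic terms; they are only controlled because the window has length $\nu_0$ (giving $\sqrt{\nu_0}\,\delta$) or via the comparison estimates of Lemma \ref{L2.2} (giving an extra $\delta$ from $\|v-1\|_\infty$), and this is exactly why the closing of the bootstrap requires the ordered choice $\nu_0$ first, then $\delta$, then $\delta^*$. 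Relatedly, for $\|\sqrt{t}\,u_x\|_{L^\infty}$ a naive $x$-derivative of your formula produces $\partial_x\partial_y G^{\mathrm{eff}}\sim(t-s)^{-1}$, which is not integrable up to $s=t$; the paper removes this by rewriting the near-endpoint contribution through the backward equation and an integration by parts in $\tau$ (Corollary \ref{Cor5.1}), trading second space derivatives for $u_\tau,v_\tau,\theta_\tau$ controlled by the local theory. Your sketch does not address either of these points, and as written the estimate would not close.
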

\begin{remark}
	It is even possible to establish the space-time pointwise estimate of the global solution for BV  data, provided the initial data satisfy certain space decay assumption. This will be pursued in  the future work.
\end{remark}

The rest of this paper is organized as follows. In Section \ref{sec:2}, we introduce some preliminary concepts and various estimates of the heat kernel with variable coefficient. The essential ideas arise from \cite{L-Y-2020}, and thus we  only list the estimates for variable coefficient heat kernel that will be used in later sections. For the detailed proof, the reader is referred to \cite{L-Y-2020,W-Y-Z-local}. In Section \ref{sec:4}, we construct the Green's function for the linearzied system \eqref{A2} around constant state. In Section \ref{sec:5}, we first interpolate heat kernel and Green's function to have the ``effective Green's function'', which yields an integral representation of the solution. The time asymptotic behavior then follow from precise estimate of effective Green's function and a priori estimate.


%

%
%

\section{Preliminary}\label{sec:2}
\setcounter{equation}{0}

In this section, we will provide some preliminary concepts and results that will be used in the later sections. We  first give definitions of the weak and a regularity property, then we introduce the pointwise  and comparison estimates of the heat kernel for BV coefficients.  

\subsection{Weak solutions}

\begin{definition}\label{Def2.1}
	A tuple $(v,u,\theta)$ is  a weak solution to the equation \eqref{NS2} in the distribution sense if 	
	for any test function $\varphi(x,t)\in C^1_0(\bbr\times[0,+\infty))$, 
	\begin{equation}\label{eq:def2.1}
		\left\{\begin{aligned}
			&\int_0^{+\infty}\int_\bbr \left[\varphi_x u-\varphi_t v	\right]dxdt= \int_\bbr \varphi(x,0) v(x,0)dx,\\
			&\int_0^{+\infty}\int_\bbr \left[\varphi_x\left(\frac{\mu u_x}{v}-p\right)-\varphi_tu\right]dxdt=\int_\bbr\varphi(x,0)u(x,0)dx,\\
			&\int_0^{+\infty}\int_\bbr \left[\varphi_x\left(\frac{\kappa}{c_vv}\theta_x\right)+\varphi\left(\frac{p}{c_v}u_x-\frac{\mu}{c_vv}\left(u_x\right)^2\right)-\varphi_t\theta\right]dxdt=\int_\bbr\varphi(x,0)\theta(x,0)dx.
		\end{aligned}
		\right.
	\end{equation}
\end{definition}

The weak solutions to the linearized equation \eqref{NS-lin-2} and \eqref{eq:fun}  can be
defined similarly.    

\begin{proposition}

\label{Def2.2}
For the weak solution $(v,u,\theta)$ of \eqref{NS2},
the function $(v,u,c_v \theta+u^2/2)$ is a weak  solution  of  \eqref{NS} if
it is satisfies:
\begin{equation*}
	\left\{\begin{aligned}
		&v(x,t)-1\in C\left([0,t_\sharp);L^1(\bbr)\cap L^\infty(\bbr)\cap BV\right),\\
		&u(x,t)\in L^\infty\left(0,t_\sharp;W^{1,1}(\bbr)\cap L^{\infty}(\bbr)\right),\quad \sqrt{t}u_x(x,t)\in L^\infty\left(0,t_\sharp;L^\infty(\bbr)\right),\\
		&\sqrt{t}u_t(x,t)\in L^\infty\left(0,t_\sharp;L^1(\bbr)\right),\quad tu_t(x,t)\in L^\infty\left(0,t_\sharp;L^\infty(\bbr)\right),\\
		&\theta(x,t)\in L^\infty\left(0,t_\sharp;W^{1,1}(\bbr)\cap L^{\infty}(\bbr)\right),\quad \sqrt{t}\theta_x(x,t)\in L^\infty\left(0,t_\sharp;L^\infty(\bbr)\right),\\
		&\sqrt{t}\theta_t(x,t)\in L^\infty\left(0,t_\sharp;L^1(\bbr)\right),\quad t\theta_t(x,t)\in L^\infty\left(0,t_\sharp;L^\infty(\bbr)\right).
	\end{aligned}
	\right.
\end{equation*}
\end{proposition}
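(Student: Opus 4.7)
The plan is to derive the conservative energy equation for $E := c_v\theta + u^2/2$ from the non-conservative temperature equation of \eqref{NS2} by multiplying the momentum equation by $u$ and adding $c_v$ times the temperature equation, and then to recast the resulting identity in distributional form against an arbitrary $\varphi\in C_0^1(\bbr\times[0,+\infty))$. The first two weak equations of \eqref{NS} coincide verbatim with those of \eqref{NS2}, so only the energy identity needs verification.

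First I would upgrade the weak identities \eqref{eq:def2.1} to pointwise a.e.\ equalities for $t>0$. The flux regularity in Proposition \ref{prop:local}(2) says that $\mu u_x/v - p$ and the temperature flux are globally Lipschitz in $x$, so their $x$-derivatives exist a.e.; combined with $u_t,\theta_t\in L^1_{\mathrm{loc}}$ and $u,\theta\in L^\infty$ away from $t=0$, this gives the pointwise identities
\begin{equation*}
u_t + p_x = (\mu u_x/v)_x, \qquad \theta_t + \tfrac{p}{c_v} u_x - \tfrac{\mu}{c_v v}u_x^2 = \bigl(\tfrac{\kappa}{c_vv}\theta_x\bigr)_x.
\end{equation*}
Multiplying the first identity by $u\in L^\infty$ and using $uu_t = (u^2/2)_t$ together with $u(\mu u_x/v)_x = (u\,\mu u_x/v)_x - \mu u_x^2/v$ (valid a.e.\ via the chain rule applied to the Lipschitz flux), and then adding $c_v$ times the second, the frictional term $\mu u_x^2/v$ cancels and $up_x+pu_x=(pu)_x$, yielding $E_t+(pu)_x=\bigl((\kappa/v)\theta_x+(\mu/v)uu_x\bigr)_x$ a.e.\ for $t>0$.

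Finally, to pass to the weak form of \eqref{NS}, I would multiply by $\varphi$ and integrate by parts in both $x$ and $t$, producing the boundary term $\int\varphi(x,0)E(x,0)\,dx$. The temporal integration by parts is justified because the regularity in \eqref{K-108} gives $\|u_t\|_{L^1_x},\|\theta_t\|_{L^1_x}\lesssim\delta/\sqrt{t}$, whence $\int_0^{t_\sharp}\|E_t\|_{L^1_{\mathrm{loc}}(\bbr)}\,dt\lesssim \int_0^{t_\sharp}t^{-1/2}\,dt<\infty$; the trace is identified with $c_v\theta_0 + u_0^2/2$ via the $L^1$-continuity of $v$ at $t=0$ from \eqref{eq:v-conti} together with the weak initial conditions for $u$ and $\theta$ encoded in \eqref{eq:def2.1} and the $L^\infty$ control that promotes $u(\cdot,t)^2\to u_0^2$ in $L^1_{\mathrm{loc}}$ as $t\to 0^+$. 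The main obstacle is precisely this uniform-in-$t$ integrability near the initial time, combined with the fact that $(\mu u_x/v)_x$ may carry singular contributions across the jump set $\scrD$ of $v$; the latter is sidestepped by working with the Lipschitz flux $\mu u_x/v - p$ as the primitive object in the chain rule rather than differentiating $\mu u_x/v$ and $p$ separately.
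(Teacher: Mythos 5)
Your proposal is correct and follows essentially the same route the paper takes (which it only sketches, in Remark \ref{rmk:prop-1} and the statement of Proposition \ref{Def2.2}): use the assumed regularity to read the fluxes of \eqref{NS2} as Lipschitz/absolutely continuous objects so the equations hold in the strong a.e.\ sense, combine $u\times$(momentum equation) with $c_v\times$(temperature equation) so the dissipation terms cancel, and return to the distributional form of \eqref{NS}, with the initial trace recovered from the integrability of $\|u_t\|_{L^1_x},\|\theta_t\|_{L^1_x}\lesssim t^{-1/2}$ near $t=0$.
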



\subsection{Heat kernel estiamtes}

In this subsection, one provides   estimates for the fundamental
solution $H(x,t;y,t_0;\rho)$ (heat kernel) of
a heat equation with  a BV function coefficient $\rho$ in $x$,
\[\partial_t + \partial_x \rho(x,t) \partial_x)u=0.\]   
A BV function $f$ can be decomposed as the sum of an absolutely continuous part and a discrete part,
\begin{align*}
	f(x)=f_c(x)+f_d(x),\quad \text{$f_c$ is absolutely continuous},\quad f_d(x)=\sum_{\alpha\in\mathscr{D},		 \alpha<x}f(y)\Bigr|_{y=\alpha-}^{y=\alpha+} h(x-\alpha),
\end{align*}
where  $\mathscr{D}$ is the discontinuity set of $f(x)$ and  $h(\cdot)$ is the Heaviside function. The total variation can also be represented as
\begin{equation}
	\label{eq:bv-2}
	\lVert f \rvert _{BV} = \int_{\bbr}\lvert \partial_x u_c \rvert  dx + \left.\left. \sum_{\alpha\in \scrD} \right| f_d(x) \big|_{x=\alpha^-}^{x=\alpha^+} \right| .
\end{equation}
Hereafter, we also adopt the notation when integrating the derivative of a BV function
\begin{equation}
	\label{eq:int-bv}	
	\int_{\R\setminus \mathscr{D}} \lvert \partial_x f\rvert  dx\equiv\int_{\bbr}\lvert\partial_x f_c\rvert  dx.
\end{equation}
The fundamental solution $H(x,t;y,t_0;\rho)$ is defined as a weak
solution of the initial value problem:
\begin{equation}\label{heat-x}
	\left\{\begin{aligned}
		&\left(\partial_t-\partial_x\rho(x,t)\partial_x\right)H(x,t;y,t_0;\rho)=0,\quad t>t_0,\\
		&H(x,t_0;y,t_0;\rho)=\delta(x-y),
	\end{aligned}
	\right. 
\end{equation} 
i.e. \[ \displaystyle  \int_{t_0}^\infty \int_{\mathbb R} \left(-\phi_t (x,t)
H(x,t;y,t_0;\rho) +\phi_x(x,t) \rho(x,t) H_x(x,t;y,t_0;\rho)\right)dxdt =
-\phi(y,t_0)\] for all test functions $\phi$; 
and $\rho(x,t)$ is assumed to satisfy  for some positive constants $\bar{\rho}$ and $\delta^*$,
\begin{equation}\label{2-2}
	\left\{\begin{aligned}
		&\|\rho(\cdot)-\bar{\rho}\|_{L^1}\leq \delta^*,\quad \|\rho(\cdot,t)\|_{BV}\leq \delta^*,\quad \|\rho_t(\cdot,t)\|_{\infty}\leq \delta^*\max\left(\frac{1}{\sqrt{t}},1\right),\quad 0<\delta^*\ll 1,\\
		&\mathscr{D}\equiv\left\{z\  |\ \text{$\rho(z,t)$ is not continuous at
			$z$} \right\} \text{ is invariant in }t.
	\end{aligned}
	\right.
\end{equation}

The construction of heat kernel for variable coefficient was introduced in \cite{L-Y-2020}. Roughly speaking, the construction consists of three steps. The first step is to construct the heat kernel for coefficient being step function in space and independent of time. This is achieved by introducing the Laplace wave train and write the solution in transformed variable by stochastic formulation. Then various detailed derivatives estimate can be obtained through this representation. In the second step, one uses a sequence of step functions to approximate general BV function, and prove the convergence of heat kernel sequence. Lastly, one uses frozen coefficient technique to represent the time-dependent heat kernel by integral equation in terms of time-independent heat kernel. Although the general framework is given in \cite{L-Y-2020}, in the current work, we need more detailed estimates than before. 

The results are summarized in the following several Lemmas. 

\begin{lemma}[Liu-Yu, \cite{L-Y-2020}]\label{L2.1}
	Suppose the conditions of $\rho$ in \eqref{2-2} hold. Then, there
	exist positive constants $C_*$ and $t_\sharp\ll 1$ such that the weak
	solution of  \eqref{heat-x}  exists and satisfies 
	for  $t \in (t_0,t_0+t_\sharp)$
	\begin{align}
		&\left|H(x,t;y,t_0;\rho)\right|\leq
		C_*\frac{e^{-\frac{\left(x-y\right)^2}{C_*(t-t_0)}}}{\sqrt{ t-t_0
		}}, 	\label{eq:H-1}\\
		&\left|H_x(x,t;y,t_0;\rho)\right|+\left|H_y(x,t;y,\mu)\right|\leq
		C_*\frac{e^{-\frac{\left(x-y\right)^2}{C_*(t-t_0)}}}{ t-t_0
		}, \label{eq:H-2} \\
		& \left|\int_{t_0}^tH_x(x,\tau;y,t_0;\rho)d\tau\right|, \left|\int_{t_0}^t H_x(x,t;y,s;\rho)ds\right|\leq C_*e^{-\frac{(x-y)^2}{C_*(t-t_0)}}. \label{eq:H-5}
	\end{align}
\end{lemma}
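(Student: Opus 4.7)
The plan is to follow the three-stage construction sketched just before the lemma and developed fully in \cite{L-Y-2020}: (a) build $H$ when $\rho$ is piecewise constant in $x$ and independent of $t$, obtaining closed-form Gaussian bounds; (b) pass to an arbitrary BV-in-$x$ coefficient by $L^1$ approximation; and (c) reinstate the $t$-dependence by a frozen-coefficient Duhamel identity. At each stage the quantities to track are the pointwise Gaussian bounds \eqref{eq:H-1}--\eqref{eq:H-2}; the integrated bound \eqref{eq:H-5} falls out of \eqref{eq:H-2} by a direct time integration at the very end.

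For stage (a) with a single jump $\rho=\rho_-\mathbf{1}_{x<0}+\rho_+\mathbf{1}_{x>0}$, I would Laplace-transform \eqref{heat-x} in $t$, solve the resulting constant-coefficient ODE on each half-line, and impose the transmission conditions $H(0^-,\cdot)=H(0^+,\cdot)$ and $\rho_- H_x(0^-,\cdot)=\rho_+ H_x(0^+,\cdot)$; a $2\times 2$ linear system determines $\hat H$, and inversion expresses $H$ as an explicit combination of Gaussians and $\mathrm{erfc}$-kernels from which \eqref{eq:H-1}--\eqref{eq:H-2} are read off. For finitely many jumps one either splices these elementary kernels (the Laplace wave-train) or uses a skew-Brownian Feynman--Kac representation, with constants uniform in the number and location of jumps provided $\|\rho\|_{BV}$ is small. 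Stage (b) then approximates a general BV coefficient by step coefficients $\rho^{(n)}$ with $\|\rho^{(n)}-\rho\|_{L^1}\to 0$ and $\|\rho^{(n)}\|_{BV}$ uniformly bounded, and uses the comparison identity
\[
H^{(n)} - H^{(m)} = -\int_{t_0}^{t}\!\!\int_\bbr H^{(n)}_z(x,t;z,s)\bigl(\rho^{(n)}-\rho^{(m)}\bigr)(z)\,H^{(m)}_z(z,s;y,t_0)\,dz\,ds,
\]
combined with the stage-(a) Gaussian bounds, to make $\{H^{(n)}\}$ Cauchy in a Gaussian-weighted pointwise norm; the limit inherits \eqref{eq:H-1}--\eqref{eq:H-2}.

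For stage (c) I would freeze $\rho$ at $t_0$ and write
\[
H(\cdot;\rho) = H(\cdot;\rho(\cdot,t_0)) - \int_{t_0}^{t}\!\!\int_\bbr H_z(x,t;z,s;\rho)\bigl(\rho(z,s)-\rho(z,t_0)\bigr) H_z(z,s;y,t_0;\rho(\cdot,t_0))\,dz\,ds,
\]
then run a fixed-point iteration in the Gaussian-weighted norm on $(t_0,t_0+t_\sharp)$. The hypothesis $\|\rho_t(\cdot,s)\|_\infty\le \delta^*\max(s^{-1/2},1)$ from \eqref{2-2} yields $\|\rho(\cdot,s)-\rho(\cdot,t_0)\|_\infty\lesssim \delta^*\sqrt{s-t_0}$, which is exactly what is needed to close the contraction for small $t_\sharp$. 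Finally \eqref{eq:H-5} follows from \eqref{eq:H-2} by the substitution $u=(x-y)^2/[C_*(\tau-t_0)]$, which gives
\[
\int_{t_0}^{t}\frac{1}{\tau-t_0}\exp\!\Bigl(-\tfrac{(x-y)^2}{C_*(\tau-t_0)}\Bigr)d\tau \;\le\; C\exp\!\Bigl(-\tfrac{(x-y)^2}{C(t-t_0)}\Bigr);
\]
the second integral in \eqref{eq:H-5}, running over $s$ rather than $\tau$, is handled by the symmetric substitution $u=(x-y)^2/[C_*(t-s)]$.

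The main obstacle is twofold. In stage (b) the Gaussian constants produced by the explicit formulas of stage (a) must remain bounded \emph{uniformly} as the refinement $n\to\infty$, which forces $\|\rho\|_{BV}$ to be quantitatively small; without this one would only recover De Giorgi--Nash--Moser-type Hölder estimates, not the sharp Gaussian form stated here. In stage (c) the $1/\sqrt{t}$ singularity of $\|\rho_t\|_\infty$ permitted by \eqref{2-2} is only borderline integrable, which is exactly why the lemma is restricted to the short interval $(t_0,t_0+t_\sharp)$ and why a genuine iteration, rather than a perturbative one-shot Duhamel, is required.
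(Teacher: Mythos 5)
Your three-stage plan (explicit kernel for step coefficients via Laplace transform/wave train or skew-Brownian representation, passage to BV coefficients by approximation with a comparison identity, and a frozen-coefficient Duhamel iteration using $\|\rho(\cdot,s)-\rho(\cdot,t_0)\|_\infty\lesssim\delta^*\sqrt{s-t_0}$ to close on a short interval) is exactly the construction the paper attributes to \cite{L-Y-2020}; the paper itself offers no proof of this lemma, so for \eqref{eq:H-1}--\eqref{eq:H-2} your route is essentially the intended one.

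However, your derivation of \eqref{eq:H-5} contains a genuine gap: the integrated bound does \emph{not} ``fall out of \eqref{eq:H-2} by a direct time integration,'' and the displayed inequality you invoke is false. With $a=(x-y)^2/[C_*(t-t_0)]$, the substitution $u=(x-y)^2/[C_*(\tau-t_0)]$ gives
\begin{equation*}
\int_{t_0}^{t}\frac{1}{\tau-t_0}\,e^{-\frac{(x-y)^2}{C_*(\tau-t_0)}}\,d\tau=\int_{a}^{\infty}\frac{e^{-u}}{u}\,du,
\end{equation*}
which behaves like $|\log a|$ as $x\to y$ (and is $+\infty$ at $x=y$, since the integrand is then $1/(\tau-t_0)$), whereas the right-hand side of \eqref{eq:H-5} stays bounded by $C_*$. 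The same logarithmic divergence occurs for $\int_{t_0}^{t}H_x(x,t;y,s;\rho)\,ds$ near $s=t$. To obtain \eqref{eq:H-5} you must use more structure than the absolute bound \eqref{eq:H-2}: either the refined pointwise estimate carrying the extra factor $|x-y|$, namely $|H_x(x,t;y,t_0;\rho)|\le C\,|x-y|\,(t-t_0)^{-3/2}e^{-(x-y)^2/[C(t-t_0)]}$, which the explicit wave-train representation of stage (a) does provide (note $\int_a^\infty e^{-u}u^{-1/2}\,du$ is bounded and decays like $e^{-a/2}$) and which must then be shown to survive stages (b) and (c); or a cancellation argument, e.g.\ rewriting the time integral of $H_x$ through the equation $H_t=(\rho H_x)_x$ or through the antisymmetry of the leading Gaussian derivative. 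As stated, your last step would also not yield the constants needed later (the integrated bounds \eqref{eq:H-5} are used precisely where the non-integrable $1/(t-\tau)$ singularity must be avoided), so this is not a cosmetic issue but the actual content of that estimate.
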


	\begin{lemma}[\cite{WYZ-wp}]\label{lem:heat-pw} Under the same consideration as in
		Lemma \ref{L2.1},  for  $t \in (t_0,t_0+t_\sharp)$
		the weak
		solution of  \eqref{heat-x}   satisfies 
		\begin{align}
			&\left|H_{xy}(x,t;y,t_0;\rho)\right|+\left|H_{t}(x,t;y,t_0;\rho)\right|\leq C_*\frac{e^{-\frac{\left(x-y\right)^2}{C_*(t-t_0)}}}{(t-t_0)^{\frac{3}{2}}},\label{eq:H-3}\\
			&  \left| H_{ty}(x,t;y,t_0;\rho)	\right| \leq
			C_{*} \frac{e^{-	\frac{(x-y)^2}{C_{*}(t-t_0)}
			}}{(t-t_0)^2}, &\quad 
			\label{eq:H-4}
			\\ 
			&  \left|\int_{t_0}^tH_{xy}(x,\tau;y,t_0;\rho)d\tau	-\frac{\delta(x-y)}{\rho(x,t_0)} -\int_{t_0}^{t} \frac{\rho(x,t_0)-\rho(x,\tau)}{\rho(x,t_0)} H_{xy}(x,\tau;y,t_0;\rho) d\tau	\right| \leq C_{*}  \frac{e^{-\frac{(x-y)^2}{C_*(t-t_0)}}}{\sqrt{t-t_0}}, 	\label{eq:H-6}\\
			& \left|\int_{t_0}^t H_{xy}(x,t;y,s;\rho)ds	+\frac{\delta(x-y)}{\rho(y,t)} -\int_{t_0}^{t} \frac{\rho(y,t)-\rho(y,s)}{\rho(y,t)} H_{xy}(x,t;y,s;\rho) ds	\right| \leq C_{*}  \frac{e^{-\frac{(x-y)^2}{C_*(t-t_0)}}}{\sqrt{t-t_0}}, 	\label{eq:H-7}\\
			&\int_{t_0}^{t} H_{xx}(x,\tau;y,t_0;\rho) d\tau = -\frac{\delta(x-y)}{\rho(x,t_0)}-\frac{1}{\rho(x,t_0)} \partial_{x} \left[	\int_{t_0}^{t} \left(\rho(x,\tau)-\rho(x,t_0)\right) H_x(x,\tau;y,t_0;\rho) d\tau	\right]	\notag	\\
			& \hspace{11em} +O(1) \left(|\partial_{x} \rho(x,t_0)| e^{-	\frac{(x-y)^2}{C_{*}(t-t_0)}	} + \frac{e^{-	\frac{(x-y)^2}{C_{*}(t-t_0)}	}}{\sqrt{t-t_0}}\right), \qquad \mbox{ for } x\notin \scrD, \, \label{eq:H-8}\\
			& \int_{t_0}^{t} H_{xxy}(x,\tau;y,t_0;\rho) d\tau  = \frac{1}{\rho(x,t_0)} \left[	\delta^{\prime}(x-y) - \int_{t_0}^{t} \partial_{x}\Big[	\big(\rho(x,\tau)-\rho(x,t_0) \big)  H_{xy}(x,\tau;y,t_0;\rho)		\Big]	d\tau \right] \notag \\
			&\hspace{11em} -\frac{\partial_{x}\rho(x,t_0)}{\rho^2(x,t_0)} \left[\delta(x-y) -  \int_{t_0}^{t} \big(\rho(x,\tau)-\rho(x,t_0) \big)  H_{xy}(x,\tau;y,t_0;\rho) d\tau \right] \notag \\
			& \hspace{11em}+ O(1) \left( |\partial_{x} \rho(x,t_0)| \frac{	e^{	-\frac{(x-y)^2}{C_{*}(t-t_0)}	}	}{\sqrt{t-t_0}}+ \frac{e^{	-\frac{(x-y)^2}{C_{*}(t-t_0)}	}}{t-t_0}    \right), \qquad  \mbox{ for } x\notin \scrD, \label{eq:H-9}\\
			&\int_{t_0}^tH_{t}(x,t;y,s;\rho)ds = H(x,t-t_0;y;\mu^t)-\delta(x-y) + O(1) \delta_{*} e^{-\frac{(x-y)^2}{C_*(t-t_0)}},\qquad \mbox{where } \mu^t(\cdot)\equiv \rho(\cdot,t). \label{eq:H-10}
		\end{align}
	\end{lemma}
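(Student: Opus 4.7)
The pointwise bounds \eqref{eq:H-3} and \eqref{eq:H-4} will follow from the semigroup representation of $H$. I plan to write
\[ H(x,t;y,t_0;\rho) = \int_{\bbr} H(x,t;z,\tfrac{t+t_0}{2};\rho)\, H(z,\tfrac{t+t_0}{2};y,t_0;\rho)\,dz, \]
apply $\partial_x$ to the first factor and $\partial_y$ to the second, and insert the one-derivative Gaussian bound \eqref{eq:H-2} for each factor. The convolution of the two Gaussians at the intermediate time collapses the product into the claimed $(t-t_0)^{-3/2}\,e^{-(x-y)^2/C_*(t-t_0)}$ bound for $H_{xy}$. For $H_t$ I use the PDE $H_t = \rho(x,t) H_{xx} + \rho_x(x,t) H_x$, estimating $H_{xx}$ by the same semigroup trick (two $x$-derivatives on the first factor) and reading $\rho_x H_x$ distributionally at points of $\scrD$. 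The estimate \eqref{eq:H-4} on $H_{ty}$ is produced analogously with an additional $y$-derivative.

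The integral identities \eqref{eq:H-6}-\eqref{eq:H-9} all stem from integrating the forward equation $H_\tau = \partial_x(\rho(x,\tau) H_x)$ (or its adjoint $-H_s = \partial_y(\rho(y,s) H_y)$) in time, and then separating the leading frozen value $\rho(\cdot,t_0)$ (resp.\ $\rho(\cdot,t)$) from $\rho(\cdot,\tau)$ through the product rule. Integrating the forward equation in $\tau$ yields the master identity
\[ \partial_x \int_{t_0}^t \rho(x,\tau) H_x(x,\tau;y,t_0)\,d\tau = H(x,t;y,t_0) - \delta(x-y). \]
Expanding $\rho(x,\tau) = \rho(x,t_0) + (\rho(x,\tau)-\rho(x,t_0))$ and distributing the outer $\partial_x$ delivers \eqref{eq:H-8}: the $|\rho_x(x,t_0)|\,e^{-\cdots}$ term arises from $\rho_x(x,t_0)\int H_x\,d\tau$, controlled by \eqref{eq:H-5}, and the bulk $e^{-\cdots}/\sqrt{t-t_0}$ term from $H(x,t;y,t_0)/\rho(x,t_0)$. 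Differentiating \eqref{eq:H-8} in $y$ produces \eqref{eq:H-9}. For \eqref{eq:H-6}, I antidifferentiate the master identity in $x$ from $-\infty$ (using the Gaussian decay of $\int \rho H_x\,d\tau$ there) and then differentiate in $y$ to obtain
\[ \int_{t_0}^t \rho(x,\tau) H_{xy}\,d\tau = \delta(x-y) + \int_{-\infty}^x H_y(z,t;y,t_0)\,dz. \]
The total-mass identity $\int_{\bbr} H_y\,dz = 0$ allows me to rewrite the tail integral as $-\int_x^\infty H_y\,dz$ (for $x>y$, and symmetrically otherwise), yielding the Gaussian bound $|\int_{-\infty}^x H_y\,dz| \le C\,e^{-(x-y)^2/C(t-t_0)}/\sqrt{t-t_0}$. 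Dividing by $\rho(x,t_0)$ and moving the $\rho(\tau)-\rho(t_0)$ piece to the right produces \eqref{eq:H-6}. The identity \eqref{eq:H-7} is obtained by the symmetric argument applied to the adjoint equation for $H_x$ in $(y,s)$.

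For \eqref{eq:H-10}, the forward equation gives
\[ \int_{t_0}^t H_t(x,t;y,s;\rho)\,ds = \partial_x\Bigl(\rho(x,t) \int_{t_0}^t H_x(x,t;y,s;\rho)\,ds\Bigr). \]
For time-independent coefficient $\rho = \mu$, the substitution $\sigma = t-s$ combined with $H_\sigma = \partial_x(\mu H_x)$ produces the exact identity $\int H_t\,ds = H(x,t-t_0;y;\mu) - \delta(x-y)$. For general time-dependent $\rho$, I plan to compare $H(x,t;y,s;\rho)$ with its frozen-coefficient counterpart $H(x,t-s;y;\mu^t)$ via Duhamel's principle, the source term being $\partial_x([\rho(x,\tau)-\rho(x,t)] H_x)$; the Hölder-in-time control on $\rho$ from \eqref{2-2} delivers the $O(\delta_*)\,e^{-(x-y)^2/C(t-t_0)}$ discrepancy. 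The principal technical obstacle running through the lemma is the interplay between the non-integrable singularity of the second-order kernels near $\tau=t_0$ (resp.\ $s=t$) and the measure-valued nature of $\rho_x$ at $\scrD$: every identity must be read distributionally, with the $\delta(x-y)$ contribution extracted explicitly and $\rho_x$ treated as a Stieltjes measure, paralleling the extraction procedure already developed in the construction of $H$ in \cite{L-Y-2020}.
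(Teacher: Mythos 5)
Note first that this paper never proves the lemma: it is imported verbatim from the companion work \cite{WYZ-wp}, so your proposal can only be judged against the construction that reference relies on (the explicit kernel of \cite{L-Y-2020} for piecewise-constant coefficients, the limit to BV coefficients, and the frozen-coefficient integral equation for time dependence). Within that caveat, your handling of the time-integrated identities is essentially right: integrating $H_\tau=\partial_x(\rho H_x)$ (resp.\ the adjoint equation in $(y,s)$) in time, splitting $\rho(\cdot,\tau)=\rho(\cdot,t_0)+[\rho(\cdot,\tau)-\rho(\cdot,t_0)]$, and invoking \eqref{eq:H-1}, \eqref{eq:H-2}, \eqref{eq:H-5} together with $\int_{\bbr}H_y\,dz=0$ reproduces the displayed remainders of \eqref{eq:H-6} and \eqref{eq:H-8} term by term, and the Chapman--Kolmogorov argument for the $H_{xy}$ bound in \eqref{eq:H-3} is clean and valid. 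Two smaller corrections: \eqref{eq:H-9} cannot be obtained by ``differentiating \eqref{eq:H-8} in $y$'' since an $O(1)$ remainder cannot be differentiated; you must redo the computation on the exact identity $\partial_x\int_{t_0}^t\rho(x,\tau)H_{xy}\,d\tau=H_y+\delta'(x-y)$ and then insert \eqref{eq:H-6} to expand $\rho_x(x,t_0)\int H_{xy}\,d\tau$, which is what generates the $\rho_x/\rho^2$ terms. Also, the constant-coefficient check $\rho\equiv\mu$ shows that your symmetric derivation of \eqref{eq:H-7} produces $-\delta(x-y)/\rho(y,t)$ rather than the displayed $+$ sign, which appears to be a typo in the statement; your route gives the consistent version.

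The genuine gap is the pointwise bound for $H_t$ (and hence $H_{ty}$) in \eqref{eq:H-3}--\eqref{eq:H-4}. Writing $H_t=\rho H_{xx}+\rho_x H_x$ and ``reading $\rho_x H_x$ distributionally'' cannot yield a pointwise Gaussian estimate here: (a) $H_{xx}(\cdot,t;y,t_0)$ is not a bounded function --- it carries Dirac masses at $x\in\scrD$ (as the paper remarks immediately after the lemma), so there is no Gaussian bound to insert when you put two $x$-derivatives on the first factor of the semigroup identity; (b) $\rho_x(\cdot,t)$ is only the derivative of a BV function, a finite measure whose absolutely continuous part is merely $L^1$, so $\rho_x H_x$ admits no bound of the form $C_*e^{-(x-y)^2/C_*(t-t_0)}(t-t_0)^{-3/2}$ with $C_*$ depending only on \eqref{2-2}; and (c) the cancellation between the atomic parts of $\rho H_{xx}$ and $\rho_x H_x$ is exactly the continuity of the flux $\rho H_x$, a structural output of the explicit construction rather than a quantitative input. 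Nor can the semigroup property rescue this: $H_t(x,t;y,t_0)=\int H_t(x,t;z,s)H(z,s;y,t_0)\,dz$ needs the same $H_t$ bound on the shorter interval, so the argument is circular. The $H_t$, $H_{ty}$ estimates have to be extracted from the short-time construction itself (explicit formulas for step coefficients, convergence to BV coefficients, and the frozen-coefficient Duhamel representation in time), which is presumably how \cite{WYZ-wp} proceeds; indeed your own paragraph on \eqref{eq:H-10} already invokes the right tool --- the frozen-coefficient comparison --- and that, not the product rule $\rho H_{xx}+\rho_x H_x$, is the viable route to \eqref{eq:H-3}--\eqref{eq:H-4}.
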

	
Notice that the  estimates for the terms involving twice $x-$derivatives  do not hold when $x\in\mathscr{D}$, which is due to the presence of Dirac-delta functions in $H_{xx}$ if $x\in\mathscr{D}$. Moreover, the zeroth order estimate actually can be extended to be global in time, while the higher order estimates are obtained only for local time so far.
 
\begin{lemma}[\cite{WYZ-wp}, H\"{o}lder continuity in time] \label{L2.2-1}   Suppose the conditions in \eqref{2-2} hold for $\rho$. Then the following estimates hold when $t_0<s<t\ll 1$,
	\begin{align*}
		& \left\|	H_x(\cdot,t;y,t_0;\rho)-H_x(\cdot,s;y,t_0;\rho)	\right\|_{\infty} \leq C_{*} \frac{(t-s)\left|\log(t-s)\right|}{(s-t_0)(t-t_0)},\\
		& \left\|	H_x(\cdot,t;y,t_0;\rho)-H_x(\cdot,s;y,t_0;\rho)	\right\|_{1} \leq C_{*} \frac{(t-s)\left|\log(t-s)\right|}{\sqrt{s-t_0}(t-t_0)},\\
		& \left\|	H_{xy}(\cdot,t;y,t_0;\rho)-H_{xy}(\cdot,s;y,t_0;\rho)	\right\|_{\infty} \leq C_{*} \frac{(t-s)\left|\log(t-s)\right|}{(s-t_0)^{3/2}(t-t_0)},\\
		& \left\|	H_{xy}(\cdot,t;y,t_0;\rho)-H_{xy}(\cdot,s;y,t_0;\rho)	\right\|_{1} \leq C_{*} \frac{(t-s)\left|\log(t-s)\right|}{(s-t_0)(t-t_0)}.
	\end{align*}
\end{lemma}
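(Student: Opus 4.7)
The plan is to start from the identity
\begin{equation*}
H_x(x,t;y,t_0;\rho) - H_x(x,s;y,t_0;\rho) = \int_s^t \partial_\tau H_x(x,\tau;y,t_0;\rho)\,d\tau
\end{equation*}
and the equation $H_\tau=\partial_x(\rho H_x)$, which gives $\partial_\tau H_x=(\rho H_x)_{xx}$ in the sense of distributions. Since $\rho$ is only BV in $x$, no pointwise bound on $(\rho H_x)_{xx}$ is available; instead, the strategy is to use the time-integral identities \eqref{eq:H-8} and \eqref{eq:H-9} of Lemma \ref{lem:heat-pw}, which express $\int_{t_0}^\tau H_{xx}\,d\sigma$ and $\int_{t_0}^\tau H_{xxy}\,d\sigma$ (modulo a single $\delta$-mass at $x=y$ and an absolutely-convergent BV-corrector integral) as Gaussian remainders.

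More concretely, applying \eqref{eq:H-8} at both times $t$ and $s$ and subtracting, the $\delta(x-y)/\rho(x,t_0)$ singularities cancel exactly, reducing the matter to estimating
\begin{equation*}
\int_s^t H_{xx}(x,\tau;y,t_0;\rho)\,d\tau = -\frac{1}{\rho(x,t_0)}\,\partial_x\int_s^t\bigl(\rho(x,\tau)-\rho(x,t_0)\bigr) H_x(x,\tau;y,t_0;\rho)\,d\tau + \mathcal{R}(x;s,t),
\end{equation*}
where $\mathcal{R}$ collects the Gaussian remainders from \eqref{eq:H-8} at $\tau=t$ and $\tau=s$. Combined with the analogous identity obtained from $\int_s^t H_{xxx}\,d\tau=\partial_x\int_s^t H_{xx}\,d\tau$ and the decomposition $(\rho H_x)_{xx}=\rho H_{xxx}+2\rho_x H_{xx}+\rho_{xx}H_x$, this yields an expression for $H_x(x,t)-H_x(x,s)$ as a sum of absolutely-convergent integrals. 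Each such piece is estimated in $L^\infty_x$ using the pointwise bounds \eqref{eq:H-1}-\eqref{eq:H-5} of Lemma \ref{L2.1} together with the smallness $|\rho(x,\tau)-\rho(x,t_0)|\lesssim \delta^*\sqrt{\tau-t_0}$ (a consequence of $\|\rho_\tau\|_\infty\lesssim\delta^*/\sqrt{\tau}$ from \eqref{2-2}). The clean Gaussian remainders contribute the base rate $(t-s)/((s-t_0)(t-t_0))$ via the algebraic identity $(t-s)/((s-t_0)(t-t_0))=\int_s^t (\tau-t_0)^{-2}\,d\tau$.

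The $|\log(t-s)|$ factor emerges from the borderline estimate of $\partial_x\int_s^t(\rho-\rho(\cdot,t_0))H_x\,d\tau$: when the outer $x$-derivative falls on $H_x$, the resulting integrand is only marginally summable in $\tau$, and a dyadic decomposition of $(s,t)$ resums the contribution to exactly one factor of $|\log(t-s)|$. For the $L^1_x$ estimates, integrating the $L^\infty_x$ Gaussian bounds against $dx$ gains an extra $\sqrt{s-t_0}$ in the time weight; for the $H_{xy}$ variants, applying $\partial_y$ throughout and using \eqref{eq:H-7} and \eqref{eq:H-9} in place of \eqref{eq:H-6} and \eqref{eq:H-8} costs a further $(s-t_0)^{-1/2}$, producing the four exponents stated in the lemma. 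The main obstacle I expect is the careful bookkeeping of this logarithm: while every individual heat-kernel bound is routine, tracking how the cancellation of $\delta$-singularities across times $s$ and $t$ interacts with both the absolutely continuous and the discrete parts of $\rho$ near the jump set $\scrD$ must be done with care to obtain exactly one $|\log(t-s)|$ rather than a stronger singularity.
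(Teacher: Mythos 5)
This lemma is imported verbatim from \cite{WYZ-wp} and is not proved in the present paper, so there is no in-paper argument to compare against; the construction it rests on (explicit Laplace wave-train representation for step-function coefficients, BV approximation, then a frozen-coefficient Duhamel equation for time-dependent $\rho$) is only sketched in Section \ref{sec:2}. Judged on its own, your proposal has the right flavor --- reduce to $\int_s^t\partial_\tau H_x\,d\tau$ with $\partial_\tau H_x=(\rho H_x)_{xx}$ and exploit cancellation of the $\delta$-singularities --- but two of its load-bearing steps fail. First, the decomposition $(\rho H_x)_{xx}=\rho H_{xxx}+2\rho_x H_{xx}+\rho_{xx}H_x$ is not available for $\rho$ merely BV in $x$: $\rho_x$ is a measure, $\rho_{xx}$ is the derivative of a measure, $H_{xx}$ itself carries Dirac masses exactly on $\scrD$ (as the paper warns after Lemma \ref{lem:heat-pw}), so $\rho_x H_{xx}$ and $\rho_{xx}H_x$ are ill-defined products, and $H_{xxx}$ is controlled by none of the quoted estimates. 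Second, subtracting \eqref{eq:H-8} (or \eqref{eq:H-9}) at upper limits $t$ and $s$ cancels only the explicit $\delta(x-y)/\rho(x,t_0)$ term; the remaining pieces are one-sided \emph{bounds} of size $O(1)\bigl(|\partial_x\rho(x,t_0)|+({s-t_0})^{-1/2}\bigr)e^{-(x-y)^2/C(s-t_0)}$, and the difference of two such bounds carries no smallness in $t-s$. Nothing in that route manufactures the crucial factor $(t-s)$, let alone $(t-s)|\log(t-s)|/((s-t_0)(t-t_0))$.

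Relatedly, your proposed mechanism for the logarithm is not right. With $|\rho(x,\tau)-\rho(x,t_0)|\lesssim\delta^*\sqrt{\tau-t_0}$ and $|H_{xx}|\lesssim(\tau-t_0)^{-3/2}$ (away from $\scrD$), the term $\int_s^t(\rho(x,\tau)-\rho(x,t_0))H_{xx}\,d\tau$ is bounded by $\int_s^t(\tau-t_0)^{-1}d\tau=\log\frac{t-t_0}{s-t_0}$, which for $t-s\ll s-t_0$ behaves like $(t-s)/(s-t_0)$ with \emph{no} logarithm; a dyadic decomposition of $(s,t)$ cannot resum to $|\log(t-s)|$. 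In this family of estimates the factor $|\log(t-s)|$ arises the same way it does in the comparison estimates of Lemma \ref{L2.2}: from a Duhamel/frozen-coefficient integral of the schematic form $\int_s^t\frac{d\sigma}{t-\sigma}\,\|\rho(\cdot,\sigma)-\rho(\cdot,t)\|$, where the near-singularity at $\sigma=t$ is cut off at scale $t-s$ (or by the H\"older-in-time smallness of $\rho$). A workable proof therefore has to go through the kernel's representation formula (semigroup property plus the backward equation to move derivatives onto the smoother factor, or the frozen-coefficient integral equation), rather than through a pointwise expansion of $(\rho H_x)_{xx}$; as written, your argument would not close.
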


We need the comparison estimate between two heat kernels with different conductivity coefficients. In the following we denote that 
\begin{equation}\label{t-norm}
	\begin{aligned}
		&\vertiii{f}_\infty\equiv\sup_{\sigma\in(0,t_\sharp)}\|f(\cdot,\sigma)\|_{L^\infty_x},\quad \vertiii{f}_1\equiv\sup_{\sigma\in(0,t_\sharp)}\|f(\cdot,\sigma)\|_{L^1_x},\quad \vertiii{f}_{BV}\equiv\sup_{\sigma\in(0,t_\sharp)}\|f(\cdot,\sigma)\|_{BV}.
	\end{aligned}
\end{equation}

\begin{lemma}[Comparison estimates, Corollaries 4.4 and 4.5 in \cite{L-Y-2020}]\label{L2.2}
	Suppose that the conditions in \eqref{2-2} hold for $\rho^a$ and
	$\rho^b$. Then, there exist positive constants $t_\sharp\ll 1$ and
	$C_*$ such that for $t \in (t_0,t_0+t_\sharp)$
	\begin{align*}
		& \left|	H(x,t;y,t_0;\rho^b) - H(x,t;y,t_0;\rho^a)	\right| \leq C_{*}\frac{e^{-	\frac{(x-y)^2}{C_{*}(t-t_0)}	}}{\sqrt{t-t_0}} \vertiii{\rho^a-\rho^b}_{\infty}, \\
		& \left|H_x(x,t;y,t_0;\rho^a)	-	H_x(x,t;y,t_0;\rho^b)	\right|,\; \left|H_y(x,t;y,t_0;\rho^a)	-	H_y(x,t;y,t_0;\rho^b)	\right|	\notag \\
		&\quad \leq C_{*}  \frac{e^{-	\frac{(x-y)^2}{C_{*}(t-t_0)}	}}{t-t_0} \left[	\left|\log(t-t_0)\right|  \vertiii{\rho^a-\rho^b}_{\infty}+ \vertiii{\rho^a-\rho^b}_{BV} +\sqrt{t-t_0} \Big(\vertiii{\rho^a-\rho^b}_{1}+\left|\log t\right|\vertiii{\frac{\sqrt{\tau}}{\left|\log \tau\right|} \partial_{\tau}\left[\rho^a-\rho^b\right]}_{\infty}	\Big) 	\right],	\\
		&\left|	\int_{t_0}^{t} \left[ H_{x} (x,\tau;y,t_0;\rho^a)- H_{x} (x,\tau;y,t_0;\rho^b)	\right] d\tau \right|\\
		&\quad \leq  C_{*} e^{-	\frac{(x-y)^2}{C_{*}(t-t_0)}	} \left[	\vertiii{\rho^a-\rho^b}_{\infty}+ \vertiii{\rho^a-\rho^b}_{BV} +   \vertiii{\rho^a-\rho^b}_{1}	+\vertiii{\frac{\sqrt{\tau}}{\lvert \log \tau \rvert }\partial_{\tau}(\rho^a-\rho^b)}_{\infty} \right].
	\end{align*}
\end{lemma}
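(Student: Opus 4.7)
}

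The plan is to set $W(x,t)\equiv H(x,t;y,t_0;\rho^a)-H(x,t;y,t_0;\rho^b)$ and exploit the fact that $W$ solves, in the weak sense, a linear heat equation with BV coefficient $\rho^a$ and an inhomogeneous source built from $\rho^a-\rho^b$:
\begin{equation*}
W_t-\partial_x\bigl(\rho^a(x,t)\,\partial_x W\bigr)=\partial_x\Bigl((\rho^a-\rho^b)(x,t)\,H_x(x,t;y,t_0;\rho^b)\Bigr),\qquad W(x,t_0)=0.
\end{equation*}
Duhamel's principle (with a single integration by parts in $z$, which is justified by the regularity in \eqref{eq:H-2}) then gives the representation
\begin{equation*}
W(x,t)=-\int_{t_0}^{t}\!\!\int_{\bbr}H_z(x,t;z,s;\rho^a)\,(\rho^a-\rho^b)(z,s)\,H_z(z,s;y,t_0;\rho^b)\,dz\,ds.
\end{equation*}

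The zeroth-order bound now follows by plugging in \eqref{eq:H-2} for both kernel factors, pulling out $\vertiii{\rho^a-\rho^b}_{\infty}$, and computing the standard Gaussian convolution: the $z$-integral produces a factor $\sqrt{(t-s)(s-t_0)/(t-t_0)}\,e^{-(x-y)^2/(C_{*}(t-t_0))}$, and the $s$-integral contributes a Beta-function constant, leaving the desired $(t-t_0)^{-1/2}$ Gaussian.

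For the first-derivative estimate I would differentiate the Duhamel formula in $x$ (the $y$-derivative is analogous). One of the two kernel factors now carries the double-derivative bound \eqref{eq:H-3}, which is not time-integrable against $(s-t_0)^{-1}$; the standard remedy is a split at $s=(t+t_0)/2$. On the half where the singularity is avoided, the direct Gaussian calculation yields the $L^\infty$ part (with the $|\log(t-t_0)|$ coming from the diverging $\int ds/(s-t_0)$ of the harmless half). On the singular half, I would instead integrate by parts in $z$ \emph{against} $\rho^a-\rho^b$: since this function is BV but possibly discontinuous, decompose $(\rho^a-\rho^b)(\cdot,s)=(\rho^a-\rho^b)_c+(\rho^a-\rho^b)_d$ as in \eqref{eq:bv-2} and, for the discrete part, absorb the sum of Dirac masses using the $BV$ norm. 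This is the source of the $\vertiii{\rho^a-\rho^b}_{BV}$ term. The remaining $\vertiii{\rho^a-\rho^b}_1$ and time-derivative norms appear when one wants to upgrade the short-time bound to one that is uniform down to the boundary $s\to t_0$: writing $(\rho^a-\rho^b)(z,s)=(\rho^a-\rho^b)(z,t_0)+\int_{t_0}^{s}\partial_\tau(\rho^a-\rho^b)(z,\tau)\,d\tau$ and using $\|f\|_\infty\le\|f\|_{BV}+\|f\|_1$ on the first summand, while the second summand is controlled by the weighted norm $\vertiii{\sqrt{\tau}|\log\tau|^{-1}\partial_\tau(\rho^a-\rho^b)}_\infty$ after integrating $\int_{t_0}^s (\sqrt{\tau}|\log\tau|^{-1})^{-1}d\tau$ and combining with the Gaussian.

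Finally, the time-integrated estimate is obtained by first performing the $\tau$-integration under the Duhamel integral, then using the time-integrated derivative bound \eqref{eq:H-5} on one of the two kernel factors to kill the remaining singular $(s-t_0)^{-1/2}$; the four norms reappear through the same decomposition as above, but without the $|\log(t-t_0)|$ factor since the logarithmic divergence is absorbed into the $\int d\tau$ integration. The main obstacle throughout is the second bullet: one must simultaneously (i) handle the nonintegrable $(s-t_0)^{-1}$ produced by two derivative kernels, (ii) pay attention to the jump part of $\rho^a-\rho^b$ when integrating by parts across points of $\scrD$, and (iii) arrange the $\rho_t$-bound in \eqref{2-2} so that its $\max(1/\sqrt t,1)$ growth is matched by the compensating $\sqrt{\tau}|\log\tau|^{-1}$ weight; keeping all three pieces quantitatively consistent is what forces the particular combination of four norms in the statement.
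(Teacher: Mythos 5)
You should first note that this paper does not actually prove Lemma \ref{L2.2}: it is quoted verbatim from Corollaries 4.4 and 4.5 of \cite{L-Y-2020}, whose proof goes through the explicit construction of the BV-coefficient kernel (piecewise-constant approximation via Laplace wave trains, then the frozen-coefficient integral equation for time dependence), not through a Duhamel bootstrap. So the comparison can only be with that constructive route. Your Duhamel strategy is the natural first attack, the representation $W(x,t)=-\int_{t_0}^{t}\int_{\bbr}H_z(x,t;z,s;\rho^a)(\rho^a-\rho^b)H_z(z,s;y,t_0;\rho^b)\,dz\,ds$ is correct (modulo identifying the weak solution with the Duhamel formula, cf.\ Remark \ref{R2.1}), and your Gaussian-convolution/Beta-function computation does give the zeroth-order estimate with $\vertiii{\rho^a-\rho^b}_{\infty}$.

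The genuine gap is in the derivative estimate, which is the heart of the lemma. On the singular half $s\in(\tfrac{t+t_0}{2},t)$ the factor $H_{xz}(x,t;z,s;\rho^a)\sim(t-s)^{-3/2}$ is not integrable, and the integration by parts you propose ("against $\rho^a-\rho^b$") does not close with the estimates available: moving the $z$-derivative off $H_{xz}$ lands it on $(\rho^a-\rho^b)H_z(z,s;y,t_0;\rho^b)$ and produces, besides the BV-measure term, a pure second spatial derivative $H_{zz}(z,s;y,t_0;\rho^b)$ of a BV-coefficient kernel. No pointwise bound for this exists in the toolbox — precisely because $H_{xx}$ carries Dirac masses on $\scrD$, the paper only records time-integrated versions \eqref{eq:H-8}--\eqref{eq:H-9}, valid only for $x\notin\scrD$. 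The standard fix, which you never state, is to use the equation to convert $\partial_z(\rho^b\partial_zH)$ into $\partial_sH\sim(s-t_0)^{-3/2}$ (harmless on this half); but even then the remaining term $H_x(x,t;z,s;\rho^a)\,d_z(\rho^a-\rho^b)\,H_z(z,s;y,t_0;\rho^b)$ has the naive bound $\int_{(t+t_0)/2}^{t}\frac{ds}{(t-s)(s-t_0)}$, which diverges, so crude pointwise Gaussian bounds cannot reproduce the stated structure in which $|\log(t-t_0)|$ multiplies only the $L^\infty$ norm while $\vertiii{\rho^a-\rho^b}_{BV}$ enters without a log and the $L^1$ and weighted $\partial_\tau$ norms carry an extra $\sqrt{t-t_0}$. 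Recovering exactly that combination (including the weight $\sqrt{\tau}/|\log\tau|$ and the factor $|\log t|$ rather than $|\log(t-t_0)|$) relies on cancellations visible only in the explicit construction of \cite{L-Y-2020}; your sketch of Taylor-expanding $\rho^a-\rho^b$ in time and of the time-integrated estimate points in a sensible direction but is not carried far enough to see that the singular pieces actually cancel, so the proposal as written does not establish the derivative or the time-integrated bounds.
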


\begin{lemma}[\cite{WYZ-wp}]\label{lem:heat-comp} Under the considerations in Lemma
	\ref{L2.2},  there exist positive constants $t_\sharp\ll 1$ and
	$C_*$ such that for $t \in (t_0,t_0+t_\sharp)$
	\begin{align}
		&\left|H_{xy}(x,t;y,t_0;\rho^a)	-	H_{xy}(x,t;y,t_0;\rho^b)	\right|,\; \left|H_{t}(x,t;y,t_0;\rho^a)	-	H_{t}(x,t;y,t_0;\rho^b)	\right| \notag \\
		&\leq C_{*}  \frac{e^{-	\frac{(x-y)^2}{C_{*}(t-t_0)}	}}{(t-t_0)^{3/2}} \left[	\left|\log(t-t_0)\right|  \vertiii{\rho^a-\rho^b}_{\infty}+ \vertiii{\rho^a-\rho^b}_{BV} +\sqrt{t-t_0} \Big(\vertiii{\rho^a-\rho^b}_{1}+\left|\log t\right|\vertiii{\frac{\sqrt{\tau}}{\left|\log \tau\right|} \partial_{\tau}\left[\rho^a-\rho^b\right]}_{\infty}	\Big) 	\right],\notag 
		\\
		&\left|	\int_{t_0}^{t} \left[ H_{y} (x,t;y,s;\rho^a)-
		H_{y} (x,t;y,s;\rho^b)	\right] ds
		\right|  \nonumber \\
		&\qquad \qquad\leq  C_{*} e^{-
			\frac{(x-y)^2}{C_{*}(t-t_0)}	} \left[
		\vertiii{\rho^a-\rho^b}_{\infty}+
		\vertiii{\rho^a-\rho^b}_{BV} +
		\vertiii{\rho^a-\rho^b}_{1}
		+\vertiii{\frac{\sqrt{\tau}}{|\log
				\tau|}\partial_{\tau}(\rho^a-\rho^b)}_{\infty}
		\right], \nonumber \\
		&\left|	\int_{t_0}^{t} \left[ H_{x} (x,t;y,s;\rho^a)-
		H_{x} (x,t;y,s;\rho^b)	\right] ds
		\right|,\; \left|	\int_{t_0}^{t} \left[ H_{y}
		(x,\tau;y,t_0;\rho^a)- H_{y} (x,\tau;y,t_0;\rho^b)
		\right] d\tau	\right|  \nonumber \\
		&\qquad \qquad \leq  C_{*} e^{-
			\frac{(x-y)^2}{C_{*}(t-t_0)}	} \left[
		\vertiii{\rho^a-\rho^b}_{\infty}+
		\vertiii{\rho^a-\rho^b}_{BV} +
		\vertiii{\rho^a-\rho^b}_{1}
		+\vertiii{\frac{\sqrt{\tau}}{|\log
				\tau|}\partial_{\tau}(\rho^a-\rho^b)}_{\infty}
		\right], \nonumber \\
		& \nonumber \int_{t_0}^{t} \left[ H_{xy} (x,\tau;y,t_0;\rho^a)- H_{xy} (x,\tau;y,t_0;\rho^b)	\right] d\tau\\
		&=
		\left[\frac{1}{\rho^a(x,t_0)}-\frac{1}{\rho^b(x,t_0)}\right]
		\delta(x-y) \nonumber \\
		&\quad -\int_{t_0}^{t}
		\left[\frac{\rho^a(x,\tau)-\rho^a(x,t_0)}{\rho^a(x,t_0)}
		H_{xy}(x,\tau;y,t_0;\rho^a)-
		\frac{\rho^b(x,\tau)-\rho^b(x,t_0)}{\rho^b(x,t_0)}
		H_{xy}(x,\tau;y,t_0;\rho^b) \right]d\tau \nonumber \\
		&\quad + O(1)\frac{e^{-	\frac{(x-y)^2}{C_{*}(t-t_0)}
		}}{\sqrt{t-t_0}} \left[
		\left|\log(t-t_0)\right|
		\vertiii{\rho^a-\rho^b}_{\infty} +
		\vertiii{\rho^a-\rho^b}_{BV}+
		\vertiii{\rho^a-\rho^b}_{1} +
		\vertiii{\frac{\sqrt{\tau}}{|\log
				\tau|}\partial_{\tau}(\rho^a-\rho^b)}_{\infty}
		\right], \nonumber 
	\end{align}
	\begin{equation*}
		\begin{aligned}
			& \int_{t_0}^{t} \left[ H_{xy} (x,t;y,s;\rho^a)- H_{xy} (x,t;y,s;\rho^b)	\right] ds\\
			&= \left[\frac{1}{\rho^a(y,t)}-\frac{1}{\rho^b(y,t)}\right] \delta(x-y)\\
			&\quad +\int_{t_0}^{t} \left[\frac{\rho^a(y,t)-\rho^a(y,s)}{\rho^a(y,t)} H_{xy}(x,t;y,s;\rho^a)- \frac{\rho^b(y,t)-\rho^b(y,s)}{\rho^b(y,t)} H_{xy}(x,t;y,s;\rho^b) \right]ds\\
			&\quad +  O(1)\frac{e^{-	\frac{(x-y)^2}{C_{*}(t-t_0)}	}}{\sqrt{t-t_0}}  \left[	\left|\log(t-t_0)\right| \vertiii{\rho^a-\rho^b}_{\infty} + \vertiii{\rho^a-\rho^b}_{BV}+  	\vertiii{\rho^a-\rho^b}_{1} + \vertiii{\frac{\sqrt{\tau}}{\lvert \log \tau \rvert }\partial_{\tau}(\rho^a-\rho^b)}_{\infty} 	 	\right].
		\end{aligned}
	\end{equation*}
\end{lemma}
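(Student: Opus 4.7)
The plan is to extend the comparison strategy of Lemma \ref{L2.2} to the higher-order derivatives $H_{xy}$, $H_t$ and to the various time-integrated quantities, all starting from a single Duhamel representation of the difference kernel. Setting $W(x,t;y,t_0)\equiv H(x,t;y,t_0;\rho^a)-H(x,t;y,t_0;\rho^b)$, I would observe that $W$ solves $(\partial_t-\partial_x\rho^a\partial_x)W=\partial_x\bigl[(\rho^a-\rho^b)\partial_x H(\cdot;\rho^b)\bigr]$ with zero initial data, so Duhamel's principle applied to the kernel $H(\cdot;\rho^a)$ yields
\begin{equation*}
W(x,t;y,t_0)=\int_{t_0}^{t}\int_{\R}H_\xi(x,t;\xi,s;\rho^a)\bigl(\rho^a(\xi,s)-\rho^b(\xi,s)\bigr)H_\xi(\xi,s;y,t_0;\rho^b)\,d\xi\,ds.
\end{equation*}
All subsequent bounds follow by differentiating or integrating this identity in the appropriate variables and then invoking Lemmas \ref{L2.1}--\ref{L2.2-1}.

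For the pointwise $H_{xy}$ and $H_t$ comparison I would split the $s$-integral at $(t_0+t)/2$, placing the $x$-derivative on the left factor on the near piece and on the right factor on the far piece (symmetrically for the $y$-derivative), so that each resulting heat-kernel factor carries at most a $(t-s)^{-1}$ or $(s-t_0)^{-1}$ singularity from \eqref{eq:H-2}--\eqref{eq:H-3}, convolved against a Gaussian in $\xi$. The coefficient difference $\rho^a-\rho^b$ is decomposed into an absolutely continuous BV part (yielding the $\vertiii{\cdot}_{BV}$ contribution after integration by parts in $\xi$), a jump part at the points of $\scrD$ (also controlled by $\vertiii{\cdot}_{BV}$), and an $L^1$ far-field part; the uniform $L^\infty$ bound generates the borderline $|\log(t-t_0)|$ factor. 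The norm $\vertiii{\frac{\sqrt{\tau}}{|\log\tau|}\partial_\tau(\rho^a-\rho^b)}_\infty$ enters only after integrating by parts in $s$, which is needed to trade a $\partial_s$ coming from $\partial_t H$ for a $\partial_s(\rho^a-\rho^b)$ and is compatible with the Lemma \ref{L2.2-1} time-H\"older bounds.

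For the time-integrated comparisons $\int[H_y(\rho^a)-H_y(\rho^b)]ds$, $\int[H_x(\rho^a)-H_x(\rho^b)]ds$, and $\int[H_y(\rho^a)-H_y(\rho^b)]d\tau$, the $(t-s)^{-1}$ singularity of the first-derivative kernels becomes integrable once the outer time integral is taken, so a uniform-in-$s$ version of Lemma \ref{L2.2} suffices; the symmetry between the $\int ds$ and $\int d\tau$ formulations follows from the adjoint structure of \eqref{heat-x}. For the two singular identities for $\int[H_{xy}(\rho^a)-H_{xy}(\rho^b)]\,d\tau$ and $\int[H_{xy}(\rho^a)-H_{xy}(\rho^b)]\,ds$, I would subtract the single-kernel identities \eqref{eq:H-6} and \eqref{eq:H-7} termwise: the two $\delta(x-y)/\rho(x,t_0)$ boundary contributions combine into $[\rho^a(x,t_0)^{-1}-\rho^b(x,t_0)^{-1}]\delta(x-y)$, the explicit $(\rho-\rho(\cdot,t_0))$-correction integrals pair off with their analogues, and the regular remainder is controlled by the pointwise $H_{xy}$-comparison established in the previous step.

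The main obstacle is the borderline nature of the $H_{xy}$ estimate: placing both derivatives on the kernels costs a full $(t-t_0)^{-1/2}$ relative to Lemma \ref{L2.2}, and the $|\log(t-t_0)|$ weight on $\vertiii{\rho^a-\rho^b}_\infty$ must be produced by splitting the $\xi$-integral according to whether $|\xi-x|$ and $|\xi-y|$ exceed $\sqrt{t-s}$ and using the Gaussian tail carefully, in the spirit of the analysis in \cite{L-Y-2020}. A second subtle point is that the discontinuity sets of $\rho^a$ and $\rho^b$ need not coincide; this forces the jump contribution to be absorbed into $\vertiii{\cdot}_{BV}$ rather than estimated pointwise, and it is precisely this BV control that tames the non-integrable $(t-s)^{-3/2}$ singularity of $H_{xy}$ appearing in the intermediate computations.
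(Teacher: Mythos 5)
Your starting point is the right one: the paper does not prove this lemma (it is quoted from \cite{WYZ-wp}), but the Duhamel difference representation you write down, with $W=H(\cdot;\rho^a)-H(\cdot;\rho^b)$ solving $(\partial_t-\partial_x\rho^a\partial_x)W=\partial_x\bigl[(\rho^a-\rho^b)\partial_xH(\cdot;\rho^b)\bigr]$, is exactly the mechanism behind Lemma \ref{L2.2} and is the natural skeleton here (up to a sign after the integration by parts in $\xi$). The difficulty is that for the second-derivative statements your accounting of the endpoint singularities does not close. After differentiating the representation in $x$ and $y$, the factor carrying the mixed derivative contributes $(t-t_0)^{-3/2}$ as you say, but the other factor contributes $(t-s)^{-1}$ (near piece) or $(s-t_0)^{-1}$ (far piece) by \eqref{eq:H-2}, and $\int^t (t-s)^{-1}\,ds$ is divergent, not merely of size $|\log(t-t_0)|$; moving one $\xi$-derivative onto $\rho^a-\rho^b$ does not help, because against the BV measure you can no longer integrate the Gaussian in $\xi$ and you are back to a non-integrable $(t-s)^{-1}$ (or, if the derivative instead falls on the second kernel, a $(s-t_0)^{-2}$) factor. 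So BV control alone does not ``tame'' the singularity. What is actually needed at the endpoints is cancellation in the signed time integrals --- the analogues of \eqref{eq:H-5}--\eqref{eq:H-10} --- together with an integration by parts in $s$ (this is where $\vertiii{\tfrac{\sqrt{\tau}}{|\log\tau|}\partial_\tau(\rho^a-\rho^b)}_{\infty}$ and the bound \eqref{eq:H-4} on $H_{ty}$ enter, and it produces boundary terms at $s=t$ and $s=t_0$ that must be identified, not just estimated). Your proposal names these ingredients but does not say how they are combined, and the one concrete mechanism you do state (midpoint split plus absolute-value bounds, log factor from a $|\xi-x|\gtrless\sqrt{t-s}$ dichotomy) would not produce the claimed bound.

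The route you propose for the two delta-function identities has a sharper gap. Subtracting \eqref{eq:H-6} (resp.\ \eqref{eq:H-7}) for $\rho^a$ and $\rho^b$ termwise does reproduce the delta term and the explicit correction integrals, but the remainders $E^a,E^b$ are each only $O(1)e^{-(x-y)^2/C_*(t-t_0)}/\sqrt{t-t_0}$, so their difference obtained this way carries no smallness in $\rho^a-\rho^b$; the whole content of the stated identities is that the remainder is proportional to the comparison norms, which forces one to re-derive the error of \eqref{eq:H-6}--\eqref{eq:H-7} in difference form, propagating the Duhamel difference representation (and the comparison versions of the integrated cancellation estimates) through every step. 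Your fallback --- controlling the remainder by the pointwise $H_{xy}$-comparison from the first part --- also fails, because that bound behaves like $(\tau-t_0)^{-3/2}|\log(\tau-t_0)|$ (resp.\ $(t-s)^{-3/2}$) at the endpoint and is not time-integrable; indeed this non-integrability is precisely why the identities retain the delta function and the explicit $\bigl(\rho(\cdot,\tau)-\rho(\cdot,t_0)\bigr)$-integrals instead of a pointwise-small estimate. As written, the proposal is a plausible outline of the framework of \cite{L-Y-2020,WYZ-wp}, but the two steps above are genuine gaps rather than routine verifications.
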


Lastly, according to the symmetry of the heat equation, one has some symmetric properties of the heat kernel. As the proof can be constructed directly from proper integrals of the equation \eqref{heat-x}, we omit the details and provide the following results. 
\begin{lemma}\label{L2.3}
	For the heat equation in conservative form \eqref{heat-x}, the solution has the following properties,
	\begin{align*}
		\left\{
		\begin{aligned}
			&\int_\bbr H(x,t;y,\tau;\rho)dx=\int_\bbr H(x,t;y,\tau;\rho)dy=1,\\
			&\int_\bbr H_x(x,t;y,\tau;\rho)dx=\int_\bbr H_x(x,t;y,\tau;\rho)dy=0,\\
			&\int_\bbr H_y(x,t;y,\tau;\rho)dx=\int_\bbr H_y(x,t;y,\tau;\rho)dy=0,\\
			&\int_\bbr H_t(x,t;y,\tau;\rho)dx=\int_\bbr H_t(x,t;y,\tau;\rho)dy=0,\\
			&\int_\bbr H_{\tau}(x,t;y,\tau;\rho)dx=\int_\bbr H_\tau(x,t;y,\tau;\rho)dy=0.
		\end{aligned}
		\right.
	\end{align*}
\end{lemma}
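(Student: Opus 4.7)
The plan is to derive all five pairs of identities by combining three ingredients: (a) direct integration of the forward equation \eqref{heat-x} in $x$, exploiting the Gaussian decay at spatial infinity guaranteed by \eqref{eq:H-1}--\eqref{eq:H-4}; (b) a dual/backward equation satisfied by $H$ in the source variables $(y,\tau)$, which follows from the formal self-adjointness in $L^2_x$ of $\partial_x(\rho\,\partial_x \cdot)$; and (c) differentiation under the integral of the mass-conservation identities once they are known.

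First, I would establish $\int_\bbr H(x,t;y,\tau;\rho)\,dx = 1$. Integrating \eqref{heat-x} in $x$ over $\bbr$ gives
\[
\frac{d}{dt} \int_\bbr H(x,t;y,\tau;\rho)\,dx = \bigl[\rho(x,t) H_x(x,t;y,\tau;\rho)\bigr]_{x=-\infty}^{x=+\infty} = 0,
\]
where the boundary terms vanish by \eqref{eq:H-2}. The initial condition $H(x,\tau;y,\tau;\rho)=\delta(x-y)$ gives the value $1$ at $t=\tau$, and this is preserved for all $t>\tau$. Next, I would obtain $\int_\bbr H(x,t;y,\tau;\rho)\,dy = 1$ by duality. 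Since $\partial_x(\rho(x,t)\partial_x\cdot)$ is formally self-adjoint in $L^2_x$, a standard Green's-identity argument (applied to the step-function approximation of $\rho$ used in \cite{L-Y-2020} and then passed to the limit) shows that, with $(x,t)$ fixed, $H$ satisfies the backward equation
\[
-\partial_\tau H(x,t;y,\tau;\rho) - \partial_y\bigl(\rho(y,\tau)\,\partial_y H(x,t;y,\tau;\rho)\bigr) = 0, \qquad \tau<t,
\]
with terminal value $H(x,t;y,t;\rho)=\delta(x-y)$. Integrating this in $y$ over $\bbr$ (boundary terms vanish by the comparison analogue of \eqref{eq:H-2} in the $y$ variable) yields $\tfrac{d}{d\tau}\int_\bbr H\,dy=0$, and the terminal value gives $\int_\bbr H\,dy = 1$ for all $\tau<t$.

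The remaining eight identities are now immediate. The direct $x$-integrals
\[
\int_\bbr H_x\,dx = \bigl[H\bigr]_{x=-\infty}^{x=+\infty} = 0, \qquad \int_\bbr H_t\,dx = \int_\bbr \partial_x(\rho H_x)\,dx = 0
\]
follow from the Gaussian pointwise bounds, and analogously $\int_\bbr H_y\,dy = 0$, $\int_\bbr H_\tau\,dy = 0$ from the backward equation. The mixed identities
\[
\int_\bbr H_y\,dx = \partial_y\!\int_\bbr H\,dx = 0, \quad \int_\bbr H_\tau\,dx = \partial_\tau\!\int_\bbr H\,dx = 0,
\]
\[
\int_\bbr H_x\,dy = \partial_x\!\int_\bbr H\,dy = 0, \quad \int_\bbr H_t\,dy = \partial_t\!\int_\bbr H\,dy = 0,
\]
follow from differentiating the two identities of the previous paragraph and interchanging differentiation with integration, which is justified by the uniform Gaussian bounds in Lemmas \ref{L2.1} and \ref{lem:heat-pw}.

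The only nontrivial step is the rigorous justification of the backward adjoint equation for $H$ when the coefficient $\rho$ is merely BV in $x$ and $H$ is a weak solution. I expect the cleanest route is to carry out the computation at the level of the step-function approximants $\rho^{(n)}\to\rho$ used in the construction of \cite{L-Y-2020}: for each smooth or piecewise-constant $\rho^{(n)}$ the adjoint identity is classical, and the convergence of the approximate heat kernels in the relevant norms (together with the uniform Gaussian bounds, which persist in the limit) lets one pass to the limit in both the backward PDE and in the integral identity $\int H^{(n)}\,dy = 1$. Alternatively, one can test the weak formulation \eqref{heat-x} against a smooth truncation of the constant function $1$ in $x$ and control the truncation error via \eqref{eq:H-1}--\eqref{eq:H-2}, which gives $\int H\,dx = 1$ directly from the definition of weak solution; the $\int H\,dy = 1$ identity then follows from the symmetric construction of the fundamental solution.
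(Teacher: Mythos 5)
Your proposal is correct and follows essentially the same route the paper intends: the paper omits the details, remarking only that the identities follow ``directly from proper integrals of the equation'' and from the symmetry of the heat equation, which is precisely your combination of integrating the forward equation in $x$, invoking the backward (adjoint) equation in $(y,\tau)$ (consistent with Remark \ref{R2.1}), and differentiating the resulting conservation identities under the integral using the Gaussian bounds of Lemmas \ref{L2.1} and \ref{lem:heat-pw}. Your extra care about justifying the backward equation for BV coefficients via the step-function approximation of \cite{L-Y-2020} is a reasonable elaboration of what the paper leaves implicit.
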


\begin{remark}\label{R2.1}
	According to \cite{L-Y-2020}, two remarks should be mentioned: 
	\begin{enumerate}
		\item In order to balance the equation \eqref{heat-x}, $\rho(x,t) H_x(x,t;y,t_0;\rho)$ is actually continuous with respect to $x$. When consider the backward equation, one also has $\rho(y,t_0) H_y(x,t;y,t_0;\rho)$ is continuous with respect to $y$.
		\item The weak solution of the heat equation \eqref{heat-x} can be defined similarly as Definition \ref{Def2.1}. In fact, if the equation \eqref{heat-x} has a source term in the following conservative form, 
		\[u_t(x,t)=\left(\rho(x,t) u_x(x,t)+g(x,t)\right)_x,\]
		then the mild solution constructed by heat kernel and Duhamel's principle is also a weak solution to the above equation in the distribution sense, provided $g(x,t)$ is a BV function with respect to $x$. Furthermore, the flux term $\left(\rho(x,t) u_x(x,t)+g(x,t)\right)$ is continuous with respect to $x$ if one of the following two properties holds,
		\begin{enumerate}
			\item $g(x,t)$ is Lipschitz continuous with respect to $x$, i.e.,
			\[\|g_x(\cdot, t)\|_\infty<+\infty.\]
			\item $g(x,t)$ is H\"older continuous with respect to $t$ in the sense that 
			\[\lvert g(x,t)-g(x,s) \rvert \leq C \frac{(t-s)^\alpha}{s^\alpha},\quad 0<s<t,\quad 0<\alpha<1.\] 
		\end{enumerate}
		
	\end{enumerate}
\end{remark}

\section{Green's function}\label{sec:4}
\setcounter{equation}{0}
As the local-in-time weak solution is established, the next two sections are devoted to the global-in-time behavior  of the solution. In this section, we will prepare the tools to capture the large time structure of the system \eqref{NS}, that is, the Green's function of the system \eqref{A2}. The Green's function $\bbg(x,t;\bar{U})$ of the linearized equation of \eqref{A2} is the solution of the following system,
\bq\label{B1}
\begin{aligned}
&\partial_t\bbg(x,t;\bar{U})=\left(-F'(\bar{U})\partial_x+B(\bar{U})\partial_{xx}\right)\bbg(x,t;\bar{U}),\\
&\bbg(x,0;\bar{U})=\delta(x)I,
\end{aligned}
\eq
where $I$ is the $3\times 3$ identity matrix and $\delta(x)$ is the Dirac-delta function. Then, with Fourier transformation, we can obtain the following equation in transformed variable,
\bq\label{B2}
\begin{aligned}
&\partial_t\hat{\bbg}(\eta,t;\bar{U})=\left(-i\eta F'(\bar{U})-\eta^2B(\bar{U})\right)\hat{\bbg}(\eta,t;\bar{U}),\\
&\hat{\bbg}(\eta,0;\bar{U})=I.
\end{aligned}
\eq
We shall  first show the eigenvalues of the matrix $\left(-i\eta F'(\bar{U})-\eta^2B(\bar{U})\right)$ are distinct and thus represent the Green's function by the state-transition matrix. 

To obtain the Green's function in terms of physical variables, we basically follow the approach introduced in \cite{Deng-Yu-2017,L-Y-2020} to split the Green's function into singular part and regular part. Roughly speaking, the singular part corresponds  to large $|\eta|$, which represents the singularity in the Green's function; while the regular part corresponds to small $|\eta|$, which dominates the large time behavior of Green's function. We shall construct the point-wise estimates for each of them. 

In the following, all the norms of the matrix can be viewed as the trace norm and we define the $L^2$ norm of a matrix valued function $A(x)$ as follows,
\bq\label{A7}
\|A(x)\|_{L^2(\bbr)}=\sqrt{\int_{\bbr}tr\left(A^\mathsf{T}A\right)dx}.
\eq

\subsection{Analysis on eigenvalues}
In this part, we will give a preliminary estimate on the eigenvalues of the matrix $\left(-i\eta F'(\bar{U})-\eta^2B(\bar{U})\right)$. In \cite{L-Y-2020}, the authors studied p-system and construct the exact formula of the eigenvalues. However, for full Navier-Stokes equations, the exact expression is too complicated to do  further analysis. Therefore, we will mainly consider the expansion of the eigenvalues at infinity and zero respectively. 
\subsubsection{Green's function in frequency space}
We first show the three eigenvalues mentioned before are distinct, and thus provide the spectral representation of the Green's function by the resolution. We first provide the characteristic equation from the determinant of the matrix $I\lambda-\left(-i\eta F'(\bar{U})-\eta^2B(\bar{U})\right)$. Actually, we have 
\bq\label{char}
\begin{aligned}
	\det(\lambda I+i\eta F'(\bar{U})+\eta^2B(\bar{U}))=\eta ^2 \lambda  p p_e+\frac{\left(\lambda  \left(\eta ^2 \mu +\lambda  v\right)-\eta ^2
		v p_v\right) \left(\eta ^2 \kappa  \theta _e+\lambda  v\right)}{v^2}=0,
\end{aligned}
\eq
where all the state variables take values at $\bar{U}$. According to the Fundamental Theorem of Algebra, there are three roots of \eqref{char} and we denote them by $\lambda_j$ with $j=1,2,3$. Moreover, from the Residue Theorem and Rouch\'e's Theorem, the eigenvalues continuously depend on the coefficients. As the coefficients are polynomials of $\eta$, we immediately conclude the eigenvalues continuously depend on $\eta$. In the following Lemma, we will further see these three eigenvalues are distinct when $\eta$ is near real axis.  
\begin{lemma}\label{LB1}
	There exists a sufficiently small positive constant $\sigma_0$ such that, the matrix $\left(-i\eta F'(\bar{U})-\eta^2B(\bar{U})\right)$ has three distinct eigenvalues for $|\eta|>0$ and $|Im(\eta)|<\sigma_0$. 
\end{lemma}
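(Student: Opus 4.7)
The plan is to analyse the cubic characteristic polynomial $P(\lambda,\eta)$ in \eqref{char} in three regimes of the parameter $\eta$ --- small $|\eta|$, large $|\eta|$, and an intermediate annulus --- and then patch the regimes together. In each regime, once the eigenvalues are located by an appropriate rescaling, their analyticity in $\eta$ (via the implicit function theorem) automatically promotes distinctness on the real line to distinctness in a small complex neighborhood, and $\sigma_0$ can be taken as the infimum of the three widths thus produced.

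For $|\eta|$ small I would substitute $\lambda=-i\eta\tilde{\lambda}$ into $P$ and extract the leading order in $\eta$, which yields the limit cubic
\[
\tilde{\lambda}\bigl(\tilde{\lambda}^2-(pp_e-p_v)\bigr)=0.
\]
For the polytropic ideal gas \eqref{constitution} one has $p_v=-p/v<0$ and $p_e=K/(c_v v)>0$, so $pp_e-p_v>0$ and the three limits $0,\pm\sqrt{pp_e-p_v}$ --- the characteristic speeds of the linearised Euler system in Lagrangian coordinates --- are real and distinct. The implicit function theorem then produces three analytic branches $\tilde{\lambda}_j(\eta)$ tending to these limits, so that the eigenvalues $\lambda_j(\eta)=-i\eta\tilde{\lambda}_j(\eta)$ of $M(\eta)\equiv-i\eta F'(\bar{U})-\eta^2B(\bar{U})$ are distinct on a punctured complex disk $\{0<|\eta|<\epsilon,\ |\mathrm{Im}\,\eta|<\sigma_1\}$.

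For $|\eta|\to\infty$ the balance is different: two branches scale like $\lambda\sim\eta^2$ while a third one stays bounded. Substituting $\lambda=\eta^2\tilde{\lambda}$ and dividing by $\eta^6$ gives the leading cubic
\[
\tilde{\lambda}\Bigl(\tilde{\lambda}^2+\tfrac{\mu+\kappa\theta_e}{v}\tilde{\lambda}+\tfrac{\mu\kappa\theta_e}{v^2}\Bigr)=0,
\]
with three distinct real roots $0,-\mu/v,-\kappa\theta_e/v$ (under the physical non-degeneracy $\mu\neq\kappa\theta_e$). The root $\tilde{\lambda}=0$ corresponds to the bounded branch, whose leading value $v p_v/\mu$ is recovered from an $O(1)$ ansatz for $\lambda$ in $P$; the other two come from the purely dissipative scaling. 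The implicit function theorem again produces analyticity and distinctness on $\{|\eta|>R,\ |\mathrm{Im}\,\eta|<\sigma_2\}$.

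Finally, on the intermediate compact set $K=\{\eta\in\mathbb{R}:\epsilon\le|\eta|\le R\}$ the cubic $P(\cdot,\eta)$ has real coefficients and distinctness is equivalent to non-vanishing of its discriminant $\Delta(\eta)$, an even polynomial in $\eta$. I would show $\Delta(\eta)\neq 0$ on $K$ by contradiction: a double eigenvalue $\lambda_0$ at some $\eta_0\in\mathbb{R}\setminus\{0\}$ would force an invariant subspace of $M(\eta_0)$ of dimension at least two, and the block structure of $B(\bar{U})$ (which annihilates the $v$-component) combined with the genuine coupling of $v$ to $(u,E)$ provided by $F'(\bar{U})$ rules out such a joint invariant direction. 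Compactness of $K$ then yields a uniform lower bound on the minimal eigenvalue gap, and standard matrix-perturbation theory extends this bound to a uniform complex strip of width $\sigma_3>0$. Taking $\sigma_0=\min(\sigma_1,\sigma_2,\sigma_3)$ finishes the proof. The main obstacle is this intermediate step: a direct attack on $\Delta$ is unwieldy because it is a polynomial of high degree in $\eta$ with mixed-sign coefficients, so the cleanest route is the structural spectral argument exploiting the genuine coupling between the hyperbolic symbol $F'(\bar{U})$ and the dissipation matrix $B(\bar{U})$, which ultimately guarantees that the parabolic regularisation breaks every possible degeneracy of the hyperbolic part at each non-zero real $\eta$.
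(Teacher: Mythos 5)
Your small-$|\eta|$ and large-$|\eta|$ regimes are sound: the limiting cubics indeed have the simple roots $0,\pm\sqrt{pp_e-p_v}$ and $0,-\mu/v,-\kappa\theta_e/v$ (the latter distinct because $\mu/(\kappa\theta_e)=\mathrm{Pr}\,c_v/c_p<1$), and the implicit-function-theorem upgrade to complex neighborhoods of $\{0<|\eta|<\epsilon\}$ and $\{|\eta|>R\}$ is routine; these match the paper's expansions \eqref{evalue1}--\eqref{evalue-zero-3}. The gap is the intermediate annulus, which is where the entire content of the lemma sits, and the ``structural spectral argument'' you offer there is not a proof. A double eigenvalue of a $3\times3$ matrix always carries a two-dimensional invariant subspace (its generalized eigenspace), so exhibiting one yields no contradiction; you would have to exclude both a semisimple double eigenvalue and a $2\times2$ Jordan block, and you give no mechanism for the latter. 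More seriously, the principle you invoke---that the coupling between $F'(\bar U)$ and the degenerate $B(\bar U)$ ``breaks every possible degeneracy of the hyperbolic part at each non-zero real $\eta$''---is false. The Lagrangian viscous $p$-system $v_t-u_x=0$, $u_t+p(v)_x=(\mu u_x/\bar v)_x$ has exactly the same block structure and satisfies the Kawashima coupling condition, yet its symbol's eigenvalues $\lambda_\pm=\tfrac12\bigl(-\mu\eta^2/\bar v\pm\sqrt{\mu^2\eta^4/\bar v^2+4p'(\bar v)\eta^2}\bigr)$ merge into a Jordan block at the \emph{real} frequency $\eta^2=-4\bar v^2p'(\bar v)/\mu^2>0$. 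What the coupling condition actually forbids is eigenvalues on the imaginary axis for real $\eta\neq0$, not eigenvalue collisions; you appear to be conflating the two. Consequently, whether the full $3\times3$ symbol is collision-free near the punctured real line is a quantitative fact about this specific system that must be verified by computation, and compactness of the annulus cannot substitute for that verification.

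That computation is precisely what the paper does (for all $\eta$ at once, so your two asymptotic regimes become unnecessary): assuming a double root $\lambda_2=\lambda_3\neq0$, it eliminates $\lambda_2$ between the characteristic equation \eqref{char} and the Vieta relations \eqref{B5} and reduces the existence of a double root to the single polynomial equation $S_1=S_2$, whose nonzero solutions $\eta=\pm i\sqrt{v}\sqrt{\kappa\theta_e+pvp_e+\mu-vp_v}/(\sqrt{\kappa\mu\theta_e})$ are purely imaginary and uniformly separated from the real axis. To repair your argument you must carry out an equivalent elimination on the annulus---e.g.\ show that the discriminant of \eqref{char}, equivalently the resultant of $f$ and $f'$, does not vanish for $\epsilon\le|\eta|\le R$ real---rather than appeal to a structural property that the $2\times2$ subsystem already violates.
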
 
\begin{proof}
	It is enough to show the equation \eqref{char} admits three distinct solutions for $\eta$ near the real line. As \eqref{char} is of order three, it has one real root and two conjugate complex roots, or it has three real roots. If there are one real root and two conjugate roots, the three roots must be distinct. Thus we only need to consider the case when all the three roots are real. 
	
	Let's check the necessary condition for existence of multiple roots. Suppose the three roots are $\lambda_1$, $\lambda_2$ and $\lambda_3$, and $\lambda_2=\lambda_3$. Then the characteristic equation can be expressed as below,
	\bq\label{B4}
	(\lambda-\lambda_1)(\lambda-\lambda_2)^2=0,\quad \lambda_2\neq 0,
	\eq
	where the multiple root $\lambda_2\neq 0$ since the coefficients in \eqref{char} are nonzero when $\eta\neq 0$. Then, we compare the coefficients between \eqref{B4} and \eqref{char} to obtain that 
	\begin{equation}\label{B5}
		\left\{\begin{aligned}
			&\lambda_1+2\lambda_2=-\left(\frac{\eta ^2 \kappa  \theta _e}{v}+\frac{\eta ^2 \mu }{v}\right):=S_1,\\
			&2\lambda_1\lambda_2+\lambda_2^2=\eta ^2 p p_e+\frac{\eta ^4 \kappa  \mu  \theta _e}{v^2}-\eta ^2 p_v:=S_2,\\
			&\lambda_1\lambda_2^2=\frac{\eta ^4 \kappa  \theta _e p_v}{v}:=S_3.
		\end{aligned}
		\right.
	\end{equation}
	We multiply $2\lambda_2$ to the first equation and minus the second equation to obtain an equation for $\lambda_2$ as below,
	\[\lambda_2^2=\frac{2\lambda_2S_1-S_2}{3}.\]
	The above equality is a reduction from the second order term $\lambda_2^2$ to first order and zero order term. Then we can substitute this reduction to \eqref{char} and $\eqref{B5}$ to solve $\lambda_2$ respectively. Actually, we have the following two representation of $\lambda_2$,
	\begin{equation*}
		\begin{aligned}
			&\lambda_2=\frac{S_1 S_2-9 S_3}{2 S_1^2+3 S_1-9 S_2},&\quad \text{from \eqref{char}},\\
			&\lambda_2=\frac{S_1 S_2-9 S_3}{2 \left(S_1^2-3 S_2\right)},&\quad \text{from $\eqref{B5}_2$ and $\eqref{B5}_3$}.
		\end{aligned}
	\end{equation*}
	Then the two representation should  coincide if $\lambda_2$ is the root of \eqref{char}. As we already know $\lambda_2\neq 0$, we immediately have $S_1=S_2$ if the two representation of $\lambda_2$ are equal. Then we solve the equation $S_1=S_2$ to obtain 
	\[\eta=0,\quad \eta=-\frac{i \sqrt{v} \sqrt{\kappa  \theta _e+p v p_e+\mu -v p_v}}{\sqrt{\kappa } \sqrt{\mu }
		\sqrt{\theta _e}},\quad \eta=\frac{i \sqrt{v} \sqrt{\kappa  \theta _e+p v p_e+\mu -v p_v}}{\sqrt{\kappa } \sqrt{\mu }
		\sqrt{\theta _e}}.\]
	Therefore, the multiple root cannot occur unless  when $\eta$ take above values. This means when $|\eta|>0$ and $|Im(\eta)|<\sigma_0$ for sufficiently small positive constant $\sigma_0$, the matrix  $\left(-i\eta F'(\bar{U})-\eta^2B(\bar{U})\right)$ admits three distinct eigenvalues.
	
\end{proof}
Once we know the eigenvalues of the matrix $\left(-i\eta F'(\bar{U})-\eta^2B(\bar{U})\right)$, we can solve the linear system \eqref{B2} via the resolution. In fact, according to \eqref{B2}, we have 
\bq\label{B10}
\hat{\bbg}(\eta,t;\bar{U})=\mathcal{L}^{-1}[\left(s+i\eta F'(\bar{U})+\eta^2B(\bar{U})\right)^{-1}],
\eq
where $\mathcal{L}^{-1}$ is the inverse Laplace transform with respect to $s$. On the other hand, according to Lemma \ref{LB1}, the eigenvalues are distinct near the real line. Therefore, $\left(s+i\eta F'(\bar{U})+\eta^2B(\bar{U})\right)^{-1}$ has three simple poles when $\eta$ near the real line, which guarantees the inverse Laplace transform \eqref{B10} has the following representation,
\bq\label{B11}
\mathcal{L}^{-1}[\left(s+i\eta F'(\bar{U})+\eta^2B(\bar{U})\right)^{-1}]=\sum_{j=1}^3\text{Res}[e^{st}\left(s+i\eta F'(\bar{U})+\eta^2B(\bar{U})\right)^{-1}, \lambda_j],
\eq
where $\text{Res}[f,z_0]$ denotes the residue of function $f$ at the pole $z_0$. More precisely, we can combine \eqref{B10} and \eqref{B11} to obtain the representation of the Green's function in frequency space as follows,
\bq\label{B12}
\hat{\bbg}(\eta,t;\bar{U})=\sum_{j=1}^3e^{\lambda_jt}\hat{\bbm}_j,\quad \hat{\bbm}_j=\frac{\text{adj}\left(s+i\eta F'(\bar{U})+\eta^2B(\bar{U})\right)|_{s=\lambda_j}}{\Pi_{k\neq j}(\lambda_j-\lambda_k)},
\eq
where $\text{adj}(A)$ is the adjugate matrix of $A$.

\subsubsection{Spectral gap}
Next, we will show there exists a spectral gap of the eigenvalues for $\eta$ on the real axis and away from $0$, which will be very important for time decay estimates later. We are motivated by the idea in \cite{Li-2005} to prove the following Lemma.
\begin{lemma}\label{LB2}
	Let $r$ and $R$ be two positive constants, we can find a positive constant $b$ such that $Re\left(\lambda_j(\eta)\right)\leq -b$  for all real number $\eta$ satisfying $0<r<|\eta|<R$,  $j=1,2,3$.
\end{lemma}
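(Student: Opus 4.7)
The strategy is to reduce the claim to the statement that $\mathrm{Re}(\lambda_j(\eta)) < 0$ pointwise for every real $\eta \neq 0$, and then harvest the uniform bound by continuity on the compact annulus $\{r\le|\eta|\le R\}$. By Lemma \ref{LB1} the three roots $\lambda_1,\lambda_2,\lambda_3$ of the characteristic polynomial \eqref{char} are pairwise distinct on this set, and the Residue/Rouch\'e argument already invoked there gives that each $\lambda_j(\eta)$ is continuous in $\eta$. Hence $\max_j \mathrm{Re}(\lambda_j(\eta))$ is a continuous real-valued function on $\{r \le |\eta| \le R\}$. If this function is strictly negative at every point, then compactness yields a uniform negative upper bound $-b$, which is exactly the conclusion.

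The heart of the matter is the pointwise claim. I plan to argue it in two pieces. First, for $|\eta|$ very large I expect that the parabolic part $-\eta^2 B(\bar U)$ dominates: a dominant-balance expansion of \eqref{char} in $1/\eta$ gives two eigenvalues with leading behavior $-\eta^2 \mu/v$ and $-\eta^2 \kappa\theta_e/v$, and a third one $O(1)$ coming from the conservation-law (zero-diffusion) mode that is stabilized at $O(\eta^0)$ by the coupling with the other modes; all three thus have strictly negative real part once $|\eta|$ is large. Second, to propagate negativity from large $|\eta|$ down to the rest of the real axis, it suffices to exclude purely imaginary eigenvalues for $\eta \in \mathbb{R}\setminus\{0\}$: by the intermediate-value theorem applied to the continuous function $\eta \mapsto \mathrm{Re}(\lambda_j(\eta))$ on each connected component of $\mathbb R\setminus\{0\}$, the sign cannot change without passing through zero.

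The concrete verification that no eigenvalue is purely imaginary proceeds by contradiction: substitute $\lambda = i\beta$ with $\beta\in\mathbb R$ and $\eta\in\mathbb R\setminus\{0\}$ into \eqref{char}, separate real and imaginary parts, and extract two relations
\begin{equation*}
\kappa\theta_e\,\eta^2 p_v = -\beta^2(\mu + \kappa\theta_e),
\qquad
v^2\beta^2 = v^2\eta^2(pp_e - p_v) + \eta^4 \mu\kappa\theta_e.
\end{equation*}
For the polytropic ideal gas \eqref{constitution} one has $p_v = -K\theta/v^2 < 0$, so the first relation is consistent and pins down $\beta^2$ as a specific positive multiple of $\eta^2$. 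Plugging that $\beta^2$ into the second relation and simplifying produces an identity whose left-hand side is a fixed quantity determined by $\bar U$, while the right-hand side is strictly larger than $v^2|p_v|$ for every $\eta\neq 0$, already exceeding the left-hand side. The case $\beta = 0$ is ruled out immediately because the first relation then forces $\eta=0$. The main obstacle I anticipate is carrying out this algebraic contradiction cleanly while keeping track of the sign conventions (in particular the fact that $p_v<0$ is what makes the system well-posed and is exactly what closes the argument). A conceptually equivalent alternative—should the algebra prove cumbersome—would be to verify the Kawashima--Shizuta genuine-coupling condition: no eigenvector of $F'(\bar U)$ lies in $\ker B(\bar U)$, since the kernel is spanned by the density direction $(1,0,0)^{\mathsf T}$ and one checks by inspection that no eigenvector of $F'(\bar U)$ is parallel to it. This condition is well known to imply strict dissipativity of the symbol, i.e.\ $\mathrm{Re}(\lambda_j(\eta))<0$ for all real $\eta\neq 0$, and would supply a softer but slightly more abstract route to the same conclusion.
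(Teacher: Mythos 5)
Your plan is correct, and it takes a genuinely different route from the paper. The paper argues directly on the real cubic \eqref{char}: it first shows $f(\lambda)>0$ for real $\lambda>-b_1$ (using $p_v<0$ and $|\eta|>r$), so every real root lies below $-b_1$; then, in the case of a conjugate pair, it uses Vieta ($\lambda_1+2\,Re(\lambda_2)=-S_1$) together with a Taylor expansion of $f$ about $-S_1$ to localize the unique real root above $-S_1+b_2$, forcing $Re(\lambda_{2,3})<-b_2$, and sets $b=\min(b_1,b_2)$. You instead prove the stronger pointwise statement $Re(\lambda_j(\eta))<0$ for every real $\eta\neq 0$ by combining (i) negativity for large $|\eta|$ from the dominant balance (which the paper records separately in \eqref{evalue1}--\eqref{evalue3} and \eqref{B7}), (ii) exclusion of imaginary-axis eigenvalues via the substitution $\lambda=i\beta$, where your two relations are exactly right and the contradiction $\frac{v^2\kappa\theta_e|p_v|}{\kappa\theta_e+\mu}<v^2|p_v|<v^2(pp_e+|p_v|)+\eta^2\mu\kappa\theta_e$ closes cleanly (note the left side is fixed only \emph{after} dividing by $\eta^2$, a point worth stating explicitly), and (iii) the intermediate value theorem on each component of $\mathbb R\setminus\{0\}$ applied to the continuous function $\max_j Re(\lambda_j(\eta))$, followed by compactness of the annulus. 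Your route is conceptually cleaner and yields the global statement on all of $\mathbb R\setminus\{0\}$, at the cost of a non-explicit constant $b$; the paper's computation produces explicit (if unwieldy) constants $b_1,b_2$ in terms of $r$, $R$ and the state. One caution on your fallback: the Kawashima--Shizuta argument requires an actual symmetrizer making $A^0F'$ symmetric and $A^0B$ symmetric nonnegative, and the matrix $Q$ used later in the paper only symmetrizes these matrices up to the lower-order perturbations $P_1,P_2$, so that alternative is not quite ``by inspection''; since your direct algebraic exclusion of imaginary eigenvalues is complete, this does not affect the validity of the proposal.
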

\begin{proof}
	The characteristic polynomial \eqref{char} can be rewritten as follows,
	\bq\label{B9}
	f(\lambda)=\lambda ^3+\lambda ^2 \left(\frac{\eta ^2
		\kappa  \theta _e}{v}+\frac{\eta ^2 \mu }{v}\right)+\lambda  \left(\eta ^2 p p_e+\frac{\eta ^4 \kappa  \mu  \theta _e}{v^2}-\eta ^2
	p_v\right)-\frac{\eta ^4 \kappa  \theta _e p_v}{v}.
	\eq
	Then we let $b_1$ be a positive constant such that $0<b_1<\min\left\{1, \frac{\kappa p_v \theta_e r^4v}{v^2(pp_e-p_v)R^2+R^4\kappa\mu\theta_e+v^2}\right\}$, and prove all real roots of \eqref{char} are less than $-b_1$. In fact, for $\lambda\geq b_1$, we have 
	\begin{align*}
		f(\lambda)&\geq -b_1^2-b_1 \left(\eta ^2 p p_e+\frac{\eta ^4 \kappa  \mu  \theta _e}{v^2}-\eta ^2
		p_v\right)-\frac{\eta ^4 \kappa  \theta _e p_v}{v}\\
		&\geq -b_1\left(1+\eta ^2 p p_e+\frac{\eta ^4 \kappa  \mu  \theta _e}{v^2}-\eta ^2
		p_v\right) -\frac{\eta ^4 \kappa  \theta _e p_v}{v}\\
		&\geq \frac{r ^4 \kappa  \theta _e p_v}{v}-\frac{\eta ^4 \kappa  \theta _e p_v}{v}\\
		&>0,
	\end{align*}
	where the last inequality holds due to $r<|\eta|$ and $p_v<0$, which is according to \eqref{constitution}. Now, if \eqref{char} admits three real roots, then all the real roots must be less than $-b_1$. Otherwise, \eqref{char} will admit one real root and two congugate complex roots. The real root will be less than $-b_1$, and thus we only need to deal with the two conjugate roots $\lambda_2$ and $\lambda_3$. In fact, we have the following relations,
	\[Re(\lambda_2)=Re(\lambda_3), \quad 2Re(\lambda_2)=\lambda_2+\lambda_3=-\left(\frac{\eta ^2
		\kappa  \theta _e}{v}+\frac{\eta ^2 \mu }{v}\right)-\lambda_1.\]
	Thus, in order to prove $Re(\lambda_2)$ is negative, we need to show the negative real root $\lambda_1$ is close to zero. To this end, we use Taylor expansion to rewrite \eqref{B9} as follows,
	\begin{align*}
		f(\lambda)&=f\left(-\left(\frac{\eta ^2
			\kappa  \theta _e}{v}+\frac{\eta ^2 \mu }{v}\right)\right)+f'\left(-\left(\frac{\eta ^2
			\kappa  \theta _e}{v}+\frac{\eta ^2 \mu }{v}\right)\right)\left(\lambda+\left(\frac{\eta ^2
			\kappa  \theta _e}{v}+\frac{\eta ^2 \mu }{v}\right)\right)\\
		&+\frac{f''\left(-\left(\frac{\eta ^2
				\kappa  \theta _e}{v}+\frac{\eta ^2 \mu }{v}\right)\right)}{2}\left(\lambda+\left(\frac{\eta ^2
			\kappa  \theta _e}{v}+\frac{\eta ^2 \mu }{v}\right)\right)^2+\left(\lambda+\left(\frac{\eta ^2
			\kappa  \theta _e}{v}+\frac{\eta ^2 \mu }{v}\right)\right)^3,
	\end{align*}
	where the expansion is finite because $f'''=6$ and the higher order derivative vanishes. By direct calculation, we have the following estimates,
	\begin{align*}
		&f\left(-\left(\frac{\eta ^2
			\kappa  \theta _e}{v}+\frac{\eta ^2 \mu }{v}\right)\right)=-\frac{\eta ^4 \left(\eta ^2 \kappa  \mu  \theta _e \left(\kappa  \theta _e+\mu \right)+p
			v^2 p_e \left(\kappa  \theta _e+\mu \right)-\mu  v^2 p_v\right)}{v^3}\leq -\frac{r ^6 \kappa  \mu  \theta _e \left(\kappa  \theta _e+\mu \right)}{v^3},\\
		&f'\left(-\left(\frac{\eta ^2
			\kappa  \theta _e}{v}+\frac{\eta ^2 \mu }{v}\right)\right)=\eta ^2 \left(p p_e+\frac{\eta ^2 \left(\kappa ^2 \theta _e^2+3 \kappa  \mu  \theta
			_e+\mu ^2\right)}{v^2}-p_v\right)\leq R^2 \left(p p_e+\frac{R ^2 \left(\kappa ^2 \theta _e^2+3 \kappa  \mu  \theta
			_e+\mu ^2\right)}{v^2}-p_v\right),\\
		&f''\left(-\left(\frac{\eta ^2
			\kappa  \theta _e}{v}+\frac{\eta ^2 \mu }{v}\right)\right)=-\frac{4 \eta ^2 \left(\kappa  \theta _e+\mu \right)}{v}< 0.
	\end{align*}
	Now we let $b_2$ be a positive constant such that $0<b_2<\min\left\{1,\frac{r^6 \kappa  \mu  \theta _e \left(\kappa  \theta _e+\mu \right)}{v^3\left((p p_e-p_v)R^2+\frac{R ^4 \left(\kappa ^2 \theta _e^2+3 \kappa  \mu  \theta
			_e+\mu ^2\right)}{v^2}+1\right)}\right\}$. Then, for $-\left(\frac{\eta ^2
		\kappa  \theta _e}{v}+\frac{\eta ^2 \mu }{v}\right)\leq x\leq -\left(\frac{\eta ^2
		\kappa  \theta _e}{v}+\frac{\eta ^2 \mu }{v}\right)+b_2$, we can substitute above estimates of $f$, $f'$ and $f''$ into the Taylor expansion and obtain the following estimates, 
	\begin{align*}
		f(x)&\leq -\frac{r^6 \kappa  \mu  \theta _e \left(\kappa  \theta _e+\mu \right)}{v^3}+R^2 \left(p p_e+\frac{R ^2 \left(\kappa ^2 \theta _e^2+3 \kappa  \mu  \theta
			_e+\mu ^2\right)}{v^2}-p_v\right)b_2+b_2^3\\
		&\leq-\frac{r^6 \kappa  \mu  \theta _e \left(\kappa  \theta _e+\mu \right)}{v^3}+ \left((p p_e-p_v)R^2+\frac{R ^4 \left(\kappa ^2 \theta _e^2+3 \kappa  \mu  \theta
			_e+\mu ^2\right)}{v^2}+1\right)b_2 <0.
	\end{align*}
	Thus, the real root $\lambda_1$ must be less than $-\left(\frac{\eta ^2
		\kappa  \theta _e}{v}+\frac{\eta ^2 \mu }{v}\right)$ or larger than $-\left(\frac{\eta ^2
		\kappa  \theta _e}{v}+\frac{\eta ^2 \mu }{v}\right)+b_2$. On the other hand, we already know that $f(\lambda)>0$ for all real $\lambda>-b_1$, therefore, we have 
	\[f\left(-\left(\frac{\eta ^2
		\kappa  \theta _e}{v}+\frac{\eta ^2 \mu }{v}\right)+b_2\right)<0, \quad f(b_1+b_2)>0,\quad -\left(\frac{\eta ^2
		\kappa  \theta _e}{v}+\frac{\eta ^2 \mu }{v}\right)+b_2<b_1+b_2.\]
	Thus, we apply the continuity of $f(\lambda)$ to conclude there is a real root between $\left(-\left(\frac{\eta ^2
		\kappa  \theta _e}{v}+\frac{\eta ^2 \mu }{v}\right)+b_2\right)$  and $b_1+b_2$. Moreover, as we assume there are two conjugate roots $\lambda_2$ and $\lambda_3$, we have $\lambda_1>\left(-\left(\frac{\eta ^2
		\kappa  \theta _e}{v}+\frac{\eta ^2 \mu }{v}\right)+b_2\right)$, which immediately implies that 
	\[Re(\lambda_2)=Re(\lambda_3)=\frac{-\left(\frac{\eta ^2
			\kappa  \theta _e}{v}+\frac{\eta ^2 \mu }{v}\right)-\lambda_1}{2}<-b_2.\]
	Finally, we let $b=\min\left\{b_1,b_2\right\}$ and finish the proof.
	
\end{proof}

\subsubsection{Eigenvalue at infinity}
We assume $\lambda$ to be a Laurent polynomial of $\eta$ with coefficients undetermined, and substitute the polynomial in \eqref{char} to solve all the coefficients and obtain the following three asymptotic expansion when $\eta$ tends to $+\infty$,
\bq\label{evalue1}
\begin{aligned}
	\lambda_1=&\frac{v p_v}{\mu }-\frac{v^3 \left(\kappa  \theta _e p_v^2+\mu  p p_e p_v\right)}{ \kappa 
		\mu ^3 \theta _e}\eta ^{-2}+\frac{v^5 p_v \left(\mu ^2 p^2 p_e^2+2 \kappa ^2 \theta _e^2 p_v^2+\mu  p p_e p_v \left(3
		\kappa  \theta _e+\mu \right)\right)}{\kappa ^2 \mu ^5 \theta _e^2}\eta ^{-4} \\
	& -\frac{v^7
		p_v \left(\mu ^3 p^3 p_e^3+3 \mu ^2 p^2 p_e^2 p_v \left(2 \kappa  \theta _e+\mu
		\right)+5 \kappa ^3 \theta _e^3 p_v^3+\mu  p p_e p_v^2 \left(10 \kappa ^2 \theta
		_e^2+4 \kappa  \mu  \theta _e+\mu ^2\right)\right)}{ \kappa ^3 \mu ^7 \theta
		_e^3}\eta ^{-6}+O(1)\eta^{-8},
\end{aligned}
\eq
\bq\label{evalue2}
\begin{aligned}
	\lambda_2=&-\frac{\eta ^2 \mu }{v} +\frac{v
		\left(\mu  p p_e+\kappa  \theta _e p_v-\mu  p_v\right)}{\mu  \left(\mu -\kappa  \theta
		_e\right)} +\eta ^{-2}\left(\frac{v^3 \left(\mu ^2 p p_e-p_v \left(\mu -\kappa  \theta
		_e\right){}^2\right) \left(\mu  p p_e+p_v \left(\kappa  \theta _e-\mu
		\right)\right)}{ \mu ^3 \left(\mu -\kappa  \theta _e\right){}^3}\right)\\
	&+\eta ^{-4}\left(\frac{v^5 \left(2 \mu ^5 p^3 p_e^3+\mu ^2 p^2 p_e^2 p_v \left(\kappa ^3 \theta _e^2-5
		\kappa ^2 \mu  \theta _e^2+10 \kappa  \mu ^2 \theta _e-6 \mu ^3\right)\right)}{ \mu ^5 \left(\mu -\kappa  \theta
		_e\right){}^5}\right.\\
	&\left.+\frac{v^5\left(\mu  p p_e
		p_v^2 \left(\mu -\kappa  \theta _e\right){}^2 \left(3 \kappa ^2 \theta _e^2-8 \kappa 
		\mu  \theta _e+6 \mu ^2\right)-2 p_v^3 \left(\mu -\kappa  \theta
		_e\right){}^5\right)}{\mu ^5 \left(\mu -\kappa  \theta
		_e\right)^5}\right)\\
	&+\eta ^{-6}\left(\frac{v^7 \left(5 \mu ^7 p^4 p_e^3-\mu ^3 p^3 p_e^3 p_v \left(\kappa ^4
		\theta _e^3-7 \kappa ^3 \mu  \theta _e^3+21 \kappa ^2 \mu ^2 \theta _e^2-35 \kappa 
		\mu ^3 \theta _e+20 \mu ^4\right)\right)}{ \mu ^7 \left(\mu -\kappa  \theta
		_e\right){}^7}\right.\\
	& \left.+\frac{v^7\left(3 \mu ^2 p^2 p_e^2 p_v^2 \left(\mu -\kappa  \theta
		_e\right){}^2 \left(-2 \kappa ^3 \theta _e^3+9 \kappa ^2 \mu  \theta _e^2-15 \kappa 
		\mu ^2 \theta _e+10 \mu ^3\right)\right)}{ \mu ^7 \left(\mu -\kappa  \theta
		_e\right){}^7}\right.\\
	&\left.-\frac{v^7\left(\mu  p p_e p_v^3 \left(\mu -\kappa  \theta
		_e\right){}^3 \left(-10 \kappa ^3 \theta _e^3+36 \kappa ^2 \mu  \theta _e^2-45 \kappa 
		\mu ^2 \theta _e+20 \mu ^3\right)+5 p_v^4 \left(\mu -\kappa  \theta
		_e\right){}^7\right)}{ \mu ^7 \left(\mu -\kappa  \theta
		_e\right){}^7}\right)+O(1)\eta^{-8},\\
\end{aligned}
\eq
\bq\label{evalue3}
\begin{aligned}
	\lambda_3=&-\frac{\eta ^2 \kappa  \theta _e}{v}+\frac{p v p_e}{\kappa  \theta _e-\mu
	}+\eta ^{-2}\left(\frac{p v^3 p_e
		\left(\kappa  \theta _e \left(p p_e+p_v\right)-\mu  p_v\right)}{ \kappa  \theta
		_e \left(\kappa  \theta _e-\mu \right){}^3}\right)\\
	&+\eta ^{-4}\left(\frac{p v^5 p_e \left(2 \kappa ^2 p^2 \theta _e^2 p_e^2+p p_e p_v \left(4 \kappa ^2
		\theta _e^2-5 \kappa  \mu  \theta _e+\mu ^2\right)+p_v^2 \left(\mu -\kappa  \theta
		_e\right){}^2\right)}{ \kappa ^2 \theta _e^2 \left(\kappa  \theta _e-\mu
		\right){}^5}\right)\\
	&+\eta ^{-6}\left(\frac{p v^7 p_e \left(5 \kappa ^3 p^3 \theta _e^3 p_e^2-p^2 p_e^2 p_v
		\left(-15 \kappa ^3 \theta _e^3+21 \kappa ^2 \mu  \theta _e^2-7 \kappa  \mu ^2 \theta
		_e+\mu ^3\right)-3 p p_e p_v^2 \left(\mu
		-3 \kappa  \theta _e\right) \left(\mu -\kappa  \theta _e\right){}^2\right)}{
		\kappa ^3 \theta _e^3 \left(\kappa  \theta _e-\mu \right){}^7}\right.\\
	&\left.-\frac{p v^7 p_e\left(p_v^3 \left(\mu -\kappa  \theta _e\right){}^3\right)}{\eta ^6
		\kappa ^3 \theta _e^3 \left(\kappa  \theta _e-\mu \right){}^7}\right)+O(1)\eta^{-8}.\\
\end{aligned}
\eq
Morevover, as we consider the ideal gas, the Prandtl number is usually smaller than one. Then we combine the constitutive relation \eqref{constitution} and the definition of Prandtl number Pr to obtain
\[\frac{\mu}{\kappa\theta_e}=\frac{\mu c_v}{\kappa}=\text{Pr}\frac{c_v}{c_p}<1.\]
Therefore, the denominators in the expansions above are nonzero and thus the formulae are well-defined. Moreover, when $\eta$ tends to infinity, the quadratic term would be dominant in the expansion of eigenvalues, and we can use above inequality to conclude 
\bq\label{B7}
0>\text{Re}(\lambda_1)>\text{Re}(\lambda_2)>\text{Re}(\lambda_3),\quad \eta\rightarrow\infty.
\eq

\subsubsection{Eigenvalue at zero} 
Now we switch to study the asymptotic behavior of the three eigenvalues when $\eta$ near zero. Similar as the expansion of eigenvalues at infinity, we can assume the eigenvalues to be polynomials of $\eta$ and substitute them into \eqref{char} to obtain the following expansions when $\eta$ tens to zero,
\bq\label{evalue-zero-1}
\begin{aligned}
	\lambda_1=&\frac{\eta ^2 \kappa  \theta _e p_v}{v \left(p
		p_e-p_v\right)}-\frac{\eta ^4 \kappa ^2 p \theta
		_e^2 p_e p_v \left(\mu  p p_e+p_v \left(\kappa  \theta _e-\mu \right)\right)}{v^3
		\left(p_v-p p_e\right){}^4}\\
	&-\frac{\eta ^6 \kappa ^3 p \theta _e^3 p_e p_v \left(\mu ^2 \left(-p^3\right) p_e^3+\mu 
		p^2 p_e^2 p_v \left(\mu -3 \kappa  \theta _e\right)+p p_e p_v^2 \left(-2 \kappa ^2
		\theta _e^2+\kappa  \mu  \theta _e+\mu ^2\right)-p_v^3 \left(\mu -\kappa  \theta
		_e\right){}^2\right)}{v^5 \left(p p_e-p_v\right){}^7}+O(1)\eta^6,
\end{aligned}
\eq
\bq\label{evalue-zero-2}
\begin{aligned}
	\lambda_2=&-i \eta  \sqrt{p p_e-p_v}-\frac{\eta ^2 \left(\kappa  p \theta _e p_e+\mu  p p_e-\mu
		p_v\right)}{2 v \left(p p_e-p_v\right)}-\frac{i \eta ^3 \left(-p^2 p_e^2 \left(\mu -\kappa  \theta _e\right){}^2+2 p p_e p_v
		\left(2 \kappa ^2 \theta _e^2-\kappa  \mu  \theta _e+\mu ^2\right)-\mu ^2
		p_v^2\right)}{8 v^2 \left(p p_e-p_v\right){}^{5/2}}\\
	&+\frac{\eta ^4 \kappa ^2 p \theta
		_e^2 p_e p_v \left(\mu  p p_e+p_v \left(\kappa  \theta _e-\mu \right)\right)}{2 v^3
		\left(p_v-p p_e\right){}^4}+O(1)\eta^4,
\end{aligned}
\eq
\bq\label{evalue-zero-3}
\begin{aligned}
	\lambda_3=&i \eta  \sqrt{p p_e-p_v}-\frac{\eta ^2 \left(\kappa  p \theta _e p_e+\mu  p p_e-\mu
		p_v\right)}{2 v \left(p p_e-p_v\right)}-\frac{i \eta ^3 \left(p^2 p_e^2 \left(\mu -\kappa  \theta _e\right){}^2-2 p p_e p_v
		\left(2 \kappa ^2 \theta _e^2-\kappa  \mu  \theta _e+\mu ^2\right)+\mu ^2
		p_v^2\right)}{8 v^2 \left(p p_e-p_v\right){}^{5/2}}\\
	&+\frac{\eta ^4 \kappa ^2 p \theta
		_e^2 p_e p_v \left(\mu  p p_e+p_v \left(\kappa  \theta _e-\mu \right)\right)}{2 v^3
		\left(p_v-p p_e\right){}^4}+O(1)\eta^4.
\end{aligned}
\eq
Note the coefficients of $i\eta$ in $\lambda_i$ are $ \pm\sqrt{p p_e-p_v}$ and $0$ respectively, which are actually the eigenvalues of the matrix $F'(\bar{U})$ and corresponding to the sound speed. On the other hand, according to \eqref{constitution}, $p_e>0$ and $p_v<0$, therefore the coefficients of $\eta^2$ in all $\lambda_i$ are negative, which correspond to the combined effect of viscosity and heat conductivity and represent the dissipative structure of the system.

\subsection{Singular parts of Green's function}
In this part, we will treat the singular part of Green's function. More precisely, in order to obtain the space time structure of the Green's function, we need inverse Fourier transform of $\hat{\bbg}(\eta,t;\bar{U})$. Following the methodology in \cite{L-Y-2020}, the first step is to study the high frequency part of $\hat{\bbg}(\eta,t;\bar{U})$ and capture the singularities. 

\subsubsection{Approximated eigenvalue} For high frequency, we note the constant terms in the expansion of the eigenvalues at infinity. This relates to the appearance of Dirac-delta function. However, we can not directly use the formula of $\lambda_j$ and $\hat{\bbm}_j$ to construct the space time Green's function. This is because the inverse power of $\eta$ in \eqref{evalue1}, \eqref{evalue2} and \eqref{evalue3} makes the expansion not analytic at zero, which is difficult to apply the complex contour integral to analyze the inverse Fourier transform. Therefore, for convenience in later analysis, we will use another approximate eigenvalues as below, 
\bq\label{lambda^*}
\begin{aligned}
	&\lambda_1^*=\beta_1^*+\sum_{k=1}^3\frac{A_{1,k}}{(1+\eta^2)^k}-\frac{K_1}{\left(\eta ^2+1\right)^4},\\
	&\lambda_2^*=-\alpha^*_2\eta^2+\beta^*_2+\sum_{k=1}^3\frac{A_{2,k}}{(1+\eta^2)^k}-\frac{K_2}{\left(\eta ^2+1\right)^4},\\
	&\lambda_3^*=-\alpha^*_3\eta^2+\beta^*_3+\sum_{k=1}^3\frac{A_{3,k}}{(1+\eta^2)^k}-\frac{K_3}{\left(\eta ^2+1\right)^4},
\end{aligned}
\eq
where the coefficients $\alpha^*_j$, $\beta^*_j$ and $A_{j,k}$ are listed in Appendix \ref{sec:A}, and $K_1$, $K_2$ and $K_3$ are large enough positive constants to guarantee the negative sign of the real part of the approximate eigenvalues. Now we have the following lemma.
\begin{lemma}\label{L4.1}
	For a fixed state $\bar{U}$, we can find positive constants $K_1$, $K_2$, $K_3$, $\sigma_0$, $\sigma_0^*$ and $\sigma_1^*$ such that the following asserts hold:
	\begin{enumerate}
		\item The approximate eigenvalues $\lambda_j^*$ are analytic around the real axis. More precisely, $\lambda_j^*$ is analytic for $j=1,2,3$ in the following region,
		\[\{\eta:\ |Im(\eta)|<\sigma_0\}.\]
		\item $\lambda_j^*$ is an approximation of $\lambda_j$ which is accurate up to the power $\left(\eta ^2+1\right)^{-3}$, and thus we have 
		\bq\label{B18}
		\lambda_j^*=\lambda_j+O(1)|\eta|^{-8}.
		\eq
		\item In the region $\{\eta:\ |Im(\eta)|<\sigma_0\}$, all the three approximated eigenvalues $\lambda_j^*$ have negative real part and are distinct. Actually, we have 
		\[\sup_{|Im(\eta)|<\sigma_0}Re(\lambda_1^*)<-\sigma_0^*,\qquad\qquad \sup_{|Im(\eta)|<\sigma_0}Re(\lambda_2^*+\frac{\mu }{2v}(\eta^2+1))<-\sigma_0^*,\]
		\[\sup_{|Im(\eta)|<\sigma_0}Re(\lambda_3^*+\frac{\kappa\theta_e }{2v}(\eta^2+1))<-\sigma_0^*,\qquad\quad \min_{j,k}\inf_{|Im(\eta)|<\sigma_0}|Re(\lambda_k^*-\lambda_j^*)|=\sigma_1^*.\]
	\end{enumerate}
\end{lemma}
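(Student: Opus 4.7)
\medskip

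\noindent\textbf{Proof plan.} The plan is to verify the three assertions of Lemma \ref{L4.1} directly from the explicit formulas \eqref{lambda^*}, leveraging the structure of the asymptotic expansions \eqref{evalue1}--\eqref{evalue3} obtained earlier.

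\medskip

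For part (1), I observe that each $\lambda_j^*$ is a rational function of $\eta^2$ whose only singularities are poles of $(1+\eta^2)^{-k}$ located at $\eta=\pm i$. Hence $\lambda_j^*$ is analytic on any open set excluding $\{\pm i\}$, and choosing $\sigma_0$ to satisfy $0<\sigma_0<1$ (to be further shrunk below if necessary) gives analyticity on the strip $\{|Im(\eta)|<\sigma_0\}$. This step is essentially a matter of reading off the formula.

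\medskip

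For part (2), I first expand $(1+\eta^2)^{-k}$ at $\eta=\infty$ via the binomial series
\[
\frac{1}{(1+\eta^2)^k}=\eta^{-2k}\sum_{n\geq 0}\binom{-k}{n}\eta^{-2n}.
\]
Plugging this into \eqref{lambda^*} produces a Laurent expansion of $\lambda_j^*$ in powers of $\eta^{-2}$ of the form $\beta_j^*-\alpha_j^*\eta^2+\sum_{k\geq 1} c_{j,k}\eta^{-2k}$. Comparing this with the Laurent expansions of the true eigenvalues in \eqref{evalue1}, \eqref{evalue2}, \eqref{evalue3} term by term at orders $\eta^{-2}$, $\eta^{-4}$, and $\eta^{-6}$ gives a triangular linear system for $A_{j,1}, A_{j,2}, A_{j,3}$ whose unique solution is recorded in Appendix \ref{sec:A}. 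Once these three coefficients are matched, the remainder is $O(\eta^{-8})$; the extra term $-K_j/(1+\eta^2)^4$ also lies in $O(\eta^{-8})$, so \eqref{B18} follows.

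\medskip

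Part (3) is the most delicate step, and is where the constants $K_j$ enter. For $\lambda_1^*$, the leading term is the constant $\beta_1^*=vp_v/\mu<0$ (using $p_v<0$ from \eqref{constitution}), so the real part is already strictly negative at $\eta=0$; the bounded rational perturbations $A_{1,k}(1+\eta^2)^{-k}$ are then dominated by taking $K_1$ sufficiently large so that $-K_1(1+\eta^2)^{-4}$ overwhelms any residual positive contribution on the compact set where those perturbations are not yet small. For $\lambda_2^*$ and $\lambda_3^*$, the dominant term is $-\alpha_j^*\eta^2$; since the coefficients tabulated in Appendix \ref{sec:A} satisfy $\alpha_2^*=\mu/v$ and $\alpha_3^*=\kappa\theta_e/v$, the surplus $-\frac{\alpha_j^*-\mu/(2v)}{1}\eta^2$ (respectively with $\kappa\theta_e/(2v)$) supplies the required margin in the large-$\eta$ regime, while on bounded $\eta$ the constants $K_2, K_3$ again provide the margin. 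Throughout, I shrink $\sigma_0$ as needed so that (i) $Re(\eta^2)\geq 1-\sigma_0^2-2\sigma_0|Re\eta|\cdot 0$ behaves comparably to $(Re\eta)^2-\sigma_0^2$, keeping $-\alpha_j^*\eta^2$ effectively negative in real part, and (ii) the Laurent error terms from (2) are controlled. Finally, for distinctness I use that at large $|\eta|$ the three eigenvalues separate on the order of $\eta^2$ (with $\alpha_2^*\neq\alpha_3^*$ since Pr${}<1$), while on bounded $|\eta|$ Lemma \ref{LB1} combined with the approximation \eqref{B18} and continuity gives $|Re(\lambda_k^*-\lambda_j^*)|\geq \sigma_1^*$ after possibly enlarging the $K_j$'s or shrinking $\sigma_0$.

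\medskip

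The main obstacle is the uniformity of the real-part bounds across the entire strip $|Im(\eta)|<\sigma_0$: on any compact $\eta$-region the bound must come from the structural sign of $\beta_j^*$ and the magnitude of $K_j$, but on the unbounded region it must come from the quadratic $-\alpha_j^*\eta^2$ remaining dissipative even after perturbation by the imaginary part of $\eta$. Balancing these two regimes is the reason both a small $\sigma_0$ and large $K_j$ are simultaneously required.
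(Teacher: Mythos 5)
Your treatment of assertions (1), (2), and the negativity bounds in (3) follows essentially the same route as the paper: analyticity from the poles of $(1+\eta^2)^{-k}$ sitting at $\eta=\pm i$, the coefficients $A_{j,k}$ obtained by matching the Laurent expansion at infinity through order $\eta^{-6}$ so that the error (including $-K_j(1+\eta^2)^{-4}$) is $O(|\eta|^{-8})$, and the real-part bounds obtained by splitting into a large-$|\eta|$ regime, where the leading terms $\beta_1^*=vp_v/\mu<0$ and $-\alpha_j^*\eta^2$ dominate, and a compact regime, where $K_j$ large supplies the margin. That part is fine.

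There is, however, a genuine gap in your argument for the uniform separation $\min_{j,k}\inf_{|Im(\eta)|<\sigma_0}|Re(\lambda_k^*-\lambda_j^*)|=\sigma_1^*$ on bounded $|\eta|$. You invoke Lemma \ref{LB1} together with \eqref{B18} and continuity, but this cannot work for two reasons. First, \eqref{B18} is an asymptotic statement as $|\eta|\to\infty$; on a compact set of $\eta$ the approximate eigenvalues are not close to the true ones at all (e.g.\ at $\eta=0$ the true eigenvalues all vanish, while $\lambda_2^*,\lambda_3^*$ carry the large constants $-K_2,-K_3$). Second, even if they were close, Lemma \ref{LB1} only gives distinctness of $\lambda_2,\lambda_3$ as complex numbers; near the real axis these are complex conjugates, so $Re(\lambda_2)=Re(\lambda_3)$ and no lower bound on the \emph{real-part} gap can be inherited from the exact spectrum. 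The separation of real parts on the compact region must come from the construction itself: as in the paper, one chooses $K_1<K_2<K_3$ with sufficiently large gaps so that $0>Re(\lambda_1^*)>Re(\lambda_2^*)>Re(\lambda_3^*)$ holds with a quantitative margin for $|\eta|\le C_1$, while for $|\eta|>C_1$ the ordering and the gap follow from \eqref{B7}, from $\alpha_2^*=\mu/v<\kappa\theta_e/v=\alpha_3^*$ (Prandtl number below one), and from $\lambda_1^*$ staying bounded. Your parenthetical "after possibly enlarging the $K_j$'s" points at the right mechanism, but as written the justification rests on \ref{LB1} and \eqref{B18}, which is not valid; replace it with the explicit choice $K_1<K_2<K_3$ and the resulting ordered, quantitatively separated real parts on the compact region.
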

\begin{proof}
	At the fixed state $\bar{U}$, all the coefficients in $\lambda_j^*$ are constants. 
	\begin{enumerate}
		\item For the first assertion, we can let $\sigma_0$ sufficiently small so that the imaginary part of $\eta$ is very small and $1+\eta^2$ has a positive lower bound. Therefore, we immediately have $\lambda_j^*$ are analytic for $j=1,2,3$. 
		
		\item The first four terms of $\lambda_j^*$ is from the expansion of $\lambda_j$ at infinity, therefore we have 
		\[|\lambda_j^*+\frac{K_0}{\left(\eta ^2+1\right)^4}-\lambda_j|=O(1)|\eta|^{-8}\Longrightarrow \lambda_j^*=\lambda_j+O(1)|\eta|^{-8}.\]
		
		\item When $\eta$ tends to infinity, the leading term would be dominant. Then we can apply \eqref{B7} to find a positive constant $\sigma_0^*$ and a sufficiently large constant $C_1$ such that 
		\[\sup_{\substack{|Im(\eta)|<\sigma_0\\ |\eta|>C_1}}Re(\lambda_1^*)<-\sigma_0^*.\] 
		For $|Im(\eta)|<\sigma_0$ and $|\eta|\leq C_1$, we know the first four terms of $\lambda_1^*$ are bounded. Then, we can make $K_1$ sufficiently large to guarantee the negative sign of $\lambda_1^*$. Therefore, we have for sufficiently large $K_1$ that 
		\[\sup_{|Im(\eta)|<\sigma_0}Re(\lambda_1^*)<-\sigma_0^*.\]
		Similar arguments can be applied to prove the negative sign of the real part of $\lambda_2^*$ and $\lambda_3^*$. Moreover, for $|\eta|\leq C_1$ we can choose $K_2,K_3$ be sufficiently large and satisfy $K_1<K_2<K_3$. Then we have 
		\[0>Re(\lambda_1^*)>Re(\lambda_2^*)>Re(\lambda_3^*),\quad |\eta|\leq C_1.\]
		On the other hand, when $|\eta|>C_1$, we apply \eqref{B7} and the fact $K_1<K_2<K_3$ to have 
		\[0>Re(\lambda_1^*)>Re(\lambda_2^*)>Re(\lambda_3^*),\quad |\eta|> C_1.\]
		Thus, we can conclude that $\lambda_j^*$ are distinct for all $\eta$ and we can find a positive constant $\sigma_1^*$ such that 
		\[\min_{j,k}\inf_{|Im(\eta)|<\sigma_0}|Re(\lambda_k^*-\lambda_j^*)|=\sigma_1^*.\]
		This finish the proof of the proposition.\newline
	\end{enumerate}
\end{proof}

\subsubsection{Representation of singular part}
Now we are going to construct the singular part of the Green's function. According to \eqref{B12}, we can express the Green's function in terms of eigenvalues $\lambda_j$ for sufficiently large $\eta$. Actually we have the expression of $\hat{\bbm}_j$ as follows when $\eta$ near infinity,
\bq\label{B13}
\frac{\left(
	\begin{array}{ccc}
		p p_e \eta ^2+\frac{\left(\mu  \eta ^2+v \lambda_j \right) \left(\kappa  \theta _e \eta
			^2+v \lambda_j \right)}{v^2} & \frac{i \eta  \left(\kappa  \theta _e \eta ^2+i u v p_e
			\eta +v \lambda_j \right)}{v} & \eta ^2 p_e \\
		-\frac{i \eta  p_v \left(\kappa  \theta _e \eta ^2+v \lambda_j \right)}{v} & \lambda_j 
		\left(\frac{\kappa  \theta _e \eta ^2}{v}+i u p_e \eta +\lambda_j \right) & -i \eta 
		\lambda_j  p_e \\
		-\frac{\eta  p_v \left(i u \kappa  \theta _e \eta ^2+p v \eta +i u v \lambda_j \right)}{v}
		& \frac{\eta  \left(i v \lambda_j  p_e u^2+v \eta  p_v u+\lambda_j  \left(-i p v-u \eta 
			\mu +u \eta  \kappa  \theta _e\right)\right)}{v} & -p_v \eta ^2-i u \lambda_j  p_e \eta
		+\lambda_j  \left(\frac{\mu  \eta ^2}{v}+\lambda_j \right) \\
	\end{array}
	\right)}{\Pi_{k\neq j}(\lambda_j-\lambda_k)}.
\eq
Note $\hat{\bbm}_j$ can be viewed as a matrix valued function $\hat{\bbm}_j(\eta;\lambda)$, which may not be analytic. We replace $\lambda_j$ by the approximate eigenvalues $\lambda_j^*$ to approximate $\hat{\bbm}_j(\eta;\lambda)$ for high frequency, then it turns out that $\hat{\bbm}_j(\eta;\lambda_j^*)$ is analytic around the real axis in the following lemma. In the following, we will denote $\hat{\bbm}_j(\eta;\lambda_j^*)$ by $\hat{\bbm}_j^*$.
\begin{lemma}\label{L4.2}
	The matrix $\hat{\bbm}_j^*$ is analytic in the region $|Im(\eta)|<\sigma_0$ around the real axis, and it has the following expansion at infinity,
	\begin{equation}\label{M expansion}
		\begin{aligned}
			&\hat{\bbm}_j^{*}=M_j^{*,0}+i\eta^{-1}M_j^{*,1}+\eta^{-2}M_j^{*,2}+i\eta^{-3}M_j^{*,3}+\eta^{-4}M_j^{*,4}+O(1)\eta^{-5},\quad j=1,2,3,\quad |\eta|\rightarrow\infty.
		\end{aligned}
	\end{equation}
	where $M_j^{*,k}$ are listed in \eqref{M_1^*}, \eqref{M_2^*} and \eqref{M_3^*} in Appendix \ref{sec:B} . 
\end{lemma}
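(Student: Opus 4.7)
The plan is to separate the two claims: analyticity of $\hat{\bbm}_j^*$ in the strip $|Im(\eta)|<\sigma_0$, and its asymptotic expansion as $|\eta|\to\infty$.

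For analyticity, I would observe that $\hat{\bbm}_j^*=\hat{\bbm}_j(\eta;\lambda_j^*)$ is a rational expression whose numerator, obtained from \eqref{B13} by replacing $\lambda_j$ with $\lambda_j^*$, is a polynomial in $\eta$ and $\lambda_j^*$, and whose denominator is $\prod_{k\neq j}(\lambda_j^*-\lambda_k^*)$. Each $\lambda_j^*$ is analytic on $\{|Im(\eta)|<\sigma_0\}$ by Lemma \ref{L4.1}(1), so the numerator is analytic on the same strip. For the denominator, Lemma \ref{L4.1}(3) gives
\[
\min_{j\neq k}\inf_{|Im(\eta)|<\sigma_0}\left|Re(\lambda_j^*-\lambda_k^*)\right|\geq \sigma_1^*>0,
\]
so $\lambda_j^*-\lambda_k^*$ never vanishes in the strip and $1/(\lambda_j^*-\lambda_k^*)$ is analytic there. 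Combining these yields the first assertion.

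For the expansion at infinity, I first expand each $\lambda_j^*$ in descending powers of $\eta$ using \eqref{lambda^*}. Since $(1+\eta^2)^{-k}=\eta^{-2k}\sum_{m\geq 0}\binom{-k}{m}\eta^{-2m}$ for $|\eta|$ large, each $\lambda_j^*$ admits an asymptotic series in even nonpositive powers of $\eta$, leading with $\beta_1^*$ for $j=1$ and with $-\alpha^*_j\eta^2+\beta^*_j$ for $j=2,3$. I would then substitute these expansions into the explicit numerator matrix of \eqref{B13} and into the denominator $\prod_{k\neq j}(\lambda_j^*-\lambda_k^*)$, and perform a formal long division in $\eta^{-1}$. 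Because the diagonal entries of the numerator in \eqref{B13} are polynomials in $\eta^2$ and $\lambda_j$, while each off-diagonal entry carries a single factor of $i\eta$ multiplying a polynomial in $\eta^2$ and $\lambda_j$, substituting $\lambda_j^*$ (whose expansion contains only even powers of $\eta$) produces a numerator whose diagonal entries are even power series and whose off-diagonal entries are $i$ times odd power series. The denominator is again an even power series, so the quotient naturally assembles into an expansion whose successive coefficients alternate between a real matrix and $i$ times a real matrix, which accounts for the prefactors $i\eta^{-(2\ell+1)}$ in the stated form. The coefficient matrices $M_j^{*,k}$ are then read off by matching powers of $\eta^{-1}$, yielding exactly the formulas listed in Appendix \ref{sec:B}.

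The main obstacle is the bookkeeping rather than any conceptual difficulty: one must track how the leading $\eta^2$ contributions from $\lambda_2^*$ and $\lambda_3^*$ cancel against the $O(\eta^4)$ growth of $\prod_{k\neq j}(\lambda_j^*-\lambda_k^*)$ so that $M_j^{*,0}$ is finite, and then identify the subleading cancellations producing $M_j^{*,1},\dots,M_j^{*,4}$. Validity of the expansion (not merely formal status) requires uniform control of the remainder; this follows from the fact that $|\lambda_j^*-\lambda_k^*|\gtrsim|\eta|^2$ for large $|\eta|$ in the strip, so the denominator is bounded below by $c|\eta|^4$, yielding a uniform $O(\eta^{-5})$ remainder. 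The $j=1$ case must be handled slightly differently from $j=2,3$ because $\lambda_1^*$ is bounded at infinity while $\lambda_2^*,\lambda_3^*$ grow like $\eta^2$; in both situations, however, the quotient stays $O(1)$ since numerator growth and denominator growth match degree-for-degree.
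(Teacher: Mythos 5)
Your proposal follows essentially the same route as the paper: analyticity of $\hat{\bbm}_j^*$ comes from the analyticity of $\lambda_j^*$ together with the uniform separation of the approximate eigenvalues from Lemma \ref{L4.1}, so the denominator $\prod_{k\neq j}(\lambda_j^*-\lambda_k^*)$ never vanishes in the strip, and the expansion is then obtained by direct substitution of the large-$\eta$ expansions of $\lambda_j^*$ into \eqref{B13} and matching powers of $\eta^{-1}$, with the remainder controlled by the $|\eta|^4$ lower bound on the denominator (and matching numerator growth). One remark: your structural justification for the alternating $i$-factors is stated inaccurately. It is not true that the diagonal entries of \eqref{B13} are even in $\eta$ while the off-diagonal ones are $i\eta$ times even functions; for instance the off-diagonal $(1,3)$ entry $\eta^2 p_e$ is even, and when $\bar u\neq 0$ the $(2,2)$ entry $\lambda_j(\frac{\kappa\theta_e\eta^2}{v}+iu p_e\eta+\lambda_j)$ and the $(1,2)$ entry contain odd imaginary pieces. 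The correct (and equally simple) parity argument is that every entry of $\lambda_j^* I+i\eta F'(\bar U)+\eta^2 B(\bar U)$ is of the form (real, even in $\eta$)$\,+\,i\,$(real, odd in $\eta$) since $\lambda_j^*$ is real and even for real $\eta$; products and sums of such quantities, hence all adjugate entries, retain this form, and division by the real even denominator preserves it, which yields real $M_j^{*,2\ell}$ and $i$ times real $M_j^{*,2\ell+1}$ in \eqref{M expansion}. With that correction your argument is sound and coincides with the paper's, which likewise reduces the expansion to a direct calculation.
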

\begin{proof}
	According to Lemma \ref{L4.1}, $\lambda_j^*$ are distinct and there is an uniform gap between two approximate eigenvalues. Therefore, according to \eqref{B13}, the denominator of $\bbm_j^*$ are nonzero so that $\bbm_j^*$ are well-defined for $j=1,2,3$. Then the analyticity  of $\lambda_j^*$ immediately implies the analyticity of $\bbm_j^*$ in the region $|Im(\eta)|<\sigma_0$. Finally, the expansion of $\bbm_j^*$ can be obtained by direct calculations. 
\end{proof}
Now we substitute $\lambda_j^*$ and $\bbm_j^*$ into the representation of Green's function \eqref{B12} to obtain the high frequency approximation,
\bq\label{singular-G}
\bbg^*(x,t;\bar{U})=\mathcal{F}^{-1}\left[\hat{\bbg}^*(\eta,t;\bar{U})\right]=\sum_{j=1}^3\mathcal{F}^{-1}\left[\hat{\bbg}^{*,j}\right]=\sum_{j=1}^3G^{*,j}(x,t;\bar{U}),\quad \hat{\bbg}^{*,j}=e^{\lambda_j^*t}\hat{\bbm}_j^*,\quad j=1,2,3,
\eq
which will be shown as the singular part of the Green's function. In the rest of this subsections, we will do inverse Fourier transform to \eqref{singular-G} to find the space-time structure of singular part $\bbg^*(x,t;\bar{U})$.
\begin{remark}\label{R4.1}
	Note in \eqref{B12}, we have the representation of the Green's function in Fourier space as follows,
	\[\hat{\bbg}(\eta,t;\bar{U})=\sum_{j=1}^3e^{\lambda_jt}\hat{\bbm}_j,\quad \hat{\bbm}_j=\frac{\text{adj}\left(s+i\eta F'(\bar{U})+\eta^2B(\bar{U})\right)|_{s=\lambda_j}}{\Pi_{k\neq j}(\lambda_j-\lambda_k)}.\]
	Moreover, from \eqref{B2}, we know the initial data $\hat{\bbg}(\eta,0;\bar{U})=I$, which is independent of $\eta$ and $\lambda_j$ due to the cancelations in $\sum_{j=1}^3\hat{\bbm}_j(\eta;\lambda_j)$. On the other hand, according to \eqref{singular-G}, the construction of the singular part of Green's function is just replacing $\lambda_j$ by $\lambda_j^*$ in \eqref{B12}. Therefore, when we take $t=0$, we have 
	\[\hat{\bbg}^*(\eta,0;\bar{U})=\sum_{j=1}^3\hat{\bbg}^{*,j}(\eta,0;\bar{U})=\sum_{j=1}^3\hat{\bbm}_j^*=\sum_{j=1}^3\hat{\bbm}_j(\eta;\lambda_j^*)=I.\]
	The above formula shows the singular part $\bbg^*$ absorbs all initial data of the Green's function $\bbg$.
\end{remark}

\subsubsection{Pointwise estimate of $\mathcal{F}^{-1}[\hat{\bbm}_j^*]$ and $\mathcal{F}^{-1}[e^{\lambda_j^*t}]$}
In this part, we will calculate the inverse Fourier transform of $\hat{\bbm}_j^*$ and $e^{\lambda_j^*t}$, which will lead to the space-time pointwise estimates of $\bbg^*(x,t;\bar{U})$. We start with the following preliminary lemma.
\begin{lemma}\label{L4.3}
	\cite{L-Y-2020} Let $\hat{f}(\eta)$ be the Fourier transform of $f(x)$. Suppose, in the region $|Im(\eta)|<\sigma_0$, we have $\hat{f}(\eta)$ is analytic and $\hat{f}(\eta)$ has the following asymptotic property,
	\bq\label{B19}
	\hat{f}(\eta)=O(1)\frac{\eta}{(1+|\eta|)^{(2m+2)}}+O(1)\frac{1}{(1+|\eta|)^{(2m+2)}},\quad  m\geq0,\quad \eta\rightarrow\infty.
	\eq
	Then  $f(x)\in H^{2m}(\bbr)$ and 
	\[\sum_{j=0}^{2m}|\frac{d^j}{dx^j}f(x)|=O(1)e^{-\sigma_0|x|},\quad x\in \bbr.\]
\end{lemma}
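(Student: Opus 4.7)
The plan is to invoke a Paley--Wiener--type argument. Analyticity of $\hat f$ in the horizontal strip $|\mathrm{Im}\,\eta|<\sigma_0$ will convert into exponential decay of $f(x)$ in $|x|$ via a contour shift, while the polynomial decay of $\hat f$ at infinity will supply the regularity needed to control derivatives of $f$ up to order $2m$ pointwise.

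Concretely, I would start from the inverse Fourier transform and its differentiation under the integral sign,
\[
f(x)=\frac{1}{2\pi}\int_{\mathbb{R}}e^{i\eta x}\hat f(\eta)\,d\eta,\qquad
\frac{d^{j} f}{dx^{j}}(x)=\frac{1}{2\pi}\int_{\mathbb{R}}(i\eta)^{j}e^{i\eta x}\hat f(\eta)\,d\eta,\quad j\le 2m,
\]
and verify that $f\in H^{2m}(\mathbb{R})$ via Plancherel: the hypothesis gives $|\eta|^{j}|\hat f(\eta)|\le C(1+|\eta|)^{-1}$ for $j\le 2m$, which is square-integrable, so $\|\partial^{j}_{x}f\|_{L^{2}}<\infty$ for each such $j$.

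For the pointwise exponential decay, fix any $\sigma\in(0,\sigma_{0})$ and first consider $x>0$. I would apply Cauchy's theorem on the rectangle with vertices $\pm R$, $\pm R-i\sigma$. By the assumed analyticity the integrand has no singularities inside, and the decay of $\hat f$ at infinity forces the two vertical segments to vanish as $R\to\infty$. This yields the contour-shifted representation
\[
\frac{d^{j} f}{dx^{j}}(x)=\frac{1}{2\pi}\int_{\mathbb{R}-i\sigma}(i\eta)^{j}e^{i\eta x}\hat f(\eta)\,d\eta.
\]
On the shifted line $|e^{i\eta x}|=e^{-\sigma x}$; parametrising $\eta=\xi-i\sigma$ and using $|\eta|\le|\xi|+\sigma$ together with the decay of $\hat f$ makes the $\xi$-integral finite, producing $|\partial_{x}^{j}f(x)|\le C_{\sigma}e^{-\sigma x}$. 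For $x<0$ I would shift the contour upward to $\mathbb{R}+i\sigma$ and obtain the mirror estimate. Summing in $j$ and taking $\sigma$ arbitrarily close to $\sigma_{0}$ gives $\sum_{j=0}^{2m}|\partial_{x}^{j}f(x)|=O(1)e^{-\sigma_{0}|x|}$.

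The main technical subtlety is the borderline case $j=2m$: the $\eta$-factor in the first term of the decay assumption makes $(i\eta)^{2m}\hat f(\eta)$ decay only like $|\xi|^{-1}$ along the horizontal line and so is not absolutely integrable there. I would handle this by splitting $\hat f$ into an ``odd-type'' part of the form $\eta\,\hat g(\eta)$ with $\hat g=O((1+|\eta|)^{-(2m+2)})$ and a remainder that is already absolutely integrable after multiplication by $(i\eta)^{2m}$. The odd piece is converted by the differentiation rule $\eta\hat g=\widehat{-ig'}$ into the derivative of a function with one extra power of decay in $\eta$, reducing it to the non-marginal case and permitting the same contour shift. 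The exponential factor $e^{-\sigma|x|}$ is supplied entirely by the shift, while the polynomial decay of $\hat f$ supplies the regularity; tracking constants and letting $\sigma\uparrow\sigma_{0}$ delivers the stated bound.
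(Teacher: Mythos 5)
Your overall strategy coincides with the paper's: Plancherel gives $f\in H^{2m}(\bbr)$, and a Cauchy-theorem contour shift to a horizontal line converts the analyticity in the strip into the factor $e^{-\sigma|x|}$, with the vertical sides of the rectangle killed by the decay of $\hat f$. Up to the borderline term this is exactly the argument in the paper (which shifts directly to the line $\mathrm{Im}\,\eta=\sigma_0$ instead of shifting to $\sigma<\sigma_0$ and letting $\sigma\uparrow\sigma_0$; note that your limiting step tacitly requires the constants $C_\sigma$ to remain bounded as $\sigma\uparrow\sigma_0$, the same mild abuse the paper commits by using the bound \eqref{B19} on the boundary line).

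The genuine gap is in your treatment of the marginal case $j=2m$ against the first term of \eqref{B19}. Writing that piece as $\eta\hat g(\eta)=\widehat{-ig'}$ with $\hat g=O\bigl((1+|\eta|)^{-(2m+2)}\bigr)$ gains nothing: to bound $\partial_x^{2m}(-ig')$ pointwise you must control $\int e^{i\eta x}(i\eta)^{2m+1}\hat g(\eta)\,d\eta$, and $(i\eta)^{2m+1}\hat g$ again decays only like $(1+|\eta|)^{-1}$ on the shifted line, i.e.\ you are facing the identical non--absolutely--integrable integral you started from, so the claimed ``reduction to the non-marginal case'' is circular. This is precisely where your proposal and the paper's proof diverge: the paper splits the shifted integral into $|\eta|\le C$ (bounded using analyticity on a compact set) and $|\eta|>C$, and estimates the remaining marginal tail $\int_{|\eta|>C}e^{i\eta x}(1+|\eta|)^{-1}\,d\eta$ as a conditionally convergent oscillatory integral (the ``$\sin x/x$'' argument), i.e.\ it uses the oscillation of $e^{i\eta x}$ rather than absolute integrability. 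Any correct completion of your argument must inject some cancellation of this kind into the top-order tail --- for instance the paper's frequency splitting plus an oscillatory/Dirichlet-type bound, or an integration by parts in $\eta$ using a derivative estimate for $\hat f$ obtained from Cauchy's estimates in a slightly smaller strip --- the algebraic identity $\eta\hat g=\widehat{-ig'}$ alone does not do it.
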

\begin{proof}
	By Plancherel's Theorem of Fourier transform, the assertion $f(x)\in H^{2m}(\bbr)$ is equivalent to $\eta^{j}\hat{f}(\eta)\in L^2(\bbr)$ for $0\leq j\leq 2m$. As $\hat{f}$ is analytic in the region $Im(\eta)<\sigma_0$ and thus has no singularity on the real axis, we only need check for the highest order $2m$. In fact, we have 
	\[\int_{\bbr}|\frac{d^{2m}}{dx^{2m}}f(x)|^2dx=\int_{\bbr}|(i\eta)^{2m}\hat{f}(\eta)|^2d\eta\leq O(1)\int_{\bbr}\frac{|\eta|^{4m+2}}{(1+|\eta|)^{4m+4}}d\eta\leq O(1),\]
	which shows $f(x)\in H^{2m}(\bbr)$. For the second assertion, we only consider the case $x>0$, and the other case can be similarly treated. As $\hat{f}(\eta)$ is analytic in the region $|Im(\eta)|<\sigma_0$, we can apply Cauchy contour Theorem to shift the integral path of the inverse Fourier transform, and apply the decay property of $\hat{f}(\eta)$ to have the following estimate for $0\leq j\leq 2m$,
	\begin{align*}
		|\frac{d^j}{dx^j}f(x)|&=\left|\frac{1}{2\pi}\int_{\bbr}e^{i\eta x}(i\eta)^j\hat{f}(\eta)d\eta\right|\\
		&=\left|\frac{1}{2\pi}\int_{\bbr}e^{i(\eta+i\sigma_0) x}(i\eta+i\sigma_0)^j\hat{f}(\eta+i\sigma_0)d\eta\right|\\
		&=e^{-\sigma_0 x}\left|\frac{1}{2\pi}\int_{\bbr}e^{i\eta x}(i\eta+i\sigma_0)^j\hat{f}(\eta+i\sigma_0)d\eta\right|.
	\end{align*}
	Now, in the region $|Im(\eta)|<\sigma_0$, if $|\eta|>C$ for some large positive constant $C$, we have the asymptotic property \eqref{B19}. Moreover, for other $\eta$ in the region $|Im(\eta)|<\sigma_0$ such that $|\eta|\leq C$, it is obvious that $(i\eta+i\sigma_0)^j\hat{f}(\eta+i\sigma_0)$ will be uniformly bounded due to the analyticity of $\hat{f}$. Therefore, we have the following estimate for the inverse Fourier transform,
	\begin{align*}
		|\frac{d^j}{dx^j}f(x)|f&=e^{-\sigma_0 x}\left|\frac{1}{2\pi}\int_{|\eta|\leq C}e^{i\eta x}(i\eta+i\sigma_0)^j\hat{f}(\eta+i\sigma_0)d\eta+\frac{1}{2\pi}\int_{|\eta|> C}e^{i\eta x}(i\eta+i\sigma_0)^j\hat{f}(\eta+i\sigma_0)d\eta\right|\\
		&= e^{-\sigma_0 x}\left|O(1)+\frac{O(1)}{2\pi}\int_{|\eta|> C}\frac{e^{i\eta x}(i\eta+i\sigma_0)^j(\eta+i\sigma_0)}{(1+|\eta+i\sigma_0|)^{2m+2}}d\eta+\frac{O(1)}{2\pi}\int_{|\eta|> C}\frac{e^{i\eta x}(i\eta+i\sigma_0)^j}{(1+|\eta+i\sigma_0|)^{2m+2}}d\eta\right|\\
		&=e^{-\sigma_0 x}\left|\frac{O(1)}{2\pi}\int_{|\eta|> C}\frac{e^{i\eta x}}{(1+|\eta|)}d\eta+\frac{O(1)}{2\pi}\int_{|\eta|> C}\frac{e^{i\eta x}}{(1+|\eta|)^2}d\eta\right|\\
		&=O(1)e^{-\sigma_0 x},
	\end{align*}
	where the last equality comes from the integrability of the function $\frac{sinx}{x}$. Then, we add up above estimates for all $j$ to verify the second assertion, and finish the proof.
\end{proof}
Now, we are ready to provide the pointwise space-time estimate of $\bbm_j^*(x)$. Actually, in the following lemma, we can combine Lemma \ref{L4.3} and the expansion of $\hat{\bbm}_j^*$ to obtain the estimates up to fourth order derivative of $\bbm_j^*(x)$.
\begin{lemma}\label{L4.4}
	For the positive constant $\sigma_0$ defined in Lemma \ref{L4.1} and Lemma \ref{L4.2}, we have the following pointwise estimates for $\bbm_j^*$,
	\begin{equation*}
		\begin{aligned}
			&\bbm_j^*(x)=\delta(x)M_j^{*,0}+O(1)e^{-\sigma_0|x|},\\
			&\frac{d}{dx}\bbm_j^*(x)=\frac{d}{dx}\delta(x)M_j^{*,0}-\delta(x)M_j^{*,1}+O(1)e^{-\sigma_0|x|},\\
			&\frac{d^2}{dx^2}\bbm_j^*(x)=\frac{d^2}{dx^2}\delta(x)M_j^{*,0}-\frac{d}{dx}\delta(x)M_j^{*,1}+\delta(x)M_j^{*,2}+O(1)e^{-\sigma_0|x|},\\
			&\frac{d^3}{dx^3}\bbm_j^*(x)=\frac{d^3}{dx^3}\delta(x)M_j^{*,0}-\frac{d^2}{dx^2}\delta(x)M_j^{*,1}+\frac{d}{dx}\delta(x)M_j^{*,2}+\delta(x)M_j^{*,3}+O(1)e^{-\sigma_0|x|},\\
			&\frac{d^4}{dx^4}\bbm_j^*(x)=\frac{d^4}{dx^4}\delta(x)M_j^{*,0}-\frac{d^3}{dx^3}\delta(x)M_j^{*,1}+\frac{d^2}{dx^2}\delta(x)M_j^{*,2}+\frac{d}{dx}\delta(x)M_j^{*,3}+\delta(x)M_j^{*,4}+O(1)e^{-\sigma_0|x|}.
		\end{aligned}
	\end{equation*}
\end{lemma}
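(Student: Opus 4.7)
The plan is to exploit the asymptotic expansion of $\hat{\bbm}_j^*$ at infinity from Lemma \ref{L4.2} by stripping off precisely the polynomial-in-$\eta$ piece that obstructs direct inversion. For each $n\in\{0,1,2,3,4\}$, I would decompose
\[
(i\eta)^n \hat{\bbm}_j^*(\eta) = P_n(\eta) + R_n(\eta),
\]
where $P_n(\eta)$ collects the non-decaying powers $\eta^j$, $j\geq 0$, produced by multiplying the five-term expansion of $\hat{\bbm}_j^*$ through by $(i\eta)^n$, and $R_n$ is the residual. Since multiplication by $(i\eta)^n$ kills the apparent $\eta^{-k}$ singularities at the origin whenever $k\leq n$, the subtraction introduces no new poles, so both $P_n$ and $R_n(\eta)=(i\eta)^n\hat{\bbm}_j^*(\eta)-P_n(\eta)$ are analytic throughout the strip $\{|\mathrm{Im}\,\eta|<\sigma_0\}$ supplied by Lemma \ref{L4.2}; by construction $R_n(\eta)=O(|\eta|^{-1})$ as $|\eta|\to\infty$.

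The two pieces then give the two parts of the claimed identity. First, $\mathcal{F}^{-1}[P_n(\eta)]$ is, directly, the finite linear combination $\sum_{k=0}^n c_{n,k}\,\delta^{(n-k)}(x)\,M_j^{*,k}$, with coefficients $c_{n,k}$ read off from the expansion in Lemma \ref{L4.2} via the identity $\mathcal{F}^{-1}[(i\eta)^m]=\delta^{(m)}(x)$ together with $(i\eta)^n \eta^{-k}=i^n\eta^{n-k}$. Second, since $R_n$ is analytic in the strip and satisfies $R_n(\eta)=O(\eta/(1+|\eta|)^{2})+O(1/(1+|\eta|)^{2})$, Lemma \ref{L4.3} with $m=0$ applies and yields $\mathcal{F}^{-1}[R_n](x)=O(1)e^{-\sigma_0|x|}$, producing the exponentially decaying error term. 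Adding the two contributions gives the pointwise formula for $\partial_x^n\bbm_j^*(x)$.

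The only nontrivial bookkeeping is determining the coefficients $c_{n,k}$ so that the signs match the statement of Lemma \ref{L4.4}, and checking in each of the five cases that the residual after subtraction is indeed $O(|\eta|^{-1})$ at infinity; this is where the fifth-order $O(|\eta|^{-5})$ term in the expansion of Lemma \ref{L4.2} enters and explains why the present lemma stops at the fourth derivative. No estimate beyond Lemmas \ref{L4.1}--\ref{L4.3} is invoked.

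I expect the main conceptual obstacle to be explaining cleanly that $P_n$ is extracted from a purely asymptotic expansion (valid as $|\eta|\to\infty$, not globally), yet $R_n$ is genuinely globally analytic in the strip. The resolution, which I would emphasize in the write-up, is that $\hat{\bbm}_j^*$ is itself globally analytic in the strip by Lemma \ref{L4.2}, so $(i\eta)^n\hat{\bbm}_j^*-P_n(\eta)$ is analytic there as a difference of analytic functions; the role of $P_n$ is not to approximate $\hat{\bbm}_j^*$ near the origin but solely to cancel the growth at infinity, a cancellation guaranteed by the expansion of Lemma \ref{L4.2}.
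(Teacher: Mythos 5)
Your proposal is correct and follows essentially the same route as the paper: subtract from $(i\eta)^n\hat{\bbm}_j^*$ the polynomial part dictated by the expansion in Lemma \ref{L4.2}, observe that the residual is analytic in the strip $|\mathrm{Im}(\eta)|<\sigma_0$ and decays like $O(|\eta|^{-1})$, and invoke Lemma \ref{L4.3} with $m=0$; the polynomial part inverts to the Dirac-delta derivatives. The paper only writes out the case $n=0$ for $\bbm_1^*$ and declares the rest analogous, so your explicit treatment of the general $n$, including the remark that the $O(\eta^{-5})$ tail is exactly what allows the argument to reach the fourth derivative, is a faithful (indeed slightly more complete) version of the same argument.
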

\begin{proof}
	We will only prove the zeroth order estimate of $\bbm_1^*(x)$, and the other estimates can be similarly obtained. According to Lemma \ref{L4.2}, $\hat{\bbm}_1^*$ is analytic in the region $|Im(\eta)|<\sigma_0$ and has the expansion \eqref{M expansion}, therefore we have $\hat{\bbm}_1^*-M_1^{*,0}$ is also analytic in the region $|Im(\eta)|<\sigma_0$. Moreover, we have 
	\[\hat{\bbm}_1^*-M_1^{*,0}=O(1)i\eta^{-1}+O(1)\eta^{-2},\quad \eta\rightarrow\infty,\]
	where $O(1)$ denotes an order one matrix in above formula. Thus we have $\hat{\bbm}_1^*-M_1^{*,0}$ satisfies the requirements in Lemma \ref{L4.3} with $m=0$. Therefore, we apply Lemma \ref{L4.3} to obtain  
	\[|\bbm_1^*(x)-\delta(x)M_1^{*,0}|=O(1)e^{-\sigma_0|x|},\]
	which finish the proof of the zeroth order estimate of $\bbm_1^*(x)$. The others can be obtained in the same way.
\end{proof}
Next, we will study the inverse Fourier transform of $e^{\lambda_j^*t}$. It is a little bit different from the inverse of $\hat{\bbm}_j^*$, since the leading order of $\lambda_j^*$ are different. Let  $\Lambda_j^*(x,t)=\mathcal{F}^{-1}\left[e^{\lambda_j^*t}\right]$, we have the following estimates. 
\begin{lemma}\label{L4.5}
	Let $\sigma_0$ and $\sigma_0^*$ be positive constants defined in Lemma \ref{L4.1} and Lemma \ref{L4.2}, then we have the following point-wise estimates for $\Lambda_j^*(x,t)$ up to the third order,
	\bq\label{B20}
	\left\{\begin{aligned}
		&\Lambda_1^*(x,t)=e^{\frac{vp_v}{\mu}t}\delta(x)
		+\left\{\begin{aligned}&O(1)te^{-\sigma_0^*t-\sigma_0|x|},\quad 0<t<1,\\
			&O(1)e^{-\sigma_0^*t-\sigma_0|x|},\quad t\geq 1,
		\end{aligned}\right.\\
		&\partial_x\Lambda_1^*(x,t)=e^{\frac{vp_v}{\mu}t}\frac{d}{dx}\delta(x)
		+\left\{\begin{aligned}&O(1)te^{-\sigma_0^*t-\sigma_0|x|},\quad 0<t<1,\\
			&O(1)e^{-\sigma_0^*t-\sigma_0|x|},\quad t\geq 1,
		\end{aligned}\right.\\
		&\partial_x^2\Lambda_1^*(x,t)=e^{\frac{vp_v}{\mu}t}\frac{d^2}{dx^2}\delta(x)-A_{1,1}te^{\frac{vp_v}{\mu}t}\delta(x)
		+\left\{\begin{aligned}&O(1)te^{-\sigma_0^*t-\sigma_0|x|},\quad 0<t<1,\\
			&O(1)e^{-\sigma_0^*t-\sigma_0|x|},\quad t\geq 1,
		\end{aligned}\right.\\
		&\partial_x^3\Lambda_1^*(x,t)=e^{\frac{vp_v}{\mu}t}\frac{d^3}{dx^3}\delta(x)-A_{1,1}te^{\frac{vp_v}{\mu}t}\frac{d}{dx}\delta(x)
		+\left\{\begin{aligned}&O(1)te^{-\sigma_0^*t-\sigma_0|x|},\quad 0<t<1,\\
			&O(1)e^{-\sigma_0^*t-\sigma_0|x|},\quad t\geq 1.
		\end{aligned}\right.\\
	\end{aligned}
	\right.
	\eq
	\bq\label{B21}
	\left\{\begin{aligned}
		&\partial^k_x\Lambda_2^*(x,t)=\left\{\begin{aligned}&\partial^k_x\left[\frac{e^{\beta^*_2t}}{\sqrt{4\pi \alpha^*_2t}}e^{-\frac{x^2}{4\alpha^*_2t}}\right]+O(1)te^{-\sigma_0^*t-\sigma_0|x|}, &\quad 0<t<1,\\
			&O(1)e^{-\sigma_0^*t-\sigma_0|x|},& t\geq 1,
		\end{aligned}
		\right.\\
		&\partial^{k+2}_x\Lambda_2^*(x,t)=\left\{\begin{aligned}&\partial^{k+2}_x\left[\frac{e^{\beta^*_2t}}{\sqrt{4\pi \alpha^*_2t}}e^{-\frac{x^2}{4\alpha^*_2t}}\right]+tA_{2,1}\partial^k_x\left[\frac{e^{\beta^*_2t}}{\sqrt{4\pi \alpha^*_2t}}e^{-\frac{x^2}{4\alpha^*_2t}}\right]+O(1)te^{-\sigma_0^*t-\sigma_0|x|}, &\quad 0<t<1,\\
			&O(1)e^{-\sigma_0^*t-\sigma_0|x|},& t\geq 1,
		\end{aligned}
		\right.\\
		&\alpha^*_2=\frac{\mu}{v},\quad \beta^*_2=\frac{v \left(\mu  p p_e+\kappa  \theta _e p_v-\mu 
			p_v\right)}{\mu  \left(\mu -\kappa  \theta _e\right)},\quad k=0,1.
	\end{aligned}
	\right.
	\eq
	\bq\label{B22}
	\left\{\begin{aligned}
		&\partial^k_x\Lambda_3^*(x,t)=\left\{\begin{aligned}&\partial^k_x\left[\frac{e^{\beta^*_3t}}{\sqrt{4\pi \alpha^*_3t}}e^{-\frac{x^2}{4\alpha^*_3t}}\right]+O(1)te^{-\sigma_0^*t-\sigma_0|x|}, &\quad 0<t<1,\\
			&O(1)e^{-\sigma_0^*t-\sigma_0|x|},& t\geq 1,
		\end{aligned}
		\right.\\
		&\partial^{k+2}_x\Lambda_3^*(x,t)=\left\{\begin{aligned}&\partial^{k+2}_x\left[\frac{e^{\beta^*_3t}}{\sqrt{4\pi \alpha^*_3t}}e^{-\frac{x^2}{4\alpha^*_3t}}\right]+tA_{3,1}\partial^k_x\left[\frac{e^{\beta^*_3t}}{\sqrt{4\pi \alpha^*_3t}}e^{-\frac{x^2}{4\alpha^*_3t}}\right]+O(1)te^{-\sigma_0^*t-\sigma_0|x|}, &\quad 0<t<1,\\
			&O(1)e^{-\sigma_0^*t-\sigma_0|x|},& t\geq 1,
		\end{aligned}
		\right.\\
		&\alpha^*_3=\frac{\kappa\theta_e}{v},\quad \beta^*_3=\frac{p v p_e}{\kappa  \theta _e-\mu },\quad k=0,1.
	\end{aligned}
	\right.
	\eq
	
\end{lemma}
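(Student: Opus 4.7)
The plan is to invert each $e^{\lambda_j^* t}$ termwise, peeling off the contribution from the leading part of $\lambda_j^*$ (the $\eta$-independent piece $\beta_1^*$ for $j=1$, and the quadratic piece $-\alpha_j^*\eta^2+\beta_j^*$ for $j=2,3$) whose inverse Fourier transform is explicit, and then treating the rational residual $g_j(\eta):=\lambda_j^*-\beta_1^*$ or $\lambda_j^*-(-\alpha_j^*\eta^2+\beta_j^*)$ via Lemma \ref{L4.3}. By construction $g_j(\eta)=O(|\eta|^{-2})$ as $|\eta|\to\infty$ and is analytic on $\{|\mathrm{Im}\,\eta|<\sigma_0\}$ thanks to Lemma \ref{L4.1}, so the hypotheses of Lemma \ref{L4.3} will be met after enough cancellation is performed.

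In the small-time regime $0<t<1$ I would write $e^{\lambda_j^* t}=e^{\mathrm{leading}\cdot t}\bigl(1+t\,g_j(\eta)+O(t^2 g_j^2)\bigr)$. The leading factor inverts to $e^{\beta_1^* t}\delta(x)$ for $j=1$ and to the Gaussian $\frac{e^{\beta_j^* t}}{\sqrt{4\pi\alpha_j^* t}}e^{-x^2/(4\alpha_j^* t)}$ for $j=2,3$. The bracketed remainder is analytic in the strip and of order $O(t/\eta^2)$ at infinity (the Gaussian factor $e^{-\alpha_j^*\eta^2 t}$ in the $j=2,3$ cases only helps), so Lemma \ref{L4.3} gives an $O(t)\,e^{-\sigma_0|x|}$ pointwise bound, and the uniform estimate $|e^{\lambda_j^* t}|\le e^{-\sigma_0^* t}$ from Lemma \ref{L4.1} supplies the time factor $e^{-\sigma_0^* t}$.

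For higher-order derivatives one multiplies by $(i\eta)^k$. The key trick for producing the secondary corrections $-A_{1,1}t\,e^{\beta_1^* t}\delta(x)$ and $tA_{j,1}\partial_x^k[\text{Gaussian}]$ listed in \eqref{B20}--\eqref{B22} is the algebraic identity $\eta^2/(1+\eta^2)^m=1/(1+\eta^2)^{m-1}-1/(1+\eta^2)^m$ applied to the explicit $t\,e^{\mathrm{leading}\cdot t}A_{j,1}/(1+\eta^2)$ piece of the Taylor expansion: the ``$1$'' summand inverts to a new Dirac delta or heat-kernel term, while the ``$1/(1+\eta^2)$'' summand stays smooth, is analytic in the strip, and falls under Lemma \ref{L4.3} once again. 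One must expand the Taylor series of $e^{g_j t}$ up to second order when $k+2$ derivatives act in the $\Lambda_2^*,\Lambda_3^*$ cases so that after cancelling the $\eta^2$ prefactor the residual still decays like $O(t/\eta^2)$ or better.

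For the large-time regime $t\ge 1$ the Taylor expansion is no longer economical. I would instead shift the inversion contour to $\mathrm{Im}\,\eta=\sigma_0\,\mathrm{sgn}(x)$, which is admissible by the strip analyticity of Lemma \ref{L4.1} and converts $e^{i\eta x}$ into $e^{-\sigma_0|x|}e^{i\eta x}$; the uniform bound $\mathrm{Re}\,\lambda_j^*\le-\sigma_0^*$ then delivers the factor $e^{-\sigma_0^* t}$, and the explicit leading terms are themselves absorbed into $e^{-\sigma_0^* t-\sigma_0|x|}$ using the elementary inequality $x^2/(4\alpha_j^* t)+\sigma_0^2\alpha_j^* t\ge\sigma_0|x|$ to rewrite the Gaussian decay in exponential form for $t\ge 1$. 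The main difficulty I anticipate is the careful bookkeeping in the short-time, higher-derivative step: one needs to expand $e^{\lambda_j^* t}$ to precisely the right order so that the secondary corrections in the statement are produced while the residual enjoys sufficient $\eta^{-(2m+2)}$ decay to match the Sobolev index required by Lemma \ref{L4.3}, and this must be done while simultaneously tracking the $t$-dependence of each piece so that the error is genuinely $O(t)$ and not merely $O(1)$.
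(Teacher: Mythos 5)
Your treatment of the regime $0<t<1$ is essentially the paper's own proof: factor out the leading exponential ($e^{\beta_1^*t}$ for $j=1$, $e^{-\alpha_j^*\eta^2t+\beta_j^*t}$ for $j=2,3$), Taylor-expand the remaining rational exponent in powers of $(1+\eta^2)^{-1}$ with remainder, invert the explicit pieces, control the rest by Lemma \ref{L4.3} after using the strict gap $\frac{vp_v}{\mu}<-\sigma_0^*$ to absorb the powers of $t$, and produce the $A_{j,1}$ corrections for the second and third derivatives from $\eta^{2}/(1+\eta^{2})=1-1/(1+\eta^{2})$. That part is sound and matches Steps 1--2 of the paper.

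The gap is in your $t\ge 1$ argument. You propose to peel off the quadratic leading part and then absorb the resulting Gaussian $\frac{e^{\beta_j^*t}}{\sqrt{4\pi\alpha_j^*t}}e^{-x^2/(4\alpha_j^*t)}$ into $O(1)e^{-\sigma_0^*t-\sigma_0|x|}$ via $\frac{x^2}{4\alpha_j^*t}+\sigma_0^2\alpha_j^*t\ge\sigma_0|x|$. That inequality only trades spatial decay; it leaves the time factor $e^{(\beta_j^*+\sigma_0^2\alpha_j^*)t}$, so the absorption requires $\beta_j^*+\sigma_0^2\alpha_j^*\le-\sigma_0^*$, which is false in general: $\beta_3^*=\frac{pvp_e}{\kappa\theta_e-\mu}>0$ under the Prandtl assumption $\mu<\kappa\theta_e$, and $\beta_2^*$ can also be positive, so the peeled-off Gaussian actually grows exponentially in $t$ near $x=0$ and cannot be dominated by $e^{-\sigma_0^*t}$. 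The point is that for $t\ge 1$ you must not separate $-\alpha_j^*\eta^2+\beta_j^*$ from the correction $\sum_k A_{j,k}(1+\eta^2)^{-k}-K_j(1+\eta^2)^{-4}$: it is precisely the large $K_j$ at low frequencies that yields $\mathrm{Re}\bigl(\lambda_j^*+\tfrac{\alpha_j^*}{2}(\eta^2+1)\bigr)<-\sigma_0^*$ (Lemma \ref{L4.1}), so that $e^{\lambda_j^*t}e^{\sigma_0^*t}=O(1)e^{-\frac{\alpha_j^*}{2}(\eta^2+1)t}$ has, for $t\ge 1$, decay of arbitrary polynomial order in $\eta$, and Lemma \ref{L4.3} (equivalently your contour shift applied to the whole symbol) then gives $O(1)e^{-\sigma_0^*t-\sigma_0|x|}$ with no Gaussian left over; this is exactly how the paper handles Case 1. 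A second, smaller issue: for $j=1$ and $t\ge1$ a bare contour shift of $e^{\lambda_1^*t}$ is not admissible, since $\lambda_1^*\to\beta_1^*$ as $|\eta|\to\infty$ and the inversion integral has no decay in $\eta$; you must retain the same subtractions as in your short-time step (the $\delta$ term, and for two or three derivatives also the $A_{1,1}t\,\delta$ term, without which $|\eta|^k$ times the residual is not integrable), and then simply absorb the leftover factor $t$ using $\frac{vp_v}{\mu}<-\sigma_0^*$, which is what the paper does.
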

\begin{proof}
	The proof can be similarly constructed as in \cite{L-Y-2020}. Here we  use two steps to briefly sketch the proof.\newline 
	
	\noindent $\bullet$ (Step 1: $\Lambda_1^*$) We first give the estimate of $\Lambda_1^*$. According to \eqref{lambda^*}, we have 
	\[e^{\lambda_1^*t}=e^{\left(\frac{v p_v}{\mu }+\sum_{k=1}^3\frac{A_{1,k}}{(1+\eta^2)^k}-\frac{K_1}{\left(\eta ^2+1\right)^4}\right)t}=e^{\frac{v p_v}{\mu }t}e^{\left(\sum_{k=1}^3\frac{A_{1,k}}{(1+\eta^2)^k}-\frac{K_1}{\left(\eta ^2+1\right)^4}\right)t}.\]
	Lemma \ref{L4.1} immediately implies that $e^{\lambda_1^*t}$ is analytic in the region $|Im(\eta)|<\sigma_0$ around the real axis. Moreover, we can view $e^{\lambda_1^*t}$ as a function of $(1+\eta^2)^{-1}$ and expand $e^{\lambda_1^*t}$ at $\eta=\infty$ to obtain 
	\begin{align*}
		e^{\lambda_1^*t}=&e^{\frac{v p_v}{\mu }t}\left(1+\frac{A_{1,1} t}{(1+\eta ^2)}+\frac{\frac{A_{1,1}^2 t^2}{2}-A_{1,1} t+A_{1,2}
			t}{(1+\eta ^2)^2}\right)\\
		&+e^{\frac{v p_v}{\mu }t}\left(C(\frac{1}{1+\eta^2})(t^3+t^2+t)\frac{e^{\left(\sum_{k=1}^3\frac{\gamma^kA_{1,k}}{(1+\eta^2)^k}-\frac{\gamma^4K_1}{\left(\eta^2+1\right)^4}\right)t}}{(1+\eta^2)^3}\right),\quad \eta\rightarrow\infty,
	\end{align*}
	where we use the Lagrangian remainder with $\gamma\in(0,1)$, and the coefficient $C(\frac{1}{1+\eta^2})$ is a function of $\frac{1}{1+\eta^2}$. By direct calculation, we obtain that $C(\frac{1}{1+\eta^2})$ is analytic and uniformly bounded. On the other hand, from Lemma \ref{L4.1}, we know $\frac{v p_v}{\mu }<-\sigma_0^*<0$. Thus we can use the factor $e^{\left(\frac{v p_v}{\mu }+\sigma_0^*\right)t}$ to absorb $t$ and $t^2$. Moreover, as $|\eta|$ can be chosen to be sufficiently large, the factor $e^{\left(\frac{v p_v}{\mu }+\sigma_0^*\right)t}$ can also absorb the exponential part in the remainder. More precisely, we have
	\bq\label{B24}
	\begin{aligned}
		&e^{\lambda_1^*t}-e^{\frac{v p_v}{\mu }t}\\
		&=e^{-\sigma_0^*t}\left(\frac{A_{1,1} t e^{\left(\frac{v p_v}{\mu }+\sigma_0^*\right)t}}{(1+\eta ^2)}+\frac{\frac{A_{1,1}^2 t^2e^{\left(\frac{v p_v}{\mu }+\sigma_0^*\right)t}}{2}-A_{1,1} te^{\left(\frac{v p_v}{\mu }+\sigma_0^*\right)t}+A_{1,2}
			te^{\left(\frac{v p_v}{\mu }+\sigma_0^*\right)t}}{(1+\eta ^2)^2}\right)\\
		&\hspace{0.4cm}+e^{-\sigma_0^*t}C(\frac{1}{1+\eta^2})(t^3+t^2+t)\frac{e^{\left(\frac{v p_v}{\mu }+\sigma_0^*\right)t+\left(\sum_{k=1}^3\frac{\gamma^kA_{1,k}}{(1+\eta^2)^k}-\frac{\gamma^4K_1}{\left(\eta^2+1\right)^4}\right)t}}{(1+\eta^2)^3}\\
		&=O(1)te^{-\sigma_0^*t}\left(\frac{1}{(1+\eta^2)}+\frac{1}{(1+\eta^2)^2}+\frac{1}{(1+\eta^2)^3}\right),\quad \eta\rightarrow\infty.
	\end{aligned}
	\eq
	From above analysis, $\frac{1}{t}\left(e^{\lambda_1^*t}-e^{\frac{v p_v}{\mu }t}\right)e^{\sigma_0^*t}$ is analytic in the region $|Im(\eta)|<\sigma_0$ and, according to \eqref{B24}, satisfies the requirement on asymptotic property in Lemma \ref{L4.3}. Therefore, we apply Lemma \ref{L4.3} to conclude that 
	\[\Lambda_1^*(x,t)=e^{\frac{vp_v}{\mu}t}\delta(x)+O(1)te^{-\sigma_0^*t-\sigma_0|x|}.\]
	When $t\geq1$, we can further use the exponential term to absorb $t$ in \eqref{B24}. Therefore, when $t\geq 1$, we actually have 
	\[\Lambda_1^*(x,t)=e^{\frac{vp_v}{\mu}t}\delta(x)+O(1)e^{-\sigma_0^*t-\sigma_0|x|}, \quad t\geq 1.\]
	Then, the higher oder estimates of $\Lambda_1^*(x,t)$ can be obtained via similar arguments, and we omit the details.\newline 
	
	\noindent $\bullet$ (Step 2: $\Lambda_2^*$ and $\Lambda_3^*$) For simplicity, we only do estimates on $\Lambda_3^*$, and the same argument can be applied to $\Lambda_2^*$ to obtain the desired results. According to \eqref{lambda^*}, we have 
	\[e^{\lambda_3^*t}=e^{-\frac{\eta ^2 \kappa
			\theta _e}{v}t+\frac{p v p_e}{\kappa  \theta _e-\mu }t+\left(\sum_{k=1}^3\frac{A_{3,k}}{(1+\eta^2)^k}-\frac{K_3}{\left(\eta ^2+1\right)^4}\right)t}.\]
	
	\noindent $\diamond$ (Case 1) We first consider the case $t\geq 1$. According to Lemma \ref{L4.1}, the real part of $\lambda_3^*$ has a negative upper bound. Actually, we have
	\bq\label{B25}
	e^{\lambda_3^*t}e^{\sigma_0^*t}=e^{\left(\lambda_3^*+\frac{\kappa\theta_e }{2v}(\eta^2+1)\right)t+\sigma_0^*t}e^{-\frac{\kappa\theta_e }{2v}(\eta^2+1)t}=O(1)e^{-\frac{\kappa\theta_e }{2v}(\eta^2+1)t}.
	\eq
	It is obvious that $e^{\lambda_3^*t}e^{\sigma_0^*t}$ is analytic in the region $|Im(\eta)|<\sigma_0$. On the other hand, as $t\geq 1$, the term $e^{-\frac{\kappa\theta_e }{2v}(\eta^2+1)t}$ is analytic, uniformly bounded and has the following estimate for any positive integer $m$ when $\eta$ tends to infinity,
	\bq\label{B26}
	e^{-\frac{\kappa\theta_e }{2v}(\eta^2+1)t}=O(1)\frac{1}{(1+\eta^2)^m},\quad t\geq 1,\quad m\in \bbz^+.
	\eq
	Therefore, we combine \eqref{B25}, \eqref{B26} and apply Lemma \ref{L4.3} to obtain that 
	\[e^{\lambda_3^*t}=O(1)e^{-\sigma_0^*t-\sigma_0|x|}.\]
	
	\noindent $\diamond$ (Case 2) Then, we consider the case $t< 1$. Similar as in estimates of $\Lambda_1^*$, we know $e^{\lambda_3^*t}$ is analytic in the region $|Im(\eta)|<\sigma_0$ around the real axis. Moreover, we can view $e^{\lambda_3^*t}$ as a function of $(1+\eta^2)^{-1}$ and expand $e^{\lambda_3^*t}$ at $\eta=\infty$ to obtain 
	\bq\label{B27}
	\begin{aligned}
		e^{\lambda_3^*t}&=e^{-\frac{\eta ^2 \kappa
				\theta _e}{v}t+\frac{p v p_e}{\kappa  \theta _e-\mu }t}e^{\left(\sum_{k=1}^3\frac{A_{3,k}}{(1+\eta^2)^k}-\frac{K_3}{\left(\eta ^2+1\right)^4}\right)t}\\
		&=e^{-\frac{\eta ^2 \kappa
				\theta _e}{v}t+\frac{p v p_e}{\kappa  \theta _e-\mu }t}\left(1+\frac{A_{3,1} t}{(1+\eta ^2)}+\frac{\frac{A_{3,1}^2 t^2}{2}-A_{3,1} t+A_{3,2}
			t}{(1+\eta ^2)^2}\right)\\
		&\hspace{0.4cm}+e^{-\frac{\eta ^2 \kappa
				\theta _e}{v}t+\frac{p v p_e}{\kappa  \theta _e-\mu }t}\left(C(\frac{1}{1+\eta^2})(t^3+t^2+t)\frac{e^{\left(\sum_{k=1}^3\frac{\gamma^kA_{3,k}}{(1+\eta^2)^k}-\frac{\gamma^4K_3}{\left(\eta^2+1\right)^4}\right)t}}{(1+\eta^2)^3}\right),
	\end{aligned}
	\eq
	where $\gamma$ is the constant chosen in the Lagrangian remainder as before. The leading term of \eqref{B27} is a heat kernel with an exponential coefficient $e^{\frac{p v p_e}{\kappa  \theta _e-\mu }t}$. As $t<1$,  we know the exponential coefficient is uniformly bounded, therefore we can apply the same argument as \eqref{B24} to conclude that 
	\[e^{\lambda_3^*t}-e^{-\frac{\eta ^2 \kappa
			\theta _e}{v}t+\frac{p v p_e}{\kappa  \theta _e-\mu }t}=O(1)\frac{t}{1+\eta^2}.\] 
	Then we apply Lemma \ref{L4.3} to have the zeroth order estimate of $\Lambda_3^*$ as follows,
	\[\Lambda_3^*=\frac{e^{\beta^*_3t}}{\sqrt{4\pi \alpha^*_3t}}e^{-\frac{x^2}{4\alpha^*_3t}}+O(1)te^{-\sigma_0|x|}=\frac{e^{\beta^*_3t}}{\sqrt{4\pi \alpha^*_3t}}e^{-\frac{x^2}{4\alpha^*_3t}}+O(1)te^{-\sigma_0^*t-\sigma_0|x|},\quad 0<t<1,\]
	where the last equality is due to the fact $t<1$. The first order derivative $\partial_x\Lambda_3^*$ can be obtained via similar argument. For second order derivative $\partial^2_x\Lambda_3^*$, the estimate would be a little bit different since the appearance of a new heat kernel. More precisely, we can apply similar argument as above to have 
	\[\eta^2e^{\lambda_3^*t}-\eta^2e^{-\frac{\eta ^2 \kappa
			\theta _e}{v}t+\frac{p v p_e}{\kappa  \theta _e-\mu }t}-A_{3,1}te^{-\frac{\eta ^2 \kappa
			\theta _e}{v}t+\frac{p v p_e}{\kappa  \theta _e-\mu }t}=O(1)\frac{t}{1+\eta^2}.\] 
	Then we can apply Lemma \ref{L4.3} and use the fact $t<1$ again to obtain that 
	\begin{align*}
		\partial^2_x\Lambda_3^*&=\partial^2_x\left[\frac{e^{\beta^*_3t}}{\sqrt{4\pi \alpha^*_3t}}e^{-\frac{x^2}{4\alpha^*_3t}}\right]+A_{3,1}\frac{e^{\beta^*_3t}\sqrt{t}}{\sqrt{4\pi \alpha^*_3}}e^{-\frac{x^2}{4\alpha^*_3t}}+O(1)te^{-\sigma_0|x|}\\
		&=\partial^2_x\left[\frac{e^{\beta^*_3t}}{\sqrt{4\pi \alpha^*_3t}}e^{-\frac{x^2}{4\alpha^*_3t}}\right]+tA_{3,1}\frac{e^{\beta^*_3t}}{\sqrt{4\pi \alpha^*_3t}}e^{-\frac{x^2}{4\alpha^*_3t}}+O(1)te^{-\sigma_0^*t-\sigma_0|x|},\quad 0<t<1.
	\end{align*}
	Finally, the estimate of the third order derivative $\partial^3_x\Lambda_3^*$ can be obtained similarly as $\partial^2_x\Lambda_3^*$.
\end{proof}

\subsubsection{Pointwise estimate of $\bbg^*(x,t;\bar{U})$}
We already define the Fourier transform of the singular part $\bbg^*(x,t;\bar{U})$ in \eqref{singular-G}. In previous discussion, we have obtained the point-wise estimates of the inverse Fourier transform of each component in $\hat{\bbg}^*(\eta,t;\bar{U})$, and now we will combine these estimates together to construct the estimates of $\bbg^*(x,t;\bar{U})$. 
\begin{theorem}\label{T4.1}
	Let $\star_x$ denotes the convolution with respect to $x$. The singular part of the Green's function defined in \eqref{singular-G} has three parts. For the first part $\bbg^{*,1}(x,t)$, we have the following pointwise estimates,
	\bq\label{G^{*,1}}
	\left\{\begin{aligned}
		&\bbg^{*,1}(x,t)=e^{\frac{vp_v}{\mu}t}\delta(x)M_1^{*,0}+O(1)e^{-\sigma_0^*t-\sigma_0|x|},\\
		&\partial_x\bbg^{*,1}(x,t)=e^{\frac{vp_v}{\mu}t}\left(\frac{d}{dx}\delta(x)M_1^{*,0}-\delta(x)M_1^{*,1}\right)+O(1)e^{-\sigma_0^*t-\sigma_0|x|},\\
		&\partial^2_x\bbg^{*,1}(x,t)=e^{\frac{vp_v}{\mu}t}\left(\frac{d^2}{dx^2}\delta(x)M_1^{*,0}-\frac{d}{dx}\delta(x)M_1^{*,1}+\delta(x)\left(M_1^{*,2}-A_{1,1}tM_1^{*,0}\right)\right)+O(1)e^{-\sigma_0^*t-\sigma_0|x|}.
	\end{aligned}
	\right.
	\eq
	For the other two parts $\bbg^{*,2}(x,t)$ and $\bbg^{*,3}(x,t)$, when $t\geq 1$, we have the estimate below,
	\bq\label{B29}
	\partial_x^k\bbg^{*,j}(x,t)=O(1)e^{-\sigma_0^*t-\sigma_0|x|},\quad k=0,1,2,\quad j=2,3, \quad t\geq 1.
	\eq
	While when $0<t<1$, we have the following estimates for $\bbg^{*,2}(x,t)$ and $\bbg^{*,3}(x,t)$,
	\bq\label{B30}
	\left\{\begin{aligned}
		&\bbg^{*,j}(x,t)&&=&&O(1)e^{-\sigma_0^*t-\sigma_0|x|}
		+\frac{e^{\beta^*_jt}}{\sqrt{4\pi \alpha^*_jt}}e^{-\frac{x^2}{4\alpha^*_jt}}M_j^{*,0},\\
		&\partial_x\bbg^{*,j}(x,t)&&=&&O(1)e^{-\sigma_0^*t-\sigma_0|x|}
		+\partial_x\left[\frac{e^{\beta^*_jt}}{\sqrt{4\pi \alpha^*_jt}}e^{-\frac{x^2}{4\alpha^*_jt}}\right]M_j^{*,0}-\frac{e^{\beta^*_jt}}{\sqrt{4\pi \alpha^*_jt}}e^{-\frac{x^2}{4\alpha^*_jt}}M_j^{*,1},\\
		&\partial^2_x\bbg^{*,j}(x,t)&&=&&O(1)e^{-\sigma_0^*t-\sigma_0|x|}
		+\partial^2_x\left[\frac{e^{\beta^*_jt}}{\sqrt{4\pi \alpha^*_jt}}e^{-\frac{x^2}{4\alpha^*_jt}}\right]M_j^{*,0}-\partial_x\left[\frac{e^{\beta^*_jt}}{\sqrt{4\pi \alpha^*_jt}}e^{-\frac{x^2}{4\alpha^*_jt}}\right]M_j^{*,1}\\
		&&&&&+\frac{e^{\beta^*_jt}}{\sqrt{4\pi \alpha^*_jt}}e^{-\frac{x^2}{4\alpha^*_jt}}M_j^{*,2},\\
		&\partial^3_x\bbg^{*,j}(x,t)&&=&&O(1)e^{-\sigma_0^*t-\sigma_0|x|}+\partial^3_x\left[\frac{e^{\beta^*_jt}}{\sqrt{4\pi \alpha^*_jt}}e^{-\frac{x^2}{4\alpha^*_jt}}\right]M_j^{*,0}-\partial^2_x\left[\frac{e^{\beta^*_jt}}{\sqrt{4\pi \alpha^*_jt}}e^{-\frac{x^2}{4\alpha^*_jt}}\right]M_j^{*,1}\\
		&&&&& +\partial_x\left[\frac{e^{\beta^*_jt}}{\sqrt{4\pi \alpha^*_jt}}e^{-\frac{x^2}{4\alpha^*_jt}}\right]M_j^{*,2}+\frac{e^{\beta^*_jt}}{\sqrt{4\pi \alpha^*_jt}}e^{-\frac{x^2}{4\alpha^*_jt}}M_j^{*,3}
	\end{aligned}
	\right.
	\eq

\end{theorem}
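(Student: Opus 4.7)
The definition \eqref{singular-G} reads $\hat{\bbg}^{*,j}(\eta,t) = e^{\lambda_j^* t}\hat{\bbm}_j^*(\eta)$, a product in Fourier variables. The plan is to convert this to a convolution via the inverse Fourier transform and differentiate under the convolution:
\[
\partial_x^k \bbg^{*,j}(x,t) = (\partial_x^k \Lambda_j^*)(\cdot,t)\star_x \bbm_j^*(\cdot) = \Lambda_j^*(\cdot,t)\star_x (\partial_x^k \bbm_j^*)(\cdot),
\]
and then substitute the pointwise decompositions furnished by Lemma \ref{L4.4} for $\bbm_j^*$ and Lemma \ref{L4.5} for $\Lambda_j^*$. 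In each case the decompositions split into a distributional main part (Dirac deltas and their derivatives, together with Gaussian heat kernels when $j=2,3$ and $t<1$) plus an $O(1)e^{-\sigma_0|x|}$ remainder carrying an additional $e^{-\sigma_0^* t}$ factor coming from the negative spectral gap established in Lemma \ref{L4.1}.

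\textbf{Case $j=1$.} Lemma \ref{L4.5} presents $\Lambda_1^*$ as a pure delta at the origin, scaled by $e^{vp_v t/\mu}$, plus an exponentially decaying remainder. Convolving with the analogous decomposition of $\bbm_1^*$ produces four terms: $\delta\star\delta\,M_1^{*,0}=\delta(x)M_1^{*,0}$ gives the explicit main term; the two mixed delta--remainder convolutions merely translate the exponential remainder and inherit the $e^{-\sigma_0^* t}$ factor; the remainder--remainder piece is a convolution of two one-sided exponentials, which I would absorb into $O(1)e^{-\sigma_0^* t-\sigma_0|x|}$ after a slight reduction of the constant $\sigma_0$ to swallow the polynomial prefactor. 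The higher-derivative formulas in \eqref{G^{*,1}} then come from matching, in the product expansion, which term of $\partial_x^k \Lambda_1^*$ pairs with which distributional piece of $\bbm_1^*$; in particular the $-A_{1,1} t M_1^{*,0}\delta(x)$ contribution to $\partial_x^2 \bbg^{*,1}$ comes directly from the corresponding term of $\partial_x^2 \Lambda_1^*$ listed in \eqref{B20}.

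\textbf{Case $j=2,3$.} For $t\ge 1$, Lemma \ref{L4.5} bounds $\Lambda_j^*$ and its derivatives by $O(1)e^{-\sigma_0^* t-\sigma_0|x|}$, and convolving with the Dirac plus exponentially decaying structure of $\bbm_j^*$ preserves both time and spatial decay, giving \eqref{B29}. For $0<t<1$ the leading part of $\Lambda_j^*$ is the Gaussian heat kernel with parameters $(\alpha_j^*,\beta_j^*)$. Convolving this Gaussian and its $x$-derivatives with the Dirac-derivative part of $\bbm_j^*$ differentiates the Gaussian and pairs it with the expansion matrices $M_j^{*,k}$, producing the explicit terms in \eqref{B30}. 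The extra Gaussian term in $\partial_x^2 \bbg^{*,j}$ (and likewise in $\partial_x^3 \bbg^{*,j}$) traces back precisely to the $tA_{j,1}$ heat-kernel correction appearing in $\partial_x^{k+2}\Lambda_j^*$ in \eqref{B21}--\eqref{B22}.

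\textbf{Expected main obstacle.} The delicate step is estimating the cross convolutions between a Gaussian heat kernel on one side and the $O(1)e^{-\sigma_0|x|}$ remainder of $\bbm_j^*$ on the other, and absorbing them into the target bound $O(1)e^{-\sigma_0^* t-\sigma_0|x|}$. The $L^1$ normalization of the Gaussian gives an immediate uniform-in-$t$ sup bound, but the pointwise spatial decay must be recovered by splitting the convolution integral at $|y|=|x|/2$: on $|y|\ge|x|/2$ the exponential factor is at most $e^{-\sigma_0|x|/2}$, while on $|y|\le|x|/2$ the inequality $|x-y|\ge|x|/2$ together with $t\le 1$ yields $e^{-c|x|^2/t}\le e^{-c|x|}$ for $|x|\ge 1$. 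A mild reduction in the effective value of $\sigma_0$ (still uniform in $t$) must be accepted, and the time factor $e^{-\sigma_0^* t}$ is inherited from whichever side of the convolution originally carried it; no additional $1/\sqrt{t}$ blow-up appears once the $L^1$ bound is used on the Gaussian.
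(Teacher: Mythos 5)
Your proposal is correct and follows essentially the same route as the paper: write $\bbg^{*,j}=\Lambda_j^*\star_x\bbm_j^*$, insert the decompositions of Lemma \ref{L4.4} and Lemma \ref{L4.5}, and absorb the cross convolutions (delta--tail, Gaussian--tail, tail--tail) into $O(1)e^{-\sigma_0^*t-\sigma_0|x|}$, exactly as in the paper's two-step proof. One small slip in your narrative, not affecting validity: the Gaussian terms $\frac{e^{\beta_j^*t}}{\sqrt{4\pi\alpha_j^*t}}e^{-x^2/(4\alpha_j^*t)}M_j^{*,2}$ (and $M_j^{*,3}$) in $\partial_x^2\bbg^{*,j},\partial_x^3\bbg^{*,j}$ originate from the $\delta(x)M_j^{*,2}$, $\delta(x)M_j^{*,3}$ pieces of $\tfrac{d^2}{dx^2}\bbm_j^*$, $\tfrac{d^3}{dx^3}\bbm_j^*$ in Lemma \ref{L4.4}, not from the $tA_{j,1}$ correction in \eqref{B21}--\eqref{B22}; for $j=2,3$ and $0<t<1$ that correction only produces a $tA_{j,1}$-Gaussian term which is absorbed into the remainder, and it survives explicitly only as the $-A_{1,1}t\,\delta(x)M_1^{*,0}$ contribution in the $j=1$ case.
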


\begin{proof}
	We will prove the estimates in two steps.
	
	\noindent $\bullet$ (Step 1: $\bbg^{*,1}(x,t)$) For the part $\bbg^{*,1}(x,t)$, we first apply Lemma \ref{L4.4} and Lemma \ref{L4.5} to provide the zero order estimate as follows, 
	\begin{align*} 
		&\bbg^{*,1}(x,t)=\mathcal{F}^{-1}\left[e^{\lambda_1^*t}\hat{\bbm}_1^*\right]=\Lambda_1^*(x,t)\star_x\bbm_1^*(x)\\
		&=\left(e^{\frac{vp_v}{\mu}t}\delta(x)+O(1)e^{-\sigma_0^*t-\sigma_0|x|}\right)\star_x\left(\delta(x)M_1^{*,0}+O(1)e^{-\sigma_0|x|}\right)\\
		&=e^{\frac{vp_v}{\mu}t}\delta(x)M_1^{*,0}+O(1)e^{-\sigma_0^*t-\sigma_0|x|},
	\end{align*}
	Then for the first order estimate, we apply Lemma \ref{L4.4} and Lemma \ref{L4.5} again to have 
	\begin{align*}
		&\partial_x\bbg^{*,1}(x,t)\\
		&=\partial_x\Lambda_1^*(x,t)\star_x\bbm_1^*(x)\\
		&=\left(e^{\frac{vp_v}{\mu}t}\frac{d}{dx}\delta(x)+O(1)e^{-\sigma_0^*t-\sigma_0|x|}\right)\star_x\bbm_1^*(x)\\
		&=e^{\frac{vp_v}{\mu}t}\frac{d}{dx}\bbm_1^*(x)+O(1)e^{-\sigma_0^*t-\sigma_0|x|}\\
		&=e^{\frac{vp_v}{\mu}t}\left(\frac{d}{dx}\delta(x)M_1^{*,0}-\delta(x)M_1^{*,1}+O(1)e^{-\sigma_0|x|}\right)+O(1)e^{-\sigma_0^*t-\sigma_0|x|}\\
		&=e^{\frac{vp_v}{\mu}t}\left(\frac{d}{dx}\delta(x)M_1^{*,0}-\delta(x)M_1^{*,1}\right)+O(1)e^{-\sigma_0^*t-\sigma_0|x|}.
	\end{align*}
	The second order estimate can be similarly obtained.\newline
	
	\noindent $\bullet$ (Step 2: $\bbg^{*,2}(x,t)$ and $\bbg^{*,3}(x,t)$) For the other two terms, we will construct the estimates in time intervals $t\geq 1$ and $0<t<1$ respectively. When $t\geq 1$, according to Lemma \ref{L4.4} and Lemma \ref{L4.5}, the following estimate holds for $k=0,1,2$ and $j=2,3$,
	\begin{align*}
		&\partial^k_x\bbg^{*,j}(x,t)\\
		&=\partial^k_x\Lambda_j^*(x,t)\star_x\bbm_j^*(x)\\
		&=\left(O(1)e^{-\sigma_0^*t-\sigma_0|x|}\right)\star_x\left(\delta(x)M_j^{*,0}+O(1)e^{-\sigma_0|x|}\right)\\
		&=O(1)e^{-\sigma_0^*t-\sigma_0|x|}.
	\end{align*}
	Then, when $0<t<1$, for the zeroth order estimates, we apply Lemma \ref{L4.4} and Lemma \ref{L4.5} to obtain for $j=2,3$ that  
	\begin{align*}
		&\bbg^{*,j}(x,t)\\
		&=\Lambda_j^*(x,t)\star_x\bbm_j^*(x)\\
		&=\left(O(1)e^{-\sigma_0^*t-\sigma_0|x|}
		+\frac{e^{\beta^*_jt}}{\sqrt{4\pi \alpha^*_jt}}e^{-\frac{x^2}{4\alpha^*_jt}}\right)\star_x\left(\delta(x)M_j^{*,0}+O(1)e^{-\sigma_0|x|}\right)\\
		&=\frac{e^{\beta^*_jt}}{\sqrt{4\pi \alpha^*_jt}}e^{-\frac{x^2}{4\alpha^*_jt}}M_j^{*,0}+O(1)e^{-\sigma_0^*t-\sigma_0|x|}.
	\end{align*}
	Similarly, when $0<t<1$ and $j=2,3$, we have the first order estimates as follows,
	\begin{align*}
		&\partial_x\bbg^{*,j}(x,t)\\
		&=\partial_x\Lambda_j^*(x,t)\star_x\bbm_j^*(x)\\
		&=\left(O(1)e^{-\sigma_0^*t-\sigma_0|x|}
		+\partial_x\left[\frac{e^{\beta^*_jt}}{\sqrt{4\pi \alpha^*_jt}}e^{-\frac{x^2}{4\alpha^*_jt}}\right]\right)\star_x\bbm_j^*(x)\\
		&=\frac{e^{\beta^*_jt}}{\sqrt{4\pi \alpha^*_jt}}e^{-\frac{x^2}{4\alpha^*_jt}}\star_x\left(\frac{d}{dx}\bbm_j^{*}\right)+O(1)e^{-\sigma_0^*t-\sigma_0|x|}\\
		&=\frac{e^{\beta^*_jt}}{\sqrt{4\pi \alpha^*_jt}}e^{-\frac{x^2}{4\alpha^*_jt}}\star_x\left(\frac{d}{dx}\delta(x)M_j^{*,0}-\delta(x)M_j^{*,1}+O(1)e^{-\sigma_0|x|}\right)+O(1)e^{-\sigma_0^*t-\sigma_0|x|}\\
		&=\partial_x\left[\frac{e^{\beta^*_jt}}{\sqrt{4\pi \alpha^*_jt}}e^{-\frac{x^2}{4\alpha^*_jt}}\right]M_j^{*,0}-\frac{e^{\beta^*_jt}}{\sqrt{4\pi \alpha^*_jt}}e^{-\frac{x^2}{4\alpha^*_jt}}M_j^{*,1}+O(1)e^{-\sigma_0^*t-\sigma_0|x|}.
	\end{align*}
	The higher order estimate can be obtained in a similar way, thus we omit the details and finish the proof.
\end{proof}
Then, we will estimate the time derivative of the singular part of Green's function. As there is a diffusion term in the linearized equation \eqref{A2}, the first order time derivative will generate a second order space derivative. The details will be shown in the following lemma.
\begin{theorem}\label{T4.2}
	Let $j=2,3$, the time derivatives and the mixed derivatives of the singular part have the following estimates,
	\begin{align*}
		&\partial_t\bbg^{*,1}(x,t)=\frac{v p_v}{\mu }e^{\frac{vp_v}{\mu}t}\delta(x)M_1^{*,0}+O(1)e^{-\sigma_0^*t-\sigma_0|x|},\\
		&\partial_{xt}\bbg^{*,1}(x,t)=\frac{v p_v}{\mu }e^{\frac{vp_v}{\mu}t}\frac{d}{dx}\delta(x)M_1^{*,0}-\frac{v p_v}{\mu }e^{\frac{vp_v}{\mu}t}\delta(x)M_1^{*,1}+O(1)e^{-\sigma_0^*t-\sigma_0|x|},\\
		&\partial_{t}\bbg^{*,j}(x,t)=O(1)e^{-\sigma_0^*t-\sigma_0|x|}
		+\left\{\begin{aligned}
			&\alpha^*_j\partial^2_x\left[\frac{e^{\beta^*_jt}}{\sqrt{4\pi \alpha^*_jt}}e^{-\frac{x^2}{4\alpha^*_jt}}\right]M_j^{*,0}-\alpha^*_j\partial_x\left[\frac{e^{\beta^*_jt}}{\sqrt{4\pi \alpha^*_jt}}e^{-\frac{x^2}{4\alpha^*_jt}}\right]M_j^{*,1}\\
			&+\frac{e^{\beta^*_jt}}{\sqrt{4\pi \alpha^*_jt}}e^{-\frac{x^2}{4\alpha^*_jt}}\left(\alpha^*_jM_j^{*,2}+\beta^*_jM_j^{*,0}\right),&\quad 0<t<1,\\
			&0,&\quad t\geq 1,
		\end{aligned}\right.\\
		&\partial_{xt}\bbg^{*,j}(x,t)=O(1)e^{-\sigma_0^*t-\sigma_0|x|}
		+\left\{\begin{aligned}
			&\alpha^*_j\partial^3_x\left[\frac{e^{\beta^*_jt}}{\sqrt{4\pi \alpha^*_jt}}e^{-\frac{x^2}{4\alpha^*_jt}}\right]M_j^{*,0}-\alpha^*_j\partial^2_x\left[\frac{e^{\beta^*_jt}}{\sqrt{4\pi \alpha^*_jt}}e^{-\frac{x^2}{4\alpha^*_jt}}\right]M_j^{*,1}\\
			& +\partial_x\left[\frac{e^{\beta^*_jt}}{\sqrt{4\pi \alpha^*_jt}}e^{-\frac{x^2}{4\alpha^*_jt}}\right]\left(\alpha^*_jM_j^{*,2}+\beta^*_jM_j^{*,0}\right)\\
			&+\frac{e^{\beta^*_jt}}{\sqrt{4\pi \alpha^*_jt}}e^{-\frac{x^2}{4\alpha^*_jt}}\left(\alpha^*_jM_j^{*,3}-\beta^*_jM_j^{*,1}\right),&\quad 0<t<1,\\
			&0,&\quad t\geq 1.
		\end{aligned}\right.
	\end{align*}
\end{theorem}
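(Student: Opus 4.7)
The plan is to mimic the structure of the proof of Theorem \ref{T4.1}, but with an extra factor of $\lambda_j^*$ inserted in the Fourier representation. Starting from $\partial_t \hat{\bbg}^{*,j} = \lambda_j^* e^{\lambda_j^* t} \hat{\bbm}_j^*$, I split $\lambda_j^*$ as a ``principal part'' $L_j$ plus a ``corrector'' $R_j(\eta)$ that gains an extra $(1+\eta^2)^{-1}$ factor at infinity. Specifically, by \eqref{lambda^*}, I take $L_1 = \tfrac{vp_v}{\mu}$ and $L_j = -\alpha_j^*\eta^2 + \beta_j^*$ for $j=2,3$, so that in each case $R_j(\eta) = \sum_{k=1}^3 A_{j,k}(1+\eta^2)^{-k} - K_j(1+\eta^2)^{-4}$ is analytic in $\{|\mathrm{Im}\,\eta|<\sigma_0\}$ and $O((1+\eta^2)^{-1})$ at infinity.

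For $j=1$, the principal piece gives $\tfrac{vp_v}{\mu}\, e^{\lambda_1^*t}\hat{\bbm}_1^*$, whose inverse Fourier transform is $\tfrac{vp_v}{\mu}\bbg^{*,1}(x,t)$; substituting the representation of $\bbg^{*,1}$ from Theorem \ref{T4.1} yields the stated leading term $\tfrac{vp_v}{\mu} e^{\frac{vp_v}{\mu}t}\delta(x)M_1^{*,0}$. The corrector $R_1(\eta)\,e^{\lambda_1^* t}\hat{\bbm}_1^*$ is analytic in the same strip and has the decay profile required by Lemma \ref{L4.3} with $m=0$; together with the bound $\sup\mathrm{Re}(\lambda_1^*)<-\sigma_0^*$ from Lemma \ref{L4.1}, this absorbs it into the remainder $O(1)e^{-\sigma_0^*t-\sigma_0|x|}$. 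The mixed derivative $\partial_{xt}\bbg^{*,1}$ is handled identically after multiplying by $i\eta$ and using the first-order term $M_1^{*,1}$ from expansion \eqref{M expansion}.

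For $j=2,3$, the decomposition produces
\[
\lambda_j^* e^{\lambda_j^* t}\hat{\bbm}_j^* \;=\; -\alpha_j^*\eta^2\, \hat{\bbg}^{*,j} \;+\; \beta_j^*\,\hat{\bbg}^{*,j} \;+\; R_j(\eta)\, e^{\lambda_j^* t}\hat{\bbm}_j^*.
\]
Inverting the first two pieces and substituting the explicit formulas from Theorem \ref{T4.1} gives $\alpha_j^*\partial_x^2 \bbg^{*,j} + \beta_j^* \bbg^{*,j}$, which for $0<t<1$ reproduces the claimed heat-kernel expansion: in particular the coefficient $\alpha_j^* M_j^{*,2} + \beta_j^* M_j^{*,0}$ in front of the Gaussian arises from combining the $M_j^{*,2}$-term of $\partial_x^2 \bbg^{*,j}$ with the $M_j^{*,0}$-term of $\bbg^{*,j}$. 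For the remainder $R_j e^{\lambda_j^*t}\hat{\bbm}_j^*$, on $0<t<1$ the extra $(1+\eta^2)^{-1}$ factor again allows Lemma \ref{L4.3} to deliver the $O(1)e^{-\sigma_0^*t-\sigma_0|x|}$ bound, while on $t\geq 1$ the stronger Gaussian damping $|e^{\lambda_j^*t}|\lesssim e^{-\sigma_0^*t}e^{-\frac{\alpha_j^*}{2}\eta^2 t}$ from Case 1 of Lemma \ref{L4.5} soaks up the polynomial factor $\eta^2$ coming from $-\alpha_j^*\eta^2\hat{\bbg}^{*,j}$, so the entire $\partial_t \bbg^{*,j}$ collapses into $O(1)e^{-\sigma_0^*t-\sigma_0|x|}$. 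The mixed derivative $\partial_{xt}\bbg^{*,j}$ follows by multiplying through by $i\eta$ and invoking the third-order expansion already recorded in Theorem \ref{T4.1}.

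The main obstacle I anticipate is bookkeeping the $M_j^{*,k}$ contributions. The formula for $\partial_x^2 \Lambda_j^*$ in Lemma \ref{L4.5} already contains a linear-in-$t$ term of the form $tA_{j,1}[\cdots]M_j^{*,0}$, so when I apply $\alpha_j^*\partial_x^2$ to the expansion of $\bbg^{*,j}$ from Theorem \ref{T4.1} such spurious $t$-factors reappear; they must cancel exactly against the corresponding contribution generated by the first term $A_{j,1}(1+\eta^2)^{-1}$ inside $R_j$. This cancellation is forced at the Fourier level by the identity $\partial_t e^{\lambda_j^* t} = \lambda_j^* e^{\lambda_j^* t}$, but realizing it cleanly requires expanding $e^{\lambda_j^* t}$ one order further than was needed for Theorem \ref{T4.1}, keeping the $A_{j,1}/(1+\eta^2)$ correction when it multiplies the leading $-\alpha_j^*\eta^2$ part of $\lambda_j^*$. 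Once this consistency is verified, the resulting coefficients match the claimed expression, and the remaining algebraic error terms fall into $O(1)e^{-\sigma_0^*t-\sigma_0|x|}$ by Lemma \ref{L4.3}.
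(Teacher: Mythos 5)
Your proposal is correct and follows essentially the same route as the paper: write $\partial_t\hat{\bbg}^{*,j}=\lambda_j^*e^{\lambda_j^*t}\hat{\bbm}_j^*$ (and $i\eta\lambda_j^*e^{\lambda_j^*t}\hat{\bbm}_j^*$ for the mixed derivative), split $\lambda_j^*$ into its polynomial principal part plus the corrector decaying like $(1+\eta^2)^{-1}$, and invert using Lemma \ref{L4.3} together with the formulas of Theorem \ref{T4.1}. One remark: the cancellation of the $tA_{j,1}$ terms that you flag as the main obstacle is not actually needed, since for $0<t<1$ such terms are $O(\sqrt{t})\,e^{-x^2/(4\alpha_j^*t)}$ and hence already absorbed into $O(1)e^{-\sigma_0^*t-\sigma_0|x|}$, which is why they do not appear in the second- and third-order formulas of Theorem \ref{T4.1} that the proof quotes.
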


\begin{proof}
	For time derivative, we will provide the proof for $\partial_t\bbg^{*,1}$; while for mixed derivative, we will give the proof for $\partial_{xt}\bbg^{*,j}$ with $j=2,3$. The other estimates can be constructed similarly. First, for $\partial_t\bbg^{*,1}$, the time derivative will be preserved under Fourier transform, and thus we have the following formula,
	\begin{equation*}
		\partial_t\hat{\bbg}^{*,1}(\eta,t)=\lambda_1^*e^{\lambda_1^*t}\hat{\bbm}_1^*=\left(\frac{v p_v}{\mu }+\sum_{k=1}^3\frac{A_{1,k}}{(1+\eta^2)^k}-\frac{K_1}{\left(\eta ^2+1\right)^4}\right)e^{\lambda_1^*t}\hat{\bbm}_1^*.
	\end{equation*}
	Then we apply Lemma \ref{L4.3} and the structure of singular part in Theorem \ref{T4.1} to obtain,
	\begin{equation}
		\begin{aligned}
			\partial_t\bbg^{*,1}(x,t)&=\mathcal{F}^{-1}\left[\left(\frac{v p_v}{\mu }+\sum_{k=1}^3\frac{A_{1,k}}{(1+\eta^2)^k}-\frac{K_1}{\left(\eta ^2+1\right)^4}\right)e^{\lambda_1^*t}\hat{\bbm}_1^*\right]\\
			&=\left(\frac{v p_v}{\mu }\delta(x)+O(1)e^{-\sigma_0^*t-\sigma_0|x|}\right)\star_x\left(e^{\frac{vp_v}{\mu}t}\delta(x)M_1^{*,0}+O(1)e^{-\sigma_0^*t-\sigma_0|x|}\right)\\
			&=\frac{v p_v}{\mu }e^{\frac{vp_v}{\mu}t}\delta(x)M_1^{*,0}+O(1)e^{-\sigma_0^*t-\sigma_0|x|}.
		\end{aligned}
	\end{equation}
	
	%
	
	\noindent Next, for $\partial_{xt}\bbg^{*,j}$ with $j=2,3$, we apply similar method as before to obtain the following exact formula under Fourier transform,
	%
	\begin{equation*}
		\mathcal{F}\left[\partial_{xt}\bbg^{*,j}(x,t)\right]=i\eta\lambda_j^*e^{\lambda_j^*t}\hat{\bbm}_j^*=\left(-i\alpha^*_j\eta^3+i\beta^*_j\eta+\sum_{k=1}^3\frac{A_{j,k}i\eta}{(1+\eta^2)^k}-\frac{K_ji\eta}{\left(\eta ^2+1\right)^4}\right)e^{\lambda_j^*t}\hat{\bbm}_j^*.
	\end{equation*}
	Then we apply inverse Fourier transform, use Lemma \ref{L4.3} and Theorem \ref{T4.1} to obtain the following estimates,
	\begin{align*}
		\partial_{xt}\bbg^{*,j}(x,t)&=\alpha^*_j\partial^3_x\bbg^{*,j}+\beta^*_j\partial_x\bbg^{*,j}+O(1)e^{-\sigma_0^*t-\sigma_0|x|}\\
		&=O(1)e^{-\sigma_0^*t-\sigma_0|x|}
		+\left\{\begin{aligned}
			&\alpha^*_j\partial^3_x\left[\frac{e^{\beta^*_jt}}{\sqrt{4\pi \alpha^*_jt}}e^{-\frac{x^2}{4\alpha^*_jt}}\right]M_j^{*,0}-\alpha^*_j\partial^2_x\left[\frac{e^{\beta^*_jt}}{\sqrt{4\pi \alpha^*_jt}}e^{-\frac{x^2}{4\alpha^*_jt}}\right]M_j^{*,1}\\
			& +\partial_x\left[\frac{e^{\beta^*_jt}}{\sqrt{4\pi \alpha^*_jt}}e^{-\frac{x^2}{4\alpha^*_jt}}\right]\left(\alpha^*_jM_j^{*,2}+\beta^*_jM_j^{*,0}\right)\\
			&+\frac{e^{\beta^*_jt}}{\sqrt{4\pi \alpha^*_jt}}e^{-\frac{x^2}{4\alpha^*_jt}}\left(\alpha^*_jM_j^{*,3}-\beta^*_jM_j^{*,1}\right),&\quad 0<t<1,\\
			&0,&\quad t\geq 1.
		\end{aligned}\right.
	\end{align*}
\end{proof}

\subsection{Regular parts of Green's function}

We next study the point-wise estimate of the regular parts, we define the regular part $\bbg^\dag$ to be the difference between the full Green's function $\bbg$ and the singular part $\bbg^*$, i.e.,
\[\bbg^\dag=\bbg-\bbg^*.\]
According to \eqref{B12} and \eqref{singular-G}, the Fourier transform of the regular part $\bbg^\dag$ has a natural representation as follows,
\bq\label{B34}
\hat{\bbg}^\dag(\eta,t;\bar{U})=\sum_{j=1}^3e^{\lambda_jt}\hat{\bbm}_j-\sum_{j=1}^3e^{\lambda^*_jt}\hat{\bbm}^*_j.
\eq
From \eqref{B34} and Remark \ref{R4.1}, we immediately see the vanishment of $\bbg^\dag$ at initial time, i.e., $\bbg^\dag(x,0)=\hat{\bbg}^\dag(\eta,0)=0$. However, it is not convenient to invert \eqref{B34} to construct the point-wise estimate of $\bbg^\dag$ since we do not have the full information of $\lambda_j$ and $\bbm_j$. Instead, we have the linear differential equations in \eqref{B1}, which provides the equation of $\bbg^\dag$ as below,
\bq\label{B35}
\left\{\begin{aligned}
	&\bbg^\dag_t+F'(\bar{U})\bbg^\dag_x-B(\bar{U})\bbg^\dag_{xx}=-\left(\bbg^*_t+F'(\bar{U})\bbg^*_x-B(\bar{U})\bbg^*_{xx}\right)=:-\Psi(x,t),\\
	&\bbg^\dag(x,0)=0,\quad \Psi(x,t)=\sum_{j=1}^3\Psi_j(x,t),\\
	&\Psi_j(x,t)=\bbg^{*,j}_t+F'(\bar{U})\bbg^{*,j}_x-B(\bar{U})\bbg^{*,j}_{xx}.
\end{aligned}
\right.
\eq
We will see in the next lemma that, the truncation error $\Psi_j$ owns high order regularity and fast space-time decay structure.

\begin{lemma}\label{LB8}
	Let  $\sigma_0$ and $\sigma_0^*$ be positive constants defined in Lemma \ref{L4.1}. Then the Fourier transform $\hat{\Psi}$ is analytic in the region $|Im(\eta)|<\sigma_0$ for $j=1,2,3$. Moreover, when $\eta$ tends to infinity, $\hat{\Psi}$ has the following asymptotic property,
	\[|\hat{\Psi}(\eta,t)|=O(1)e^{-\sigma^*_0t}(1+|\eta|)^{-6},\quad |\eta|\rightarrow+\infty,\quad |Im(\eta)|<\sigma_0.\]
	Consequently, let $\|\cdot\|_{H^4(\bbr)}$ be the fourth order Sobolev norm, then we have
	\[\|\Psi(x,t)\|_{H^4(\bbr)}=O(1)e^{-\sigma^*_0t},\quad \sum_{k=0}^4|\partial_x^k\Psi(x,t)|=O(1)e^{-\sigma_0^*t-\sigma_0|x|}.\]
\end{lemma}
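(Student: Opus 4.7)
The plan is to compute $\hat{\Psi}$ in closed form via an adjugate identity, then read off both the strip analyticity and the $(1+|\eta|)^{-6}$ decay directly from the approximation bound \eqref{B18}.

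First I would Fourier-transform the defining relation $\Psi_j = \bbg^{*,j}_t + F'(\bar U) \bbg^{*,j}_x - B(\bar U) \bbg^{*,j}_{xx}$. Since $\hat{\bbg}^{*,j}(\eta,t) = e^{\lambda_j^* t} \hat{\bbm}_j^*$, this gives
\[
\hat{\Psi}_j(\eta,t) = \bigl(\lambda_j^* I + i\eta F'(\bar U) + \eta^2 B(\bar U)\bigr)\, e^{\lambda_j^* t} \hat{\bbm}_j^*.
\]
By construction $\hat{\bbm}_j^*$ is the adjugate of $\lambda_j^* I + i\eta F'(\bar U) + \eta^2 B(\bar U)$ divided by $\prod_{k \neq j}(\lambda_j^* - \lambda_k^*)$. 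The identity $M\cdot \mathrm{adj}(M) = \det(M)\, I$, together with the factorization $\det(\lambda I + i\eta F'(\bar U) + \eta^2 B(\bar U)) = \prod_{k}(\lambda - \lambda_k)$ coming from \eqref{char}, yields the clean scalar formula
\[
\hat{\Psi}_j(\eta,t) = \frac{(\lambda_j^* - \lambda_j)\prod_{k \neq j}(\lambda_j^* - \lambda_k)}{\prod_{k \neq j}(\lambda_j^* - \lambda_k^*)}\, e^{\lambda_j^* t}\, I.
\]

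Next I would estimate this ratio. The leading factor $\lambda_j^* - \lambda_j = O(|\eta|^{-8})$ is precisely the content of \eqref{B18}. Writing $\lambda_j^* - \lambda_k = (\lambda_j^* - \lambda_k^*) + (\lambda_k^* - \lambda_k)$ and using the spectral gap $|\lambda_j^* - \lambda_k^*| \geq \sigma_1^*$ from Lemma \ref{L4.1}, the remaining quotient becomes $\prod_{k \neq j}\bigl(1 + O(|\eta|^{-8})\bigr)$, hence uniformly bounded in the strip $|\mathrm{Im}(\eta)| < \sigma_0$. Combined with the real-part bounds $\mathrm{Re}(\lambda_1^*) < -\sigma_0^*$ and $\mathrm{Re}(\lambda_j^*) + \tfrac{\mu}{2v}(\eta^2 + 1) < -\sigma_0^*$ (and analogously for $j=3$) from the same lemma, which give $|e^{\lambda_j^* t}| \leq e^{-\sigma_0^* t}$ for all three branches, summing over $j$ produces
\[
|\hat{\Psi}(\eta,t)| = O(1)\, e^{-\sigma_0^* t}\, (1+|\eta|)^{-6}, \qquad |\eta| \to \infty.
\]
Analyticity of $\hat{\Psi}$ in the strip is immediate: $\lambda_j^*$ (Lemma \ref{L4.1}) and $\hat{\bbm}_j^*$ (Lemma \ref{L4.2}) are both analytic there, and the spectral gap prevents the denominator from vanishing.

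The physical-side assertions then follow from Lemma \ref{L4.3} with $m = 2$, whose hypotheses on $\hat{f}$ match our frequency-side bound exactly. That gives $\Psi(\cdot,t) \in H^4(\bbr)$ together with $\sum_{k=0}^{4} |\partial_x^k \Psi(x,t)| = O(1)\, e^{-\sigma_0^* t - \sigma_0 |x|}$; the scalar prefactor $e^{-\sigma_0^* t}$ is untouched by the Cauchy contour shift inside Lemma \ref{L4.3}. The bound $\|\Psi(\cdot,t)\|_{H^4} = O(1)\, e^{-\sigma_0^* t}$ is either read off by Plancherel from the frequency estimate or integrated from the pointwise bound.

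The main obstacle is in the middle step: propagating the precision $O(|\eta|^{-8})$ through the ratio of products without paying for the $\eta^2$-scale growth of $|\lambda_1^* - \lambda_{2,3}^*|$. The remedy is to bound the ratio of differences using the universal lower bound $\sigma_1^*$ rather than the actual (possibly larger) gap, which guarantees each factor contributes only $1 + O(|\eta|^{-8})$ and no spurious polynomial growth enters. Once this bookkeeping is clean, everything else is a direct invocation of the already-established pointwise-transform machinery.
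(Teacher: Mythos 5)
Your argument is correct, but it takes a genuinely different route from the paper's. The paper starts from the same identity \eqref{B36}, then invokes the exact relation \eqref{B38} satisfied by the true Green's function, writes $\hat{\Psi}$ as the three-term difference \eqref{B39}, and estimates each term using the $O(|\eta|^{-8})$ approximations \eqref{B40}; since the symbol matrix $\lambda_j+i\eta F'(\bar{U})+\eta^2B(\bar{U})$ grows like $\eta^2$, that bookkeeping lands at the stated rate $(1+|\eta|)^{-6}$. You instead exploit the algebraic structure of $\hat{\bbm}_j^*$ as $\text{adj}\bigl(\lambda_j^*+i\eta F'(\bar{U})+\eta^2B(\bar{U})\bigr)\big/\prod_{k\neq j}(\lambda_j^*-\lambda_k^*)$ --- which is indeed the paper's construction in \eqref{B12}--\eqref{B13}, and is precisely what makes Lemma \ref{L4.2} and Remark \ref{R4.1} work --- so that $M\,\text{adj}(M)=\det(M)I$ collapses each $\hat{\Psi}_j$ to an explicit scalar multiple of the identity, the scalar being the characteristic polynomial evaluated at the approximate eigenvalue. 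With \eqref{B18} for the factor $\lambda_j^*-\lambda_j$ and the uniform gap $\sigma_1^*$ from Lemma \ref{L4.1} controlling the ratio of products (the quadratic growth of some gaps only helps, as you note), you get $O(1)e^{-\sigma_0^*t}(1+|\eta|)^{-8}$, which is stronger than the claimed $(1+|\eta|)^{-6}$, with no cancellation needed between the three branches and no reference to analyticity of the exact $\lambda_j$. What the paper's route buys is robustness (it never uses the adjugate structure of $\hat{\bbm}_j^*$, only the approximation bounds); what yours buys is a cleaner, sharper estimate.

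Two small points to make explicit. First, before invoking Lemma \ref{L4.3} for the $H^4$ and pointwise conclusions you also need $|\hat{\Psi}(\eta,t)|=O(1)e^{-\sigma_0^*t}$ uniformly on compact $\eta$-sets in the strip (the paper's \eqref{B37}), since Lemma \ref{L4.3} is applied to the $t$-dependent family $e^{\sigma_0^*t}\hat{\Psi}(\cdot,t)$ and its compact-region step requires boundedness uniform in $t$; with your representation this is immediate because everything except $e^{\lambda_j^*t}$ is bounded on compacts and $Re(\lambda_j^*)<-\sigma_0^*$ there, but it should be stated. Second, your claim $|e^{\lambda_j^*t}|\leq e^{-\sigma_0^*t}$ for $j=2,3$ throughout the strip uses $Re(\eta^2+1)\geq 1-\sigma_0^2>0$; worth a line, but harmless.
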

\begin{proof}
	We start with the Fourier transform $\hat{\Psi}_j$.  According to \eqref{singular-G}  and \eqref{B35}, $\hat{\Psi}_j$ satisfies the following equation,
	\bq\label{B36}
	\hat{\Psi}_j(\eta,t)=\hat{\bbg}^{*,j}_t+i\eta F'(\bar{U})\hat{\bbg}^{*,j}+\eta^2B(\bar{U})\hat{\bbg}^{*,j}=\left(\lambda_j^*+i\eta F'(\bar{U})+\eta^2B(\bar{U})\right)e^{\lambda_j^*t}\hat{\bbm}_j^*.
	\eq
	From Lemma \ref{L4.1}, we know $\lambda_j^*$ are analytic in the region $|Im(\eta)|<\sigma_0$ for $j=1,2,3$. Then the above formula immediately implies that $\hat{\Psi}_j$, and thus $\hat{\Psi}$, are analytic in the region $|Im(\eta)|<\sigma_0$. Moreover, if we further have $|\eta|<C$ for some positive constant $C$, then we can use the spectral gap of $\lambda_j^*$ in Lemma \ref{L4.1} to obtain that 
	\bq\label{B37}
	|\hat{\Psi}(\eta,t)|=O(1)e^{-\sigma^*_0t}.
	\eq
	Next, we will show the estimates of $\hat{\Psi}(\eta,t)$ when $\eta$ tends to infinity. In fact, from the equation of full Green's function $\bbg$ in \eqref{B2} and the representation of $\bbg$ in terms of $\bbg_j$ in \eqref{B12}, we have 
	\bq\label{B38}
	0=\sum_{j=1}^3\left(\lambda_j+i\eta F'(\bar{U})+\eta^2B(\bar{U})\right)e^{\lambda_jt}\hat{\bbm}_j.
	\eq
	Combining \eqref{B36} and \eqref{B38}, we obtain that 
	\bq\label{B39}
	\begin{aligned}
		\hat{\Psi}(\eta,t)=&\sum_{j=1}^3\left(\left(\lambda_j^*+i\eta F'(\bar{U})+\eta^2B(\bar{U})\right)e^{\lambda_j^*t}\hat{\bbm}_j^*-\left(\lambda_j+i\eta F'(\bar{U})+\eta^2B(\bar{U})\right)e^{\lambda_jt}\hat{\bbm}_j\right)\\
		=&\sum_{j=1}^3\left(\lambda_j^*-\lambda_j\right)e^{\lambda_j^*t}\hat{\bbm}_j^*+\sum_{j=1}^3\left(\lambda_j+i\eta F'(\bar{U})+\eta^2B(\bar{U})\right)e^{\lambda^*_jt}\left(1-e^{(\lambda_j-\lambda_j^*)t}\right)\hat{\bbm}^*_j\\
		&+\sum_{j=1}^3\left(\lambda_j+i\eta F'(\bar{U})+\eta^2B(\bar{U})\right)e^{\lambda_jt}\left(\hat{\bbm}^*_j-\hat{\bbm}_j\right).
	\end{aligned}
	\eq
	According to \eqref{B18} in Lemma \ref{L4.1}, we know the approximation from $\lambda_j^*$ to $\lambda_j$ is accurate up to eighth order, therefore we have the following asymptotic estimates when $\eta\rightarrow\infty$ in the region $|Im(\eta)|<\sigma_0$, 
	\bq\label{B40}
	\lambda_j^*=\lambda_j+O(1)\frac{1}{|\eta|^8},\quad e^{\lambda_j^*t}=e^{\lambda_jt}+O(1)\frac{1}{|\eta|^8},\quad \hat{\bbm}_j^*=\hat{\bbm}_j+O(1)\frac{1}{|\eta|^8},\quad Re(\lambda_j)<-\sigma^*_0.
	\eq
	Note in the matrix $\left(\lambda_j+i\eta F'(\bar{U})+\eta^2B(\bar{U})\right)$, the highest order of $\eta$ is two. Then we combine \eqref{B37}, \eqref{B39} and \eqref{B40} to obtain that 
	\begin{align*}
		&|\hat{\Psi}(\eta,t)|=O(1)e^{-\sigma_0^*t},\qquad\quad |\eta|\leq C,\quad |Im(\eta)|<\sigma_0,\\
		&|\hat{\Psi}(\eta,t)|=O(1)e^{-\sigma_0^*t}\frac{1}{|\eta|^6},\quad |\eta|\rightarrow+\infty,\quad |Im(\eta)|<\sigma_0.
	\end{align*}
	Finally, we combine above estimates with Lemma \ref{L4.3} to conclude the estimates in space time domain,
	\[\|\Psi(x,t)\|_{H^4(\bbr)}=O(1)e^{-\sigma^*_0t},\quad \sum_{k=0}^4|\partial_x^k\Psi(x,t)|=O(1)e^{-\sigma_0^*t-\sigma_0|x|}.\]
\end{proof}
According to Lemma \ref{LB8}, the source term in \eqref{B35} is regular, moreover the initial data is zero in \eqref{B35}, therefore there is no singularity in the regular part $\bbg^{\dag}$. In the following, we will split the estimates of $\bbg^\dag$ into two parts. We first use energy estimate to treat  $\bbg^\dag$ in initial layer and the region outside wave cone, then we use the spectral information, the long wave and short wave decomposition to construct  $\bbg^\dag$ inside the finite Mach region. 

\subsubsection{Weighted energy estimates}
In this part, we will introduce weighted energy estimate, which will give us the structure of $\bbg^\dag(x,t;\bar{U})$ in the region $0<t<1$ and $|x|>2Mt$ for some positive constants $M$ related to the Mach number. First of all, in order to make sure the matrix $B$ in \eqref{A2} is positive semi-definite, we need the following small perturbation assumption,
\bq\label{assumption on fix state}
\bar{U}=\left(\bar{v},\bar{u},\bar{E}\right),\quad \|\bar{U}-(1,0,1)\|<\varepsilon,\quad 0<\varepsilon\ll 1.
\eq 
\begin{lemma}\label{LB9}
	Under the assumption \eqref{assumption on fix state}, there exists a positive constant $\sigma$ such that the following estimates hold when $0<t<1$,
	\[\|e^{\sigma |x|}\bbg^\dag(x,t)\|_{H^4(\bbr)}\leq O(1)t,\quad \sum_{k=0}^3|\partial_x^k\bbg^\dag(x,t)|\leq O(1)te^{-\sigma |x|}.\]
\end{lemma}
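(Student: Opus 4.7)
The plan is to derive weighted $L^2$ energy estimates for $\bbg^\dag$ and its spatial derivatives up to order four, starting from the equation
\[
\bbg^\dag_t + F'(\bar{U})\bbg^\dag_x - B(\bar{U})\bbg^\dag_{xx} = -\Psi(x,t), \qquad \bbg^\dag(x,0)=0,
\]
together with the high-regularity/fast-decay estimate $\sum_{k=0}^4 |\partial_x^k \Psi(x,t)| = O(1) e^{-\sigma_0^* t - \sigma_0 |x|}$ provided by Lemma \ref{LB8}. Since $F'(\bar U)$ and $B(\bar U)$ are constant symmetric/positive-semidefinite data, the whole argument is a linear constant-coefficient energy estimate with a spatial weight, combined with Gr\"onwall in time.

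First I would introduce a regularized weight $w_\sigma(x)$ that equals $e^{\sigma|x|}$ outside a neighborhood of $0$ and is smooth at the origin, with $|w'_\sigma| \le \sigma w_\sigma$ and $|w''_\sigma| \le \sigma^2 w_\sigma + O(1)\mathbf{1}_{|x|\le 1}$. Multiply the equation by $w_\sigma^2\, \bbg^\dag$ and integrate. Integration by parts on the hyperbolic term $F'(\bar U)\bbg^\dag_x$ produces a commutator $-\int (w_\sigma^2)' F'(\bar U)\bbg^\dag \cdot \bbg^\dag$ bounded by $O(\sigma)\|w_\sigma \bbg^\dag\|_{L^2}^2$. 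Integration by parts on the viscous term $-B(\bar U)\bbg^\dag_{xx}$ gives the nonnegative dissipation $\int w_\sigma^2 B(\bar U)\bbg^\dag_x \cdot \bbg^\dag_x$ plus a cross term $\int (w_\sigma^2)' B(\bar U)\bbg^\dag_x \cdot \bbg^\dag$ which, by Cauchy--Schwarz, is dominated by $\tfrac12$ of the dissipation plus $O(\sigma^2)\|w_\sigma \bbg^\dag\|_{L^2}^2$. The source gives $\|w_\sigma \Psi\|_{L^2}\|w_\sigma \bbg^\dag\|_{L^2} \le O(1)e^{-\sigma_0^* t}\|w_\sigma \bbg^\dag\|_{L^2}$ provided $\sigma \le \sigma_0$.

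Next, since the coefficients are constant, differentiating the equation $k$ times in $x$ (for $k=1,2,3,4$) produces the same equation for $\partial_x^k \bbg^\dag$ with source $\partial_x^k \Psi$, which enjoys the same weighted bound by Lemma \ref{LB8}. Performing the analogous weighted energy estimate and summing $k=0,\dots,4$ yields
\[
\tfrac{d}{dt}\|w_\sigma \bbg^\dag\|_{H^4}^2 \le C\sigma \|w_\sigma \bbg^\dag\|_{H^4}^2 + C e^{-\sigma_0^* t}\|w_\sigma \bbg^\dag\|_{H^4}.
\]
Because $\bbg^\dag(\cdot,0)=0$ and we are on $0<t<1$, Gr\"onwall gives $\|w_\sigma \bbg^\dag\|_{H^4} \le O(1) t$, which is the first asserted estimate. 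The pointwise inequality $\sum_{k=0}^3 |\partial_x^k \bbg^\dag| \le O(1) t\, e^{-\sigma|x|}$ then follows from $H^1 \hookrightarrow L^\infty$ applied to $w_\sigma \partial_x^k \bbg^\dag$ for $k\le 3$, using the $H^4$ bound.

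The main obstacle I anticipate is the degeneracy of $B(\bar U)$: its $(1,1)$ block vanishes because the continuity equation $v_t = u_x$ carries no viscosity, so the dissipation does not control $\partial_x v^\dag$. This is handled by exploiting the structure $v_t = u_x$: once the weighted $H^4$ norm of $u^\dag$ is controlled by the dissipation, the $v^\dag$ component is recovered by integrating in time (using $\bbg^\dag|_{t=0}=0$), which costs only a factor of $t \le 1$ and feeds back into the Gr\"onwall loop. The smallness assumption \eqref{assumption on fix state} is used precisely to keep $B(\bar U)$ positive semi-definite with the correct block structure and to ensure the constant $\sigma$ can be chosen independent of $\bar U$ in a neighborhood of $(1,0,1)$.
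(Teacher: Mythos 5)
Your overall architecture (weighted $L^2$ energy estimate, differentiating the constant-coefficient equation in $x$ to reach $H^4$, Gr\"onwall with zero initial data on $0<t<1$, then Sobolev embedding for the pointwise bound) is exactly the paper's strategy. However, there is a genuine gap in your treatment of the hyperbolic term. You assert that $F'(\bar U)$ is symmetric, but it is not: at $\bar U=(1,0,c_v)$ one has $F'(\bar U)_{12}=-1$ while $F'(\bar U)_{21}=-K$, and $F'(\bar U)_{23}=K/c_v$ while $F'(\bar U)_{32}=K$. Consequently the identity $\mathrm{tr}\bigl((\bbg^\dag)^{\mathsf T}F'(\bar U)\bbg^\dag_x\bigr)=\tfrac12\partial_x\,\mathrm{tr}\bigl((\bbg^\dag)^{\mathsf T}F'(\bar U)\bbg^\dag\bigr)$ fails, and integration by parts does \emph{not} reduce $\int w_\sigma^2\,\mathrm{tr}\bigl((\bbg^\dag)^{\mathsf T}F'(\bar U)\bbg^\dag_x\bigr)dx$ to a commutator of size $O(\sigma)\|w_\sigma\bbg^\dag\|_{L^2}^2$; the antisymmetric part leaves genuine first-order terms. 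These must be absorbed by the (degenerate) dissipation, and this is precisely where the structure matters: one needs that, after moving derivatives by parts, the surviving $\partial_x$'s land only on the rows of $\bbg^\dag$ seen by $B(\bar U)=\mathrm{diag}(0,\mu,\kappa/c_v)$, and then one borrows from the dissipation via Cauchy--Schwarz with an adjustable large constant. The paper implements this by first multiplying the equation by the symmetrizer $Q$ of \eqref{Q}, splitting $QF'(\bar U)=S_1+P_1$ and $QB(\bar U)=S_2+P_2$ as in \eqref{B43}, and exploiting that the first row of $P_1$ vanishes so that $\mathrm{tr}\bigl((\bbg^\dag)_x^{\mathsf T}P_1\bbg^\dag\bigr)$ contains no derivative of the first (inviscid) row; the absorption is then done in \eqref{B46}--\eqref{B47} by choosing $C_3$ large. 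Your sketch, as written, skips this entire mechanism.

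Relatedly, your proposed repair for the degeneracy of $B(\bar U)$ (recovering the $v$-component by integrating $v_t=u_x$ in time) addresses the wrong issue: in the energy identity the $v$-row of $\bbg^\dag$ itself is harmless (it sits inside the Gr\"onwall term); what must be avoided is any uncontrolled occurrence of $\partial_x$ applied to that row, which is exactly what the non-symmetry of $F'(\bar U)$ threatens to produce and what the symmetrizer/structure argument prevents. Once you insert the symmetrizer $Q$ (or, equivalently, integrate by parts so that all leftover derivatives fall on the dissipative rows and absorb them with a large constant), the rest of your argument---the source bound from Lemma \ref{LB8} with $\sigma\le\sigma_0$, differentiation in $x$ up to order four, Gr\"onwall giving $O(1)t$, the two-sided weight, and the $H^1\hookrightarrow L^\infty$ step---goes through and coincides with the paper's proof.
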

\begin{proof}
	According to constitutive relation \eqref{constitution} and the assumption \eqref{assumption on fix state}, we immediately obtain that $p-p_eu^2>0$ at the fixed state $\bar{U}$. Therefore, we have the following matrix $Q$ is positive definite,
	\bq\label{Q}
	Q:=\begin{pmatrix}
		-p_vp_e^{-1}\left(p-p_eu^2\right) & 0 & 0\\ 0& p_e^{-1}\left(p-p_eu^2\right) &0\\0&0&1
	\end{pmatrix}.
	\eq
	Then we multiply $Q$ to \eqref{B35} to obtain an equivalent equation as follows,
	\bq\label{B43}
	Q\bbg^\dag_t+QF'(\bar{U})\bbg^\dag_x-QB(\bar{U})\bbg^\dag_{xx}=-Q\Psi(x,t),
	\eq
	where the coefficients have the following decompositions,
	\begin{align*}
		&QF'(\bar{U})=S_1+P_1,\quad S_1=\begin{pmatrix}0 & -p_vp_e^{-1}\left(p-p_eu^2\right) & 0\\  -p_vp_e^{-1}\left(p-p_eu^2\right)& -u\left(p-p_eu^2\right)&p-p_eu^2\\ 0&p-p_eu^2&p_eu\end{pmatrix},\quad P_1=\begin{pmatrix}0&0&0\\0&0&0\\p_vu&0&0\end{pmatrix},\\
		&QB(\bar{U})=S_2+P_2,,\quad S_2=\begin{pmatrix}0 &0 &0\\0& \frac{\mu\left(p-p_eu^2\right)}{vp_e} & 0\\0 & 0 & \frac{\kappa\theta_e}{v}\end{pmatrix},\quad P_2=\begin{pmatrix}0&0&0\\0&0&0\\0&\left(\frac{\mu }{v}-\frac{\kappa\theta_e}{v}\right)u&0\end{pmatrix},
	\end{align*}
	where all the state variables take values at $\bar{U}$ in \eqref{B43}. In \eqref{B43}, $S_1$ and $P_1$ ($S_2$ and $P_2$) are the symmetric part and non-symmetric part of the matrix $QF'(\bar{U})$ (matrix $QB(\bar{U})$) respectively. Then we choose a positive constant $\sigma$ such that $0<\sigma<\sigma_0$ (given in Lemma \ref{L4.1}), multiply $e^{2\sigma x}(\bbg^\dag)^\mathsf{T}$ to \eqref{B43}, take the trace norm and integrate with respect to $x$ to have 
	\bq\label{B44}
	\begin{aligned}
		&\int_\bbr tr\left(e^{2\sigma x}(\bbg^\dag)^\mathsf{T} Q\bbg^\dag_t\right)dx+\int_\bbr tr\left(e^{2\sigma x}(\bbg^\dag)^\mathsf{T}(S_1+P_1)\bbg^\dag_x\right)dx-\int_\bbr tr\left(e^{2\sigma x}(\bbg^\dag)^\mathsf{T}(S_2+P_2)\bbg^\dag_{xx}\right)dx\\
		&=\int_\bbr tr\left(-e^{2\sigma x}(\bbg^\dag)^\mathsf{T}Q\Psi(x,t)\right)dx,
	\end{aligned}
	\eq
	where $Q^\mathsf{T}$ is the transpose of $Q$ and $tr(Q)$ is the trace of the matrix $Q$. Now as $S_1$ is symmetric with all elements being constants, we have the following relation 
	\[tr\left((\bbg^\dag)^\mathsf{T}S_1\bbg^\dag_x\right)=\frac{1}{2}tr\left((\bbg^\dag)^\mathsf{T}S_1\bbg^\dag\right)_x,\] 
	and similar identity holds for $S_2$. Therefore, we substitute above relation into \eqref{B44} and apply integration by parts to conclude that 
	\bq\label{B45}
	\begin{aligned}
		&\frac{1}{2}\frac{d}{dt}\int_\bbr e^{2\sigma x}tr\left((\bbg^\dag)^\mathsf{T} Q\bbg^\dag\right)dx\\
		&-\int_\bbr\sigma e^{2\sigma x}tr\left((\bbg^\dag)^\mathsf{T}(S_1)\bbg^\dag\right)dx-\int_\bbr e^{2\sigma x}tr\left((\bbg^\dag)_x^\mathsf{T}(P_1)\bbg^\dag\right)dx-\int_\bbr 2\sigma e^{2\sigma x}tr\left((\bbg^\dag)^\mathsf{T}(P_1)\bbg^\dag\right)dx\\
		&+\int_\bbr e^{2\sigma x}tr\left((\bbg^\dag)_x^\mathsf{T}B(\bar{U})\bbg^\dag_{x}\right)dx-\int_\bbr 2\sigma^2 e^{2\sigma x} tr\left((\bbg^\dag)^\mathsf{T}(S_2)\bbg^\dag\right)dx+\int_\bbr 2\sigma e^{2\sigma x}tr\left((\bbg^\dag)^\mathsf{T}(P_2)\bbg^\dag_{x}\right)dx\\
		&=\int_\bbr tr\left(-e^{2\sigma x}(\bbg^\dag)^\mathsf{T}Q\Psi(x,t)\right)dx.
	\end{aligned}
	\eq
	It is obvious that $Q$ is positive definite according to \eqref{Q}, thus the trace of $(\bbg^\dag)^\mathsf{T} Q\bbg^\dag_t$ is equivalent to the trace of $(\bbg^\dag)^\mathsf{T}\bbg^\dag_t$. Actually, we can find two positive constants $C_1$, $C_2$, $C_3$ and apply Young's inequality and Cauchy inequality to obtain that 
	\bq\label{B46}
	\begin{aligned}
		&0<C_1tr\left((\bbg^\dag)^\mathsf{T} Q\bbg^\dag\right)\leq tr\left((\bbg^\dag)^\mathsf{T}\bbg^\dag\right)\leq C_2tr\left((\bbg^\dag)^\mathsf{T} Q\bbg^\dag\right),\\
		&0<C_1tr\left((\bbg^\dag)_x^\mathsf{T}B(\bar{U})\bbg^\dag_{x}\right)\leq tr\left((\bbg^\dag)_x^\mathsf{T}\begin{pmatrix}0&0&0\\0&1&0\\0&0&1\end{pmatrix}\bbg^\dag_{x}\right)\leq C_2tr\left((\bbg^\dag)_x^\mathsf{T}B(\bar{U})\bbg^\dag_{x}\right),\\
		&\left|tr\left((\bbg^\dag)^\mathsf{T}(S_1)\bbg^\dag\right)\right|+\left|tr\left((\bbg^\dag)^\mathsf{T}(P_1)\bbg^\dag\right)\right|+\left|
		tr\left((\bbg^\dag)^\mathsf{T}(S_2)\bbg^\dag\right)\right|\leq C_2tr\left((\bbg^\dag)^\mathsf{T}\bbg^\dag\right),\\
		&\left|tr\left((\bbg^\dag)_x^\mathsf{T}(P_1)\bbg^\dag\right)\right|+\left|tr\left((\bbg^\dag)^\mathsf{T}(P_2)\bbg^\dag_{x}\right)\right|\leq \frac{C_2}{C_3}tr\left((\bbg^\dag)_x^\mathsf{T}\begin{pmatrix}0&0&0\\0&1&0\\0&0&1\end{pmatrix}\bbg_x^\dag\right)+C_3C_2tr\left((\bbg^\dag)^\mathsf{T}\bbg^\dag\right),\\
		&\left|tr\left((\bbg^\dag)^\mathsf{T}Q\Psi(x,t)\right)\right|\leq C_2\sqrt{tr\left((\bbg^\dag)^\mathsf{T}\bbg^\dag\right)}\sqrt{tr\left((\Psi(x,t))^\mathsf{T}\Psi(x,t)\right)},
	\end{aligned}
	\eq
	where the second inequality holds because $B(\bar{U})$ is positive semi-definite and the first row and first column vanish in $B(\bar{U})$. Similarly, in the fourth inequality, the first row of matrix $P_1$ vanishes, which implies that $tr\left((\bbg^\dag)_x^\mathsf{T}(P_1)\bbg^\dag\right)$ does not contain the differentiation of the elements in the first row of $\bbg^\dag$, and thus the fourth inequality holds. Now we combine \eqref{B45}, \eqref{B46} and H\"older's inequality to obtain that 
	\bq\label{B47}
	\begin{aligned}
		&\frac{1}{2}\frac{d}{dt}\int_\bbr e^{2\sigma x}tr\left((\bbg^\dag)^\mathsf{T} Q\bbg^\dag\right)dx\\
		&\leq \left(3\sigma C_2^2+2C_2^2\sigma^2+\left(1+2\sigma\right)C_3C_2^2\right)\int_\bbr e^{2\sigma x}tr\left((\bbg^\dag)^\mathsf{T}Q\bbg^\dag\right)dx\\
		&+\left(\frac{C_2}{C_3}\left(1+2\sigma\right)-\frac{1}{C_2}\right)\int_\bbr e^{2\sigma x}tr\left((\bbg^\dag)_x^\mathsf{T}\begin{pmatrix}0&0&0\\0&1&0\\0&0&1\end{pmatrix}\bbg_x^\dag\right)dx\\
		&+ (C_2)^{\frac{3}{2}}\sqrt{\int_\bbr e^{2\sigma x}tr\left((\bbg^\dag)^\mathsf{T} Q\bbg^\dag\right)dx}\sqrt{\int_\bbr e^{2\sigma x}tr\left((\Psi(x,t))^\mathsf{T}\Psi(x,t)\right)dx}.
	\end{aligned}
	\eq
	Now, we let $C_3$ be large enough so that the coefficient of the higher order term in \eqref{B47} becomes negative. Then \eqref{B47} implies for a positive constant $C$ that 
	\begin{align*}
		\frac{d}{dt}\sqrt{\int_\bbr e^{2\sigma x}tr\left((\bbg^\dag)^\mathsf{T} Q\bbg^\dag\right)dx}\leq C\sqrt{\int_\bbr e^{2\sigma x}tr\left((\bbg^\dag)^\mathsf{T}Q\bbg^\dag\right)dx}+ (C_2)^{\frac{3}{2}}\|e^{\sigma x}\Psi(x,t)\|_{L^2(\bbr)}.
	\end{align*}
	According to Lemma \ref{LB8} and the assumption that $\sigma<\sigma_0$, we know that $\|e^{\sigma x}\Psi(x,t)\|_{L^2(\bbr)}$ is integrable. Moreover, as the initial data is zero for $\bbg^\dag$ and we only consider the case $0<t<1$, we can apply Gronwall's inequality to obtain that 
	\[\sqrt{\int_\bbr e^{2\sigma x}tr\left((\bbg^\dag)^\mathsf{T} Q\bbg^\dag\right)dx}\leq \int_0^te^{C(t-s)}(C_2)^{\frac{3}{2}}\|e^{\sigma x}\Psi(x,s)\|_{L^2(\bbr)}ds\leq \int_0^tO(1)ds=O(1)t.\]
	Then we apply \eqref{B46} again to have 
	\[\|e^{\sigma x}\bbg^\dag\|_{L^2(\bbr)}=\sqrt{\int_\bbr e^{2\sigma x}tr\left((\bbg^\dag)^\mathsf{T}\bbg^\dag\right)dx}\leq C_2\sqrt{\int_\bbr e^{2\sigma x}tr\left((\bbg^\dag)^\mathsf{T} Q\bbg^\dag\right)dx}\leq O(1)t.\]
	Next, we consider $-\sigma_0<\sigma<0$, and we can obtain the similar estimate as above. Thus we obtain the desired zeroth order estimate as below,
	\[\|e^{\sigma |x|}\bbg^\dag\|_{L^2(\bbr)}\leq O(1)t,\quad 0<t<1.\]
	Since \eqref{B35} is a linear system, the higher order estimates can be similarly constructed. As $\Psi$ has finite Sobolev norm up to fourth order, we have the estimates as follows,
	\[\|e^{\sigma |x|}\bbg^\dag\|_{H^4(\bbr)}\leq O(1)t,\quad 0<t<1.\]
	Finally, we apply Sobolev inequality and above $L^2$ norm of higher order derivatives to obtain that 
	\[\|\partial^k_x\bbg^\dag\|_{L^\infty(\bbr)}\leq O(1)te^{-\sigma |x|},\quad 0\leq k\leq 3,\quad 0<t<1.\]
\end{proof}
Next, we study the pointwise structure of $\bbg^\dag$ for $t>1$ and the region outside the wave cone. More precisely, we will consider the region $1\leq t$ and $|x|>2Mt$, where $M$ is a sufficient large positive constant related to the sound speed. 

\begin{lemma}\label{LB10}
	Under the assumption \eqref{assumption on fix state}, there exist positive constants $\sigma$ and $M$ such that the following estimates hold when $t\geq 1$,
	\[\|e^{\pm\sigma (x-Mt)}\bbg^\dag(x,t)\|_{H^4(\bbr)}\leq O(1)e^{-\sigma_0^*t},\quad \sum_{k=0}^3|\partial_x^k\bbg^\dag(x,t)|\leq O(1)e^{-\frac{\sigma}{2} |x|-\sigma_0^*t},\quad t\geq 1,\quad |x|>2Mt.\]
\end{lemma}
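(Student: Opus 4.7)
The plan is to adapt the weighted-energy argument from Lemma \ref{LB9} to a moving-frame weight. I introduce weights of the form $w_{\pm}(x,t)=e^{\pm\sigma(x-Mt)}$ with $\sigma\in(0,\sigma_0)$ and $M>0$ to be chosen strictly larger than the spectral radius of $F'(\bar U)$. In order to force the time-derivative of the weight to contribute a decay term, in the minus case one should read the weight as $e^{-\sigma(x+Mt)}$, i.e.\ pair each spatial sign with a compatible temporal sign; both choices satisfy $\partial_t w_\pm^2=-2\sigma M w_\pm^2$. Multiplying the symmetrized equation \eqref{B43} by $2w_\pm^2(\bbg^\dag)^{\mathsf T}$, tracing, and integrating in $x$ produces, after the splittings $QF'(\bar U)=S_1+P_1$ and $QB(\bar U)=S_2+P_2$ used in Lemma \ref{LB9}, the identity
\begin{align*}
&\frac{d}{dt}\int_\bbr w_\pm^2\,\mathrm{tr}((\bbg^\dag)^{\mathsf T}Q\bbg^\dag)\,dx+2\int_\bbr w_\pm^2\,\mathrm{tr}((\bbg^\dag_x)^{\mathsf T}S_2\bbg^\dag_x)\,dx \\
&\quad=-2\sigma M\int_\bbr w_\pm^2\,\mathrm{tr}((\bbg^\dag)^{\mathsf T}Q\bbg^\dag)\,dx\pm 2\sigma\int_\bbr w_\pm^2\,\mathrm{tr}((\bbg^\dag)^{\mathsf T}S_1\bbg^\dag)\,dx+\{P_1,P_2\ \textrm{terms}\}-2\int_\bbr w_\pm^2\,\mathrm{tr}((\bbg^\dag)^{\mathsf T}Q\Psi)\,dx.
\end{align*}

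The $P_1$ and $P_2$ cross-terms are handled exactly as in Lemma \ref{LB9}: their first rows and columns vanish, so any appearance of $\bbg^\dag_x$ is absorbed against the dissipation $\int w_\pm^2(\bbg^\dag_x)^{\mathsf T}B(\bar U)\bbg^\dag_x\,dx$ via Cauchy--Schwarz/Young. Choosing $M$ large enough that $2\sigma M$ dominates the bound $C\sigma\|S_1\|$ on the symmetric hyperbolic term (and the $P_1,P_2$ constants), I obtain the differential inequality
\begin{align*}
\frac{d}{dt}\int_\bbr w_\pm^2\,\mathrm{tr}((\bbg^\dag)^{\mathsf T}Q\bbg^\dag)\,dx\leq -c\int_\bbr w_\pm^2\,\mathrm{tr}((\bbg^\dag)^{\mathsf T}Q\bbg^\dag)\,dx+C\|w_\pm\Psi\|_{L^2}^2
\end{align*}
for some $c>0$. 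Factoring $w_\pm=e^{-\sigma Mt}e^{\pm\sigma x}$ and using the pointwise bound $|\Psi|\leq O(1)e^{-\sigma_0^*t-\sigma_0|x|}$ from Lemma \ref{LB8}, together with $\sigma<\sigma_0$ so $\int e^{\pm 2\sigma x-2\sigma_0|x|}\,dx<\infty$, gives $\|w_\pm\Psi\|_{L^2}^2\leq O(1)e^{-(2\sigma M+2\sigma_0^*)t}$. At $t=1$, Lemma \ref{LB9} yields $\|e^{\sigma|x|}\bbg^\dag(\cdot,1)\|_{L^2}\leq O(1)$, and since $w_\pm(x,1)\leq Ce^{\sigma|x|}$, the starting value of the weighted energy is $O(1)$. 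Gronwall's inequality then produces $\|w_\pm\bbg^\dag\|_{L^2}^2\leq O(1)e^{-2\sigma_0^*t}$, possibly after replacing $\sigma_0^*$ by a slightly smaller positive constant still called $\sigma_0^*$.

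Because \eqref{B35} is a constant-coefficient linear system and $\Psi$ satisfies the higher-derivative bounds in Lemma \ref{LB8}, the same weighted argument applied to $\partial_x^k\bbg^\dag$ for $k\leq 4$ upgrades this to $\|w_\pm\bbg^\dag\|_{H^4}\leq O(1)e^{-\sigma_0^*t}$. One-dimensional Sobolev embedding $L^\infty\hookrightarrow H^1$ then gives $|\partial_x^k\bbg^\dag(x,t)|\leq O(1)w_\pm^{-1}(x,t)e^{-\sigma_0^*t}$ for $k\leq 3$. In the region $|x|>2Mt$, restricted to the appropriate half-line (right half-line for the weight used on $\{x>0\}$, left for the other), the elementary estimate $|x\mp Mt|\geq|x|/2$ yields $w_\pm^{-1}(x,t)\leq e^{-\sigma|x|/2}$, completing the pointwise decay bound.

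The main technical obstacle is arranging that the decay term $-2\sigma M\int w_\pm^2\,\mathrm{tr}((\bbg^\dag)^{\mathsf T}Q\bbg^\dag)\,dx$ coming from $\partial_t w_\pm^2$ strictly dominates the first-order hyperbolic contribution $\pm 2\sigma\int w_\pm^2\,\mathrm{tr}((\bbg^\dag)^{\mathsf T}S_1\bbg^\dag)\,dx$ as well as the non-symmetric $P_1,P_2$ cross-terms, uniformly in the small-perturbation regime \eqref{assumption on fix state}; this is precisely where $M$ being larger than the spectral radius of $F'(\bar U)$ at $\bar U$ close to $(1,0,1)$ is essential. A secondary but necessary bookkeeping issue is the sign pairing noted above: the two weights in the statement must be oriented so that $\partial_t w^2$ is always negative, otherwise the weighted energy grows rather than decays, and the Gronwall step fails.
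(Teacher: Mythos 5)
Your proposal is correct and follows essentially the same route as the paper: the identical moving-weight energy estimate built on the symmetrization $Q$, $S_1+P_1$, $S_2+P_2$ of Lemma \ref{LB9}, with $M$ chosen large so the $-2\sigma M$ term dominates the hyperbolic and cross terms, Gronwall against the exponentially decaying source $\Psi$ from Lemma \ref{LB8}, the same linearity argument for the $H^4$ bounds, and Sobolev embedding together with $|x\mp Mt|\geq |x|/2$ in the region $|x|>2Mt$. Your sign pairing for the second weight (reading it as $e^{\sigma(-x-Mt)}$ so that $\partial_t w^2<0$) is exactly what the paper's proof does despite the $e^{\pm\sigma(x-Mt)}$ in the statement; the only cosmetic difference is that the paper runs Gronwall from $t=0$ with zero initial data rather than starting at $t=1$ from Lemma \ref{LB9}.
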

\begin{proof}
	We will continue using the notations in Lemma \ref{LB9}. We first choose a sufficiently large constant $M$ and a positive constant $\sigma$ such that $0<\sigma<\sigma_0$. Then, we multiply \eqref{B43} by $e^{2\sigma (x-Mt)}(\bbg^\dag)^\mathsf{T}$, take the trace norm and integrate with respect to $x$ to have 
	\bq\label{B48}
	\begin{aligned}
		&\frac{1}{2}\frac{d}{dt}\int_\bbr tr\left(e^{2\sigma (x-Mt)}\left((\bbg^\dag)^\mathsf{T} Q\bbg^\dag\right)\right)dx+2\sigma M\int_\bbr tr\left(e^{2\sigma (x-Mt)}\left((\bbg^\dag)^\mathsf{T} Q\bbg^\dag\right)\right)dx\\
		&-\int_\bbr\sigma e^{2\sigma (x-Mt)}tr\left((\bbg^\dag)^\mathsf{T}(S_1)\bbg^\dag\right)dx-\int_\bbr e^{2\sigma (x-Mt)}tr\left((\bbg^\dag)_x^\mathsf{T}(P_1)\bbg^\dag\right)dx-\int_\bbr 2\sigma e^{2\sigma (x-Mt)}tr\left((\bbg^\dag)^\mathsf{T}(P_1)\bbg^\dag\right)dx\\
		&+\int_\bbr e^{2\sigma (x-Mt)}tr\left((\bbg^\dag)_x^\mathsf{T}B(\bar{U})\bbg^\dag_{x}\right)dx-\int_\bbr 2\sigma^2 e^{2\sigma (x-Mt)} tr\left((\bbg^\dag)^\mathsf{T}(S_2)\bbg^\dag\right)dx+\int_\bbr 2\sigma e^{2\sigma (x-Mt)}tr\left((\bbg^\dag)^\mathsf{T}(P_2)\bbg^\dag_{x}\right)dx\\
		&=\int_\bbr tr\left(-e^{2\sigma (x-Mt)}(\bbg^\dag)^\mathsf{T}Q\Psi(x,t)\right)dx.
	\end{aligned}
	\eq
	Comparing to \eqref{B45}, the only difference is that \eqref{B48} contains one more term with coefficient $M$. Then we apply \eqref{B46} to have the following estimates,
	\bq\label{B49}
	\begin{aligned}
		&\frac{1}{2}\frac{d}{dt}\int_\bbr e^{2\sigma x}tr\left((\bbg^\dag)^\mathsf{T} Q\bbg^\dag\right)dx\\
		&\leq \left(3\sigma C_2^2+2C_2^2\sigma^2+\left(1+2\sigma\right)C_3C_2^2-2\sigma M\right)\int_\bbr e^{2\sigma (x-Mt)}tr\left((\bbg^\dag)^\mathsf{T}Q\bbg^\dag\right)dx\\
		&+\left(\frac{C_2}{C_3}\left(1+2\sigma\right)-\frac{1}{C_2}\right)\int_\bbr e^{2\sigma  (x-Mt)}tr\left((\bbg^\dag)_x^\mathsf{T}\begin{pmatrix}0&0&0\\0&1&0\\0&0&1\end{pmatrix}\bbg_x^\dag\right)dx\\
		&+ (C_2)^{\frac{3}{2}}\sqrt{\int_\bbr e^{2\sigma  (x-Mt)}tr\left((\bbg^\dag)^\mathsf{T} Q\bbg^\dag\right)dx}\sqrt{\int_\bbr e^{2\sigma (x-Mt)}tr\left((\Psi(x,t))^\mathsf{T}\Psi(x,t)\right)dx}.
	\end{aligned}
	\eq
	Therefore, we can let $C_2\ll C_3\ll M$ so that the coefficients of zeroth order term and first order term are both negative. Then we have 
	\[\frac{d}{dt}\sqrt{\int_\bbr e^{2\sigma (x-Mt)}tr\left((\bbg^\dag)^\mathsf{T} Q\bbg^\dag\right)dx}\leq -\sigma_0^*\sqrt{\int_\bbr e^{2\sigma (x-Mt)}tr\left((\bbg^\dag)^\mathsf{T}Q\bbg^\dag\right)dx}+ (C_2)^{\frac{3}{2}}\|e^{\sigma (x-Mt)}\Psi(x,t)\|_{L^2(\bbr)}.
	\] 
	According to Lemma \ref{LB8} and the assumption that $\sigma<\sigma_0$, we know that $\|e^{\sigma x}\Psi(x,t)\|_{L^2(\bbr)}$ is integrable and has time exponential decay. Moreover, the initial data is zero for $\bbg^\dag$, therefore, we apply Gronwall's inequality to obtain that 
	\begin{align*}
		&\|e^{\sigma (x-Mt)}\bbg^\dag\|_{L^2(\bbr)}\\
		&\leq C_2\sqrt{\int_\bbr e^{2\sigma (x-Mt)}tr\left((\bbg^\dag)^\mathsf{T} Q\bbg^\dag\right)dx}\\
		&\leq C_2\int_0^te^{-\sigma_0^*(t-s)}(C_2)^{\frac{3}{2}}\|e^{\sigma (x-Ms)}\Psi(x,s)\|_{L^2(\bbr)}ds\\
		&\leq O(1)\int_0^te^{-\sigma_0^*(t-s)}\|e^{\sigma (x-Ms)}e^{-\sigma_0x-\sigma_0^*s}\|_{L^2(\bbr)}ds\\
		&\leq O(1)\int_0^te^{-\sigma_0^*(t-s)}e^{-\sigma Ms}e^{-\sigma_0^*s}ds\\
		&\leq O(1)e^{-\sigma_0^*t}.
	\end{align*}
	Similarly, we can also get
	\[
	\|e^{\sigma (-x-Mt)}\bbg^\dag\|_{L^2(\bbr)}\leq O(1) e^{-\sigma_0^*t}.
	\]
	Then as in Lemma \ref{LB9}, the higher order estimates can be obtained in a similar way for the linear system \eqref{B43}. 
	Finally for $|x|>2Mt$, we apply Sobolev inequality to obtain the $L^\infty$ estimates as below,
	\[\|\partial_x^k\bbg^\dag(x,t)\|_{L^\infty(\bbr)}\leq O(1)e^{-\sigma(|x|-Mt)-\sigma_0^*t}\leq O(1)e^{-\frac{\sigma |x|}{2}-\sigma_0^*t},\quad t\geq 1,\quad |x|>2Mt,\quad k=1,2,3.\]
\end{proof}

\subsubsection{Long wave and short wave decomposition}
In this part, we will construct the estimates of $\bbg^\dag$ in the region $|x|\leq 2Mt$ when $t\geq 1$. First, for full Green's function $\bbg$, we can decompose it into long wave part $\bbg_L$ and short wave part $\bbg_S$ as follows,
\bq\label{B50}
\bbg=\bbg_L+\bbg_S,\qquad\hat{\bbg}_L(\eta,t):=\left\{\begin{aligned}&\hat{\bbg}(\eta,t),\quad |\eta|<\delta,\\ &0,\quad |\eta|\geq\delta, \end{aligned}\right.\qquad \hat{\bbg}_S(\eta,t):=\left\{\begin{aligned}&0,\quad |\eta|<\delta,\\ &\hat{\bbg}(\eta,t),\quad |\eta|\geq\delta, \end{aligned}\right.
\eq
where $\delta$ is a sufficient small positive constant. Then, we can similarly define the long wave part $\bbg^*_L$ ($\bbg^\dag_L$) and short wave part $\bbg^*_S$ ($\bbg^\dag_S$) for $\bbg^*$ ($\bbg^\dag$). From \eqref{B50}, we immediately have 
\bq\label{B51}
\bbg_L=\bbg^*_L+\bbg^\dag_L,\qquad \bbg_S=\bbg^*_S+\bbg^\dag_S.
\eq
In the following, we will study the long wave part and short wave part respectively.\newline

\noindent $\bullet$ (Case 1: Long wave part $\bbg^\dag_L$) We start from the long wave part of singular part $\bbg^*_L$. Actually, we will provide the estimates for $\bbg$ and $\bbg^*$, and then the desired results follows from \eqref{B51}.
\begin{lemma}\label{LB11}
	The long wave of the singular part $\bbg^*$ has the following estimates inside the wave cone when $t\geq 1$,
	\[\left|\partial^l_t\partial^k_x\bbg_L^*(x,t;\bar{U})\right|\leq O(1)e^{-\sigma_0^*t}\leq O(1)e^{-\frac{\sigma_0^*}{2}\left(t+\frac{|x|}{2M}\right)},\quad t\geq 1,\quad |x|\leq2Mt,\quad l, k\in\bbn.\]
\end{lemma}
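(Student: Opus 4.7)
The strategy is to exploit the definition of the long-wave part: by \eqref{B50} applied to $\bbg^*$ together with \eqref{singular-G}, $\bbg_L^*$ is the inverse Fourier transform of $\sum_{j=1}^3 e^{\lambda_j^* t}\hat{\bbm}_j^*$ restricted to the compact frequency set $\{|\eta|<\delta\}$. On this bounded set, every ingredient except the exponential factor is uniformly bounded (the functions $\lambda_j^*$ and $\hat{\bbm}_j^*$ are analytic on $\{|Im(\eta)|<\sigma_0\}$ by Lemmas \ref{L4.1} and \ref{L4.2}, so in particular continuous on the compact disk $\{|\eta|\leq \delta\}$ once one chooses $\delta<\sigma_0$), while the exponential factor has uniform decay $e^{-\sigma_0^* t}$ by the spectral gap in Lemma \ref{L4.1}(3). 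Combining these will yield the first inequality immediately, and the wave-cone condition $|x|\leq 2Mt$ then converts it to the second form.

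Concretely, I would write
\begin{equation*}
\partial_t^l \partial_x^k \bbg_L^*(x,t;\bar{U}) = \frac{1}{2\pi}\sum_{j=1}^3 \int_{|\eta|<\delta} (i\eta)^k (\lambda_j^*(\eta))^l\, e^{i\eta x}\, e^{\lambda_j^*(\eta) t}\, \hat{\bbm}_j^*(\eta)\, d\eta,
\end{equation*}
differentiation under the integral being justified because the integrand is smooth on the compact integration region. For real $\eta$, part (3) of Lemma \ref{L4.1} gives $Re(\lambda_j^*(\eta)) \leq -\sigma_0^*$ for $j=1,2,3$ (noting that the auxiliary quantities $\frac{\mu}{2v}(\eta^2+1)$ and $\frac{\kappa\theta_e}{2v}(\eta^2+1)$ are positive, so dropping them only weakens the bound), hence $|e^{\lambda_j^*(\eta)t}| \leq e^{-\sigma_0^* t}$. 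The continuous functions $(\lambda_j^*)^l$ and $\hat{\bbm}_j^*$ are uniformly bounded on $\{|\eta|\leq\delta\}$, and $|(i\eta)^k|\leq \delta^k$; therefore the integrand is at most $O(1)e^{-\sigma_0^* t}$ uniformly in $x$. Since the integration domain has finite length $2\delta$, this yields $|\partial_t^l\partial_x^k \bbg_L^*(x,t;\bar{U})|\leq O(1)e^{-\sigma_0^* t}$, which is the first asserted bound.

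For the second inequality, in the wave-cone region $|x|\leq 2Mt$ one has $|x|/(2M)\leq t$, and so
\begin{equation*}
\frac{\sigma_0^*}{2}\left(t+\frac{|x|}{2M}\right)\leq \frac{\sigma_0^*}{2}(t+t)=\sigma_0^* t,
\end{equation*}
which gives the desired $e^{-\sigma_0^* t}\leq e^{-\frac{\sigma_0^*}{2}(t+\frac{|x|}{2M})}$. I do not anticipate any real obstacle here; the argument is almost entirely packaged in the uniform spectral-gap statement of Lemma \ref{L4.1} and the analyticity of $\hat{\bbm}_j^*$ from Lemma \ref{L4.2}. The whole point of restricting to the long-wave sector is precisely that the exponentials $e^{\lambda_j^* t}$ have uniform decay there, so no delicate contour shifting or oscillatory-integral analysis is required, unlike in the short-wave sector to be treated next.
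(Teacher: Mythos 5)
Your proposal is correct and follows essentially the same route as the paper: represent $\hat{\bbg}_L^*$ on the compact frequency set $\{|\eta|<\delta\}$ via \eqref{singular-G}, use the spectral gap of Lemma \ref{L4.1} (noting, as you do, that for $j=2,3$ the positive shifts $\frac{\mu}{2v}(\eta^2+1)$, $\frac{\kappa\theta_e}{2v}(\eta^2+1)$ can be dropped) together with $\hat{\bbm}_j^*=O(1)$, and obtain higher derivatives by inserting the bounded factors $(i\eta)^k$ and $(\lambda_j^*)^l$ before inverting; the wave-cone inequality then follows exactly as you state.
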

\begin{proof}
	the Fourier transform of $\bbg^*_L$ satisfies the formula in \eqref{singular-G}, i.e.,
	\[\hat{\bbg}_L^*(\eta,t;\bar{U})=\sum_{j=1}^3\hat{G}^{*,j}(\eta,t;\bar{U})=\sum_{j=1}^3e^{\lambda_j^*t}\hat{\bbm}_j^*,\quad |\eta|<\delta.\]  
	According to Lemma \ref{L4.1}, $\lambda^*_j$ has spectral gap such that $Re(\lambda^*_j)<-\sigma_0^*$ for $j=1,2,3$. Moreover, $\hat{\bbg}^*_L(\eta,t)$ vanishes when $|\eta|\geq\delta$, and $\hat{\bbm}_j^*=O(1)$ when $|\eta|<\delta$. Therefore, we take inverse Fourier transform of $\hat{\bbg}_L^*(\eta,t;\bar{U})$ to obtain 
	\[|\bbg_L^*(x,t;\bar{U})|\leq O(1)e^{-\sigma_0^*t}\leq O(1)e^{-\frac{\sigma_0^*}{2}\left(t+\frac{|x|}{2M}\right)},\quad t\geq 1,\quad |x|\leq2Mt.\]
	where the last inequality comes from the fact $|x|\leq2Mt$. For higher order estimates $\partial^l_t\partial^k_x\bbg_L^*(x,t;\bar{U})$, we need to multiply $(i\eta)^k$ or $(\lambda_j^*)^l$ to $\hat{\bbg}^*$ and take inverse Fourier transform. Since $|\eta|\leq \delta$ for long wave, the desired higher order estimates follow similar arguments in zeroth order estimates, and we omit the details.
\end{proof}
Next, we need the estimates of the long wave part of the full Green's function $\bbg$. Actually, we have the asymptotic expansion of the eigenvalues $\lambda_j$ near zero in \eqref{evalue-zero-1}, \eqref{evalue-zero-2} and \eqref{evalue-zero-3}. Moreover, if we expand up to second order, then we have a simpler formula as follows,
\bq\label{B52}
\begin{aligned}
	&\lambda_1=i\eta \beta_1-\alpha_1\eta^2+O(1)\eta^3,\quad \lambda_2=i \eta \beta_2-\alpha_2\eta^2+O(1)\eta^3,\quad\lambda_3=i \eta \beta_3-\alpha_3\eta^2+O(1)\eta^3.\\
	& \alpha_1=   \frac{-\kappa  \theta _e p_v}{v \left(pp_e-p_v\right)},\quad \alpha_2=\alpha_3=\frac{ \left(\kappa  p \theta _e p_e+\mu  p p_e-\mu
		p_v\right)}{2 v \left(p p_e-p_v\right)},\quad \beta_1=0,\quad \beta_2=  -\sqrt{p p_e-p_v},\quad \beta_3=  \sqrt{p p_e-p_v},
\end{aligned}
\eq
where $\alpha_j>0$ and $\beta_j$ are real numbers as we discussed before. Then, similar as in Lemma \ref{L4.2}, the expansion of $\lambda_j$ yields the asymptotical expansion of $\hat{M}_j$ at zero. In fact, we have 
\bq\label{B53}
\hat{\bbm}_j=M_j^0+i\eta M_j^1+O(1)\eta^2, \quad j=1,2,3,
\eq
where $M_j^k$ are listed in Appendix \ref{sec:C}. Now, we combine the explicit expression of full Green's function in \eqref{B12}, the expansion of eigenvalue in \eqref{B52} and the expansion of $\hat{\bbm}_j$ in \eqref{B53} to have for $|\eta|<\delta$ that,
\bq\label{B54}
\begin{aligned}
	\hat{\bbg}_L(\eta,t)&=\hat{\bbg}_L^1+\hat{\bbg}_L^2+\hat{\bbg}_L^3\\
	&=e^{-\alpha_1\eta^2t+O(1)\eta^3t}\left(M_1^0+i\eta M_1^1+O(1)\eta^2\right)\\
	&+e^{i \eta  \beta_2 t-\alpha_2 \eta^2t+O(1)\eta^3t}\left(M_2^0+i\eta M_2^1+O(1)\eta^2\right)\\
	&+e^{i \eta \beta_3t-\alpha_3\eta^2t+O(1)\eta^3t}\left(M_3^0+i\eta M_3^1+O(1)\eta^2\right).    
\end{aligned}
\eq

\begin{lemma}\label{LB12}
	Suppose $t\geq 1$ and $|x|\leq 2Mt$. Then there exists a positive constant $C$ such that, the long wave part of the full Green's function has the following estimate for $j=1,2,3$ and $k\geq 0$,
	\begin{align*}
		&\left|\partial_x^k\bbg_L^j(x,t)-\partial_x^k\left(\frac{e^{-\frac{(x+\beta_jt)^2}{4\alpha_jt}}}{2\sqrt{\pi\alpha_jt}}\right)M_j^0-\partial^{k+1}_x\left(\frac{e^{-\frac{(x+\beta_jt)^2}{4\alpha_jt}}}{2\sqrt{\pi\alpha_jt}}\right)M_j^1\right|\\
		&\hspace{6cm}\leq \frac{O(1)e^{-\frac{(x+\beta_jt)^2}{4Ct}}}{t^{\frac{k+2}{2}}}M_j^0+\frac{O(1)e^{-\frac{(x+\beta_jt)^2}{4Ct}}}{t^{\frac{k+3}{2}}}+O(1)e^{-\alpha_j\delta^2(\frac{t}{2}+\frac{|x|}{4M})}.
	\end{align*}
\end{lemma}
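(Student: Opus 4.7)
\bigskip

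\noindent\textbf{Proof plan.} The plan is to work in Fourier space, use the expansions \eqref{B52}–\eqref{B53} to extract a principal heat-kernel part plus an explicit error, and then invert termwise, relying on a complex-contour shift to convert the sharp cutoff $|\eta|<\delta$ into the Gaussian in $x/t$ required by the lemma.

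\medskip

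First, I would write
\begin{equation*}
\hat{\bbg}_L^j(\eta,t) = e^{i\eta\beta_j t-\alpha_j\eta^2 t}\bigl(M_j^0 + i\eta M_j^1\bigr)\chi_{|\eta|<\delta} + \hat R_j(\eta,t),
\end{equation*}
where $\hat R_j$ collects all remaining terms in \eqref{B54}. Using $|e^z-1|\le |z|e^{|z|}$ and choosing $\delta$ small enough that $|O(1)\eta^3|\le (\alpha_j/4)\eta^2$ for $|\eta|<\delta$, the factor $e^{O(1)\eta^3 t}-1$ is bounded by $C|\eta|^3 t \, e^{(\alpha_j/4)\eta^2 t}$. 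Combined with the $O(1)\eta^2$ correction in the prefactor, this yields the pointwise bound
\begin{equation*}
|\hat R_j(\eta,t)| \;\le\; C\bigl(\eta^2 + |\eta|^3 t\bigr)e^{-\tfrac{3\alpha_j}{4}\eta^2 t}\bigl(|M_j^0|+|M_j^1|+1\bigr)\chi_{|\eta|<\delta}.
\end{equation*}

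\medskip

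Next, I would invert termwise. For the principal term, the inverse Fourier transform of $e^{i\eta\beta_j t-\alpha_j\eta^2 t}$ on all of $\mathbb R$ is exactly the shifted Gaussian $(2\sqrt{\pi\alpha_j t})^{-1}e^{-(x+\beta_j t)^2/(4\alpha_j t)}$, so the principal pieces reproduce the first two explicit terms in the lemma's statement (the $i\eta$ becomes a spatial derivative). The cutoff error — namely the integrals over $|\eta|\ge\delta$ — is handled by shifting the integration contour to $\eta=\xi + i\tau\,\mathrm{sgn}(x+\beta_j t)$ with $\tau=\delta/2$ on the part $|\xi|\ge\delta$; using that $|x|\le 2Mt$ forces $|x+\beta_j t|\le (2M+|\beta_j|)t$, standard estimates yield an error of order $e^{-\alpha_j\delta^2(t/2+|x|/(4M))}$, matching the last term in the target bound. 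The $\hat R_j$ contribution is inverted by the elementary rule that $|\eta|^m e^{-c\eta^2 t}$ transforms into $O(1)t^{-(m+1)/2}e^{-x^2/(Ct)}$; the $\eta^2$ piece gives the $O(t^{-(k+2)/2})M_j^0$ term (one extra $t^{-1/2}$ relative to the principal term, after $k$ spatial derivatives), while the $|\eta|^3 t$ piece gives the $O(t^{-(k+3)/2})$ term, after checking that the shift of argument from $x$ to $x+\beta_j t$ produced by multiplication by $e^{i\eta\beta_j t}$ is preserved.

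\medskip

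The higher-derivative version is obtained by multiplying by $(i\eta)^k$ before inversion; since $|\eta|<\delta$ is bounded, the same argument gives an extra factor $t^{-k/2}$ in every Gaussian bound, which is exactly the exponent in the lemma.

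\medskip

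\textbf{Main obstacle.} The delicate step is the contour-shift estimate for the cutoff error that must simultaneously produce exponential decay in both $t$ and $|x|/M$, rather than in $t$ alone. A naive $L^\infty_\eta$ bound on the truncated integrand yields only $e^{-\alpha_j\delta^2 t}$, which is insufficient when $|x|$ is comparable to $Mt$; one has to exploit the oscillation $e^{i\eta x}$ by shifting into the half-plane selected by the sign of $x+\beta_j t$, and control the ensuing boundary contribution. A secondary technical point is justifying, for large $t$, the Taylor-remainder estimate on $e^{O(1)\eta^3 t}$ uniformly in $\eta$; this is what forces the smallness restriction on $\delta$ and is the reason the long-wave cutoff must be chosen a priori small compared to $\alpha_j/O(1)$ from \eqref{B52}.
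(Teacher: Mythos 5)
You follow the same route as the paper's proof: expand $\lambda_j$ and $\hat{\bbm}_j$ around $\eta=0$ as in \eqref{B52}--\eqref{B53}, invert the principal term into the moving Gaussian, and control the truncation to $|\eta|<\delta$ using the spectral gap at $|\eta|=\delta$ together with $t\geq 1$, $|x|\leq 2Mt$. The outline is viable, but three points need repair. First, your rate bookkeeping is reversed: $\int_{|\eta|<\delta}\eta^2e^{-c\eta^2t}\,d\eta=O(t^{-3/2})$ while $\int_{|\eta|<\delta}|\eta|^3t\,e^{-c\eta^2t}\,d\eta=O(t^{-1})$, so it is the cubic (eigenvalue) error that produces the $t^{-(k+2)/2}$ term, and the $O(1)\eta^2$ remainder of $\hat{\bbm}_j$ that produces $t^{-(k+3)/2}$. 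Moreover, to obtain the stated bound literally --- with the slower error carrying the factor $M_j^0$, which is genuine information since e.g.\ the second row of $M_1^0$ vanishes --- you cannot lump $(|M_j^0|+|M_j^1|+1)$: keep the cubic error multiplied by $M_j^0$, as the paper does by writing $e^{\lambda_jt}=e^{i\beta_j\eta t-\alpha_j\eta^2t}+O(1)\eta\,e^{-\bar{\alpha}_j\eta^2t}$ via the mean value theorem and absorption of $\eta^2te^{O(1)\bar{\eta}^3t}$ into part of the Gaussian, cf.\ \eqref{B55}--\eqref{B56}.

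Second, your ``main obstacle'' is not an obstacle. Since the lemma is asserted only for $|x|\leq 2Mt$, one has $\frac t2+\frac{|x|}{4M}\leq t$, so the plain triangle-inequality tail bound $\bigl|\int_{|\eta|\geq\delta}e^{i\eta x+i\eta\beta_jt-\alpha_j\eta^2t}\bigl(M_j^0+i\eta M_j^1\bigr)d\eta\bigr|\leq O(1)e^{-\alpha_j\delta^2t}$ already yields the third error term; this is exactly how the paper concludes in \eqref{B59}. The $\mathrm{sgn}(x+\beta_jt)$-tilted contour is unnecessary, and by itself it would not give genuine spatial decay anyway: the vertical connector segments at $\eta=\pm\delta$ carry no factor $e^{-\tau|x+\beta_jt|}$, and on the characteristic $x=-\beta_jt$ the oscillation is absent. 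Third, and this is the one real gap: you handle the remainder $\hat R_j$ only through a modulus bound and then invoke the ``elementary rule'' that $|\eta|^me^{-c\eta^2t}$ inverts to $O(1)t^{-(m+1)/2}e^{-x^2/(Ct)}$; but a modulus bound on $\hat R_j$ only gives $|R_j(x)|\leq\frac{1}{2\pi}\|\hat R_j\|_{L^1_\eta}=O(t^{-1})$ with no Gaussian factor in $x$, which is strictly weaker than the claimed estimate when $(x+\beta_jt)^2/t$ is large. To recover the weight $e^{-(x+\beta_jt)^2/(4Ct)}$ you must retain the analyticity of the remainder in $\eta$ near $[-\delta,\delta]$ and run the same complete-the-square, shifted-contour estimate used for the explicit terms (as the paper does in \eqref{B58}--\eqref{B63}), absorbing the resulting powers of $|x|/\sqrt t$ into an enlarged constant $C$.
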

\begin{proof}
	According to the discussion at the beginning of Section \ref{sec:4}, we already knew $\lambda_j$ are analytic functions of  $\eta$ when $|\eta|\ll1$. Therefore, in \eqref{B54}, we can expand the exponential part of $\hat{\bbg}_L^1$ and apply the mean value theorem to find $\bar{\eta}$ between $0$ and $\eta$ such that
	\bq\label{B55}
	e^{\lambda_1t}=e^{-\alpha_1\eta^2t}\left(1+e^{O(1)\bar{\eta}^3t}O(1)\eta^3t\right)=e^{-\alpha_1\eta^2t}+O(1)\eta e^{-\bar{\alpha}_1\eta^2t},
	\eq
	where $\bar{\alpha}_1$ is some positive constant such that $0<\bar{\alpha}_1<\alpha_1$. The last equality holds because $|\bar{\eta}|\leq |\eta|<\delta$, and thus we can choose a sufficiently small $\bar{\alpha}_1$ and use the term $e^{-(\alpha_1-\bar{\alpha}_1)\eta^2t}$ to absorb the part $\eta^2te^{O(1)\bar{\eta}^3t}$. Without loss of generality, we may let $\delta$ be sufficiently small and choose $\bar{\alpha}_1=\frac{\alpha_1}{2}$. Now we combine \eqref{B54} and \eqref{B55} to obtain that 
	\bq\label{B56}
	\begin{aligned}
		\hat{\bbg}_L^1(\eta,t)&=\left(e^{-\alpha_1\eta^2t}+O(1)\eta e^{-\frac{\alpha_1}{2}\eta^2t}\right)\left(M_1^0+i\eta M_1^1+O(1)\eta^2\right)\\
		&=\left(e^{-\alpha_1\eta^2t}+O(1)\eta e^{-\frac{\alpha_1}{2}\eta^2t}\right)M_1^0+i\eta e^{-\alpha_1\eta^2t}M_1^1+O(1)\eta^2 \left(e^{-\alpha_1\eta^2t}+e^{-\frac{\alpha_1}{2}\eta^2t}\right),\quad |\eta|<\delta.
	\end{aligned}
	\eq
	We will take inverse Fourier transform of above formula term by term.\newline 
	
	\noindent  $\diamond$ (Step 1: Leading term $e^{-\alpha_1\eta^2t}$) : First, when $|\eta|<\delta$, the leading term $e^{-\alpha_1\eta^2t}$ is exactly the heat kernel. While when $|\eta|>\delta$, the long wave part vanishes due to \eqref{B50}. Therefore, the leading term of $\hat{\bbg}_L^1(\eta,t)$ has inverse Fourier transform as follows, 
	\bq\label{B57}
	\frac{1}{2\pi}\int_{-\delta}^{\delta}e^{i\eta x-\alpha_1\eta^2t}d\eta=\frac{1}{2\pi}e^{-\frac{x^2}{4\alpha_1t}}\int_{-\delta}^{\delta}e^{-\alpha_1t\left(\eta-i\frac{x}{2\alpha_1t}\right)^2}d\eta.
	\eq
	Now, as the term $e^{-\alpha_1t\left(\eta-i\frac{x}{2\alpha_1t}\right)^2}$ is analytic as a function of $\eta$, we can use  contour integral and the change of variable to rewrite the above integration \eqref{B57} as follows,
	\bq\label{B58}
	\begin{aligned}
		&\int_{-\delta}^{\delta}e^{-\alpha_1t\left(\eta-i\frac{x}{2\alpha_1t}\right)^2}d\eta\\
		&=\left(\int_{\delta}^{\delta+i\frac{x}{2\alpha_1t}}+\int_{\delta+i\frac{x}{2\alpha_1t}}^{-\delta+i\frac{x}{2\alpha_1t}}+\int_{-\delta+i\frac{x}{2\alpha_1t}}^{-\delta}\right)e^{-\alpha_1t\left(\eta-i\frac{x}{2\alpha_1t}\right)^2}d\eta\\
		&=\left(-\int_{\delta}^{\delta+i\frac{x}{2\alpha_1t}}+\int^{+\infty+i\frac{x}{2\alpha_1t}}_{-\infty+i\frac{x}{2\alpha_1t}}-\int_{\delta+i\frac{x}{2\alpha_1t}}^{+\infty+i\frac{x}{2\alpha_1t}}-\int^{-\delta+i\frac{x}{2\alpha_1t}}_{-\infty+i\frac{x}{2\alpha_1t}}+\int^{-\delta+i\frac{x}{2\alpha_1t}}_{-\delta}\right)e^{-\alpha_1t\left(\eta-i\frac{x}{2\alpha_1t}\right)^2}d\eta\\
		&=\int_{-\infty}^{+\infty}e^{-\alpha_1t\nu^2}d\nu-\int_{|\nu|>\delta}e^{-\alpha_1t\nu^2}d\nu+i\int^{\frac{x}{2\alpha_1t}}_{0}e^{-\alpha_1t\left(\delta+i\nu\right)^2}d\nu-i\int^{\frac{x}{2\alpha_1t}}_{0}e^{-\alpha_1t\left(\delta-i\nu\right)^2}d\nu.
	\end{aligned}
	\eq
	We can directly calculate the first term, and for the second term, we have a spectral gap with order $\delta$ which provides a exponential decay. More precisely, we have 
	\begin{align*}
		&\int_{-\infty}^{+\infty}e^{-\alpha_1t\nu^2}d\nu=\sqrt{\frac{\pi}{\alpha_1t}},\\
		&\int_{|\nu|>\delta}e^{-\alpha_1t\nu^2}d\nu=e^{-\alpha_1\delta^2t}\int_{|\nu|>\delta}e^{-\alpha_1t(|\nu|-\delta)(|\nu|+\delta)}d\nu\leq e^{-\alpha_1\delta^2t}\int_{|\nu|>\delta}e^{-\alpha_1\delta t(|\nu|-\delta)}d\nu=\frac{2e^{-\alpha_1\delta^2t}}{\alpha_1\delta t}.
	\end{align*}
	For the third and fourth terms in \eqref{B58}, it is obviously they are conjugate and thus we can combine them together to obtain the following estimate,
	\begin{align*}
		&\left|i\int^{\frac{x}{2\alpha_1t}}_{0}e^{-\alpha_1t\left(\delta+i\nu\right)^2}d\nu-i\int^{\frac{x}{2\alpha_1t}}_{0}e^{-\alpha_1t\left(\delta-i\nu\right)^2}d\nu\right|\\
		&=\left|i\int^{\frac{x}{2\alpha_1t}}_{0}e^{-\alpha_1\delta^2t+\alpha_1\nu^2t}\left(e^{-2i\alpha_1\delta \nu t}-e^{2i\alpha_1\delta \nu t}\right)d\nu\right|\leq 2\left|\int^{\frac{x}{2\alpha_1t}}_{0}e^{-\alpha_1\delta^2t+\alpha_1\nu^2t}d\nu\right|\leq \frac{|x|}{\alpha_1t}e^{-\alpha_1\delta^2t}e^{\frac{x^2}{4\alpha_1t}}.
	\end{align*}
	Combining \eqref{B57}, \eqref{B58} and above estimates of \eqref{B58}, we have 
	\begin{align*}
		\left|\frac{1}{2\pi}\int_{-\delta}^{\delta}e^{i\eta x-\alpha_1\eta^2t}d\eta-\frac{1}{2\sqrt{\pi\alpha_1t}}e^{-\frac{x^2}{4\alpha_1t}}\right|\leq \frac{1}{2\pi}e^{-\frac{x^2}{4\alpha_1t}}\left(\frac{2e^{-\alpha_1\delta^2t}}{\alpha_1\delta t}\right)+\frac{|x|}{2\pi\alpha_1t}e^{-\alpha_1\delta^2t}.
	\end{align*}
	As we consider the region $t\geq 1$ and $|x|\leq 2Mt$, we can further use the exponential decay $e^{-\alpha_1\delta^2t}$ to absorb the $|x|$ and $\frac{1}{t}$. Then we obtain 
	\bq\label{B59}
	\left|\frac{1}{2\pi}\int_{-\delta}^{\delta}e^{i\eta x-\alpha_1\eta^2t}d\eta-\frac{1}{2\sqrt{\pi\alpha_1t}}e^{-\frac{x^2}{4\alpha_1t}}\right|\leq  O(1)e^{-\alpha_1\delta^2(\frac{t}{2}+\frac{|x|}{4M})}.
	\eq
	
	\noindent $\diamond$ (Step 2: Higher order terms) The higher order terms are similar as derivatives of the heat kernel. Similarly as before, as the long wave only contains low frequency part, we have to estimate the remainders. Actually, we have 
	\bq\label{B60}
	\begin{aligned}
		&\frac{1}{2\pi}\int_{-\delta}^{\delta}i\eta e^{i\eta x-\alpha_1\eta^2t}d\eta\\
		&=-\frac{x}{4\pi\alpha_1t}e^{-\frac{x^2}{4\alpha_1t}}\int_{-\delta}^{\delta}e^{-\alpha_1t\left(\eta-i\frac{x}{2\alpha_1t}\right)^2}d\eta+\frac{1}{2\pi}e^{-\frac{x^2}{4\alpha_1t}}\int_{-\delta}^{\delta}i\left(\eta-i\frac{x}{2\alpha_1t}\right) e^{-\alpha_1t\left(\eta-i\frac{x}{2\alpha_1t}\right)^2}d\eta.
	\end{aligned}
	\eq
	The estimate of the first term in \eqref{B60} directly follows from the estimates of the leading term in Step 1. For the second term, as it is analytic in $\eta$, we can apply contour integral similarly as before to obtain that 
	\begin{align*}
		&\int_{-\delta}^{\delta}i\left(\eta-i\frac{x}{2\alpha_1t}\right)e^{-\alpha_1t\left(\eta-i\frac{x}{2\alpha_1t}\right)^2}d\eta\\
		&=\left(-\int_{\delta}^{\delta+i\frac{x}{2\alpha_1t}}+\int^{+\infty+i\frac{x}{2\alpha_1t}}_{-\infty+i\frac{x}{2\alpha_1t}}-\int_{\delta+i\frac{x}{2\alpha_1t}}^{+\infty+i\frac{x}{2\alpha_1t}}-\int^{-\delta+i\frac{x}{2\alpha_1t}}_{-\infty+i\frac{x}{2\alpha_1t}}+\int^{-\delta+i\frac{x}{2\alpha_1t}}_{-\delta}\right)i\left(\eta-i\frac{x}{2\alpha_1t}\right)e^{-\alpha_1t\left(\eta-i\frac{x}{2\alpha_1t}\right)^2}d\eta\\
		&=i\int_{-\infty}^{+\infty}\nu e^{-\alpha_1t\nu^2}d\nu-i\int_{|\nu|>\delta}\nu e^{-\alpha_1t\nu^2}d\nu+\int^{\frac{x}{2\alpha_1t}}_{0}\left(i\nu+\delta\right)e^{-\alpha_1t\left(\delta+i\nu\right)^2}d\nu-\int^{\frac{x}{2\alpha_1t}}_{0}\left(i\nu-\delta\right)e^{-\alpha_1t\left(\delta-i\nu\right)^2}d\nu.
	\end{align*}
	Then, the first two terms are zero due the symmetric property, the last two terms can be treated similar as before. In fact, we have 
	\begin{align*}
		&\left|\int^{\frac{x}{2\alpha_1t}}_{0}\left(i\nu+\delta\right)e^{-\alpha_1t\left(\delta+i\nu\right)^2}d\nu-\int^{\frac{x}{2\alpha_1t}}_{0}\left(i\nu-\delta\right)e^{-\alpha_1t\left(\delta-i\nu\right)^2}d\nu\right|\\
		&\leq \left|\int^{\frac{x}{2\alpha_1t}}_{0}i\nu e^{-\alpha_1\delta^2t+\alpha_1\nu^2t}\left(e^{-2i\alpha_1\delta \nu t}-e^{2i\alpha_1\delta \nu t}\right)d\nu\right|+ \left|\int^{\frac{x}{2\alpha_1t}}_{0}\delta e^{-\alpha_1\delta^2t+\alpha_1\nu^2t}\left(e^{-2i\alpha_1\delta \nu t}+e^{2i\alpha_1\delta \nu t}\right)d\nu\right|\\
		&\leq 2\left|\int^{\frac{x}{2\alpha_1t}}_{0}i\nu e^{-\alpha_1\delta^2t+\alpha_1\nu^2t}d\nu\right|+2 \left|\int^{\frac{x}{2\alpha_1t}}_{0}\delta e^{-\alpha_1\delta^2t+\alpha_1\nu^2t}d\nu\right|\\
		&\leq \frac{|x|^2}{(2\alpha_1t)^2}e^{-\alpha_1\delta^2t}e^{\frac{x^2}{4\alpha_1t}}+\frac{\delta |x|}{\alpha_1t}e^{-\alpha_1\delta^2t}e^{\frac{x^2}{4\alpha_1t}}.
	\end{align*}
	Combing \eqref{B59}, \eqref{B60} and above estimates, and similarly as in the end of Step 1, we apply the property $t\geq 1$ and $|x|\leq 2Mt$ to have 
	\bq\label{B61}
	\begin{aligned}
		&\left|\frac{1}{2\pi}\int_{-\delta}^{\delta}i\eta e^{i\eta x-\alpha_1\eta^2t}d\eta-\partial_x\left(\frac{1}{2\sqrt{\pi\alpha_1t}}e^{-\frac{x^2}{4\alpha_1t}}\right)\right|\\
		&=\left|\frac{1}{2\pi}\int_{-\delta}^{\delta}i\eta e^{i\eta x-\alpha_1\eta^2t}d\eta+\frac{x}{2\alpha_1t}\left(\frac{1}{2\sqrt{\pi\alpha_1t}}e^{-\frac{x^2}{4\alpha_1t}}\right)\right|\\
		&\leq\frac{x}{2\alpha_1t}\left(O(1)e^{-\alpha_1\delta^2(\frac{t}{2}+\frac{|x|}{4M})}\right)+\frac{1}{2\pi}e^{-\frac{x^2}{4\alpha_1t}}\left(\frac{|x|^2}{(2\alpha_1t)^2}e^{-\alpha_1\delta^2t}e^{\frac{x^2}{4\alpha_1t}}+\frac{\delta |x|}{\alpha_1t}e^{-\alpha_1\delta^2t}e^{\frac{x^2}{4\alpha_1t}}\right)\\
		&=O(1)e^{-\alpha_1\delta^2(\frac{t}{2}+\frac{|x|}{4M})}.
	\end{aligned}
	\eq
	Moreover, if  only concerning about the decay estimate, we can rewrite \eqref{B61} as follows,
	\[\left|\frac{1}{2\pi}\int_{-\delta}^{\delta}i\eta e^{i\eta x-\alpha_1\eta^2t}d\eta\right|\leq \frac{|x|}{2\alpha_1t}\left(\frac{1}{2\sqrt{\pi\alpha_1t}}e^{-\frac{x^2}{4\alpha_1t}}\right)+O(1)e^{-\alpha_1\delta^2(\frac{t}{2}+\frac{|x|}{4M})}.\]
	Then, as $\frac{|x|}{\sqrt{t}}$ can be absorbed by a small part from $e^{-\frac{x^2}{4\alpha_1t}}$ to bounded, we can find a proper positive constant $C>\alpha_1$ such that
	\bq\label{B62}
	\left|\frac{1}{2\pi}\int_{-\delta}^{\delta}i\eta e^{i\eta x-\alpha_1\eta^2t}d\eta\right|\leq \frac{O(1)}{t}e^{-\frac{x^2}{4Ct}}+O(1)e^{-\alpha_1\delta^2(\frac{t}{2}+\frac{|x|}{4M})}.
	\eq
	Then, the higher order estimates are followed by similar manner.  We omit the details and obtain that 
	\bq\label{B63}
	\left\{\begin{aligned}
		&\left|\frac{1}{2\pi}\int_{-\delta}^{\delta}(i\eta)^k e^{i\eta x-\alpha_1\eta^2t}d\eta-\partial^k_x\left(\frac{1}{2\sqrt{\pi\alpha_1t}}e^{-\frac{x^2}{4\alpha_1t}}\right)\right|\leq O(1)e^{-\alpha_1\delta^2(\frac{t}{2}+\frac{|x|}{4M})},\\
		&\left|\frac{1}{2\pi}\int_{-\delta}^{\delta}(i\eta)^k e^{i\eta x-\alpha_1\eta^2t}d\eta|\right|\leq \frac{O(1)}{t^{\frac{k+1}{2}}}e^{-\frac{x^2}{4Ct}}+O(1)e^{-\alpha_1\delta^2(\frac{t}{2}+\frac{|x|}{4M})}.
	\end{aligned}
	\right.
	\eq
	Finally, we combine \eqref{B56} and \eqref{B63} to obtain the desired estimates. 
\end{proof}
Now we combine the estimates for $\bbg^*_L$ in Lemma \ref{LB11} and the estimates of $\bbg_L$ in Lemma \ref{LB12} to construct the estimates of $\bbg^\dag_L$.
\begin{lemma}\label{LB13}
	When $t\geq 1$ and $|x|\leq 2Mt$, there exist sufficiently small positive constant $\sigma_0^*$ and large constant $C$ such that, the long wave of the regular part $\bbg^\dag_L(x,t)$ has the following estimates
	\begin{align*}
		&\left|\partial_x^k\bbg_L^\dag(x,t)-\sum_{j=1}^3\partial_x^k\left(\frac{e^{-\frac{(x+\beta_jt)^2}{4\alpha_jt}}}{2\sqrt{\pi\alpha_jt}}\right)M_j^0-\sum_{j=1}^3\partial^{k+1}_x\left(\frac{e^{-\frac{(x+\beta_jt)^2}{4\alpha_jt}}}{2\sqrt{\pi\alpha_jt}}\right)M_j^1\right|\\
		&\hspace{6cm}\leq \sum_{j=1}^3\frac{O(1)e^{-\frac{(x+\beta_jt)^2}{4Ct}}}{t^{\frac{k+2}{2}}}M_j^0+\sum_{j=1}^3\frac{O(1)e^{-\frac{(x+\beta_jt)^2}{4Ct}}}{t^{\frac{k+3}{2}}}+O(1)e^{-\sigma_0^*(\frac{t}{2}+\frac{|x|}{4M})}.
	\end{align*}
\end{lemma}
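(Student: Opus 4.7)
The plan is to obtain the estimate by a direct decomposition, exploiting the identity
\[
\bbg^\dag_L = \bbg_L - \bbg^*_L = \sum_{j=1}^3 \bbg^j_L - \bbg^*_L,
\]
where the first equality is the definition of the regular part and the second uses the representation from \eqref{B54}. All three ingredients on the right-hand side are already controlled by previously established lemmas in the region $t\ge 1$, $|x|\le 2Mt$, so no new energy estimate or contour shift is needed.

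First, I would sum the pointwise estimate of Lemma \ref{LB12} over $j=1,2,3$. This yields exactly the three leading heat-kernel terms $\partial_x^k\bigl(\tfrac{1}{2\sqrt{\pi\alpha_j t}} e^{-(x+\beta_j t)^2/(4\alpha_j t)}\bigr)M_j^0$ and the three sub-leading terms involving $M_j^1$ appearing in the statement, together with a remainder of the form
\[
\sum_{j=1}^3\frac{O(1)e^{-\frac{(x+\beta_jt)^2}{4Ct}}}{t^{(k+2)/2}}M_j^0+\sum_{j=1}^3\frac{O(1)e^{-\frac{(x+\beta_jt)^2}{4Ct}}}{t^{(k+3)/2}} + \sum_{j=1}^3 O(1) e^{-\alpha_j \delta^2(t/2+|x|/(4M))}.
\]
Next I would apply Lemma \ref{LB11} to the subtracted term $\bbg^*_L$, which gives $|\partial_x^k \bbg^*_L(x,t)|\le O(1)e^{-\sigma_0^*(t+|x|/(2M))/2}$ in the same region.

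Finally, I would consolidate the two purely exponentially decaying contributions — the tail $\sum_j O(1) e^{-\alpha_j \delta^2(t/2+|x|/(4M))}$ coming from the long-wave truncation and the contribution $O(1)e^{-\sigma_0^*(t+|x|/(2M))/2}$ coming from $\bbg^*_L$ — by choosing the constant denoted $\sigma_0^*$ in the conclusion small enough that
\[
\sigma_0^* \le \min\Bigl\{\tfrac{1}{2}\min_{j=1,2,3}\alpha_j \delta^2,\; \tfrac{1}{2}\sigma_0^*_{\text{(Lem. \ref{L4.1})}}\Bigr\},
\]
and by taking $C$ at least as large as the $C$ coming from Lemma \ref{LB12}. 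With this choice all purely exponential errors are absorbed into a single term $O(1)e^{-\sigma_0^*(t/2+|x|/(4M))}$, matching the form in the statement.

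The argument really is a bookkeeping exercise — the hard analytic work (heat-kernel expansion near $\eta=0$, contour shift of the inverse Fourier transform, control of the remainder $\bbg^*_L$ via the spectral gap) has already been carried out in Lemmas \ref{LB11}--\ref{LB12}. The only step that requires even minor care is the constant-matching in the last paragraph, ensuring that the remainder written as $O(1)e^{-\sigma_0^*(t/2+|x|/(4M))}$ genuinely dominates both the Lemma \ref{LB11} bound and the long-wave truncation tails for all $(x,t)$ in the finite Mach region $t\ge 1$, $|x|\le 2Mt$; this is immediate once $\sigma_0^*$ is renamed to the minimum indicated above.
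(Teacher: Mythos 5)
Your proposal is correct and follows exactly the route the paper intends: Lemma \ref{LB13} is the immediate consequence of writing $\bbg^\dag_L=\sum_{j=1}^3\bbg^j_L-\bbg^*_L$ via \eqref{B51}, summing Lemma \ref{LB12} over $j$, bounding $\bbg^*_L$ by Lemma \ref{LB11}, and shrinking $\sigma_0^*$ (as the statement permits) so that both exponential tails are absorbed into $O(1)e^{-\sigma_0^*(\frac{t}{2}+\frac{|x|}{4M})}$. No further argument is needed.
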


\noindent $\bullet$ (Case 2: Short wave part $\bbg^\dag_S$)
In this part, we will study the short wave part $\bbg^\dag_S$. From the equation \eqref{B35}, we have the following equation of $\hat{\bbg}^\dag$,
\[\hat{\bbg}^\dag_t+i\eta F'(\bar{U})\hat{\bbg}^\dag+\eta^2B(\bar{U})\hat{\bbg}^\dag=-\hat{\Psi}(\eta,t),\]
with zero initial data. For the homogeneous equation, we already have the solution operator in \eqref{B12}, therefore we apply Duhamel's principle to obtain the solution to the inhomogeneous equation, and express $\hat{\bbg}^\dag_S$ as follows,
\[\hat{\bbg}^\dag_S(\eta,t)=-\int_0^{t}\sum_{j=1}^3e^{\lambda_j(t-\tau)}\hat{\bbm}_j\hat{\Psi}(\eta,\tau)d\tau,\quad |\eta|\geq\delta.\]

\begin{lemma}\label{LB14}
	There exists a sufficiently small positive constant $\sigma_0^*$ such that, when $t\geq 1$ and $|x|\leq 2Mt$, the short wave part $\bbg^\dag_S$ has the following estimates,
	\begin{align*}
		&|\hat{\bbg}^\dag_S(\eta,t)|=O(1)\frac{e^{-\sigma_0^*t}}{(1+|\eta|)^6},\\
		&\|\bbg^\dag_S(x,t)\|_{H^4(\bbr)}\leq O(1)e^{-\sigma_0^*t},\quad \sum_{k=0}^3\left|\partial^k_x\bbg^\dag_S(x,t)\right|\leq O(1)e^{-\sigma_0^*t}\leq O(1)e^{-\sigma_0^*\left(\frac{t}{2}+\frac{|x|}{4M}\right)},\quad t\geq 1,\quad |x|\leq 2Mt.
	\end{align*}
\end{lemma}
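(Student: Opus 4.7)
The plan is to apply Plancherel's theorem and one-dimensional Sobolev embedding to the Duhamel identity in Fourier space. The key inputs will be a uniform spectral gap for the true eigenvalues $\lambda_j(\eta)$ together with a uniform bound on the matrices $\hat{\bbm}_j(\eta)$ over the short-wave region $|\eta|\geq\delta$, which together convert the pointwise decay of $\hat{\Psi}$ from Lemma \ref{LB8} into the claimed estimates for $\hat{\bbg}^\dag_S$. First I would establish that there exist positive constants $b_0$ and $C_0$ with
\[\sup_{j\in\{1,2,3\}}\sup_{|\eta|\geq \delta}\mathrm{Re}\,\lambda_j(\eta)\leq -b_0,\qquad \sup_{j\in\{1,2,3\}}\sup_{|\eta|\geq \delta}|\hat{\bbm}_j(\eta)|\leq C_0.\]
The spectral gap on the bounded annulus $\delta\leq |\eta|\leq R$ is supplied directly by Lemma \ref{LB2}; for $|\eta|\geq R$ the asymptotic expansions \eqref{evalue1}--\eqref{evalue3} give $\mathrm{Re}\,\lambda_1\to vp_v/\mu<0$ and $\mathrm{Re}\,\lambda_{2,3}\sim -\eta^2$, which combined with continuity upgrades to a uniform bound. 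The uniform bound on $\hat{\bbm}_j$ is read off from the adjugate formula \eqref{B13}: the denominator $\Pi_{k\neq j}(\lambda_j-\lambda_k)$ is continuous and nonvanishing on $|\eta|\geq\delta$ by Lemma \ref{LB1}, with magnitude $\sim\eta^4$ at infinity thanks to the distinct leading coefficients of $\lambda_2$ and $\lambda_3$ (where the Prandtl inequality $\mu\neq\kappa\theta_e$ discussed after \eqref{evalue3} is used), while the numerator is polynomial in $\eta$ and $\lambda_j$ and also of order $\eta^4$.

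Substituting these two bounds together with $|\hat{\Psi}(\eta,\tau)|\lesssim e^{-\sigma^*_0\tau}(1+|\eta|)^{-6}$ from Lemma \ref{LB8} into the Duhamel formula for $\hat{\bbg}^\dag_S$ yields
\[|\hat{\bbg}^\dag_S(\eta,t)|\lesssim (1+|\eta|)^{-6}\int_0^t e^{-b_0(t-\tau)}\,e^{-\sigma^*_0\tau}\,d\tau.\]
The time integral is bounded by $t\,e^{-\min(b_0,\sigma^*_0)t}$, and the polynomial factor $t$ is absorbed into the exponential by redefining $\sigma^*_0$ to be any strictly smaller positive constant, producing the claimed pointwise Fourier bound $|\hat{\bbg}^\dag_S(\eta,t)|\lesssim e^{-\sigma^*_0 t}(1+|\eta|)^{-6}$. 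Plancherel's theorem then gives
\[\|\bbg^\dag_S(\cdot,t)\|_{H^4}^2\lesssim \int_{\bbr}(1+\eta^2)^4|\hat{\bbg}^\dag_S(\eta,t)|^2d\eta\lesssim e^{-2\sigma^*_0 t}\int_{\bbr}\frac{(1+\eta^2)^4}{(1+|\eta|)^{12}}d\eta\lesssim e^{-2\sigma^*_0 t},\]
since the last integrand decays like $|\eta|^{-4}$. The one-dimensional embedding $H^1(\bbr)\hookrightarrow L^\infty(\bbr)$ applied to $\partial_x^k\bbg^\dag_S$ for $k=0,1,2,3$ then yields $\sum_{k=0}^3|\partial_x^k\bbg^\dag_S(x,t)|\lesssim\|\bbg^\dag_S\|_{H^4}\lesssim e^{-\sigma^*_0 t}$. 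Finally, inside the wave cone $|x|\leq 2Mt$ one has $|x|/(4M)\leq t/2$, hence $t\geq t/2+|x|/(4M)$ and the refined bound $e^{-\sigma^*_0(t/2+|x|/(4M))}$ follows from monotonicity of the exponential.

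The main obstacle will be the clean verification of Step 1, i.e.\ the uniform spectral gap and the uniform boundedness of $\hat{\bbm}_j$ on the full short-wave region, since the two regimes $|\eta|\sim\delta$ and $|\eta|\to\infty$ are controlled by entirely different mechanisms: compactness combined with Lemma \ref{LB1}--Lemma \ref{LB2} for the former, and the explicit asymptotic expansions \eqref{evalue1}--\eqref{evalue3} for the latter, with the gap between the leading $\eta^2$-coefficients of $\lambda_2$ and $\lambda_3$ playing the essential role in keeping the denominators of $\hat{\bbm}_j$ away from zero. The remainder is routine Fourier analysis, benefitting crucially from the sixth-order Fourier decay of $\hat{\Psi}$ already secured in Lemma \ref{LB8}, which provides exactly enough room to control four Sobolev derivatives.
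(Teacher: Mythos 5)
Your proposal is correct and follows essentially the same route as the paper: uniform spectral gap on $|\eta|\ge\delta$ from Lemma \ref{LB2} plus the large-$\eta$ asymptotics, $O(1)$ bounds on $\hat{\bbm}_j$, the Duhamel formula combined with the sixth-order Fourier decay of $\hat{\Psi}$ from Lemma \ref{LB8}, and then Plancherel, Sobolev embedding and the cone inequality $|x|\le 2Mt$. Your explicit absorption of the factor $t$ from the time convolution by slightly shrinking $\sigma_0^*$ is in fact a touch more careful than the paper's own computation.
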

\begin{proof}
	From Lemma \ref{LB8}, we know the source term $\hat{\Psi}$  exponentially decays with respect to $t$, and has a small tail for large $\eta$. On the other hand, according to Lemma \ref{LB2}, there is a spectral gap for $r<|\eta|<R$, where $r$ and $R$ are two arbitrary positive constants. Moreover, according to Lemma \ref{L4.1}, there is also a spectral gap for sufficiently large $\eta$. Therefore, without loss of generality, we may let $\sigma_0^*$ be sufficiently small so that $Re(\lambda_j)<-\sigma_0^*$ for $|\eta|\geq\delta$. Finally, $\hat{\bbm}_j$ is bounded when $|\eta|$ near infinity from the expansion, and it is also bounded for $\eta$ in any finite region because $\lambda_j(\eta)$ is continuous as we discussed in Section \ref{sec:4}. Therefore, $\hat{\bbm}_j$ is simply an order one term. Then, we combine the spectral gap, $O(1)$ estimates of $\hat{\bbm}_j$ and Lemma \ref{LB8} to have 
	\begin{align*}
		|\hat{\bbg}^\dag_S(\eta,t)|=|\int_0^{t}\sum_{j=1}^3e^{\lambda_j(t-\tau)}\hat{\bbm}_j\hat{\Psi}(\eta,\tau)d\tau|\leq O(1)\int_0^{t}\sum_{j=1}^3e^{-\sigma_0^*(t-\tau)}e^{-\sigma_0^*\tau}(1+|\eta|)^{-6}d\tau=O(1)\frac{e^{-\sigma_0^*t}}{(1+|\eta|)^6}.
	\end{align*}
	As we have the decay estimate of $\eta$ up to order six when $|\eta|$ tends to infinity, the Plancherel's theorem immediately yields that 
	\begin{align*}
		\|\bbg^\dag_S(x,t)\|_{H^4(\bbr)}\leq O(1)e^{-\sigma_0^*t},\quad \sum_{k=0}^3\left|\partial^k_x\bbg^\dag_S(x,t)\right|\leq O(1)e^{-\sigma_0^*t}\leq O(1)e^{-\left(\frac{\sigma_0^*t}{2}+\frac{|x|}{4M}\right)},\quad t\geq 1,\quad |x|\leq 2Mt.
	\end{align*}
\end{proof}

\subsubsection{Pointwise estimate of $\bbg^\dag(x,t,\bar{U})$}
In the final part of this subsection, we will combine Lemma \ref{LB9} ($t< 1$), Lemma \ref{LB10} ($t\geq 1$ and $|x|>2Mt$), Lemma \ref{LB13} ($t\geq 1$, $|x|\leq 2Mt$ and $|\eta|<\delta$) and Lemma \ref{LB14} ($t\geq 1$, $|x|\leq 2Mt$ and $|\eta|\geq\delta$) to construct the full estimates of the regular part $\bbg^\dag(x,t)$. 
\begin{theorem}\label{T4.3}
	Suppose the assumption \eqref{assumption on fix state} holds. Then we can find a sufficiently large constant $C$, sufficiently small positive constant $\sigma_0$ and $\sigma^*_0$ such that   
	\begin{align*}
		&\sum_{k=0}^3|\partial_x^k\bbg^\dag(x,t)|\leq O(1)te^{-\sigma_0 |x|},&\quad 0<t<1,\\
		&\left|\partial_x^k\bbg^\dag(x,t)-\sum_{j=1}^3\partial_x^k\left(\frac{e^{-\frac{(x+\beta_jt)^2}{4\alpha_jt}}}{2\sqrt{\pi\alpha_jt}}\right)M_j^0-\sum_{j=1}^3\partial^{k+1}_x\left(\frac{e^{-\frac{(x+\beta_jt)^2}{4\alpha_jt}}}{2\sqrt{\pi\alpha_jt}}\right)M_j^1\right|\\
		&\hspace{3cm}\leq \sum_{j=1}^3\frac{O(1)e^{-\frac{(x+\beta_jt)^2}{4Ct}}}{t^{\frac{k+2}{2}}}M_j^0+\sum_{j=1}^3\frac{O(1)e^{-\frac{(x+\beta_jt)^2}{4Ct}}}{t^{\frac{k+3}{2}}}+O(1)e^{-\sigma_0^*t-\sigma_0|x|},&\quad t\geq 1,\quad 0\leq k\leq 3,
	\end{align*}
	where $\alpha_j$ and $\beta_j$ are given below,
	\[ \alpha_1=   \frac{-\kappa  \theta _e p_v}{v \left(pp_e-p_v\right)},\quad \alpha_2=\alpha_3=\frac{ \left(\kappa  p \theta _e p_e+\mu  p p_e-\mu
		p_v\right)}{2 v \left(p p_e-p_v\right)},\quad \beta_1=0,\quad \beta_2=  -\sqrt{p p_e-p_v},\quad \beta_3=  \sqrt{p p_e-p_v}.\]
	
\end{theorem}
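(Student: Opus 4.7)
The plan is to glue together the four regime-by-regime estimates already established: Lemma~\ref{LB9} for the initial layer, Lemma~\ref{LB10} for the exterior of the wave cone at long times, and the long/short-wave decomposition estimates Lemma~\ref{LB13} and Lemma~\ref{LB14} for the interior of the wave cone at long times. Since all four lemmas bound the same quantity $\bbg^{\dag}$, the proof of Theorem~\ref{T4.3} will be essentially a bookkeeping exercise that partitions the $(x,t)$-plane and verifies that, after possibly shrinking $\sigma_{0}$ and $\sigma_{0}^{*}$, the four bounds can be collected into a single pointwise statement.

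First I would treat the range $0<t<1$: the first bound of Theorem~\ref{T4.3} is precisely the content of Lemma~\ref{LB9} applied for $k=0,1,2,3$, so nothing is required beyond quoting it. Next, for $t\geq 1$, I fix the constants $M$, $\sigma$, $\sigma_{0}$, $\sigma_{0}^{*}$ produced by Lemmas~\ref{LB10}, \ref{LB13}, \ref{LB14}, shrinking them to a common value if necessary, and split the region into $|x|>2Mt$ and $|x|\leq 2Mt$. In the exterior region Lemma~\ref{LB10} directly yields $|\partial_{x}^{k}\bbg^{\dag}|\leq O(1)e^{-\sigma|x|/2-\sigma_{0}^{*}t}$; since $|x|>2Mt$ forces $e^{-\sigma|x|/2-\sigma_{0}^{*}t}\leq O(1)e^{-\frac{(x+\beta_{j}t)^{2}}{4Ct}}e^{-\sigma_{0}^{*}t/2-\sigma_{0}|x|/2}$ for $C$ sufficiently large (the Gaussian is essentially $1$ there because $|x+\beta_{j}t|$ is comparable to $|x|$ and $t$), this bound is absorbed in the remainder $e^{-\sigma_{0}^{*}t-\sigma_{0}|x|}$ of the theorem and the main heat-kernel terms of the theorem are themselves dominated by that same remainder in this region.

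For the interior region $|x|\leq 2Mt$, $t\geq 1$, I use the decomposition $\bbg^{\dag}=\bbg^{\dag}_{L}+\bbg^{\dag}_{S}$ induced by the frequency cut-off \eqref{B50}. Lemma~\ref{LB13} gives the long-wave piece the claimed structure with a remainder $e^{-\sigma_{0}^{*}(t/2+|x|/(4M))}$, and Lemma~\ref{LB14} shows that the short-wave piece is of the same exponentially small order uniformly in the interior. Since inside the wave cone $|x|\leq 2Mt$ we have $\tfrac{t}{2}+\tfrac{|x|}{4M}\geq \tfrac{t}{4}+\tfrac{|x|}{8M}$, after relabeling the constant we can bound both by $O(1)e^{-\sigma_{0}^{*}t-\sigma_{0}|x|}$ with a new small $\sigma_{0},\sigma_{0}^{*}$. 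Adding the short-wave remainder to the right-hand side of Lemma~\ref{LB13} then reproduces exactly the inequality of Theorem~\ref{T4.3} for $0\leq k\leq 3$.

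The only genuine obstacle is the constant-juggling in the last paragraph: I must verify that the exponents $\sigma,\sigma_{0},\sigma_{0}^{*}$ and the Gaussian constant $C$ inherited from the four lemmas can be simultaneously shrunk (for $\sigma$'s) or enlarged (for $C$) so as to produce one common pair $(\sigma_{0},\sigma_{0}^{*})$ and one common $C$ governing every region, and that the ``leading'' heat-kernel terms $\partial_{x}^{k}\bigl(\frac{1}{2\sqrt{\pi\alpha_{j}t}}e^{-(x+\beta_{j}t)^{2}/(4\alpha_{j}t)}\bigr)M_{j}^{0}$ and $\partial_{x}^{k+1}(\cdots)M_{j}^{1}$ inside the wave cone are consistent with being merely absorbed into the exponentially small remainder outside the wave cone. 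Once these choices are made, the four pieces assemble into the stated inequality and the proof is complete.
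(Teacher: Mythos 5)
Your proposal is correct and follows essentially the same route as the paper: the paper's proof of Theorem~\ref{T4.3} simply combines Lemma~\ref{LB9} (initial layer), Lemma~\ref{LB10} (outside the wave cone), Lemma~\ref{LB13} (long-wave part inside the cone) and Lemma~\ref{LB14} (short-wave part inside the cone), noting that $\sigma_0$ and $\sigma_0^*$ may be shrunk so that all the estimates hold with common constants. Your additional verifications (absorbing the Gaussian leading terms into the exponential remainder for $|x|>2Mt$, and converting $e^{-\sigma_0^*(t/2+|x|/(4M))}$ into the form $e^{-\sigma_0^*t-\sigma_0|x|}$) are exactly the bookkeeping the paper leaves implicit.
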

\begin{proof}
	The proof directly follows from Lemma \ref{LB9}, Lemma \ref{LB10}, Lemma \ref{LB13} and Lemma \ref{LB14}. We emphasize that, $\sigma_0$ and $\sigma_0^*$ are first introduced in Lemma \ref{L4.1}, but we can choose them further smaller so that all the previous estimates hold. 
\end{proof}
$\ $

%
%
%
%

\section{Global well-posedness}\label{sec:5}

\setcounter{equation}{0}
In this section, we will show the local-in-time solution constructed in \cite{W-Y-Z-local} (see Propositions \ref{prop:local} and \ref{T3.3}) can be extended to exist globally in time. To this end, we will take use of the Green's function constructed in Section \ref{sec:4}. For simplicity, we assume the initial data to be a small perturbation of the constant state $(\bar{v},\bar{u},\bar{\theta})=(1,0,1)$. Then, if we use total energy $E$ as a variable, we actually have 
\[\bar{U}=(\bar{v},\bar{u},\bar{E})=(1,0,c_v),\]
where $c_v$ is the heat capacity defined in \eqref{constitution}, and $\bar{U}$ is the equilibrium state in \eqref{A2}. In this setting, the expression of the Green's function $\bbg$ would be much simplified. Actually, recall the singular part $\bbg^*$ and the regular part $\bbg^\dag$ in Section \ref{sec:4}, the matrix $\bbg$ can be represented as follows, 
\bq\label{5-1}
\bbg=\bbg^*+\bbg^\dag,\quad
\bbg^*=\left(
\begin{array}{ccc}
	\bbg_{11}^* & \bbg_{12}^* & \bbg_{13}^*\\
	\bbg_{21}^* & \bbg_{22}^* & \bbg_{23}^*\\
	\bbg_{31}^* & \bbg_{32}^* & \bbg_{33}^*
\end{array}
\right),\quad 
\bbg^\dag=\left(
\begin{array}{ccc}
	\bbg_{11}^\dag & \bbg_{12}^\dag & \bbg_{13}^\dag\\
	\bbg_{21}^\dag & \bbg_{22}^\dag & \bbg_{23}^\dag\\
	\bbg_{31}^\dag & \bbg_{32}^\dag & \bbg_{33}^\dag
\end{array}
\right).
\eq
Since $\bbg(x,t;\bar{U})$ satisfies the forward equation \eqref{B1}, it follows that $\bbg(x-y,t-\tau;\bar{U})$ also satisfies the following backward equation,
\bq\label{backward-G}
\partial_\tau\bbg(x-y,t-\tau;\bar{U})+\partial_y\bbg(x-y,t-\tau;\bar{U})F'(\bar{U})+\partial_y^2\bbg(x-y,t-\tau;\bar{U})B(\bar{U})=0.
\eq
In the following, we will briefly denote the Green's function by $\bbg(x-y,t-\tau)$. As we already chose $\bar{U}=(\bar{v},\bar{u},\bar{E})=(1,0,c_v)$, the matrices $F'(\bar{U})$ and $B(\bar{U})$ have the following form,
\[F'(\bar{U})=\begin{pmatrix} 0& -1 & 0\\ -K&0&\frac{K}{c_v}\\0&K&0\end{pmatrix},\quad B(\bar{U})=\begin{pmatrix}0&0&0\\0&\mu&0\\0&0&\frac{\kappa}{c_v}\end{pmatrix}.\]
Now, we substitute above $F'(\bar{U})$ and $B(\bar{U})$ into the backward equation \eqref{backward-G} to obtain the following equations for each component,
\bq\label{5-3}
\left(\begin{array}{ccc}
	\partial_\tau\bbg_{11}-K\partial_y\bbg_{12}\ \ \ &\partial_\tau\bbg_{12}-\partial_y\bbg_{11}+K\partial_y\bbg_{13}+\mu\partial_y^2\bbg_{12}\ \ \ &\partial_\tau\bbg_{13}+\frac{K}{c_v}\partial_y\bbg_{12}+\frac{\kappa}{c_v}\partial_y^2\bbg_{13}\\
	\ &\ &\ \\
	\partial_\tau\bbg_{21}-K\partial_y\bbg_{22}\ \ \ &\partial_\tau\bbg_{22}-\partial_y\bbg_{21}+K\partial_y\bbg_{23}+\mu\partial_y^2\bbg_{22}\ \ \ &\partial_\tau\bbg_{23}+\frac{K}{c_v}\partial_y\bbg_{22}+\frac{\kappa}{c_v}\partial_y^2\bbg_{23}\\
	\ &\ &\ \\
	\partial_\tau\bbg_{31}-K\partial_y\bbg_{32}\ \ \ &\partial_\tau\bbg_{32}-\partial_y\bbg_{31}+K\partial_y\bbg_{33}+\mu\partial_y^2\bbg_{32}\ \ \ &\partial_\tau\bbg_{33}+\frac{K}{c_v}\partial_y\bbg_{32}+\frac{\kappa}{c_v}\partial_y^2\bbg_{33}\\
\end{array}\right)=0.
\eq

\subsection{Representation by Green's function}
The representation of the solution in terms of variable coefficient heat kernel in Section \ref{sec:2} is mainly used to construct the local solution, but not convenient to get  large time structure. In this part, we also include the Green's function into the representations. Now, in order to construct the new representation of $v$, $u$ and $\theta$ by Green's function, we first introduce an effective Green's function $G$ as in \cite{L-Y-2020}. We define a smooth non-increasing cutoff function as follows,
\bq\label{5-4}
\mathcal{X}(t)\in C^\infty\left(\bbr_+\right),\quad \mathcal{X}'(t)\leq 0,\quad \|\mathcal{X}'\|_{L^\infty(\bbr_+)}\leq 2,\quad \mathcal{X}(t)=\left\{\begin{aligned}&1,\quad \text{for $t\in(0,1]$,}\\
	&0,\quad \text{for $t>2$.}\end{aligned}\right.
\eq
Then, we choose a  small positive constant $\nu_0$ such that, the heat kernel $H(x,t;y,\tau;\frac{1}{v})$ and the local weak solution $(v(x,\tau),u(x,\tau),E(x,\tau))$ for \eqref{NS} both exist when $\tau\in(t-2\nu_0,t]$. We interpolate the heat kernel for short time and Green's function for large time via cutoff function \eqref{5-4}, thus introduce the effective Green's functions as follows,
\bq\label{5-5}
\left\{\begin{aligned}
	&G_{22}(x,t;y,\tau)=\mathcal{X}\left(\frac{t-\tau}{\nu_0}\right)H\left(x,t;y,\tau;\frac{\mu}{v}\right)+\left(1-\mathcal{X}\left(\frac{t-\tau}{\nu_0}\right)\right)\bbg_{22}(x-y;t-\tau),\\
	&G_{33}(x,t;y,\tau)=\mathcal{X}\left(\frac{t-\tau}{\nu_0}\right)H\left(x,t;y,\tau;\frac{\kappa}{c_vv}\right)+\left(1-\mathcal{X}\left(\frac{t-\tau}{\nu_0}\right)\right)\bbg_{33}(x-y;t-\tau).
\end{aligned}
\right.
\eq
Now we can represent the solution $(v,u,E)$ in terms of the effective Green's function, which captures both the local-in-time regularity and global-in-time space-time structure of the solution.

\begin{lemma}\label{L5.1}
	Suppose the weak solution $(v(x,\tau),u(x,\tau),E(x,\tau))$ for \eqref{NS} exists for $\tau\in[0,t]$, and the heat kernel $H(x,t;y,\tau;\frac{\mu}{v})$ exists for $\tau\in(t-2\nu_0,t)$ for a sufficiently small positive constant $\nu_0$ such that $2\nu_0<t$. Then we have the representation of $u(x,t)$ as follows,
	\begin{align}
		&u(x,t)\label{u-repre}\\
		&=\int_\bbr\bbg_{21}(x-y,t)(v(y,0)-1)dy+\int_\bbr G_{22}(x,t;y,0)u(y,0)dy+\int_\bbr \bbg_{23}(x-y,t)\left(E(y,0)-c_v\right)dy+\sum_{i=1}^3\mathcal{R}^u_i,\notag
	\end{align}
	where the inhomogeneous remainders $\mathcal{R}_i$ are listed as below,
	\begin{align}
		\mathcal{R}^u_1
		&=\int_0^{t-2\nu_0}\int_\bbr \partial_y\bbg_{22}(x-y,t-\tau)\left(\frac{K(v-1)^2}{v}+\frac{K(\theta-1)(1-v)}{v}-\frac{Ku^2}{2c_v}+\frac{\mu u_y(v-1)}{v}\right)dyd\tau\notag\\
		&\hspace{0.4cm}+\int_0^{t-2\nu_0}\int_\bbr \partial_y\bbg_{23}(x-y,t-\tau)\left(\left(\frac{K(\theta-1)+K(1-v)}{v}\right)u+\frac{\kappa\theta_y(v-1)}{v}+\left(\frac{\kappa}{c_v}-\frac{\mu}{v}\right)uu_y\right)dyd\tau,\notag\\
		\mathcal{R}^u_2
		&=\int_{t-2\nu_0}^{t-\nu_0}\int_\bbr \partial_y\bbg_{22}(x-y,t-\tau)\left(\frac{K(v-1)^2}{v}+\frac{K(\theta-1)(1-v)}{v}+\frac{\mu u_y(v-1)}{v}\left(1-\mathcal{X}\right)-\frac{Ku^2}{2c_v}\right)dyd\tau\notag\\
		&\hspace{0.4cm}+\int_{t-2\nu_0}^{t-\nu_0}\int_\bbr \frac{1}{\nu_0}\mathcal{X}'\left(\frac{t-\tau}{\nu_0}\right)\left(\bbg_{22}(x-y;t-\tau)-H\left(x,t;y,\tau;\frac{\mu}{v}\right)\right)u(y,\tau)dyd\tau\notag\\
		&\hspace{0.4cm}+\int_{t-2\nu_0}^{t-\nu_0}\int_\bbr \mathcal{X}\left(\frac{t-\tau}{\nu_0}\right)\left(K\partial_y\bbg_{23}(x-y,t-\tau)-\partial_y\bbg_{21}(x-y,t-\tau)\right)u(y,\tau)dyd\tau\notag\\
		&\hspace{0.4cm}+\int_{t-2\nu_0}^{t-\nu_0}\int_\bbr \mathcal{X}\left(\frac{t-\tau}{\nu_0}\right)\left(H_y\left(x,t;y,\tau;\frac{\mu}{v}\right)-\partial_y\bbg_{22}(x-y;t-\tau)\right)\frac{K(\theta-v)}{v}dyd\tau\notag\\
		&\hspace{0.4cm}+\int_{t-2\nu_0}^{t-\nu_0}\int_\bbr \partial_y\bbg_{23}(x-y;t-\tau)\left(\frac{\kappa\theta_y(v-1)}{v}+\frac{Ku(\theta-v)}{v}+\left(\frac{\kappa}{c_v}-\frac{\mu}{v}\right)uu_y\right)dyd\tau,\notag\\
		\mathcal{R}^u_3
		&=\int_{t-\nu_0}^t\int_\bbr K\partial_y\bbg_{22}(x-y,t-\tau)(v(y,\tau)-1)dyd\tau-\int_{t-\nu_0}^t\int_\bbr\partial_y\bbg_{21}(x-y,t-\tau)u(y,\tau)dyd\tau\notag\\
		&\hspace{0.4cm}+\int_{t-\nu_0}^t\int_\bbr H_y\left(x,t;y,\tau;\frac{\mu}{v}\right)\left(\frac{K\theta(y,\tau)-Kv(y,\tau)}{v(y,\tau)}\right)dyd\tau\notag\\
		&\hspace{0.4cm}-\int_{t-\nu_0}^t\int_\bbr \partial_y\bbg_{22}(x-y,t-\tau)\left(K(\theta-1)+\frac{Ku^2}{2c_v}\right)dyd\tau\notag\\
		&\hspace{0.4cm}+\int_{t-\nu_0}^t\int_\bbr \partial_y\bbg_{23}(x-y,t-\tau)\left((pu)+\frac{\kappa(v-1)\theta_y}{v}+\left(\frac{\kappa}{c_v}-\frac{\mu}{v}\right)uu_y\right)dyd\tau.\notag
	\end{align}
\end{lemma}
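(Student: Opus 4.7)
The plan is to derive the identity by viewing the right-hand side as the $\tau = t$ value of an interpolating auxiliary function and computing its $\tau$-derivative piecewise. Concretely, introduce
\[
J(\tau) = \int_\bbr \bbg_{21}(x-y,t-\tau)\,(v(y,\tau)-1)\,dy + \int_\bbr G_{22}(x,t;y,\tau)\,u(y,\tau)\,dy + \int_\bbr \bbg_{23}(x-y,t-\tau)\,(E(y,\tau)-c_v)\,dy.
\]
Using the initial conditions $\bbg(x,0)=\delta(x)I$ and $H(x,t;y,t;\mu/v)=\delta(x-y)$ together with $\mathcal{X}(0)=1$, one verifies $J(t)=u(x,t)$; meanwhile the assumption $2\nu_0<t$ yields $\mathcal{X}(t/\nu_0)=0$, so $G_{22}(x,t;y,0)=\bbg_{22}(x-y,t)$ and $J(0)$ equals the initial-data sum in \eqref{u-repre}. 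Hence it suffices to prove $\int_0^t J'(\tau)\,d\tau = \mathcal{R}^u_1 + \mathcal{R}^u_2 + \mathcal{R}^u_3$ by splitting the integral at $t-2\nu_0$ and $t-\nu_0$ and matching each piece to one remainder.

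On $[0,t-2\nu_0]$, where $\mathcal{X}\equiv 0$ and hence $G_{22}=\bbg_{22}$, I substitute $v_\tau=u_y$ and the $u_\tau$, $E_\tau$ equations from \eqref{main} together with the backward identities \eqref{5-3} for $\partial_\tau\bbg_{2j}$, then integrate by parts in $y$. The constant-coefficient linear contributions cancel identically because \eqref{5-3} is precisely the adjoint of the linearization of \eqref{main} around $\bar{U}=(1,0,c_v)$. What remains is $-\int [\partial_y\bbg_{22}\, N^{(2)} + \partial_y\bbg_{23}\, N^{(3)}]\,dy$, where $N^{(2)}, N^{(3)}$ are the second and third components of $N_1 + N_2 V_y$ from \eqref{A3}. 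Evaluating at $\bar{U}$ (so $p_v|_{\bar{U}}=-K$, $p_e|_{\bar{U}}=K/c_v$) and expanding $\theta=(E-u^2/2)/c_v$ converts these into exactly the integrands displayed in $\mathcal{R}^u_1$.

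On $[t-\nu_0,t]$, where $G_{22}=H(x,t;y,\tau;\mu/v)$, the heat kernel satisfies the backward equation $\partial_\tau H + \partial_y(\frac{\mu}{v}H_y)=0$, which absorbs precisely the viscosity term in $u_\tau=(\mu u_y/v)_y - p_y$; what survives from the $u$-piece is the Duhamel integral $\int H_y\cdot p\,dy$ after integrating by parts in $y$. The $\bbg_{21}$ and $\bbg_{23}$ pieces continue to evolve by \eqref{5-3}, but now the cross-linear terms $K\partial_y\bbg_{22}(v-1)$, $\partial_y\bbg_{21}u$, and the pressure linearization $K(\theta-v)/v$ no longer fully cancel and contribute the first four lines of $\mathcal{R}^u_3$; the genuinely nonlinear pieces of $p$ together with the $(pu)$ and $\kappa\theta_y/v$ couplings supply the last line. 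On the transition window $[t-2\nu_0,t-\nu_0]$, the product rule for $\partial_\tau G_{22}$ generates an explicit cutoff-derivative term $\tfrac{1}{\nu_0}\mathcal{X}'(\tfrac{t-\tau}{\nu_0})(\bbg_{22}-H)u$ (second line of $\mathcal{R}^u_2$) together with an $\mathcal{X}$-weighted heat-kernel piece and a $(1-\mathcal{X})$-weighted Green's-function piece. Combining these with the backward equations for $\bbg_{21}, \bbg_{23}$ yields the remaining lines of $\mathcal{R}^u_2$; in particular the $(K\partial_y\bbg_{23}-\partial_y\bbg_{21})u$ and $(H_y-\partial_y\bbg_{22})K(\theta-v)/v$ structures arise from the linear mismatch between the two representations.

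The principal technical obstacle is this middle interval: two distinct linearizations coexist, since the heat kernel $H$ linearizes only the viscosity coefficient $\mu/v$ while $\bbg$ linearizes the full constant-coefficient system around $\bar{U}$. I must verify that, after weighting by $\mathcal{X}$ and $(1-\mathcal{X})$, the linear mismatches combine exactly into the displayed structures and leave no spurious linear residue, so that only the quasi-linear corrections survive. The bookkeeping here, which pairs the backward identities \eqref{5-3} against the pointwise pressure expansion in $(v-1),(\theta-1),u$ at $\bar{U}$, is the most delicate part of the argument; once it is set up, the remaining cancellations reduce to the same direct computation carried out on $[0,t-2\nu_0]$.
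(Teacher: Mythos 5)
Your proposal is correct and follows essentially the same route as the paper: the auxiliary function $J(\tau)$ is exactly the duality pairing the paper obtains by multiplying the row $\left(\bbg_{21},\,G_{22},\,\bbg_{23}\right)$ against \eqref{NS} and integrating by parts in $y$ and $\tau$, and the subsequent steps — the backward equations \eqref{5-3}, the split at $t-2\nu_0$ and $t-\nu_0$, the cutoff-derivative term from differentiating $\mathcal{X}$, and the cancellation of the constant-coefficient linear terms against the adjoint linearization at $\bar{U}$ — match the paper's computation. No substantive differences.
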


\begin{proof}
	In fact, we multiply the vector $\left(\bbg_{21}(x-y,t-\tau), G_{22}(x,t;y,\tau), \bbg_{23}(x-y,t-\tau)\right)$ to the system \eqref{NS}, and apply the integration by part to have 
	\begin{align*}
		0&=\int_0^t\int_\bbr\bbg_{21}(x-y,t-\tau)\left(v_\tau(y,\tau)-u_y(y,\tau)\right)dyd\tau\\
		&\hspace{0.4cm}+\int_0^t\int_\bbr G_{22}(x,t;y,\tau)\left(u_\tau+p_y-\left(\frac{\mu u_y}{v}\right)_y\right)dyd\tau\\
		&\hspace{0.4cm}+\int_0^t\int_\bbr \bbg_{23}(x-y,t-\tau)\left(E_\tau+(pu)_y-\left(\frac{\kappa}{v}\theta_y+\frac{\mu}{v}uu_y\right)_y\right)dyd\tau\\
		&=\int_0^t\int_\bbr\bbg_{21}(x-y,t-\tau)\left((v-1)_\tau(y,\tau)-u_y(y,\tau)\right)dyd\tau\\
		&\hspace{0.4cm}+\int_0^t\int_\bbr G_{22}(x,t;y,\tau)\left(u_\tau+(p(v,\theta)-p(1,1))_y-\left(\frac{\mu u_y}{v}\right)_y\right)dyd\tau\\
		&\hspace{0.4cm}+\int_0^t\int_\bbr \bbg_{23}(x-y,t-\tau)\left((E-c_v)_\tau+(pu)_y-\left(\frac{\kappa}{v}\theta_y+\frac{\mu}{v}uu_y\right)_y\right)dyd\tau\\
		&=-\int_\bbr\bbg_{21}(x-y,t)(v_0(y)-1)dy-\int_0^t\int_\bbr\partial_\tau\bbg_{21}(x-y,t-\tau)(v(y,\tau)-1)dyd\tau\\
		&\hspace{0.4cm}+\int_0^t\int_\bbr\partial_y\bbg_{21}(x-y,t-\tau)u(y,\tau)dyd\tau\\
		&\hspace{0.4cm}+u(x,t)-\int_\bbr G_{22}(x,t;y,0)u(y,0)dy-\int_0^t\int_\bbr \partial_\tau G_{22}(x,t;y,\tau)u(y,\tau)dyd\tau\\
		&\hspace{0.4cm}-\int_0^t\int_\bbr \partial_yG_{22}(x,t;y,\tau)\left((p-p(1,1))-\frac{\mu u_y}{v}\right)dyd\tau\\
		&\hspace{0.4cm}-\int_\bbr \bbg_{23}(x-y,t)\left(E(y,0)-c_v\right)dy-\int_0^t\int_\bbr \partial_\tau\bbg_{23}(x-y,t-\tau)\left(E(y,\tau)-c_v\right)dyd\tau\\
		&\hspace{0.4cm}-\int_0^t\int_\bbr \partial_y\bbg_{23}(x-y,t-\tau)\left((pu)-\left(\frac{\kappa}{v}\theta_y+\frac{\mu}{v}uu_y\right)\right)dyd\tau.
	\end{align*}
	In above formula, we obtain $u(x,t)$ since $G_{22}(x-y;0)=\delta(x-y)$, and we do not have other boundary term at $t$ because the Green's function $\bbg(x,0)=I$ and thus $\bbg_{ij}(x,0)=0$ when $i\neq j$. Then, we immediately obtain a representation of $u(x,t)$ as follows,
	\begin{align}
		&u(x,t)\label{5-6}\\
		&=\int_\bbr\bbg_{21}(x-y,t)(v(y,0)-1)dy+\int_\bbr G_{22}(x,t;y,0)u(y,0)dy+\int_\bbr \bbg_{23}(x-y,t)\left(E(y,0)-c_v\right)dy\notag\\
		&\hspace{0.4cm}+\int_0^t\int_\bbr\partial_\tau\bbg_{21}(x-y,t-\tau)(v(y,\tau)-1)dyd\tau-\int_0^t\int_\bbr\partial_y\bbg_{21}(x-y,t-\tau)u(y,\tau)dyd\tau\notag\\
		&\hspace{0.4cm}+\int_0^t\int_\bbr \partial_\tau G_{22}(x,t;y,\tau)u(y,\tau)dyd\tau+\int_0^t\int_\bbr \partial_yG_{22}(x,t;y,\tau)\left(p-p(1,1)-\frac{\mu u_y}{v}\right)dyd\tau\notag\\
		&\hspace{0.4cm}+\int_0^t\int_\bbr \partial_\tau\bbg_{23}(x-y,t-\tau)\left(E(y,\tau)-c_v\right)dyd\tau+\int_0^t\int_\bbr \partial_y\bbg_{23}(x-y,t-\tau)\left((pu)-\left(\frac{\kappa}{v}\theta_y+\frac{\mu}{v}uu_y\right)\right)dyd\tau\notag\\
		&=\sum_{i=1}^9\mathcal{I}_i.\notag
	\end{align}
	Now, we apply \eqref{5-3} to represent $\bbg_\tau$ in terms of derivative in $y$. Actually, for $\mathcal{I}_4$ and $\mathcal{I}_8$ in \eqref{5-6}, we have the following representation,
	\begin{align}
		&\mathcal{I}_4=\int_0^t\int_\bbr\partial_\tau\bbg_{21}(x-y,t-\tau)(v(y,\tau)-1)dyd\tau=\int_0^t\int_\bbr K\partial_y\bbg_{22}(x-y,t-\tau)(v(y,\tau)-1)dyd\tau,\label{5-7}\\
		&\mathcal{I}_8=\int_0^t\int_\bbr \partial_\tau\bbg_{23}(x-y,t-\tau)\left(E(y,\tau)-c_v\right)dyd\tau\notag\\
		&\hspace{0.4cm}=-\int_0^t\int_\bbr \left(\frac{K}{c_v}\partial_y\bbg_{22}(x-y,t-\tau)+\frac{\kappa}{c_v}\partial_y^2\bbg_{23}(x-y,t-\tau)\right)\left(E(y,\tau)-c_v\right)dyd\tau.\label{5-8}
	\end{align}
	Moreover, according to \eqref{5-5}, the effective Green's function is constructed by partition of unit. Therefore, when dealing with $G_{22}$, we need to split the integral into three parts. More precisely, for $\mathcal{I}_6$ in \eqref{5-6}, we have the representation as below,
	\begin{align}
		\mathcal{I}_6&=\int_0^t\int_\bbr \partial_\tau G_{22}(x,t;y,\tau)u(y,\tau)dyd\tau\label{5-9}\\
		&=\int_0^{t-2\nu_0}\int_\bbr\left(\partial_y\bbg_{21}-K\partial_y\bbg_{23}-\mu\partial_y^2\bbg_{22}\right)(x-y,t-\tau)u(y,\tau)dyd\tau\notag\\
		&\hspace{0.4cm}+\int_{t-2\nu_0}^{t-\nu_0}\int_\bbr-\frac{\mathcal{X}'\left(\frac{t-\tau}{\nu_0}\right)}{\nu_0}H\left(x,t;y,\tau;\frac{\mu}{v}\right)u(y,\tau)+\frac{\mathcal{X}'\left(\frac{t-\tau}{\nu_0}\right)}{\nu_0}\bbg_{22}(x-y;t-\tau)u(y,\tau)dyd\tau\notag\\
		&\hspace{0.4cm}+\int_{t-2\nu_0}^{t-\nu_0}\int_\bbr-\mathcal{X}\left(\frac{t-\tau}{\nu_0}\right)\left(\frac{\mu}{v}H_y\left(x,t;y,\tau;\frac{\mu}{v}\right)\right)_yu(y,\tau)dyd\tau\notag\\
		&\hspace{0.4cm}+\int_{t-2\nu_0}^{t-\nu_0}\int_\bbr\left(1-\mathcal{X}\left(\frac{t-\tau}{\nu_0}\right)\right)\left(\partial_y\bbg_{21}-K\partial_y\bbg_{23}-\mu\partial_y^2\bbg_{22}\right)(x-y,t-\tau)u(y,\tau)dyd\tau\notag\\
		&\hspace{0.4cm}+\int_{t-\nu_0}^t\int_\bbr-\left(\frac{\mu}{v}H_y\left(x,t;y,\tau;\frac{\mu}{v}\right)\right)_yu(y,\tau)dyd\tau.\notag
	\end{align}
	Similarly, for $\mathcal{I}_7$ in \eqref{5-6}, we have the following representation, 
	\begin{align}
		\mathcal{I}_7&=\int_0^t\int_\bbr \partial_yG_{22}(x,t;y,\tau)\left(p-p(1,1)-\frac{\mu u_y}{v}\right)dyd\tau\label{5-10}\\
		&=\int_0^{t-2\nu_0}\int_\bbr\partial_y\bbg_{22}(x,t;y,\tau)\left(p-p(1,1)-\frac{\mu u_y}{v}\right)dyd\tau\notag\\
		&\hspace{0.4cm}+\int_{t-2\nu_0}^{t-\nu_0}\int_\bbr\mathcal{X}\left(\frac{t-\tau}{\nu_0}\right)H_y\left(x,t;y,\tau;\frac{\mu}{v}\right)\left(p-p(1,1)-\frac{\mu u_y}{v}\right)\notag\\
		&\hspace{0.4cm}+\int_{t-2\nu_0}^{t-\nu_0}\int_\bbr\left(1-\mathcal{X}\left(\frac{t-\tau}{\nu_0}\right)\right)\partial_y\bbg_{22}(x-y;t-\tau)\left(p-p(1,1)-\frac{\mu u_y}{v}\right)dyd\tau\notag\\
		&\hspace{0.4cm}+\int_{t-\nu_0}^t\int_\bbr H_y\left(x,t;y,\tau;\frac{\mu}{v}\right)\left(p-p(1,1)-\frac{\mu u_y}{v}\right).\notag
	\end{align}
	Now, we combine the representations in \eqref{5-6}, \eqref{5-7}, \eqref{5-8}, \eqref{5-9} and \eqref{5-10}, and apply the backward equation \eqref{5-3} to obtain a new representation of $u(x,t)$ as follows,
	\begin{align}
		&u(x,t)\label{5-11}\\
		&=\int_\bbr\bbg_{21}(x-y,t)(v(y,0)-1)dy+\int_\bbr G_{22}(x,t;y,0)u(y,0)dy+\int_\bbr \bbg_{23}(x-y,t)\left(E(y,0)-c_v\right)dy\notag\\
		&\hspace{0.4cm}+\int_0^t\int_\bbr K\partial_y\bbg_{22}(x-y,t-\tau)(v(y,\tau)-1)dyd\tau-\int_0^t\int_\bbr\partial_y\bbg_{21}(x-y,t-\tau)u(y,\tau)dyd\tau\notag\\
		&\hspace{0.4cm}+\int_0^{t-2\nu_0}\int_\bbr\left(\partial_y\bbg_{21}-K\partial_y\bbg_{23}-\mu\partial_y^2\bbg_{22}\right)(x-y,t-\tau)u(y,\tau)dyd\tau\notag\\
		&\hspace{0.4cm}+\int_{t-2\nu_0}^{t-\nu_0}\int_\bbr-\frac{1}{\nu_0}\mathcal{X}'\left(\frac{t-\tau}{\nu_0}\right)H\left(x,t;y,\tau;\frac{\mu}{v}\right)u(y,\tau)+\frac{1}{\nu_0}\mathcal{X}'\left(\frac{t-\tau}{\nu_0}\right)\bbg_{22}(x-y;t-\tau)u(y,\tau)dyd\tau\notag\\
		&\hspace{0.4cm}+\int_{t-2\nu_0}^{t-\nu_0}\int_\bbr-\mathcal{X}\left(\frac{t-\tau}{\nu_0}\right)\left(\frac{\mu}{v}H_y\left(x,t;y,\tau;\frac{\mu}{v}\right)\right)_yu(y,\tau)dyd\tau\notag\\
		&\hspace{0.4cm}+\int_{t-2\nu_0}^{t-\nu_0}\int_\bbr\left(1-\mathcal{X}\left(\frac{t-\tau}{\nu_0}\right)\right)\left(\partial_y\bbg_{21}-K\partial_y\bbg_{23}-\mu\partial_y^2\bbg_{22}\right)(x-y,t-\tau)u(y,\tau)dyd\tau\notag\\
		&\hspace{0.4cm}+\int_{t-\nu_0}^t\int_\bbr-\left(\frac{\mu}{v}H_y\left(x,t;y,\tau;\frac{\mu}{v}\right)\right)_yu(y,\tau)dyd\tau\notag\\
		&\hspace{0.4cm}+\int_0^{t-2\nu_0}\int_\bbr\partial_y\bbg_{22}(x,t;y,\tau)\left(p-p(1,1)-\frac{\mu u_y}{v}\right)dyd\tau\notag\\
		&\hspace{0.4cm}+\int_{t-2\nu_0}^{t-\nu_0}\int_\bbr\mathcal{X}\left(\frac{t-\tau}{\nu_0}\right)H_y\left(x,t;y,\tau;\frac{\mu}{v}\right)\left(p-p(1,1)-\frac{\mu u_y}{v}\right)dyd\tau\notag\\
		&\hspace{0.4cm}+\int_{t-2\nu_0}^{t-\nu_0}\int_\bbr\left(1-\mathcal{X}\left(\frac{t-\tau}{\nu_0}\right)\right)\partial_y\bbg_{22}(x-y;t-\tau)\left(p-p(1,1)-\frac{\mu u_y}{v}\right)dyd\tau\notag\\
		&\hspace{0.4cm}+\int_{t-\nu_0}^t\int_\bbr H_y\left(x,t;y,\tau;\frac{\mu}{v}\right)\left(p-p(1,1)-\frac{\mu u_y}{v}\right)dyd\tau\notag\\
		&\hspace{0.4cm}-\int_0^t\int_\bbr \left(\frac{K}{c_v}\partial_y\bbg_{22}(x-y,t-\tau)+\frac{\kappa}{c_v}\partial_y^2\bbg_{23}(x-y,t-\tau)\right)\left(E(y,\tau)-c_v\right)dyd\tau\notag\\
		&\hspace{0.4cm}+\int_0^t\int_\bbr \partial_y\bbg_{23}(x-y,t-\tau)\left((pu)-\left(\frac{\kappa}{v}\theta_y+\frac{\mu}{v}uu_y\right)\right)dyd\tau.\notag
	\end{align}
	Finally, we obtain the desired representation by separating the representation \eqref{5-11} with respect to the time intervals $[0,t-2\nu_0]$, $[t-2\nu_0,t-\nu_0]$ and $[t-\nu_0,t]$. 
	
\end{proof}
\begin{lemma}\label{L5.2}
	Suppose the weak solution $(v(x,\tau),u(x,\tau),E(x,\tau))$ for \eqref{NS} exists for $\tau\in[0,t]$, and the heat kernel $H(x,t;y,\tau;\frac{\kappa}{c_vv})$ exists for $\tau\in(t-2\nu_0,t)$ for a sufficiently small positive constant $\nu_0$ such that $2\nu_0<t$. Then we have the representation of $v(x,t)$ and $E(x,t)$ as follows,
	\begin{align}
		&v(x,t)-1\label{v-repre}\\
		&=\int_\bbr\bbg_{11}(x-y,t)(v(y,0)-1)dy+\int_\bbr \bbg_{12}(x,t;y,0)u(y,0)dy+\int_\bbr \bbg_{13}(x-y,t)\left(E(y,0)-c_v\right)dy\notag\\
		&\hspace{0.4cm}+\int_0^{t}\int_\bbr \partial_y\bbg_{12}(x,t;y,\tau)\left(\frac{K(v-1)^2}{v}+\frac{(K\theta-K)(1-v)}{v}+\frac{\mu u_y(v-1)}{v}-\frac{Ku^2}{2c_v}\right)dyd\tau\notag\\
		&\hspace{0.4cm}+\int_0^{t}\int_\bbr \partial_y\bbg_{13}(x-y,t-\tau)\left((\frac{K\theta}{v}-K)u+\left(\kappa-\frac{\kappa}{v}\right)\theta_y+\left(\frac{\kappa}{c_v}-\frac{\mu}{v}\right)uu_y\right)dyd\tau,\notag\\
		&E(x,t)-c_v\label{theta-repre}\\
		&=\int_\bbr\bbg_{31}(x-y,t)(v(y,0)-1)dy+\int_\bbr \bbg_{32}(x,t;y,0)u(y,0)dy+\int_\bbr G_{33}(x-y,t)\left(E(y,0)-c_v\right)dy+\sum_{i=1}^3\mathcal{R}^\theta_i,\notag
	\end{align}
	where the remainders $\mathcal{R}_i^\theta$ are listed as below,
	\begin{align*}
		\mathcal{R}_1^\theta&=\int_0^{t-2\nu_0}\int_\bbr \partial_y\bbg_{32}(x,t;y,\tau)\left(\frac{K(v-1)^2}{v}+\frac{(K\theta-K)(1-v)}{v}+\frac{\mu u_y(v-1)}{v}-\frac{Ku^2}{2c_v}\right)dyd\tau\notag\\
		&\hspace{0.4cm}+\int_0^{t-2\nu_0}\int_\bbr \partial_y\bbg_{33}(x-y,t-\tau)\left((\frac{K\theta}{v}-K)u+\left(\kappa-\frac{\kappa}{v}\right)\theta_y+\left(\frac{\kappa}{c_v}-\frac{\mu}{v}\right)uu_y\right)dyd\tau,\notag\\
		\mathcal{R}_2^\theta&=\int_{t-2\nu_0}^{t-\nu_0}\int_\bbr \partial_y\bbg_{32}(x,t;y,\tau)\left(\frac{K(\theta-1)}{v}+\frac{\mu (v-1)u_y}{v}+\frac{K(v-1)^2}{v}\right)dyd\tau\notag\\
		&\hspace{0.4cm}-\int_{t-2\nu_0}^{t-\nu_0}\int_\bbr  \left(1-\mathcal{X}\left(\frac{t-\tau}{\nu_0}\right)\right)\left(\frac{K}{c_v}\partial_y\bbg_{32}\right)\left(E(y,\tau)-c_v\right)dyd\tau\notag\\
		&\hspace{0.4cm}+\int_{t-2\nu_0}^{t-\nu_0}\int_\bbr \frac{1}{\nu_0}\mathcal{X}'\left(\frac{t-\tau}{\nu_0}\right) \left(-H\left(x,t;y,\tau;\frac{\kappa}{c_vv}\right)+\bbg_{33}(x-y;t-\tau)\right)\left(E(y,\tau)-c_v\right)dyd\tau\notag\\
		&\hspace{0.4cm}+\int_{t-2\nu_0}^{t-\nu_0}\int_\bbr \mathcal{X}\left(\frac{t-\tau}{\nu_0}\right)H_y\left(x,t;y,\tau;\frac{\kappa}{c_vv}\right)\left((pu)+\left(\frac{\kappa}{c_vv}-\frac{\mu}{v}\right)uu_y\right)dyd\tau\notag\\
		&\hspace{0.4cm}+\int_{t-2\nu_0}^{t-\nu_0}\int_\bbr \left(1-\mathcal{X}\left(\frac{t-\tau}{\nu_0}\right)\right)\partial_y\bbg_{33}(x-y;t-\tau)\left((pu)+\frac{\kappa(v-1)}{v}\theta_y+\left(\frac{\kappa}{c_v}-\frac{\mu}{v}\right)uu_y\right)dyd\tau\notag\\
		&\hspace{0.4cm}+\int_{t-2\nu_0}^{t-\nu_0}\int_\bbr \left(-K\partial_y\bbg_{33}\right)u(y,\tau)dyd\tau,\notag\\
		\mathcal{R}_3^\theta&=\int_{t-\nu_0}^t\int_\bbr \partial_y\bbg_{32}(x-y,t-\tau)\left(K(v(y,\tau)-1)+\mu u_y\frac{v-1}{v}+\frac{K\theta-Kv}{v}\right)dyd\tau\notag\\
		&\hspace{0.4cm}+\int_{t-\nu_0}^t\int_\bbr \left(-K\partial_y\bbg_{33}\right)u(y,\tau)dyd\tau\notag\\
		&\hspace{0.4cm}+\int_{t-\nu_0}^t\int_\bbr \partial_yH\left(x,t;y,\tau;\frac{\kappa}{c_vv}\right)\left((pu)+\left(\frac{\kappa}{c_vv}-\frac{\mu}{v}\right)uu_y\right)dyd\tau.\notag
	\end{align*}
	
\end{lemma}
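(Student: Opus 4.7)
The plan is to mirror the argument of Lemma \ref{L5.1} row by row. For the first formula \eqref{v-repre}, I would multiply the three equations of \eqref{NS} by the row vector $\bigl(\bbg_{11}(x-y,t-\tau),\,\bbg_{12}(x-y,t-\tau),\,\bbg_{13}(x-y,t-\tau)\bigr)$ and integrate over $(y,\tau) \in \bbr \times (0,t)$. Integration by parts in $\tau$ produces the three initial-data integrals (the volume terms $\bbg_{11}$, $\bbg_{12}$, $\bbg_{13}$ all vanish at $\tau = t^-$ except where they meet $\delta(x-y)$, and here the $(1,1)$ entry of $\bbg(0)$ is $\delta(x-y)$, giving the term $v(x,t)-1$ on the left), while integration by parts in $y$ moves derivatives off $u$, $p$ and the fluxes onto $\bbg_{1k}$. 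The time derivatives $\partial_\tau \bbg_{1k}$ that appear are then rewritten as spatial derivatives using the first row of \eqref{5-3}:
\[
\partial_\tau \bbg_{11} = K \partial_y \bbg_{12}, \quad \partial_\tau \bbg_{12} = \partial_y \bbg_{11} - K\partial_y \bbg_{13} - \mu \partial_y^2 \bbg_{12}, \quad \partial_\tau \bbg_{13} = -\tfrac{K}{c_v}\partial_y \bbg_{12} - \tfrac{\kappa}{c_v} \partial_y^2 \bbg_{13}.
\]
Grouping the linear terms into the initial-data integrals and collecting the remaining quadratic terms (coming from $p-p(1,1) = \frac{K(\theta-1)-K(v-1)}{v} - \frac{Ku^2}{2c_v v}$ linearized about $\bar U$, from $\tfrac{\mu u_y}{v}-\mu u_y$, from $pu - 0$, and from $\tfrac{\kappa}{v}\theta_y - \kappa \theta_y$) yields exactly the two nonlinear integrals in \eqref{v-repre}. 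Note that no effective Green's function is required here because the $v$-equation has no dissipation and $\bbg_{11}$, $\bbg_{12}$, $\bbg_{13}$ are already well-behaved in the given regularity class.

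For the second formula, I would repeat the argument with the row vector $\bigl(\bbg_{31}(x-y,t-\tau),\,\bbg_{32}(x-y,t-\tau),\,G_{33}(x,t;y,\tau)\bigr)$. The $G_{33}$ slot now interpolates between the heat kernel $H(x,t;y,\tau;\tfrac{\kappa}{c_v v})$ for $t-\tau < \nu_0$ and the Green's function component $\bbg_{33}$ for $t - \tau > 2\nu_0$. The derivation is the same as in Lemma \ref{L5.1} for $u$, with two modifications: (i) use the third row of \eqref{5-3} for $\partial_\tau \bbg_{31}, \partial_\tau \bbg_{32}$ together with the heat-equation identity $\partial_\tau H + \partial_y\bigl(\tfrac{\kappa}{c_v v} \partial_y H\bigr) = 0$ where $G_{33} = H$; and (ii) wherever $\mathcal{X}'(\tfrac{t-\tau}{\nu_0})$ appears (on the interval $[t-2\nu_0,t-\nu_0]$), it produces the interpolation remainder $\tfrac{1}{\nu_0}\mathcal{X}'\bigl(\bbg_{33}-H\bigr)(E-c_v)$ seen in $\mathcal{R}^\theta_2$. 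Splitting $[0,t]$ into the three pieces $[0,t-2\nu_0]$, $[t-2\nu_0,t-\nu_0]$, $[t-\nu_0,t]$ after carrying out these manipulations produces the three remainders $\mathcal{R}^\theta_1$, $\mathcal{R}^\theta_2$, $\mathcal{R}^\theta_3$.

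The main bookkeeping obstacle is the middle interval $[t-2\nu_0,t-\nu_0]$, where $G_{33}$ is genuinely a convex combination of the two kernels and the interpolation weight $\mathcal{X}$ contributes a derivative term when $\partial_\tau$ acts on it. One has to be careful to package the telescoping identity
\[
\partial_\tau G_{33} = \tfrac{1}{\nu_0}\mathcal{X}'\bigl(H - \bbg_{33}\bigr) + \mathcal{X}\,\partial_\tau H + (1-\mathcal{X})\,\partial_\tau \bbg_{33}
\]
so that the two time-evolutions on the right are simultaneously converted to $y$-derivatives via their respective equations, and the $\mathcal{X}'$ term is written as the explicit remainder. The same splitting applied to the flux term $\partial_y G_{33}\bigl((pu) - \tfrac{\kappa}{v}\theta_y - \tfrac{\mu}{v} u u_y\bigr)$, together with the cancellation $-\partial_y^2\bigl(\tfrac{\kappa}{c_v v} H\bigr)(E-c_v) + \partial_y H\cdot(\text{flux})$ on the pieces where $G_{33} = H$, then produces exactly the terms listed in $\mathcal{R}^\theta_2$ and $\mathcal{R}^\theta_3$. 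All the regularity needed to justify the integrations by parts is already provided by the local existence result (Proposition \ref{prop:local}) and the heat-kernel estimates in Lemmas \ref{L2.1}--\ref{lem:heat-pw}, so no new analytic input is required.
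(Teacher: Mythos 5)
Your proposal follows exactly the paper's route: multiply \eqref{NS} by the rows $(\bbg_{11},\bbg_{12},\bbg_{13})$ and $(\bbg_{31},\bbg_{32},G_{33})$, integrate by parts in $y$ and $\tau$, and convert $\partial_\tau\bbg$ via the backward equations \eqref{5-3} (together with the heat-kernel equation and the $\mathcal{X}'$ interpolation term where $G_{33}$ is active), which is precisely the computation the paper sketches for this lemma. Two bookkeeping slips to fix before carrying it out literally: the chain rule gives $\partial_\tau G_{33}=\tfrac{1}{\nu_0}\mathcal{X}'\bigl(\bbg_{33}-H\bigr)+\mathcal{X}\,\partial_\tau H+(1-\mathcal{X})\,\partial_\tau\bbg_{33}$ (your displayed identity has the opposite sign, although the remainder you then quote is the correct one appearing in $\mathcal{R}^\theta_2$), and $p-p(1,1)=\tfrac{K(\theta-1)-K(v-1)}{v}$ exactly, the $-\tfrac{Ku^2}{2c_v}$ term arising only when $\theta-1$ is re-expressed through $E-c_v$ against the $E$-based linearization (likewise the linear part of $pu$ is $Ku$, not $0$).
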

\begin{proof}
	The calculations are similar to those in Lemma \ref{L5.1}. Indeed, multiplying the vector 
	\[
	\begin{pmatrix}
		\bbg_{11}(x-y,t-\tau)& \bbg_{12}(x-y,t-\tau) & \bbg_{13}(x-y,t-\tau)
	\end{pmatrix}
	\]
	on the system \eqref{NS}, integrating with respect to $y$ and $\tau$, using integration by parts and the backward equation, one can get the representation for $v(x,t)$. 
	
	For the representation of $E(x,t)$, we just need to multiply 
	\[
	\begin{pmatrix}
		\bbg_{31}(x-y,t-\tau)& \bbg_{32}(x-y,t-\tau) & G_{33}(x,t;y,\tau)
	\end{pmatrix}
	\]
	on the system \eqref{NS} and follow the similar procedure. As the computations are straightforward though a bit tedious, here we omit the details.

\end{proof}

\begin{remark}\label{R5.1-1}
	Although the computations in Lemmas \ref{L5.1} and \ref{L5.2} are a bit lengthy, the goal is clear, that is, we want to write the solution into the sum of linear  terms and  nonlinear terms. Since we are dealing with perturbation problem, one expects the linear terms are dominant. Notice that when $\tau\in (t-2\nu_0,t)$, there are also some linear terms, it turns out they can be controlled either by comparison estimates of Green's function or by the appropriate small parameter $\nu_0$.
\end{remark}

Note that in the representation of zeroth order solutions, we need the information of first order derivative of $u$ and $\theta$. If we formally differentiate the expressions \eqref{u-repre} and \eqref{theta-repre} with respect to $x$, the derivative will induce additional singularity in time, so that the time integral is not integrable anymore. Therefore, in order to gain the derivative estimates, we provide another representation for the derivatives, which is a direct consequence of the previous representations.


\begin{corollary}\label{Cor5.1}
	For the first order derivative of $\mathcal{R}^u_3$ and $\mathcal{R}^\theta_3$, we have the following estimates,
	\begin{align*}
		\partial_x\mathcal{R}^u_3
		&=\int_{-\infty}^x \int_{-\infty}^y\frac{1}{\mu}\left(\partial_x\bbg_{22}(x-z,\nu_0)-H_x\left(x,t;z,t-\nu_0;\frac{\mu}{v}\right)\right)dz(Kv(y,t-\nu_0)-K\theta(y,t-\nu_0))dy\\
		&\hspace{0.4cm}-\int_{t-\nu_0}^t\int_{-\infty}^x \int_{-\infty}^y-\frac{1}{\mu}\left(\partial_x\bbg_{22}(x-z,t-\tau)-H_x\left(x,t;z,\tau;\frac{\mu}{v}\right)\right)dz(Kv_\tau(y,\tau)-K\theta_\tau(y,\tau))dyd\tau\\
		&\hspace{0.4cm}-\int_x^{+\infty} \int_y^{+\infty}\frac{1}{\mu}\left(\partial_x\bbg_{22}(x-z,\nu_0)-H_x\left(x,t;z,t-\nu_0;\frac{\mu}{v}\right)\right)dz(Kv(y,t-\nu_0)-K\theta(y,t-\nu_0))dy\\
		&\hspace{0.4cm}+\int_{t-\nu_0}^t\int_x^{+\infty} \int_y^{+\infty}-\frac{1}{\mu}\left(\partial_x\bbg_{22}(x-z,t-\tau)-H_x\left(x,t;z,\tau;\frac{\mu}{v}\right)\right)dz(Kv_\tau(y,\tau)-K\theta_\tau(y,\tau))dyd\tau\\
		&\hspace{0.4cm}+\int_{t-\nu_0}^t\int_\bbr -\frac{1}{\mu}\left(-\partial_x\bbg_{21}(x-y,t-\tau)+K\partial_x\bbg_{23}(x-y,t-\tau)\right)(Kv(y,\tau)-K\theta(y,\tau))dyd\tau\\
		&\hspace{0.4cm}+\int_{t-\nu_0}^t\int_\bbr\partial_x\bbg_{21}(x-y,t-\tau)u_y(y,\tau)dyd\tau\notag\\
		&\hspace{0.4cm}+\int_{t-\nu_0}^t\int_\bbr \partial_x\bbg_{22}(x-y,t-\tau)\frac{Kuu_y}{c_v}dyd\tau\notag\\
		&\hspace{0.4cm}+\int_{t-\nu_0}^t\int_\bbr \partial_{xy}\bbg_{23}(x-y,t-\tau)\left((pu)+\frac{\kappa(v-1)\theta_y}{v}+\left(\frac{\kappa}{c_v}-\frac{\mu}{v}\right)uu_y\right)dyd\tau,\notag
	\end{align*}
and	
	\begin{align*}
		\partial_x\mathcal{R}_3^\theta
		&=\int_{t-\nu_0}^t\int_\bbr \partial_{xy}\bbg_{32}(x-y,t-\tau)\left(K(v(y,\tau)-1)+\mu u_y\frac{v-1}{v}+\frac{K\theta-Kv}{v}\right)dyd\tau\notag\\
		&\hspace{0.4cm}+\int_{t-\nu_0}^t\int_\bbr K\left(\partial_x\bbg_{33}(x-y,t-\tau)-H_x\left(x,t;y,\tau;\frac{\kappa}{c_vv}\right)\right)u_y(y,\tau)dyd\tau\notag\\
		&\hspace{0.4cm}+\int_{t-\nu_0}^t\int_\bbr H_x\left(x,t;y,\tau;\frac{\kappa}{c_vv}\right)\left(1-\frac{\kappa}{c_v\mu}\right)uu_\tau dyd\tau\notag\\
		&\hspace{0.4cm}+\int_{t-\nu_0}^t\int_\bbr H_x\left(x,t;y,\tau;\frac{\kappa}{c_vv}\right)\left(1-\frac{\kappa}{c_v\mu}\right)u_y\left(K-p+\frac{\mu}{v}u_y\right)dyd\tau\notag\\
		&\hspace{0.4cm}+\int_{-\infty}^x\int_{-\infty}^y H_x\left(x,t;z,t-\nu_0;\frac{\kappa}{c_vv}\right)dz\left(\frac{\kappa K}{c_v\mu}\right)\frac{(\theta(y,t-\nu_0)-v(y,t-\nu_0))}{v(y,t-\nu_0)}u(y,t-\nu_0)dy\notag\\
		&\hspace{0.4cm}+\left(\frac{\kappa K}{c_v\mu}\right)\frac{(\theta(x,t-\nu_0)-v(x,t-\nu_0))}{v(x,t-\nu_0)}u(x,t-\nu_0)\notag\\
		&\hspace{0.4cm}+\int_{t-\nu_0}^t\int_{-\infty}^x\int_{-\infty}^y H_x\left(x,t;z,\tau;\frac{\kappa}{c_vv}\right)dz\left(\frac{\kappa K}{c_v\mu}\right)\left(\frac{\left(\theta_\tau v-\theta v_\tau\right)}{v^2}u(y,\tau)+\frac{(\theta-v)}{v}u_\tau(y,\tau)\right)dyd\tau\notag\\
		&\hspace{0.4cm}+\int_{t-\nu_0}^t\left(\frac{\kappa K}{c_v\mu}\right)\left(\frac{\left(\theta_\tau(x,\tau) v(x,\tau)-\theta(x,\tau) v_\tau(x,\tau)\right)}{v^2(x,\tau)}u(x,\tau)+\frac{(\theta(x,\tau)-v(x,\tau))}{v(x,\tau)}u_\tau(x,\tau)\right)d\tau\notag\\
		&\hspace{0.4cm}-\int_x^{+\infty}\int_y^{+\infty} H_x\left(x,t;z,t-\nu_0;\frac{\kappa}{c_vv}\right)dz\left(\frac{\kappa K}{c_v\mu}\right)\left(\frac{\theta(y,t-\nu_0)-v(y,t-\nu_0)}{v(y,t-\nu_0)}\right)u(y,t-\nu_0)dy\notag\\
		&\hspace{0.4cm}-\int_{t-\nu_0}^t\int_x^{+\infty}\int_y^{+\infty} H_x\left(x,t;z,\tau;\frac{\kappa}{c_vv}\right)dz\left(\frac{\kappa K}{c_v\mu}\right)\left(\frac{\left(\theta_\tau v-\theta v_\tau\right)}{v^2}u(y,\tau)+\frac{(\theta-v)}{v}u_\tau(y,\tau)\right)dyd\tau.\notag\\
	\end{align*}
	
\end{corollary}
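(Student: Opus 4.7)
The corollary asserts explicit rearrangements of $\partial_x \mathcal{R}^u_3$ and $\partial_x \mathcal{R}^\theta_3$ in which every space derivative that would otherwise fall on a second-order kernel is absorbed either into a comparison difference $\partial_x \bbg - H_x$ (controllable through Lemma \ref{lem:heat-comp}) or into a double antiderivative of the data factor, so that the resulting $\tau$-integral over $(t-\nu_0,t)$ remains integrable at $\tau = t$. Without such a rearrangement, naive differentiation of the expressions in Lemma \ref{L5.1} would produce kernels of order $\partial_x \partial_y \bbg \sim (t-\tau)^{-3/2}$, whose contribution would diverge. My plan is to perform the rearrangement directly on $\mathcal{R}^u_3$, then repeat the same template for $\mathcal{R}^\theta_3$.

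The first step is algebraic. Using $1/v = 1 - (v-1)/v$ inside the $H_y(\mu/v)$ term of $\mathcal{R}^u_3$, I group the three pieces that carry the linear factors $(v-1)$ and $(\theta-1)$ into the single combination
\[
K\int_{t-\nu_0}^{t}\!\!\int_{\bbr}\bigl[\partial_y\bbg_{22}(x-y,t-\tau) - H_y(x,t;y,\tau;\mu/v)\bigr](v-\theta)(y,\tau)\,dyd\tau,
\]
plus a remainder quadratic in $(v-1)$ that fits directly into the quadratic lines of the stated formula. I then pass $\partial_x$ inside the integrals and use the translation invariance $\partial_y\bbg_{22}=-\partial_x \bbg_{22}$ together with the backward equations \eqref{5-3} for $\bbg_{22}$ and \eqref{heat-x} for $H(\mu/v)$ to trade each $\partial_x\partial_y$-kernel for a $-(1/\mu)\partial_\tau$-kernel, modulo commutator terms of order $\partial_y\bbg_{21}$, $\partial_y\bbg_{23}$ and $(\mu/v)_y\partial_y H$, all of which are regular enough to keep in the form claimed.

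An integration by parts in $\tau$ on $[t-\nu_0,t]$ now produces three contributions. The boundary at $\tau=t$ vanishes by the exact cancellation $\bbg_{22}(x-y,0)=\delta(x-y)=H(x,t;y,t;\mu/v)$, so no surface term survives there. The boundary at $\tau=t-\nu_0$ produces the first and third lines of the stated identity, while the interior piece carries $\partial_\tau(v-\theta)=Kv_\tau - K\theta_\tau$ and produces the second and fourth lines. The double antiderivatives $\int_{-\infty}^x\!\int_{-\infty}^y dz\,dy$ and $\int_x^{+\infty}\!\int_y^{+\infty} dz\,dy$ then arise when the remaining $\partial_y$ still attached to each kernel is moved off by two successive integrations by parts in $y$, performed separately on the two sides of $y=x$; the boundary at $y=\pm\infty$ vanishes by decay of $\bbg_{22}$ and $H$, and the boundary at $y=x$ cancels between the two halves by symmetry. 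The integrand that survives is precisely the comparison difference $\partial_x\bbg_{22}(x-z,\cdot)-H_x(x,t;z,\cdot;\mu/v)$ appearing in the statement. The remaining integrals in $\mathcal{R}^u_3$ — those in which kernels $\partial_y\bbg_{21}$, $\partial_y\bbg_{23}$, $H_y$ are paired with already-quadratic data $(pu)$, $\kappa(v-1)\theta_y/v$, $(\kappa/c_v-\mu/v)uu_y$, or $Ku^2/(2c_v)$ — admit $\partial_x$ taken under the integral with only one $\partial_y$ on the kernel, yielding the last four lines of the $\partial_x\mathcal{R}^u_3$ identity.

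The derivation of $\partial_x\mathcal{R}^\theta_3$ follows the same template, now with the combined difference $\partial_y\bbg_{33} - H_y(\kappa/(c_v v))$ acting on the energy perturbation $E-c_v$. One additional ingredient appears during the $\tau$-integration by parts: the $u$-momentum equation from \eqref{NS2} is used once to rewrite $u_\tau$ in terms of the viscous flux $(K-p+(\mu/v)u_y)_y$, which accounts for the appearance of both $uu_\tau$ and $u_y(K-p+(\mu/v)u_y)$ in the stated formula. The principal obstacle throughout is pure bookkeeping: one must verify that every regrouping preserves the pairing between singular kernel derivatives and either a comparison difference or a double antiderivative, and that the $\tau=t$ boundary cancellation is exact so that no non-integrable contribution survives. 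All remaining manipulations — integration by parts in $y$ and $\tau$, substitution of the backward equations, and the $1/v$ expansion — are routine once the correct groupings are identified.
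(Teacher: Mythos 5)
Your proposal is correct and follows essentially the same route as the paper: rewrite the linear pieces as double antiderivatives (split at $y=x$) of the comparison kernels, use the backward equations for $\bbg_{22}$ and $H(\cdot;\mu/v)$ to trade the second spatial derivatives for $-\frac{1}{\mu}\partial_\tau$, integrate by parts in $\tau$ using the exact cancellation $\bbg_{22}(x-z,0)=H(x,t;z,t;\mu/v)=\delta(x-z)$ so that only the $\tau=t-\nu_0$ boundary survives, and invoke the momentum equation for the flux/$u_\tau$ terms in $\partial_x\mathcal{R}^\theta_3$. The only difference is the order of the manipulations (you generate the double antiderivatives by integration by parts in $y$ at the end rather than positing that form at the outset), which is immaterial.
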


\begin{proof}
	We will only show the details for the proof of $\partial_x\mathcal{R}^u_3$, and the other one can be obtained by similar arguments.  
	\begin{align*}
		\mathcal{R}^u_3 & = \int_{t-\nu_0}^t\int_{-\infty}^x \int_{-\infty}^y \left[\partial_{zz} \bbg_{22} (x-z,t-\tau) - \partial_z\left(\frac{1}{v(z,\tau)}H_z\right)\left(x,t;z,\tau;\frac{\mu}{v}\right) \right]dz K \left(v-\theta\right)(y,\tau) dy d\tau\\
		&\quad -  \int_{t-\nu_0}^t\int_{x}^{+\infty} \int_{y}^{+\infty} \left[\partial_{zz} \bbg_{22} (x-z,t-\tau) - \partial_z\left(\frac{1}{v(z,\tau)}H_z\right)\left(x,t;z,\tau;\frac{\mu}{v}\right) \right]dz K \left(v-\theta\right)(y,\tau) dy d\tau\\
		&\quad+\int_{t-\nu_0}^t\int_\bbr\bbg_{21}(x-y,t-\tau)u_y(y,\tau)dyd\tau +\int_{t-\nu_0}^t\int_\bbr \bbg_{22}(x-y,t-\tau)\frac{Kuu_y}{c_v}dyd\tau\notag\\
		&\quad+\int_{t-\nu_0}^t\int_\bbr \partial_y\bbg_{23}(x-y,t-\tau)\left((pu)+\frac{\kappa(v-1)\theta_y}{v}+\left(\frac{\kappa}{c_v}-\frac{\mu}{v}\right)uu_y\right)dyd\tau.\notag
	\end{align*}
	
	One uses the backward equations for $\bbg_{22}$ in \eqref{5-3} and $H(x,t;z,\tau,\frac{\mu}{v})$ to replace the second order derivatives by
	\begin{align*}
		&\partial_{zz} \bbg_{22} (x-z,t-\tau)  = -\frac{1}{\mu} \left(	\partial_{\tau} \bbg_{22} -\partial_{z}\bbg_{21} + K\partial_{z} \bbg_{23}		\right)(x-z,t-\tau),\\
		&\partial_z\left(\frac{1}{v(z,\tau)}H_z\right)  = -\frac{1}{\mu} \partial_{\tau} H (x,t;z,\tau;\frac{\mu}{v}).
	\end{align*}
	
	For the time derivative, one applies integration by parts with respect to $\tau$. Considering that  
	\[\bbg_{22}(x-z,0)=H(x,t;z,t;\frac{\mu}{v})=\delta(x-z)\] 
	when $\tau=t$, only boundary terms at $\tau=t-\nu_0$ remain. Collecting all the terms, taking derivative with respect to $x$, one obtains the desired results.

	The formula for $\mathcal{R}^\theta_3$ can be similarly obtained.

\end{proof}

\subsection{Global existence, uniquenss and large time behavior}
In this part, we will study the global existence of the solution to the nonlinear Navier-Stokes equation \eqref{NS}. According to Propositions \ref{prop:local} and  \ref{T3.3}, if the initial data is controlled by a sufficiently small constant $\delta$ as in \eqref{D1}, there exists a unique weak solution $(v,u,E)$ to \eqref{NS} for $t<t_\sharp$. Moreover, the solutions are kept small in the sense of \eqref{local-small-strong}. Then, we define a stopping time as below,
\begin{align}
	&T=\sup_{t \geq 0}\left\{t\ \Big|\ \mathcal{G}(\tau)< \delta,\quad \text{for $0< \tau<t$}\right\},	\label{5-15}\\
	&\mathcal{G}(\tau):=\|\sqrt{\tau+1}(v(\cdot,\tau)-1)\|_\infty+\|\sqrt{\tau+1}u(\cdot,\tau)\|_\infty+\|\sqrt{\tau+1}(\theta(\cdot,\tau)-1)\|_\infty\notag\\
	&\hspace{1.2cm}+\|v(\cdot,\tau)-1\|_{L^1}+\|u(\cdot,\tau)\|_{L^1}+\|\theta(\cdot,\tau)-1\|_{L^1}\notag\\
	&\hspace{1.2cm}+\|v(\cdot,\tau)-1\|_{BV}+\|u(\cdot,\tau)\|_{BV}+\|\theta(\cdot,\tau)-1\|_{BV}\notag\\
	&\hspace{1.2cm}+\|\sqrt{\tau}u_x(\cdot,\tau)\|_\infty+\|\sqrt{\tau}\theta_x(\cdot,\tau)\|_\infty.\notag
\end{align}
According to Proposition \ref{prop:local}, the life span of the weak solution is at least $T+t_\sharp$. It is obvious $T\geq 0$, and we will actually show in the next lemma that, $T>0$ for sufficiently small initial data. 
\begin{lemma}\label{L5.3}
	Let $(v,u,E)$, $C_\sharp$, $t_\sharp$ and $\delta$  be the local solution and corresponding parameters constructed in Proposition \ref{prop:local}. If we further let the initial data satisfy the following small condition,
	\bq\label{global-initial} 
	\|v_0-1\|_{BV}+\|u_0\|_{BV}+\|\theta_0-1\|_{BV}+\|v_0-1\|_{L^1}+\|u_0\|_{L^1}+\|\theta_0-1\|_{L^1}<\delta^*<\frac{\delta}{220 C_\sharp},
	\eq 
	then we have $T>t_\sharp$, where $T$ is the stopping time defined in \eqref{5-15}. Moreover, we have the following estimates for all $t<T+t_\sharp$,
	\begin{equation}\label{global-small}
		\left\{\begin{aligned}
			&0<\delta,\\
			&\max\left\{\begin{aligned}&\left\|u(\cdot,t)\right\|_{L^1_x},\ \left\|u(\cdot,t)\right\|_{L^\infty_x},\ \left\|u_x(\cdot,t)\right\|_{L^1_x},\\ &\sqrt{\min(t,t_\sharp)}\left\|u_x(\cdot,t)\right\|_{L^\infty_x},\ \sqrt{\min(t,t_\sharp)}\left\|u_t(\cdot,t)\right\|_{L^1_x},\ \min(t,t_\sharp)\left\|u_t(\cdot,t)\right\|_{L^\infty_x}\end{aligned}\right\}\leq 2C_\sharp\delta,\\
			&\max\left\{\begin{aligned}&\left\|\theta(\cdot,t)-1\right\|_{L^1_x},\ \left\|\theta(\cdot,t)-1\right\|_{L^\infty_x},\ \left\|\theta_x(\cdot,t)\right\|_{L^1_x},\\ &\sqrt{\min(t,t_\sharp)}\left\|\theta_x(\cdot,t)\right\|_{L^\infty_x},\ \sqrt{\min(t,t_\sharp)}\left\|\theta_t(\cdot,t)\right\|_{L^1_x},\ \min(t,t_\sharp)\left\|\theta_t(\cdot,t)\right\|_{L^\infty_x}\end{aligned}\right\}\leq 2C_\sharp\delta,\\
			&\max\left\{\int_{\bbr\setminus\mathscr{D}}|v_x(x,t)|dx,\ \left\|v(\cdot,t)-1\right\|_{L^1_x},\ \left\|v(\cdot,t)-1\right\|_{L^\infty_x},\ \sqrt{\min(t,t_\sharp)}\left\|v_t(\cdot,t)\right\|_{L^\infty_x}\right\}\leq 2C_\sharp\delta,\\
			&v^*=v^*_c+v^*_d,\quad  v^*_d(x,t)=\sum_{z<x,z\in\mathscr{D}}v^*\Big|_{z^-}^{z^+}h(x-z),\quad \text{$v_c^*$ is AC},\\
			&\left|v(\cdot,t)\Big|_{x=z^-}^{x=z^+}\right|\leq 2\left|v^*_0(\cdot)\Big|_{x=z^-}^{x=z^+}\right|,\quad z\in\mathscr{D},
		\end{aligned}
		\right.
	\end{equation}
	where $v^*=v-1$ as before.
\end{lemma}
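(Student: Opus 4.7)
The plan is to establish $T>t_\sharp$ as a direct consequence of Proposition \ref{prop:local} applied to the small initial data, and then to obtain \eqref{global-small} for all $t<T+t_\sharp$ via a re-application of the local existence result within the stopping-time interval.

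First, since $\delta^*<\delta/(220C_\sharp)<\delta$, Proposition \ref{prop:local} yields a weak solution on $(0,t_\sharp)$ satisfying \eqref{local-small-strong} with $\delta$ replaced by $\delta^*$. To show $\mathcal{G}(\tau)<\delta$ for $\tau\in(0,t_\sharp)$, I would bound each of the eleven summands defining $\mathcal{G}(\tau)$. Because $t_\sharp\ll 1$, the weight $\sqrt{\tau+1}\leq\sqrt{2}$, so each of the three $\sqrt{\tau+1}$-weighted $L^\infty$ norms is controlled by $\sqrt{2}\cdot 2C_\sharp\delta^*\leq 4C_\sharp\delta^*$. The three $L^1_x$ norms and the two $\sqrt{\tau}$-weighted derivative $L^\infty$ norms are each bounded by $2C_\sharp\delta^*$ directly from \eqref{local-small-strong}. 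For the three BV norms, $\|u\|_{BV}=\|u_x\|_{L^1}\leq 2C_\sharp\delta^*$ and $\|\theta\|_{BV}=\|\theta_x\|_{L^1}\leq 2C_\sharp\delta^*$, while for $v$ the continuous part contributes $\int_{\bbr\setminus\mathscr{D}}|v_x|dx\leq 2C_\sharp\delta^*$ and the discrete part $\sum_{z\in\mathscr{D}}|[v]_z|\leq 2\|v_0\|_{BV}\leq 2\delta^*$ by the jump inequality in \eqref{local-small-strong}. Summing all contributions yields $\mathcal{G}(\tau)\leq N\,C_\sharp\delta^*$ for some absolute constant $N$ (with $N\leq 220$ by the above count), and hence $\mathcal{G}(\tau)<\delta$ on $(0,t_\sharp)$, proving $T>t_\sharp$.

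Next, I would establish \eqref{global-small} on $(0,T+t_\sharp)$ by splitting into the two ranges $t\in(0,t_\sharp]$ and $t\in(t_\sharp,T+t_\sharp)$. For $t\in(0,t_\sharp]$, the estimates \eqref{global-small} reduce to \eqref{local-small-strong} with initial smallness $\delta^*\leq\delta$, which is immediate. For $t\in(t_\sharp,T+t_\sharp)$, set $s=t-t_\sharp\in(0,T)$. By definition of $T$, the hypothesis $\mathcal{G}(s)<\delta$ gives $\|v(\cdot,s)-1\|_{BV}+\|u(\cdot,s)\|_{BV}+\|\theta(\cdot,s)-1\|_{BV}+\|v(\cdot,s)-1\|_{L^1}+\|u(\cdot,s)\|_{L^1}+\|\theta(\cdot,s)-1\|_{L^1}<\delta$, so Proposition \ref{prop:local} applies with initial time $s$ and produces a solution on $[s,s+t_\sharp]=[t-t_\sharp,t]$ bounded by $2C_\sharp\delta$ in all the quantities listed. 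By Proposition \ref{T3.3} (uniqueness), this solution coincides with the original. The shifted local estimates give for instance $\sqrt{t-s}\,\|u_x(\cdot,t)\|_{L^\infty}\leq 2C_\sharp\delta$, i.e.\ $\sqrt{t_\sharp}\,\|u_x(\cdot,t)\|_{L^\infty}\leq 2C_\sharp\delta$, which matches $\sqrt{\min(t,t_\sharp)}\,\|u_x(\cdot,t)\|_{L^\infty}\leq 2C_\sharp\delta$ in this range; the remaining bounds follow identically, including the jump invariance $|[v(\cdot,t)]_z|\leq 2|[v_0^*(\cdot)]_z|$, obtained by chaining the jump estimate on $[0,t_\sharp]$ with the one on $[s,t]$ (the jump locations $\mathscr{D}$ being time-invariant by \eqref{2-2}).

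There is no serious obstacle here: Lemma \ref{L5.3} is essentially a bookkeeping step that opens the continuity argument to come. The only mild subtlety is that one must verify that at the stopping time $T$ (if $T<\infty$) one still has a controlled solution to work with, but this follows because the bounds on $\mathcal{G}$ on $[0,T)$ together with the H\"older continuity in time of the state variables (from \eqref{eq:v-conti} and analogous bounds for $u,\theta$) give well-defined limits of $v,u,\theta$ as $\tau\uparrow T$ that can serve as initial data for a further application of Proposition \ref{prop:local}. The choice of the constant $220$ simply absorbs the cumulative count of the eleven terms in $\mathcal{G}$ with sufficient margin.
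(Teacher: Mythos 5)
Your proposal reproduces the paper's own argument: apply Proposition \ref{prop:local} with the $\delta^*$-small data and sum the eleven constituents of $\mathcal{G}$ (the paper records this as $\mathcal{G}(\tau)\leq 22C_\sharp\sqrt{1+t_\sharp}\,\delta^*<\delta/5$ on $(0,t_\sharp)$), and then, for $t<T+t_\sharp$, restart Proposition \ref{prop:local} from the data at time $\max(0,t-t_\sharp)$, where $\mathcal{G}<\delta$, to read off \eqref{global-small} with the $\min(t,t_\sharp)$ weights; your explicit appeal to Proposition \ref{T3.3} to identify the restarted solution with the original one is the right justification of a step the paper leaves implicit.

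Two caveats. First, ``$\mathcal{G}(\tau)<\delta$ for all $\tau\in(0,t_\sharp)$'' only yields $T\geq t_\sharp$ by the definition \eqref{5-15}; to get the strict inequality $T>t_\sharp$ you must, as the paper does, invoke the continuity in $t$ of the solution together with the uniform margin your estimate provides ($\mathcal{G}\leq 220C_\sharp\delta^*<\delta$), so that $\mathcal{G}$ stays below $\delta$ slightly beyond $t_\sharp$. Second, your ``chaining'' of the jump estimate does not give the stated bound $\left|v(\cdot,t)\big|_{x=z^-}^{x=z^+}\right|\leq 2\left|v_0^*\big|_{x=z^-}^{x=z^+}\right|$: each restart multiplies the factor $2$, so chaining yields $4$ on $(t_\sharp,2t_\sharp)$ and $2^k$ after $k$ restarts, and for a restart time $s=t-t_\sharp\geq t_\sharp$ the definition of $T$ controls only the BV norm of $v(\cdot,s)-1$, not its individual jumps relative to those of $v_0$. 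What the restart honestly delivers is the factor-$2$ jump bound relative to the data at time $\max(0,t-t_\sharp)$; the paper's proof is equally terse on this point (it simply cites Proposition \ref{prop:local}), so this is an imprecision shared with the source rather than a defect peculiar to your argument, but you should not present the chaining as a proof of the factor $2$ with respect to $v_0^*$.
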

\begin{proof}
	Notice that the local existence result in Proposition \ref{prop:local} can be applied for the initial data \eqref{global-initial} for same $C_\sharp$ and $t_\sharp$. More precisely, we have for $\tau<t_\sharp$ that,
	\begin{align*}
		\left\{
		\begin{aligned}
			&\max\left\{\|v(\cdot,\tau)-1\|_{BV},\ \|u(\cdot,\tau)\|_{BV},\ \|\theta(\cdot,\tau)-1\|_{BV},\ \|v(\cdot,\tau)-1\|_{L^1},\ \|u(\cdot,\tau)\|_{L^1},\ \|\theta(\cdot,\tau)-1\|_{L^1}\right\}\leq 2C_\sharp\delta^*,\\
			&\max\left\{\|\sqrt{1+\tau}u(\cdot,\tau)\|_{L^\infty},\ \|\sqrt{1+\tau}(\theta(\cdot,\tau)-1)\|_{L^\infty},\ \|\sqrt{1+\tau}(v(\cdot,\tau)-1)\|_{L^\infty}\right\}\leq 2C_\sharp\sqrt{1+t_\sharp}\delta^*,\\
			&\max\left\{\|\sqrt{\tau}u_x(\cdot,\tau)\|_{L^\infty},\ \|\sqrt{\tau}\theta_x(\cdot,\tau)\|_{L^\infty}\right\}\leq 2C_\sharp\delta^*.
		\end{aligned}
		\right.
	\end{align*}
	Therefore, combining the above inequality, the definition of $\delta^*$ in \eqref{global-initial} and $\mathcal{G}(\tau)$ in \eqref{5-15}, we immediately obtain that 
	\bq\label{5-17}
	\mathcal{G}(\tau)\leq 22C_\sharp\sqrt{1+t_\sharp}\delta^*<\frac{\delta}{5},\quad \text{for $\tau<t_\sharp$}.
	\eq 
	Then, \eqref{5-17}  and the continuity in $t$ of the  solution $(v,u,E)$ immediately imply $T>t_\sharp$. Next, we prove the estimates \eqref{global-small} for $t<T+t_\sharp$. In fact, according to \eqref{5-15}, we have 
	\[\max(0,t-t_\sharp)<T,\quad \mathcal{G}\left(\max(0,t-t_\sharp)\right)<\delta.\]
	Therefore, we use $v(x,\max(0,t-t_\sharp))$, $u(x,\max(0,t-t_\sharp))$ and $\theta(x,\max(0,t-t_\sharp))$ as the initial data, and apply Proposition \ref{prop:local} to obtain \eqref{global-small}. Then we finish the proof of the lemma.
	
\end{proof}

In Lemma \ref{L5.3}, we can only show $\mathcal{G}(\tau)<\delta$ for $\tau\leq t_\sharp$ but not for all $\tau>0$, because we use the estimates  of local existence. In the following, we will take advantage of the representations in Lemma \ref{L5.1} and Lemma \ref{L5.2}, and construct sharper estimates of the solution for large time. First, we recall that we linearize \eqref{NS} around $(1,0,c_v)$, and therefore we apply Theorem \ref{T4.1} and Theorem \ref{T4.3} to have simpler estimates for Green's function around $(v, u, \theta)=(1,0,1)$.
\begin{lemma}\label{L5.4}
	Let $\bbg(x,t)$ be the Green's function of the linearized equation of \eqref{NS} around the constant equilibrium state $(v,u,\theta)=(1,0,1)$. Then, the Green's function $\bbg(x,t)$ has the following estimates for $t\leq 1$, 
	\begin{align*}
		&\Bigg|\bbg(x-y,t)
		- e^{-\frac{K}{\mu}t}\delta(x-y)
		\begin{pmatrix}
			1 & 0 & 0 \\
			0 & 0 & 0 \\
			0 & 0 & 0 \\
		\end{pmatrix}
		-\frac{e^{\beta^*_2t}}{\sqrt{4\pi \mu t}}e^{-\frac{(x-y)^2}{4\mu t}}
		\begin{pmatrix}
			0 & 0 & 0 \\
			0 & 1 & 0 \\
			0 & 0 & 0 \\
		\end{pmatrix}\\
		&\hspace{3cm}-\frac{e^{\beta^*_3t}}{\sqrt{4\pi \frac{\kappa}{c_v}t}}e^{-\frac{(x-y)^2}{4\frac{\kappa}{c_v}t}}
		\begin{pmatrix}
			0 & 0 & 0 \\
			0 & 0 & 0 \\
			0 & 0 & 1 \\
		\end{pmatrix}\Bigg|\leq O(1)e^{-\sigma_0^*t-\sigma_0|x-y|}+O(1)te^{-\sigma_0 |x-y|},\qquad t\leq 1,
	\end{align*}

	\begin{align*}
		&\Bigg|\bbg_x(x-y,t)
		- e^{-\frac{K}{\mu}t}\delta'(x-y)
		\begin{pmatrix}
			1 & 0 & 0 \\
			0 & 0 & 0 \\
			0 & 0 & 0 \\
		\end{pmatrix}
		+e^{-\frac{K}{\mu}t}\delta(x-y)
		\begin{pmatrix}
			0 & \frac{1}{\mu} & 0 \\
			\frac{K}{\mu} & 0 & 0 \\
			0 & 0 & 0 \\
		\end{pmatrix}\\
		& -\partial_x\left(\frac{e^{\beta^*_2t}}{\sqrt{4\pi \mu t}}e^{-\frac{(x-y)^2}{4\mu t}}\right)
		\begin{pmatrix}
			0 & 0 & 0 \\
			0 & 1 & 0 \\
			0 & 0 & 0 \\
		\end{pmatrix}
		+\frac{e^{\beta^*_2t}}{\sqrt{4\pi \mu t}}e^{-\frac{(x-y)^2}{4\mu t}}
		\begin{pmatrix}
			0 & -\frac{ 1}{\mu } & 0 \\
			-\frac{ K}{\mu } & 0 & \frac{K}{c_v\mu
				-\kappa} \\
			0 & \frac{c_vK}{c_v\mu -\kappa } &
			0
		\end{pmatrix}\\
		&-\partial_x\left(\frac{e^{\beta^*_3t}}{\sqrt{4\pi \frac{\kappa}{c_v}t}}e^{-\frac{(x-y)^2}{4\frac{\kappa}{c_v}t}}\right)
		\begin{pmatrix}
			0 & 0 & 0 \\
			0 & 0 & 0 \\
			0 & 0 & 1 \\
		\end{pmatrix}
		+\frac{e^{\beta^*_3t}}{\sqrt{4\pi \frac{\kappa}{c_v}t}}e^{-\frac{(x-y)^2}{4\frac{\kappa}{c_v}t}}
		\begin{pmatrix}
			0 & 0 & 0 \\
			0 & 0 & -\frac{K}{c_v\mu -\kappa} \\
			0 & - \frac{c_vK}{c_v\mu -\kappa } & 0 
		\end{pmatrix}\Bigg|\\
		&\hspace{8cm}\leq O(1)e^{-\sigma_0^*t-\sigma_0|x-y|}+O(1)te^{-\sigma_0 |x-y|},\qquad t\leq 1,
	\end{align*}

	\begin{align*}
		&\Bigg|\bbg_{xx}(x-y,t)
		- e^{-\frac{K}{\mu}t}\delta''(x-y)
		\begin{pmatrix}
			1 & 0 & 0 \\
			0 & 0 & 0 \\
			0 & 0 & 0 \\
		\end{pmatrix}
		+e^{-\frac{K}{\mu}t}\delta'(x-y)
		\begin{pmatrix}
			0 & \frac{1}{\mu} & 0 \\
			\frac{K}{\mu} & 0 & 0 \\
			0 & 0 & 0 
		\end{pmatrix}\\
		&-e^{-\frac{K}{\mu}t}\delta(x-y)
		\left( \begin{pmatrix}
			\frac{K}{\mu ^2} & 0 & \frac{K}{\kappa  \mu } \\
			0 & -\frac{K}{\mu ^2} & 0 \\
			\frac{c_vK^2}{\kappa  \mu } & 0 & 0 
		\end{pmatrix} 
		-t\begin{pmatrix}
			-\frac{ \left(\kappa K^2-\mu  K^3 \right)}{ \kappa  \mu ^3 }& 0 & 0 \\
			0 & 0 & 0 \\
			0 & 0 & 0
		\end{pmatrix}\right)\\
		& -\partial_{xx}\left(\frac{e^{\beta^*_2t}}{\sqrt{4\pi \mu t}}e^{-\frac{(x-y)^2}{4\mu t}}\right)
		\begin{pmatrix}
			0 & 0 & 0 \\
			0 & 1 & 0 \\
			0 & 0 & 0 \\
		\end{pmatrix}
		+\partial_x\left(\frac{e^{\beta^*_2t}}{\sqrt{4\pi \mu t}}e^{-\frac{(x-y)^2}{4\mu t}}\right)
		\begin{pmatrix}
			0 & -\frac{ 1}{\mu } & 0 \\
			-\frac{ K}{\mu } & 0 & \frac{K}{c_v\mu
				-\kappa} \\
			0 & \frac{c_vK}{c_v\mu -\kappa } &
			0
		\end{pmatrix}\\
		&-\frac{e^{\beta^*_2t}}{\sqrt{4\pi \mu t}}e^{-\frac{(x-y)^2}{4\mu t}}
		\begin{pmatrix}
			-\frac{K}{\mu ^2} & 0 & \frac{K}{c_v\mu ^2-\kappa  \mu } \\
			0 & \frac{ c_vK^2}{ \left(c_v\mu -\kappa \right)^2}+\frac{K}{\mu
				^2 } & 0 \\
			\frac{c_vK^2}{c_v\mu ^2-\kappa  \mu } & 0 & -\frac{c_vK^2}{\left(c_v\mu -\kappa \right){}^2}
		\end{pmatrix}\\
		&-\partial_{xx}\left(\frac{e^{\beta^*_3t}}{\sqrt{4\pi \frac{\kappa}{c_v}t}}e^{-\frac{(x-y)^2}{4\frac{\kappa}{c_v}t}}\right)
		\begin{pmatrix}
			0 & 0 & 0 \\
			0 & 0 & 0 \\
			0 & 0 & 1 \\
		\end{pmatrix}
		+\partial_x\left(\frac{e^{\beta^*_3t}}{\sqrt{4\pi \frac{\kappa}{c_v}t}}e^{-\frac{(x-y)^2}{4\frac{\kappa}{c_v}t}}\right)
		\begin{pmatrix}
			0 & 0 & 0 \\
			0 & 0 & -\frac{K}{c_v\mu -\kappa} \\
			0 & - \frac{c_vK}{c_v\mu -\kappa } & 0 
		\end{pmatrix}\\
		&+\frac{e^{\beta^*_3t}}{\sqrt{4\pi \frac{\kappa}{c_v}t}}e^{-\frac{(x-y)^2}{4\frac{\kappa}{c_v}t}}
		\begin{pmatrix}
			0 & 0 & \frac{c_vK}{\kappa  \left(c_v\mu -\kappa \right)} \\
			0 & \frac{c_vK^2}{\left(c_v\mu -\kappa \right)^2} & 0 \\
			\frac{c_v^2K^2}{\kappa  \left(c_v\mu -\kappa \right)} & 0 & -\frac{c_vK^2}{\left(c_v\mu -\kappa\right)^2}
		\end{pmatrix}\Bigg|\leq O(1)e^{-\sigma_0^*t-\sigma_0|x-y|}+O(1)te^{-\sigma_0 |x-y|},\qquad t\leq 1.\\
		\ 
	\end{align*}
On the other hand, for large time $t\geq 1$, the Green's function has the following estimates,

\begin{align*}
&\Bigg|\bbg(x-y,t)
- e^{-\frac{K}{\mu}t}\delta(x-y)
\begin{pmatrix}
1 & 0 & 0 \\
0 & 0 & 0 \\
0 & 0 & 0 \\
\end{pmatrix}\\
&-\sum_{j=1}^3\frac{e^{-\frac{(x-y+\beta_jt)^2}{4\alpha_jt}}}{2\sqrt{\pi\alpha_jt}}M_j^0-\sum_{j=1}^3\partial_x\left(\frac{e^{-\frac{(x-y+\beta_jt)^2}{4\alpha_jt}}}{2\sqrt{\pi\alpha_jt}}\right)M_j^1\Bigg|\\
&\hspace{2.5cm}\leq \sum_{j=1}^3\frac{O(1)e^{-\frac{(x-y+\beta_jt)^2}{4Ct}}}{t}M_j^0+\sum_{j=1}^3\frac{O(1)e^{-\frac{(x-y+\beta_jt)^2}{4Ct}}}{t^{\frac{3}{2}}}+O(1)e^{-\sigma_0^*t-\sigma_0|x-y|},\qquad 1\leq t,
\end{align*}

\begin{align*}
&\Bigg|\bbg_x(x-y,t)
- e^{-\frac{K}{\mu}t}\delta'(x-y)
\begin{pmatrix}
1 & 0 & 0 \\
0 & 0 & 0 \\
0 & 0 & 0 \\
\end{pmatrix}
+e^{-\frac{K}{\mu}t}\delta(x-y)
\begin{pmatrix}
0 & \frac{1}{\mu} & 0 \\
\frac{K}{\mu} & 0 & 0 \\
0 & 0 & 0 \\
\end{pmatrix}\\
&-\sum_{j=1}^3\partial_x\left(\frac{e^{-\frac{(x-y+\beta_jt)^2}{4\alpha_jt}}}{2\sqrt{\pi\alpha_jt}}\right)M_j^0-\sum_{j=1}^3\partial^2_x\left(\frac{e^{-\frac{(x-y+\beta_jt)^2}{4\alpha_jt}}}{2\sqrt{\pi\alpha_jt}}\right)M_j^1\Bigg|\\
&\hspace{2.5cm}\leq \sum_{j=1}^3\frac{O(1)e^{-\frac{(x-y+\beta_jt)^2}{4Ct}}}{t^{\frac{3}{2}}}M_j^0+\sum_{j=1}^3\frac{O(1)e^{-\frac{(x-y+\beta_jt)^2}{4Ct}}}{t^2}+O(1)e^{-\sigma_0^*t-\sigma_0|x-y|},\qquad 1\leq t,
\end{align*}

\begin{align*}
&\Bigg|\bbg_{xx}(x-y,t)
- e^{-\frac{K}{\mu}t}\delta''(x-y)
\begin{pmatrix}
1 & 0 & 0 \\
0 & 0 & 0 \\
0 & 0 & 0 \\
\end{pmatrix}
+e^{-\frac{K}{\mu}t}\delta'(x-y)
\begin{pmatrix}
0 & \frac{1}{\mu} & 0 \\
\frac{K}{\mu} & 0 & 0 \\
0 & 0 & 0 
\end{pmatrix}\\
&-e^{-\frac{K}{\mu}t}\delta(x-y)
\left( \begin{pmatrix}
\frac{K}{\mu ^2} & 0 & \frac{K}{\kappa  \mu } \\
0 & -\frac{K}{\mu ^2} & 0 \\
\frac{c_vK^2}{\kappa  \mu } & 0 & 0 
\end{pmatrix} 
-t\begin{pmatrix}
-\frac{ \left(\kappa K^2-\mu  K^3 \right)}{ \kappa  \mu ^3 }& 0 & 0 \\
0 & 0 & 0 \\
0 & 0 & 0
\end{pmatrix}\right)\\
&-\sum_{j=1}^3\partial_x^2\left(\frac{e^{-\frac{(x-y+\beta_jt)^2}{4\alpha_jt}}}{2\sqrt{\pi\alpha_jt}}\right)M_j^0-\sum_{j=1}^3\partial^{3}_x\left(\frac{e^{-\frac{(x-y+\beta_jt)^2}{4\alpha_jt}}}{2\sqrt{\pi\alpha_jt}}\right)M_j^1\Bigg|\\
&\hspace{2.5cm}\leq \sum_{j=1}^3\frac{O(1)e^{-\frac{(x-y+\beta_jt)^2}{4Ct}}}{t^2}M_j^0+\sum_{j=1}^3\frac{O(1)e^{-\frac{(x-y+\beta_jt)^2}{4Ct}}}{t^{\frac{5}{2}}}+O(1)e^{-\sigma_0^*t-\sigma_0|x-y|},\qquad 1\leq t,\\
\ 
\end{align*}
where the parameters in the above formulae are listed as below,
\begin{align*}
&\alpha_1=   \frac{\kappa }{ \left(K+c_v\right)},\quad \alpha_2=\alpha_3=\frac{ \left(\kappa K+c_v\mu  K+c_v^2\mu
\right)}{2 \left(c_vK+c_v^2\right)},\quad \beta_1=0,\quad \beta_2=  -\sqrt{\frac{K^2}{c_v}+K},\quad \beta_3=  \sqrt{\frac{K^2}{c_v}+K},\\
&\beta^*_2=\frac{v \left(\mu  K^2 \frac{1}{c_v}-\kappa \frac{1}{c_v} K+\mu 
K\right)}{\mu  \left(\mu -\frac{\kappa}{c_v}  \right)}=\frac{K}{\mu}+\frac{ \left(K^2 \right)}{ \left(c_v\mu -\kappa  \right)},\quad \beta^*_3=\frac{p v p_e}{\kappa  \theta _e-\mu }.
\end{align*}

\end{lemma}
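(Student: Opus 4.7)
The plan is to specialize Theorems \ref{T4.1}, \ref{T4.2}, and \ref{T4.3} to the equilibrium state $\bar{U}=(1,0,c_v)$ corresponding to $(v,u,\theta)=(1,0,1)$, and then to repackage the resulting singular and regular contributions into the consolidated matrix form stated in the lemma. Since both the singular part $\bbg^*$ and the regular part $\bbg^\dag$ have already been estimated in Section \ref{sec:4}, and $\bbg=\bbg^*+\bbg^\dag$ by construction, this reduces to a (lengthy) algebraic evaluation of the coefficient matrices at $\bar{U}$.

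First, I would evaluate all the state-dependent parameters. From the constitutive relation \eqref{constitution} one obtains, at $(v,\theta)=(1,1)$ with $e=c_v\theta$, the identities $p=K$, $p_v=-K$, $p_e=K/c_v$, $\theta_e=1/c_v$, and $u=0$. Plugging these into the definitions of $\alpha^*_j,\beta^*_j$ (Appendix \ref{sec:A}) yields $\alpha^*_2=\mu$, $\alpha^*_3=\kappa/c_v$, together with the explicit $\beta^*_2,\beta^*_3$ stated in the lemma. The first eigenvalue formula \eqref{evalue1} specializes to $\lambda_1=vp_v/\mu=-K/\mu$, which accounts for the $e^{-Kt/\mu}\delta$-type terms. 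For the long-wave parameters in Theorem \ref{T4.3}, the values in \eqref{B52} likewise simplify to the $\alpha_j,\beta_j$ listed at the end of the lemma.

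Second, I would compute the matrices $M_j^{*,k}$ for $k=0,1,2,3$ (from Appendix \ref{sec:B}) and $M_j^0,M_j^1$ (from Appendix \ref{sec:C}) at $u=0$. At this equilibrium many entries collapse: $M_1^{*,0}, M_2^{*,0}, M_3^{*,0}$ each become rank-one diagonal matrices supported at positions $(1,1),(2,2),(3,3)$ respectively (this is forced by $\sum_j M_j^{*,0}=I$ from Remark \ref{R4.1} combined with the block structure at $u=0$). The matrices $M_j^{*,1}$ produce the off-diagonal Dirac coefficients involving $1/\mu$, $K/\mu$, and $K/(c_v\mu-\kappa)$ visible in the first-derivative estimate, while $M_j^{*,2}$ together with the $-A_{j,1}tM_j^{*,0}$ correction from Theorem \ref{T4.1} produces the diagonal terms appearing in the second-derivative estimate (in particular the $t$-linear correction $(\kappa K^2-\mu K^3)/(\kappa\mu^3)$ in the $(1,1)$ entry traces back to $A_{1,1}$). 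Substituting into \eqref{G^{*,1}} and \eqref{B30} gives the short-time estimates, where the remainder absorbs the regular-part contribution via the bound $\sum_{k=0}^3|\partial_x^k\bbg^\dag|=O(1)te^{-\sigma_0|x|}$ from Lemma \ref{LB9}.

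For the case $t\geq 1$, the singular parts $\bbg^{*,2},\bbg^{*,3}$ have become exponentially small by \eqref{B29}, so only the $(1,1)$ Dirac-mass piece of $\bbg^{*,1}$ survives; the three heat-kernel profiles then come entirely from the $\bbg^\dag$ bound in Theorem \ref{T4.3}, with the explicit matrices $M_j^0,M_j^1$ evaluated at $\bar{U}$. The main bookkeeping obstacle is matching coefficients: one must verify that the combined short-time expansion reproduces the correct off-diagonal $1/\mu$, $K/\mu$, $K/(c_v\mu-\kappa)$ entries (these arise from $\hat{\bbm}_j^*$ via $\Pi_{k\neq j}(\lambda_j^*-\lambda_k^*)$, which is why the denominator $c_v\mu-\kappa$ — essentially $\mu-\kappa\theta_e$ evaluated at $\bar{U}$ — appears), and that the long-time expansion is consistent with the short-time expansion at $t\sim 1$ up to the stated error level $O(1)e^{-\sigma_0^*t-\sigma_0|x|}$. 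I would verify this in two sanity checks: (i) at $t=0$ the singular part must recover $\bbg(x,0)=\delta(x)I$, pinning down $\sum_j M_j^{*,0}=I$; and (ii) the coefficients of the $\delta'$ and $\delta''$ terms must match the formal differentiation of the $\delta$ term. Once these consistency relations hold, the remaining claims are direct substitutions and the proof is complete.
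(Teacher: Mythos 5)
Your proposal is correct and follows exactly the route of the paper, whose proof of this lemma is literally the one-line instruction to substitute the constant state $(v,u,\theta)=(1,0,1)$ into Theorems \ref{T4.1}, \ref{T4.2} and \ref{T4.3}. Your evaluations $p=K$, $p_v=-K$, $p_e=K/c_v$, $\theta_e=1/c_v$ and the resulting $\alpha_j^*,\beta_j^*,\alpha_j,\beta_j$ are the right specializations, and the consistency checks you propose (in particular $\sum_j M_j^{*,0}=I$ from Remark \ref{R4.1}) are exactly the bookkeeping the paper leaves implicit.
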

\begin{proof}
We just need to substitute the constant state $(v,u,\theta)=(1,0,1)$ into Theorem \ref{T4.1}, Theorem \ref{T4.2} and Theorem \ref{T4.3} to obtain the desired estimates. 
\end{proof}
\begin{remark}\label{R5.1}
In the above estimates, we rescale time $t$ by $\frac{t}{\nu_0}$ with $\nu_0$ a positive constant. Then, we can obtain similar estimates for $t\leq \nu_0$ and $t\geq \nu_0$. The only difference is that, all the $O(1)$ terms depend on $\nu_0$ due to the rescaling.
\end{remark}

Now,  we will provide several a priori estimates for the global estimates of the solution constructed in  Proposition \ref{prop:local}. As the proof is rather long, we will split the estimates into several lemmas. 
\begin{lemma}\label{L5.5}
	$(\mathbf{\|u(\cdot,t)\|_{L^1},\ \|\sqrt{1+t}u(\cdot,t)\|_{L^\infty}})$ Let $(v,u,E)$, $C_\sharp$, $t_\sharp$ and $\delta$  be the local solution and corresponding parameters constructed in Proposition \ref{prop:local}. We further suppose the following properties hold for the solution, 
	\bq\label{5-19}
	\left\{
	\begin{aligned}
		&\|v_0-1\|_{BV}+\|u_0\|_{BV}+\|\theta_0-1\|_{BV}+\|v_0-1\|_{L^1}+\|u_0\|_{L^1}+\|\theta_0-1\|_{L^1}<\delta^*,\\
		&\mathcal{G}(\tau)<\delta,\quad \text{for $\forall\tau<t$,}\\
		&t_\sharp\geq 4\nu_0,
	\end{aligned} 
	\right.
	\eq
	where $\mathcal{G}(\tau)$ is provided in \eqref{5-15}, $\delta^*$ is defined in \eqref{global-initial} and $\nu_0$ is given in \eqref{5-5}. Then, $u(x,t)$ has the following zeroth order estimates for $t\geq t_\sharp$,
	\begin{equation*}
		\left\{
		\begin{aligned}
			&\|u(\cdot,t)\|_{L^1}\leq C(\nu_0)\delta^*+O(1)\left(\delta^2+\sqrt{\nu_0}\delta^2+\sqrt{\nu_0}\delta+\nu_0\delta+\delta^2\right),\\
			&\|\sqrt{1+t}u(\cdot,t)\|_{L^\infty}\leq C(\nu_0)\delta^*+O(1)\left(\sqrt{\nu_0}\delta+\delta^2\right).
		\end{aligned}
		\right.
	\end{equation*}
	
\end{lemma}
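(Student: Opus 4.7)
The plan is to start from the integral representation for $u(x,t)$ in Lemma \ref{L5.1}, which writes $u$ as the sum of three linear terms (the initial data convolved with $\bbg_{21}$, $G_{22}$, $\bbg_{23}$) plus the remainders $\mathcal{R}^u_1$, $\mathcal{R}^u_2$, $\mathcal{R}^u_3$ supported on $[0,t-2\nu_0]$, $[t-2\nu_0,t-\nu_0]$, $[t-\nu_0,t]$ respectively. For each piece one applies Young's convolution inequality: for the $L^1$ bound one uses $\|\bbg(\cdot,t)\|_{L^1_x}=O(1)$ after $t\ge\nu_0$, while for the weighted $L^\infty$ bound one uses the decay $\|\bbg(\cdot,t)\|_{L^\infty_x}=O(1)/\sqrt{t}$ from Lemma \ref{L5.4} (the Dirac-delta pieces, which carry weight $e^{-\frac{K}{\mu}t}$, are trivially small and absorbed into $C(\nu_0)\delta^*$). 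This immediately reduces the problem to bounding the three nonlinear remainders.

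For $\mathcal{R}^u_1$ on $[0,t-2\nu_0]$, every integrand is quadratic in the perturbation variables $(v-1,u,\theta-1,u_y)$ and is hit by $\partial_y\bbg_{22}$ or $\partial_y\bbg_{23}$. The bootstrap hypothesis $\mathcal{G}(\tau)<\delta$ gives $\|N(\cdot,\tau)\|_{L^1_y}\leq O(\delta^2)/\sqrt{1+\tau}$ for every such quadratic term $N$. Combining this with $\|\partial_y\bbg(\cdot,t-\tau)\|_{L^1_x}=O(1)/\sqrt{t-\tau}$ (for the $L^1$ bound) and splitting the time integral into $[0,t/2]\cup[t/2,t-2\nu_0]$ to trade $L^\infty_y$ decay of $N$ against $L^\infty_x$ decay of $\partial_y\bbg$ (for the weighted $L^\infty$ bound), one obtains the standard heat-kernel Beta-function estimate yielding $\|\mathcal{R}^u_1\|_{L^1}=O(\delta^2)$ and $\sqrt{1+t}\|\mathcal{R}^u_1\|_{L^\infty}=O(\delta^2)$. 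This is the term that produces the $O(\delta^2)$ contributions in the stated bound.

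For $\mathcal{R}^u_2$ on $[t-2\nu_0,t-\nu_0]$, the interval has length $\nu_0$, so the strategy is to extract a factor $\sqrt{\nu_0}$ or $\nu_0$ from the short interval. The terms involving $\mathcal{X}'/\nu_0$ multiply the difference $\bbg_{22}-H(\cdot;\mu/v)$, and the linear combinations $H_y-\partial_y\bbg_{22}$ and $K\partial_y\bbg_{23}-\partial_y\bbg_{21}$ act on $u$ (or on $K(\theta-v)/v$). Applying the comparison estimates of Lemma \ref{L2.2} together with $\vertiii{v-1}_\infty,\vertiii{v-1}_1,\vertiii{v-1}_{BV}\leq O(\delta)$, each such difference carries at least one extra factor of $\delta$; after integration in $\tau$ over $[t-2\nu_0,t-\nu_0]$ and convolution with the slowly varying $u(\cdot,\tau)$ of size $\delta/\sqrt{1+\tau}$, one gets contributions of order $\sqrt{\nu_0}\delta$ and $\nu_0\delta$. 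The remaining quadratic pieces are handled exactly as in $\mathcal{R}^u_1$ but over a short interval, contributing $\sqrt{\nu_0}\delta^2$.

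For $\mathcal{R}^u_3$ on $[t-\nu_0,t]$, the heat kernel $H(\cdot;\mu/v)$ with BV coefficient enters; here one uses the short-time estimates of Lemmas \ref{L2.1}--\ref{lem:heat-pw} and the cancellation identity \eqref{eq:H-7}. The formally linear terms $K\partial_y\bbg_{22}(v-1)$, $H_y\cdot K(\theta-v)/v$, $-\partial_y\bbg_{22}\cdot K(\theta-1)$ are reorganized by writing $K(v-1)-K(\theta-1)=K(v-\theta)$, which is a perturbative quantity, and the resulting combination is $O(\delta^2)$ after integration. I expect the main obstacle to be this $\mathcal{R}^u_3$ estimate: unlike $\mathcal{R}^u_1$, we are in the short-time regime where $\bbg$ and $H$ are both singular, and one must carefully pair their derivatives so that the BV discontinuities in $v$ do not produce non-integrable $1/(t-\tau)$ singularities. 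Once all four contributions are summed, the resulting bound matches the stated estimate for $\|u(\cdot,t)\|_{L^1}$ and $\|\sqrt{1+t}u(\cdot,t)\|_{L^\infty}$.
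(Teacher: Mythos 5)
Your overall route is the paper's: the representation of Lemma \ref{L5.1}, Green's-function bounds for the three linear terms, the a priori decay $\mathcal{G}(\tau)<\delta$ for the quadratic terms in $\mathcal{R}^u_1$, and short-interval smallness for $\mathcal{R}^u_2$, $\mathcal{R}^u_3$. There are, however, two concrete gaps. First, Lemma \ref{L2.2} compares two \emph{heat kernels} with BV conductivities; it does not apply directly to $\bbg_{22}(x-y;t-\tau)-H(x,t;y,\tau;\frac{\mu}{v})$, since $\bbg_{22}$ is a component of the matrix Green's function, not a scalar heat kernel. The paper bridges this with the Duhamel identity \eqref{5-27}, which writes $\bbg_{22}$ as the constant-coefficient kernel $H(x,t;y,\tau;\mu)$ plus the correction $-\int_\tau^t\int_{\bbr} H(z,s;y,\tau;\mu)\left(\partial_z\bbg_{21}-K\partial_z\bbg_{23}\right)dz\,ds$; the comparison lemma is then applied only to $H(\cdot;\mu)-H(\cdot;\frac{\mu}{v})$, which indeed carries $\|v-1\|_{L^\infty}\lesssim\delta$, while the correction carries no $\delta$ at all and is controlled solely by the $O(\nu_0)$ or $O(\sqrt{\nu_0})$ length of the time interval. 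Without this intermediate representation your assertion that ``each such difference carries at least one extra factor of $\delta$'' is unjustified, and it is in fact false for the correction term and for the combination $K\partial_y\bbg_{23}-\partial_y\bbg_{21}$, whose contributions are linear in $\delta$ with only $\nu_0$-smallness (this is precisely where the $\sqrt{\nu_0}\delta$ and $\nu_0\delta$ terms in the statement come from).

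Second, your treatment of $\mathcal{R}^u_3$ overreaches: recombining $K(v-1)-K(\theta-1)=K(v-\theta)$ produces a quantity of size $\delta$, not $\delta^2$, and the kernels $\partial_y\bbg_{22}$ and $H_y$ multiplying it are not perturbatively small, so the claimed $O(\delta^2)$ bound for these formally linear terms is not obtainable. What actually closes the estimate --- and what the paper does --- is simply the shortness of $[t-\nu_0,t]$: at this zeroth order the kernels have only $1/\sqrt{t-\tau}$ singularities (plus the harmless delta in $\partial_y\bbg_{21}$), so the linear terms contribute $O(\sqrt{\nu_0}\,\delta+\nu_0\,\delta)$, which the stated bound allows. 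For the same reason your worry about non-integrable singularities, and the invocation of the cancellation \eqref{eq:H-7}, are misplaced here; those devices are needed only for the first-derivative estimates (Corollary \ref{Cor5.1} and Lemma \ref{L5.6}), not for Lemma \ref{L5.5}. With these two repairs your argument coincides with the paper's proof.
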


\begin{remark}\label{R5.2}
	According to Lemma \ref{L5.3} , we know that  $\mathcal{G}(t)<\delta$ for $t\leq t_\sharp$ if $\delta^*$ is sufficient small, in particular, much smaller than $\delta$. Therefore, our assumption $t>t_\sharp$ in the lemma makes sense.
\end{remark}

\begin{proof} 
	According to \eqref{u-repre} in Lemma \ref{L5.1}, we have the following representation of $u(x,t)$, 
	\begin{align*}
		&u(x,t)\\
		&=\int_\bbr\bbg_{21}(x-y,t)(v(y,0)-1)dy+\int_\bbr \bbg_{22}(x-y,t)u(y,0)dy+\int_\bbr \bbg_{23}(x-y,t)\left(E(y,0)-c_v\right)dy+\sum_{i=1}^3\mathcal{R}^u_i(x,t).
	\end{align*}
	Note, as $t>t_\sharp\geq 4\nu_0$, we actually have $G_{22}(x,t)=\bbg_{22}(x,t)$, and thus obtain above representation. Moreover, as the Green's function has short time and large time estimates, we should split the proof into two cases, i.e., $t\leq 1$ and $t\geq 1$. As the estimates of the case $t\geq 1$ also involves the estimates of $t\leq 1$, we will only show the details of the estimates of case $t\geq 1$.\newline
	
	\noindent $\bullet$ ($\mathbf{\|u(\cdot,t)\|_{L^1}}$) We first estimate  the $L^1$ norm of $u$. According to Lemma \ref{L5.4}, the zeroth order Green's function $\bbg_{2j}$  are constructed by heat kernel, its derivative and remainders $\mathcal{R}_j$. Moreover, according to Remark \ref{R5.1}, we can rescale the Green's function, and have the following estimates for $t\geq2\nu_0$, 
	\begin{align}
		&|\bbg_{2k}(x,t)|\leq \sum_{j=1}^3\frac{O(1)e^{-\frac{(x+\beta_jt)^2}{4Ct}}}{\sqrt{t}}+O(1)e^{-\sigma_0^*t-\sigma_0|x|},\quad k=1,2,3,\label{5-20}\\
		&|\partial_x\bbg_{2k}(x,t)|\leq \sum_{j=1}^3\frac{O(1)e^{-\frac{(x+\beta_jt)^2}{4Ct}}}{t}+O(1)e^{-\sigma_0^*t-\sigma_0|x|},\quad k=2,3,\label{5-21}\\
		&|\partial_x\bbg_{21}(x,t)+\frac{K}{\mu}e^{-\frac{K}{\mu}t}\delta(x)| \leq \sum_{j=1}^3\frac{O(1)e^{-\frac{(x+\beta_jt)^2}{4Ct}}}{t}+O(1)e^{-\sigma_0^*t-\sigma_0|x|},\label{5-21-1}
	\end{align}
	where, according to Remark \ref{R5.1}, the coefficients $O(1)$ depend on $\nu_0$, and thus we will denote them by $C(\nu_0)$. Then, as $t\geq t_\sharp\geq 4\nu_0$, we directly substitute \eqref{5-20} into the first three homogeneous terms  in the representation of $u$ to obtain that 
	\begin{equation}
		\label{5-22}
		\left\{
		\begin{aligned}
			&\int_\bbr\left|\int_\bbr\bbg_{21}(x-y,t)(v(y,0)-1)dy\right|dx\leq C(\nu_0) \delta^*,\\
			&\int_\bbr\left|\int_\bbr\bbg_{22}(x-y,t)u(y,0)dy\right|dx\leq C(\nu_0) \delta^*,\\
			&\int_\bbr\left|\int_\bbr \bbg_{23}(x-y,t)\left(E(y,0)-c_v\right)dy\right|dx\leq C(\nu_0) \delta^*.
		\end{aligned}
		\right.
	\end{equation}
	
	Next, for inhomogeneous terms $\mathcal{R}^u_j$, we need to use $\nu_0$ to gain small factor. Thus, we do not rescale the Green's function, and directly apply Lemma \ref{L5.4} to estimate $\mathcal{R}^u_j$. For $\mathcal{R}^u_1$, we split it into integrations from $0$ to $t-1$, and from $t-1$ to $t-2\nu_0$ as follows,
	\begin{align}
		\mathcal{R}^u_1&=\int_0^{t-1}\int_\bbr \partial_y\bbg_{22}(x-y,t-\tau)\left(\frac{K(v-1)^2}{v}+\frac{K(\theta-1)(1-v)}{v}-\frac{Ku^2}{2c_v}+\frac{\mu u_y(v-1)}{v}\right)dyd\tau\label{5-23}\\
		&\hspace{0.4cm}+\int_0^{t-1}\int_\bbr \partial_y\bbg_{23}(x-y,t-\tau)\left(\left(\frac{K(\theta-1)+K(1-v)}{v}\right)u+\frac{\kappa\theta_y(v-1)}{v}+\left(\frac{\kappa}{c_v}-\frac{\mu}{v}\right)uu_y\right)dyd\tau\notag\\
		&\hspace{0.4cm}+\int_{t-1}^{t-2\nu_0}\int_\bbr \partial_y\bbg_{22}(x-y,t-\tau)\left(\frac{K(v-1)^2}{v}+\frac{K(\theta-1)(1-v)}{v}-\frac{Ku^2}{2c_v}+\frac{\mu u_y(v-1)}{v}\right)dyd\tau\notag\\
		&\hspace{0.4cm}+\int_{t-1}^{t-2\nu_0}\int_\bbr \partial_y\bbg_{23}(x-y,t-\tau)\left(\left(\frac{K(\theta-1)+K(1-v)}{v}\right)u+\frac{\kappa\theta_y(v-1)}{v}+\left(\frac{\kappa}{c_v}-\frac{\mu}{v}\right)uu_y\right)dyd\tau\notag\\
		&=\mathcal{I}_1+\mathcal{I}_2.\notag
	\end{align}
	
	When $t\geq 1$, we apply Theorem \ref{T4.1} and Theorem \ref{T4.3} to obtain the estimates of $\bbg$, which are same as \eqref{5-20} and \eqref{5-21} with $O(1)$ coefficients. On the other hand, we use $\|u\|_{L^1}<\delta$ and $\|u\|_{BV}<\delta$ to obtain that $\|u(\cdot,\tau)\|_\infty<\delta$ for $\tau<t$. Similarly, we have $\|v(\cdot,\tau)-1\|_\infty<\delta$ and $\|\theta(\cdot,\tau)-1\|_\infty<\delta$ for $\tau<t$. Then we combine the $L^\infty$ estimates and \eqref{5-21} to have the estimates of $\mathcal{I}_1$ as follows,
	\begin{align}
		&\int_\bbr\left|\mathcal{I}_1\right|dx\label{5-24}\\
		&\leq\int_\bbr\left|\int_0^{t-1}\int_\bbr \partial_y\bbg_{22}(x-y,t-\tau)\left(\frac{K(v-1)^2}{v}+\frac{K(\theta-1)(1-v)}{v}-\frac{Ku^2}{2c_v}+\frac{\mu u_y(v-1)}{v}\right)dyd\tau\right|dx\notag\\
		&\hspace{0.4cm}+\int_\bbr\left|\int_0^{t-1}\int_\bbr \partial_y\bbg_{23}(x-y,t-\tau)\left(\left(\frac{K(\theta-1)+K(1-v)}{v}\right)u+\frac{\kappa\theta_y(v-1)}{v}+\left(\frac{\kappa}{c_v}-\frac{\mu}{v}\right)uu_y\right)dyd\tau\right|dx,\notag\\
		&\leq \int_0^{t-1}\int_\bbr\int_\bbr \left|\partial_y\bbg_{22}(x-y,t-\tau)\right|dx\left(\frac{K|v-1|\delta}{v\sqrt{1+\tau}}+\frac{K|\theta-1|\delta}{v\sqrt{1+\tau}}+\frac{K|u|\delta}{2c_v\sqrt{1+\tau}}+\frac{\mu |u_y|\delta}{v\sqrt{1+\tau}}\right)dyd\tau\notag\\
		&\hspace{0.4cm}+\int_0^{t-1}\int_\bbr\int_\bbr \left|\partial_y\bbg_{23}(x-y,t-\tau)\right|dx\left(\left(\frac{2K\delta}{v\sqrt{1+\tau}}\right)|u|+\frac{\kappa|\theta_y|\delta}{v\sqrt{1+\tau}}+\left(\frac{\kappa}{c_v}-\frac{\mu}{v}\right)\frac{\delta}{\sqrt{1+\tau}}|u_y|\right)dyd\tau,\notag\\
		&\leq O(1)\int_0^{t-1}\left(\frac{1}{\sqrt{t-\tau}}+\frac{1}{t-\tau}+\frac{1}{(t-\tau)^{\frac{3}{2}}}+e^{-\sigma^*_0(t-\tau)} \right)\left(\frac{\delta^2}{\sqrt{1+\tau}}+\frac{\delta^2}{\sqrt{1+\tau}}+\frac{\delta^2}{\sqrt{1+\tau}}+\frac{\delta^2}{\sqrt{1+\tau}}\right)d\tau\notag\\
		&\leq O(1)\int_0^{t-1}\frac{1}{\sqrt{t-\tau}}\frac{\delta^2}{\sqrt{1+\tau}}d\tau\notag\\
		&\leq O(1)\delta^2.\notag
	\end{align}
	In order to estimates $\mathcal{I}_2$ in \eqref{5-23}, we need the estimates of $\bbg$ for $t\leq 1$. Actually, we apply Lemma \ref{L5.4} to obtain the following estimates of the Green's function $\bbg$ when $t\leq 1$,
	\bq\label{5-25}
	\left\{
	\begin{aligned}
		&|\bbg_{21}(x-y,t)|\leq O(1)e^{-\sigma_0^*t-\sigma_0|x-y|}+O(1)te^{-\sigma_0 |x-y|},\\
		&\left|\partial_x\bbg_{21}(x-y,t)-\frac{K}{\mu}e^{-Kt/\mu}\delta(x-y)\right|\leq O(1) \left(\frac{e^{\beta^*_2t}}{\sqrt{4\pi \mu t}}e^{-\frac{(x-y)^2}{4\mu t}}+e^{-\sigma_0^*t-\sigma_0|x-y|}+te^{-\sigma_0 |x-y|}\right),\\
		&|\partial^k_x\bbg_{22}(x-y,t)|\leq \left|\partial^k_x\frac{e^{\beta^*_2t}}{\sqrt{4\pi \mu t}}e^{-\frac{(x-y)^2}{4\mu t}}\right|+O(1)e^{-\sigma_0^*t-\sigma_0|x-y|}+O(1)te^{-\sigma_0 |x-y|},\quad k=0,1,\\
		&|\bbg_{23}(x-y,t)|\leq O(1)e^{-\sigma_0^*t-\sigma_0|x-y|}+O(1)te^{-\sigma_0 |x-y|},\\
		&|\partial_x\bbg_{23}(x-y,t)|\leq O(1) \left(\frac{e^{\beta^*_2t}}{\sqrt{4\pi \mu t}}e^{-\frac{(x-y)^2}{4\mu t}}+\frac{e^{\beta^*_3t}}{\sqrt{4\pi \frac{\kappa}{c_v}t}}e^{-\frac{(x-y)^2}{4\frac{\kappa}{c_v}t}}+e^{-\sigma_0^*t-\sigma_0|x-y|}+te^{-\sigma_0 |x-y|}\right)
	\end{aligned}
	\right.
	\eq
	Then, we substitute \eqref{5-25} into $\mathcal{I}_2$ in \eqref{5-23}, and follow the estimates \eqref{5-24} to obtain that, 
	\begin{align}
		&\int_\bbr\left|\mathcal{I}_2\right|dx\leq O(1)\delta^2. \label{5-26}
	\end{align}
	
	Next, in order to estimate the remainder $\mathcal{R}^u_2$ and $\mathcal{R}^u_3$, we need the estimates of the Green's function $\bbg_{2j}(x,t)$ for $t\leq 2\nu_0<1$, which has been listed in \eqref{5-25}. Moreover, we can have a more accurate estimates of $\bbg_{22}$. Actually, from \eqref{5-3} and the initial data for Green's function $\bbg$, we have the following equation of $\bbg_{22}$,
	\[\partial_\tau\bbg_{22}-\partial_y\bbg_{21}+K\partial_y\bbg_{23}+\mu\partial_y^2\bbg_{22}=0,\]
	with initial data $\bbg_{22}(x,0)=\delta(x)$. Therefore, we can solve the above equation by using heat kernel $H(x,t;y,\tau;\mu)$, and have an exact representation of $\bbg_{22}$ as follows,
	\begin{align}
		&\bbg_{22}(x-y,t-\tau)\label{5-27}\\
		&=H(x,t;y,\tau;\mu)-\int_\tau^t\int_\bbr H(z,s;y,\tau;\mu)\left(\partial_z\bbg_{21}(x-z,t-s)-K\partial_z\bbg_{23}(x-z,t-s)\right)dzds.\notag
	\end{align}
	Now, we substitute \eqref{5-25} and \eqref{5-27} into the representation $\mathcal{R}^u_2$, and apply the estimates of $H$ in  Lemma \ref{L2.1}, Lemma \ref{L2.2} and the assumption of $\mathcal{G}(\tau)$ in \eqref{5-19} to obtain that,
	\begin{align}
		&\int_\bbr\left|\mathcal{R}^u_2\right|dx\label{5-28}\\
		&\leq \int_\bbr\left|\int_{t-2\nu_0}^{t-\nu_0}\int_\bbr \partial_y\bbg_{22}(x-y,t-\tau)\left(\frac{K(v-1)^2}{v}+\frac{K(\theta-1)(1-v)}{v}+\frac{\mu u_y(v-1)}{v}\left(1-\mathcal{X}\right)-\frac{Ku^2}{2c_v}\right)dyd\tau\right|dx\notag\\
		&\hspace{0.4cm}+\int_\bbr\left|\int_{t-2\nu_0}^{t-\nu_0}\int_\bbr \frac{1}{\nu_0}\mathcal{X}'\left(\frac{t-\tau}{\nu_0}\right)\left(\bbg_{22}(x-y;t-\tau)-H\left(x,t;y,\tau;\frac{\mu}{v}\right)\right)u(y,\tau)dyd\tau\right|dx\notag\\
		&\hspace{0.4cm}+\int_\bbr\left|\int_{t-2\nu_0}^{t-\nu_0}\int_\bbr \mathcal{X}\left(\frac{t-\tau}{\nu_0}\right)\left(K\partial_y\bbg_{23}(x-y,t-\tau)-\partial_y\bbg_{21}(x-y,t-\tau)\right)u(y,\tau)dyd\tau\right|dx\notag\\
		&\hspace{0.4cm}+\int_\bbr\left|\int_{t-2\nu_0}^{t-\nu_0}\int_\bbr \mathcal{X}\left(\frac{t-\tau}{\nu_0}\right)\left(H_y\left(x,t;y,\tau;\frac{\mu}{v}\right)-\partial_y\bbg_{22}(x-y;t-\tau)\right)\frac{K(\theta-v)}{v}dyd\tau\right|dx\notag\\
		&\hspace{0.4cm}+\int_\bbr\left|\int_{t-2\nu_0}^{t-\nu_0}\int_\bbr \partial_y\bbg_{23}(x-y;t-\tau)\left(\frac{\kappa\theta_y(v-1)}{v}+\frac{Ku(\theta-v)}{v}+\left(\frac{\kappa}{c_v}-\frac{\mu}{v}\right)uu_y\right)dyd\tau\right|dx\notag\\
		&\leq O(1)\sqrt{\nu_0}\delta^2+O(1)\sqrt{\nu_0}\delta\notag\\
		&\hspace{0.4cm}+O(1)\int_{t-2\nu_0}^{t-\nu_0}\int_\bbr\int_\bbr \frac{1}{\nu_0}\left|H\left(x,t;y,\tau;\mu\right)-H\left(x,t;y,\tau;\frac{\mu}{v}\right)\right| dx|u(y,\tau)|dyd\tau\notag\\
		&\hspace{0.4cm}+O(1)\int_{t-2\nu_0}^{t-\nu_0}\int_\bbr\int_\bbr \frac{1}{\nu_0}\left|\int_\tau^t\int_\bbr H(z,s;y,\tau;\mu)\left(\partial_z\bbg_{21}(x-z,t-s)\right)dzds\right|dx|u(y,\tau)|dyd\tau\notag\\
		&\hspace{0.4cm}+O(1)\int_{t-2\nu_0}^{t-\nu_0}\int_\bbr\int_\bbr \frac{1}{\nu_0}\left|\int_\tau^t\int_\bbr H(z,s;y,\tau;\mu)\left(K\partial_z\bbg_{23}(x-z,t-s)\right)dzds\right|dx|u(y,\tau)|dyd\tau\notag\\
		&\leq O(1)\sqrt{\nu_0}\delta^2+O(1)\sqrt{\nu_0}\delta+O(1)\nu_0\delta\notag\\
		&\hspace{0.4cm}+O(1)\int_{t-2\nu_0}^{t-\nu_0}\int_\bbr\int_\bbr \frac{1}{\nu_0}\|\mu-\frac{\mu}{v}\|_{L^\infty}\frac{1}{\sqrt{t-\tau}}e^{-\frac{(x-y)^2}{C(t-\tau)}} dx|u(y,\tau)|dyd\tau\notag\\
		&\leq O(1)\left(\sqrt{\nu_0}\delta^2+\sqrt{\nu_0}\delta+\nu_0\delta+\delta^2\right).\notag
	\end{align}
	With similar arguments, we have the estimates of $\mathcal{R}^u_3$ as follows,
	\begin{align}
		&\int_\bbr\left|\mathcal{R}^u_3\right|dx\label{5-29}\\
		&\leq\int_\bbr\left|\int_{t-\nu_0}^t\int_\bbr K\partial_y\bbg_{22}(x-y,t-\tau)(v(y,\tau)-1)dyd\tau\right|dx\notag\\
		&\hspace{0.4cm}+\int_\bbr\left|\int_{t-\nu_0}^t\int_\bbr\partial_y\bbg_{21}(x-y,t-\tau)u(y,\tau)dyd\tau\right|dx\notag\\
		&\hspace{0.4cm}+\int_\bbr\left|\int_{t-\nu_0}^t\int_\bbr H_y\left(x,t;y,\tau;\frac{\mu}{v}\right)\left(\frac{K\theta(y,\tau)-Kv(y,\tau)}{v(y,\tau)}\right)dyd\tau\right|dx\notag\\
		&\hspace{0.4cm}+\int_\bbr\left|\int_{t-\nu_0}^t\int_\bbr \partial_y\bbg_{22}(x-y,t-\tau)\left(K(\theta-1)+\frac{Ku^2}{2c_v}\right)dyd\tau\right|dx\notag\\
		&\hspace{0.4cm}+\int_\bbr\left|\int_{t-\nu_0}^t\int_\bbr \partial_y\bbg_{23}(x-y,t-\tau)\left((pu)+\frac{\kappa(v-1)\theta_y}{v}+\left(\frac{\kappa}{c_v}-\frac{\mu}{v}\right)uu_y\right)dyd\tau\right|dx\notag\\
		&\leq O(1)\left(\sqrt{\nu_0}\delta+\nu_0\delta+\sqrt{\nu_0}\delta^2\right).\notag
	\end{align}
	Now, we note the smallness of $\delta$ and $\nu_0\leq \frac{t_\sharp}{4}$, and combine \eqref{5-22}, \eqref{5-23}, \eqref{5-24}, \eqref{5-26}, \eqref{5-28} and \eqref{5-29} to obtain that 
	\bq\label{5-30}
	\|u(\cdot,t)\|_{L^1}\leq O(1)C(\nu_0)\delta^*+O(1)\left(\delta^2+\sqrt{\nu_0}\delta^2+\sqrt{\nu_0}\delta+\nu_0\delta+\delta^2\right),\quad t\geq 1.
	\eq
	
	For $t\leq 1$, the only difference is that we do not have $\mathcal{I}_1$ in \eqref{5-23}, and the integration in $\mathcal{I}_2$ is from $0$ to $t-2\nu_0$. As $t\leq 1$ and all the estimates have no singularity at $\tau=0$, we can use same arguments to obtain that \eqref{5-30} also holds for $t\leq 1$. Thus we conclude that \eqref{5-30} holds for any $t>0$.\newline
	
	\noindent $\bullet$ ($\mathbf{\|u(\cdot,t)\|_{L^\infty}}$) Next, we study the infinity norm of $u$. In fact, we can use almost the same arguments as in the estimates of $\|u(\cdot,t)\|_{L^1}$ to obtain the estimate of $\|u(\cdot,t)\|_{L^\infty}$. Actually, as we only need to consider $t\geq t_\sharp$, for the homogeneous terms, we apply \eqref{5-19} and \eqref{5-20} to have,
	\bq\label{5-31}
	\left\{
	\begin{aligned}
		&\left|\int_\bbr\bbg_{21}(x-y,t)(v(y,0)-1)dy\right|\leq \frac{C(\nu_0)}{t_\sharp}\frac{\sqrt{1+t}}{\sqrt{t}}\frac{\delta^*}{\sqrt{1+t}}\leq \frac{C(\nu_0)\delta^*}{\sqrt{1+t}},\\
		&\left|\int_\bbr\bbg_{22}(x-y,t)u(y,0)dy\right|\leq  \frac{C(\nu_0)}{t_\sharp}\frac{\sqrt{1+t}}{\sqrt{t}}\frac{\delta^*}{\sqrt{1+t}}\leq \frac{C(\nu_0)\delta^*}{\sqrt{1+t}},\\
		&\left|\int_\bbr \bbg_{23}(x-y,t)\left(E(y,0)-c_v\right)dy\right|\leq  \frac{C(\nu_0)}{t_\sharp}\frac{\sqrt{1+t}}{\sqrt{t}}\left(\frac{\delta^*}{\sqrt{1+t}}+\frac{(\delta^*)^2}{\sqrt{1+t}}\right)\leq \frac{C(\nu_0)\delta^*}{\sqrt{1+t}}.
	\end{aligned}
	\right.
	\eq
	Next, for the remainders $\mathcal{R}^u_j$, we follow the arguments as in the estimates of $\|u\|_{L^1}$ and apply \eqref{5-19}, \eqref{5-21} and \eqref{5-25} to obtain the estimates. Actually, for $\mathcal{R}^u_1$, we have for $t\geq 1$ that,
	\begin{align}
		\left|\mathcal{R}^u_1\right|&=\left|\int_0^{\frac{t-1}{2}}\int_\bbr \partial_y\bbg_{22}(x-y,t-\tau)\left(\frac{K(v-1)^2}{v}+\frac{K(\theta-1)(1-v)}{v}-\frac{Ku^2}{2c_v}+\frac{\mu u_y(v-1)}{v}\right)dyd\tau\right|\label{5-32}\\
		&\hspace{0.4cm}+\left|\int_0^{\frac{t-1}{2}}\int_\bbr \partial_y\bbg_{23}(x-y,t-\tau)\left(\left(\frac{K(\theta-1)+K(1-v)}{v}\right)u+\frac{\kappa\theta_y(v-1)}{v}+\left(\frac{\kappa}{c_v}-\frac{\mu}{v}\right)uu_y\right)dyd\tau\right|\notag\\
		&\hspace{0.4cm}+\left|\int_{\frac{t-1}{2}}^{t-1}\int_\bbr \partial_y\bbg_{22}(x-y,t-\tau)\left(\frac{K(v-1)^2}{v}+\frac{K(\theta-1)(1-v)}{v}-\frac{Ku^2}{2c_v}+\frac{\mu u_y(v-1)}{v}\right)dyd\tau\right|\notag\\
		&\hspace{0.4cm}+\left|\int_{\frac{t-1}{2}}^{t-1}\int_\bbr \partial_y\bbg_{23}(x-y,t-\tau)\left(\left(\frac{K(\theta-1)+K(1-v)}{v}\right)u+\frac{\kappa\theta_y(v-1)}{v}+\left(\frac{\kappa}{c_v}-\frac{\mu}{v}\right)uu_y\right)dyd\tau\right|\notag\\
		&\hspace{0.4cm}+\left|\int_{t-1}^{t-2\nu_0}\int_\bbr \partial_y\bbg_{22}(x-y,t-\tau)\left(\frac{K(v-1)^2}{v}+\frac{K(\theta-1)(1-v)}{v}-\frac{Ku^2}{2c_v}+\frac{\mu u_y(v-1)}{v}\right)dyd\tau\right|\notag\\
		&\hspace{0.4cm}+\left|\int_{t-1}^{t-2\nu_0}\int_\bbr \partial_y\bbg_{23}(x-y,t-\tau)\left(\left(\frac{K(\theta-1)+K(1-v)}{v}\right)u+\frac{\kappa\theta_y(v-1)}{v}+\left(\frac{\kappa}{c_v}-\frac{\mu}{v}\right)uu_y\right)dyd\tau\right|\notag\\
		&\leq O(1)\int_0^{\frac{t-1}{2}} \frac{1}{t-\tau}\frac{\delta^2}{\sqrt{\tau+1}}d\tau+O(1)\int_{\frac{t-1}{2}}^{t-1}\frac{1}{\sqrt{t-\tau}}\frac{\delta^2}{\sqrt{1+\tau}{\sqrt{\tau}}}d\tau+O(1)\int_{t-1}^{t-2\nu_0}\frac{1}{\sqrt{t-\tau}}\frac{\delta^2}{\sqrt{1+\tau}\sqrt{\tau}}d\tau\notag\\
		&\leq O(1)\frac{\delta^2}{\sqrt{1+t}}.\notag
	\end{align}
	Next, for $\mathcal{R}^u_2$, we recall the estimates of Green's function in \eqref{5-25} and \eqref{5-27}, and combine the estimates of Heat kernel to obtain for $t\geq 1$ that, 
	\begin{align}
		&\left|\mathcal{R}^u_2(x,t)\right|\label{5-33}\\
		&\leq \left|\int_{t-2\nu_0}^{t-\nu_0}\int_\bbr \partial_y\bbg_{22}(x-y,t-\tau)\left(\frac{K(v-1)^2}{v}+\frac{K(\theta-1)(1-v)}{v}+\frac{\mu u_y(v-1)}{v}\left(1-\mathcal{X}\right)-\frac{Ku^2}{2c_v}\right)dyd\tau\right|\notag\\
		&\hspace{0.4cm}+\left|\int_{t-2\nu_0}^{t-\nu_0}\int_\bbr \frac{1}{\nu_0}\mathcal{X}'\left(\frac{t-\tau}{\nu_0}\right)\left(\bbg_{22}(x-y;t-\tau)-H\left(x,t;y,\tau;\frac{\mu}{v}\right)\right)u(y,\tau)dyd\tau\right|\notag\\
		&\hspace{0.4cm}+\left|\int_{t-2\nu_0}^{t-\nu_0}\int_\bbr \mathcal{X}\left(\frac{t-\tau}{\nu_0}\right)\left(K\partial_y\bbg_{23}(x-y,t-\tau)-\partial_y\bbg_{21}(x-y,t-\tau)\right)u(y,\tau)dyd\tau\right|\notag\\
		&\hspace{0.4cm}+\left|\int_{t-2\nu_0}^{t-\nu_0}\int_\bbr \mathcal{X}\left(\frac{t-\tau}{\nu_0}\right)\left(H_y\left(x,t;y,\tau;\frac{\mu}{v}\right)-\partial_y\bbg_{22}(x-y;t-\tau)\right)\frac{K(\theta-v)}{v}dyd\tau\right|\notag\\
		&\hspace{0.4cm}+\left|\int_{t-2\nu_0}^{t-\nu_0}\int_\bbr \partial_y\bbg_{23}(x-y;t-\tau)\left(\frac{\kappa\theta_y(v-1)}{v}+\frac{Ku(\theta-v)}{v}+\left(\frac{\kappa}{c_v}-\frac{\mu}{v}\right)uu_y\right)dyd\tau\right|\notag\\
		&\leq O(1)\int_{t-2\nu_0}^{t-\nu_0}\frac{1}{\sqrt{t-\tau}}\frac{\delta^2}{\sqrt{1+\tau}\sqrt{\tau}}d\tau+O(1)\int_{t-2\nu_0}^{t-\nu_0}\frac{\delta}{\sqrt{1+\tau}}d\tau+O(1)\int_{t-2\nu_0}^{t-\nu_0}\frac{1}{\sqrt{t-\tau}}\frac{\delta}{\sqrt{1+\tau}}d\tau\notag\\
		&\hspace{0.4cm}+O(1)\int_{t-2\nu_0}^{t-\nu_0}\int_\bbr \frac{1}{\nu_0}\left|H\left(x,t;y,\tau;\mu\right)-H\left(x,t;y,\tau;\frac{\mu}{v}\right)\right| |u(y,\tau)|dyd\tau\notag\\
		&\hspace{0.4cm}+O(1)\int_{t-2\nu_0}^{t-\nu_0}\int_\bbr \frac{1}{\nu_0}\left|\int_\tau^t\int_\bbr H(z,s;y,\tau;\mu)\left(\partial_z\bbg_{21}(x-z,t-s)\right)dzds\right||u(y,\tau)|dyd\tau\notag\\
		&\hspace{0.4cm}+O(1)\int_{t-2\nu_0}^{t-\nu_0}\int_\bbr \frac{1}{\nu_0}\left|\int_\tau^t\int_\bbr H(z,s;y,\tau;\mu)\left(K\partial_z\bbg_{23}(x-z,t-s)\right)dzds\right||u(y,\tau)|dyd\tau\notag\\
		&\leq O(1)\frac{\delta^2}{\sqrt{1+t}}+O(1)(\nu_0+\sqrt{\nu_0})\frac{\delta}{\sqrt{t+1}}\notag\\
		&\hspace{0.4cm}+O(1)\int_{t-2\nu_0}^{t-\nu_0}\int_\bbr \frac{1}{\nu_0}\|\mu-\frac{\mu}{v}\|_{L^\infty}\frac{1}{\sqrt{t-\tau}}e^{-\frac{(x-y)^2}{C(t-\tau)}} |u(y,\tau)|dyd\tau\notag\\
		&\leq O(1)\left(\sqrt{\nu_0}\delta+\delta^2\right)\frac{1}{\sqrt{1+t}}.\notag
	\end{align}
	With similar arguments, we have the estimates of $\mathcal{R}^u_3$ as follows,
	\begin{align}
		\left|\mathcal{R}^u_3\right|
		&\leq\left|\int_{t-\nu_0}^t\int_\bbr K\partial_y\bbg_{22}(x-y,t-\tau)(v(y,\tau)-1)dyd\tau\right|\label{5-34}\\
		&\hspace{0.4cm}+\left|\int_{t-\nu_0}^t\int_\bbr\partial_y\bbg_{21}(x-y,t-\tau)u(y,\tau)dyd\tau\right|\notag\\
		&\hspace{0.4cm}+\left|\int_{t-\nu_0}^t\int_\bbr H_y\left(x,t;y,\tau;\frac{\mu}{v}\right)\left(\frac{K\theta(y,\tau)-Kv(y,\tau)}{v(y,\tau)}\right)dyd\tau\right|\notag\\
		&\hspace{0.4cm}+\left|\int_{t-\nu_0}^t\int_\bbr \partial_y\bbg_{22}(x-y,t-\tau)\left(K(\theta-1)+\frac{Ku^2}{2c_v}\right)dyd\tau\right|\notag\\
		&\hspace{0.4cm}+\left|\int_{t-\nu_0}^t\int_\bbr \partial_y\bbg_{23}(x-y,t-\tau)\left((pu)+\frac{\kappa(v-1)\theta_y}{v}+\left(\frac{\kappa}{c_v}-\frac{\mu}{v}\right)uu_y\right)dyd\tau\right|\notag\\
		&\leq O(1)\left(\sqrt{\nu_0}\delta+\nu_0\delta+\sqrt{\nu_0}\delta^2\right)\frac{1}{\sqrt{1+t}}.\notag
	\end{align}
	Now, 
	we combine \eqref{5-31}, \eqref{5-32}, \eqref{5-33} and \eqref{5-34} to obtain the $L^\infty$ estimate of $u$ as follows,
	\bq\label{5-35}
	\|u(\cdot,t)\|_{L^\infty}\leq \frac{C(\nu_0)\delta^*}{\sqrt{1+t}}+O(1)\left(\frac{\sqrt{\nu_0}\delta+\delta^2}{\sqrt{1+t}}\right),\quad t\geq 1.
	\eq
	Similar as before, we can simply prove \eqref{5-35} also holds for $t\leq 1$, and we omit the details of the proof.
	
\end{proof}

\begin{lemma}\label{L5.6}
	$(\mathbf{\|u_x(\cdot,t)\|_{L^1},\ \|\sqrt{t}u_x(\cdot,t)\|_{L^\infty}})$ Let $(v,u,E)$ be the constructed solution, $C_\sharp$, $t_\sharp$ and $\delta$ be the associated parameters  in Proposition \ref{prop:local}. We further suppose the properties \eqref{5-19} hold for the solution.
	Then, $u(x,t)$ has the following first order estimates for $t\geq t_\sharp$,
	\begin{equation*}
		\left\{
		\begin{aligned}
			&\|u_x(\cdot,t)\|_{L^1}\leq C(\nu_0)\delta^*+O(1)\frac{|\log(\nu_0)|}{\sqrt{\nu_0}}\delta^2+O(1)\sqrt{\nu_0}\delta,\\
			&\|\sqrt{t}u_x(\cdot,t)\|_{L^\infty}\leq C(\nu_0)\delta^*+O(1)\frac{|\log(\nu_0)|}{\sqrt{\nu_0}}\delta^2+O(1)\sqrt{\nu_0}\delta.
		\end{aligned}
		\right.
	\end{equation*}
	
\end{lemma}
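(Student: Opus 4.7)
The plan is to differentiate the integral representation \eqref{u-repre} in $x$ and estimate the result term by term, but with one crucial modification in the local time window $[t-\nu_0,t]$. Formally differentiating the kernel in $\mathcal{R}^u_3$ introduces singularities of order $(t-\tau)^{-3/2}$ which are not integrable near $\tau=t$. The alternative representation of $\partial_x\mathcal{R}^u_3$ stated in Corollary \ref{Cor5.1} is designed precisely to remove this issue: it uses the backward equations satisfied by $\bbg_{22}$ and by $H(x,t;z,\tau;\mu/v)$ to trade one spatial derivative for a time derivative, then integrates by parts in $\tau$ using $\bbg_{22}(x-z,0)=H(x,t;z,t;\mu/v)=\delta(x-z)$ to cancel the top-endpoint boundary, and finally replaces the raw kernel $\partial_x\bbg_{22}$ by the comparison kernel $\partial_x\bbg_{22}-H_x$, which carries a small factor $\|v-1\|_\infty=O(\delta)$ through Lemma \ref{L2.2}.

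Granted the above, the bookkeeping splits naturally. \textbf{Step 1 (linear initial data).} Since $t\ge t_\sharp\ge 4\nu_0$, one is in the large-time regime where Lemma \ref{L5.4} gives $|\partial_x\bbg_{2k}(x-y,t)|\le \sum_j O(1)t^{-1}e^{-(x-y+\beta_j t)^2/Ct}+O(1)e^{-\sigma_0^*t-\sigma_0|x-y|}$ modulo Dirac contributions; moving the $x$-derivative onto the BV initial data yields the $C(\nu_0)\delta^*$ contribution in both norms. \textbf{Step 2 ($\partial_x\mathcal{R}^u_1$).} For $\tau\in[0,t-2\nu_0]$ the singularity $(t-\tau)^{-1}$ from $\partial_{xy}\bbg_{2k}$ is integrable in $\tau$; combining with the a priori $L^\infty$ decay $\delta/\sqrt{1+\tau}$ and with $\|u_y\|_{L^1}\le 2C_\sharp\delta$ from \eqref{global-small} produces $O(\delta^2)$ in $L^1$ and $O(\delta^2/\sqrt{1+t})$ in $L^\infty$. \textbf{Step 3 ($\partial_x\mathcal{R}^u_2$, transition window).} For $\tau\in[t-2\nu_0,t-\nu_0]$ one uses the identity \eqref{5-27} together with short-time heat-kernel comparison from Lemmas \ref{L2.1}--\ref{L2.2}. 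The $(t-\tau)^{-1}$ singularity integrated over a window of length $\nu_0$, paired with the cutoff $\mathcal{X}'/\nu_0$ and with the $O(\delta)$ comparison factor $\|\mu-\mu/v\|_\infty$, yields $O(\sqrt{\nu_0}\delta)+O(\sqrt{\nu_0}\delta^2)$ after exploiting the $(1+\tau)^{-1/2}$ decay of $\|u\|_\infty$.

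\textbf{Step 4 ($\partial_x\mathcal{R}^u_3$ via Corollary \ref{Cor5.1}, the core difficulty).} Each of the double-integral comparison terms $\int_{-\infty}^x\!\!\int_{-\infty}^y\tfrac{1}{\mu}(\partial_x\bbg_{22}-H_x)\,dz\cdot(Kv-K\theta)(y,\cdot)\,dy$ is estimated by applying Lemma \ref{L2.2} to $(\partial_x\bbg_{22}-H_x)$: since $\bbg_{22}$ corresponds to the constant coefficient $\mu$ while $H$ has variable coefficient $\mu/v$, the comparison bound is $O(1)(t-\tau)^{-1}|\log(t-\tau)|\cdot\|v-1\|_\infty=O(\delta)\cdot(t-\tau)^{-1}|\log(t-\tau)|$; integrating against $(v-\theta)\in L^1\cap L^\infty$ and taking an antiderivative in $y$ to gain one spatial decay factor leaves a time integrand that, over $[t-\nu_0,t]$, produces $O(|\log\nu_0|/\sqrt{\nu_0})\cdot\delta^2$. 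The analogous boundary terms at $\tau=t-\nu_0$ are bounded by the short-time kernel estimate from Lemma \ref{L5.4} and the $(1+\tau)^{-1/2}$ decay of $\|u\|_\infty$, giving $O(\sqrt{\nu_0})\delta$. The remaining pieces containing $\partial_{xy}\bbg_{23}\cdot[\cdots]$ and $\partial_x\bbg_{21}\cdot u_y$ are handled by integration by parts that moves the spare $y$-derivative onto the nonlinearity, which is controlled via $\|u_t\|_{L^1}\le 2C_\sharp\delta/\sqrt{\nu_0}$ and $\|\theta_x\|_\infty\le 2C_\sharp\delta/\sqrt{\tau}$ from \eqref{global-small}. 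Summing Steps 1--4 yields the asserted bounds.

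The $L^\infty$ estimate is obtained by the same decomposition but replacing spatial integration by a pointwise supremum, using that the heat-kernel factors satisfy $\|\partial_x H(\cdot,t;\cdot,\tau;\rho)\|_\infty\lesssim (t-\tau)^{-1}$; the extra weight $\sqrt{t}$ compensates the singularity arising when one derivative falls on the time-local kernel at $\tau=t-\nu_0$, so the $\sqrt{t}$-weighted supremum inherits the same three size classes $C(\nu_0)\delta^*$, $|\log\nu_0|/\sqrt{\nu_0}\cdot\delta^2$, and $\sqrt{\nu_0}\delta$.

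The main obstacle is Step 4: the apparent non-integrable singularity in $\partial_x\mathcal{R}^u_3$ is cured only by the subtle rewriting of Corollary \ref{Cor5.1}, which hinges on both backward equations and on the sharp BV-coefficient comparison estimates from Lemma \ref{L2.2}; the necessary smallness $O(\delta)$ must be extracted by pairing the comparison kernel with $\|v-1\|_\infty$ rather than with bare $L^\infty$ norms of the nonlinearity.
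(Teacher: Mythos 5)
Your proposal follows essentially the same route as the paper: differentiate the representation \eqref{u-repre}, bound the homogeneous terms with the derivative estimates of Lemma \ref{L5.4}, treat $\partial_x\mathcal{R}^u_1$ and $\partial_x\mathcal{R}^u_2$ with the kernel and comparison estimates (Lemmas \ref{L2.1}--\ref{L2.2}, identity \eqref{5-27}), and cure the non-integrable singularity in $\partial_x\mathcal{R}^u_3$ exactly as the paper does, via the backward-equation/integration-by-parts rewriting of Corollary \ref{Cor5.1} paired with the BV-coefficient comparison bounds (the paper only deviates by using the direct representation with an integration by parts in $y$ for the $L^1$ bound of $\partial_x\mathcal{R}^u_3$). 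One bookkeeping correction: the $\frac{|\log\nu_0|}{\sqrt{\nu_0}}\delta^2$ contribution actually originates in the cutoff term $\frac{1}{\nu_0}\mathcal{X}'\left(\partial_x\bbg_{22}-H_x\right)u$ of $\partial_x\mathcal{R}^u_2$ (the paper's $T_2$), not in $\partial_x\mathcal{R}^u_3$ (which only yields $|\log\nu_0|\,\delta^2+\sqrt{\nu_0}\,\delta$), so your Step 3 estimate $O(\sqrt{\nu_0}\delta^2)$ undercounts that window, though the final stated bound is unaffected.
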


\begin{proof}
	\noindent $\bullet$ ($\mathbf{\|u_x(\cdot,t)\|_{L^\infty}}$) For the higher order estimates, we have the following representation,
	\bq\label{5-37}
	\begin{aligned}
		u_x(x,t)=&\int_\bbr\partial_x\bbg_{21}(x-y,t)(v(y,0)-1)dy+\int_\bbr \partial_x\bbg_{22}(x-y,t)u(y,0)dy\\
		&+\int_\bbr \partial_x\bbg_{23}(x-y,t)\left(E(y,0)-c_v\right)dy+\sum_{i=1}^3\partial_x\mathcal{R}^u_i(x,t).
	\end{aligned}
	\eq
	Similar as lower order estimates, we only need to consider the case $t\geq t_\sharp$. Then, for the first three homogeneous terms in \eqref{5-37}, we can directly apply the estimates in \eqref{5-21} and \eqref{5-21-1} to obtain the following estimates,
	\begin{equation}\label{5-38}
		\left\{
		\begin{aligned}
			&\left|\int_\bbr\partial_x\bbg_{21}(x-y,t)(v(y,0)-1)dy\right|\leq C(\nu_0)\left(\sqrt{t}e^{-\frac{Kt}{\mu}}+1\right)\frac{\delta^*}{\sqrt{t}}\leq \frac{C(\nu_0)\delta^*}{\sqrt{t}},\\
			&\left|\int_\bbr\partial_x\bbg_{22}(x-y,t)u(y,0)dy\right|\leq \frac{C(\nu_0)\delta^*}{\sqrt{t}},\\
			&\left|\int_\bbr \partial_x\bbg_{23}(x-y,t)\left(E(y,0)-c_v\right)dy\right|\leq \frac{C(\nu_0)\delta^*}{\sqrt{t}}.
		\end{aligned}
		\right.
	\end{equation}
	Next, for the estimates of $\partial_x\mathcal{R}^u_j$, it's obviously, we need higher order estimates of the Green's function $\bbg$. In fact, according to Lemma \ref{L5.4}, we have the following estimates,
	\begin{align}
		&|\partial_{xx}\bbg_{21}(x,t)+\frac{K}{\mu}e^{-\frac{Kt}{\mu}}\delta'(x)|\label{5-39}\\
		&\leq \sum_{j=1}^3\frac{O(1)e^{-\frac{(x+\beta_jt)^2}{4Ct}}}{t^{\frac{3}{2}}}+\sum_{j=1}^3\frac{O(1)e^{-\frac{(x+\beta_jt)^2}{4Ct}}}{t^2}+\sum_{j=1}^3\frac{O(1)e^{-\frac{(x+\beta_jt)^2}{4Ct}}}{t^{\frac{5}{2}}}+O(1)e^{-\sigma_0^*t-\sigma_0|x|},\quad t\geq 1,\notag\\
		&|\partial_{xx}\bbg_{22}(x,t)+\frac{K}{\mu^2}e^{-\frac{Kt}{\mu}}\delta(x)|\label{5-40}\\
		&\leq \sum_{j=1}^3\frac{O(1)e^{-\frac{(x+\beta_jt)^2}{4Ct}}}{t^{\frac{3}{2}}}+\sum_{j=1}^3\frac{O(1)e^{-\frac{(x+\beta_jt)^2}{4Ct}}}{t^2}+\sum_{j=1}^3\frac{O(1)e^{-\frac{(x+\beta_jt)^2}{4Ct}}}{t^{\frac{5}{2}}}+O(1)e^{-\sigma_0^*t-\sigma_0|x|},\quad t\geq 1,\notag\\
		&|\partial_{xx}\bbg_{23}(x,t)|\label{5-41}\\
		&\leq \sum_{j=1}^3\frac{O(1)e^{-\frac{(x+\beta_jt)^2}{4Ct}}}{t^{\frac{3}{2}}}+\sum_{j=1}^3\frac{O(1)e^{-\frac{(x+\beta_jt)^2}{4Ct}}}{t^2}+\sum_{j=1}^3\frac{O(1)e^{-\frac{(x+\beta_jt)^2}{4Ct}}}{t^{\frac{5}{2}}}+O(1)e^{-\sigma_0^*t-\sigma_0|x|},\quad t\geq 1,\notag\\
		&|\partial_{xx}\bbg_{21}(x,t)+\frac{K}{\mu}e^{-\frac{Kt}{\mu}}\delta'(x)|\leq \frac{O(1)e^{-\frac{x^2}{4Ct}}}{t}+O(1)e^{-\sigma_0^*t-\sigma_0|x|}+O(1)te^{-\sigma_0 |x|},\quad t\leq 1,\label{5-42}\\
		&|\partial_{xx}\bbg_{22}(x,t)+\frac{K}{\mu^2}e^{-\frac{Kt}{\mu}}\delta(x)|\leq \frac{O(1)e^{-\frac{x^2}{4Ct}}}{t^{\frac{3}{2}}}+\frac{O(1)e^{-\frac{x^2}{4Ct}}}{\sqrt{t}}+O(1)e^{-\sigma_0^*t-\sigma_0|x|}+O(1)te^{-\sigma_0 |x|},\quad t\leq 1,\label{5-43}\\
		&|\partial_{xx}\bbg_{23}(x,t)|\leq \frac{O(1)e^{-\frac{x^2}{4Ct}}}{t}+O(1)e^{-\sigma_0^*t-\sigma_0|x|}+O(1)te^{-\sigma_0 |x|},\quad t\leq 1.\label{5-44}
	\end{align}
	
	For the remainder $\mathcal{R}^u_1$, we follow the previous arguments, assume $t\geq 1$, and apply \eqref{5-19}, \eqref{5-40}, \eqref{5-41}, \eqref{5-43} and \eqref{5-44} to obtain the estimates as follows,
	\begin{align}
		&\left|\partial_x\mathcal{R}^u_1\right|\label{5-45}\\
		&=\left|\int_0^{\frac{t-1}{2}}\int_\bbr \partial_{xy}\bbg_{22}(x-y,t-\tau)\left(\frac{K(v-1)^2}{v}+\frac{K(\theta-1)(1-v)}{v}-\frac{Ku^2}{2c_v}+\frac{\mu u_y(v-1)}{v}\right)dyd\tau\right|\notag\\
		&\hspace{0.4cm}+\left|\int_0^{\frac{t-1}{2}}\int_\bbr \partial_{xy}\bbg_{23}(x-y,t-\tau)\left(\left(\frac{K(\theta-1)+K(1-v)}{v}\right)u+\frac{\kappa\theta_y(v-1)}{v}+\left(\frac{\kappa}{c_v}-\frac{\mu}{v}\right)uu_y\right)dyd\tau\right|\notag\\
		&\hspace{0.4cm}+\left|\int_{\frac{t-1}{2}}^{t-1}\int_\bbr \partial_{xy}\bbg_{22}(x-y,t-\tau)\left(\frac{K(v-1)^2}{v}+\frac{K(\theta-1)(1-v)}{v}-\frac{Ku^2}{2c_v}+\frac{\mu u_y(v-1)}{v}\right)dyd\tau\right|\notag\\
		&\hspace{0.4cm}+\left|\int_{\frac{t-1}{2}}^{t-1}\int_\bbr \partial_{xy}\bbg_{23}(x-y,t-\tau)\left(\left(\frac{K(\theta-1)+K(1-v)}{v}\right)u+\frac{\kappa\theta_y(v-1)}{v}+\left(\frac{\kappa}{c_v}-\frac{\mu}{v}\right)uu_y\right)dyd\tau\right|\notag\\
		&\hspace{0.4cm}+\left|\int_{t-1}^{t-2\nu_0}\int_\bbr \partial_{xy}\bbg_{22}(x-y,t-\tau)\left(\frac{K(v-1)^2}{v}+\frac{K(\theta-1)(1-v)}{v}-\frac{Ku^2}{2c_v}+\frac{\mu u_y(v-1)}{v}\right)dyd\tau\right|\notag\\
		&\hspace{0.4cm}+\left|\int_{t-1}^{t-2\nu_0}\int_\bbr \partial_{xy}\bbg_{23}(x-y,t-\tau)\left(\left(\frac{K(\theta-1)+K(1-v)}{v}\right)u+\frac{\kappa\theta_y(v-1)}{v}+\left(\frac{\kappa}{c_v}-\frac{\mu}{v}\right)uu_y\right)dyd\tau\right|\notag\\
		&\leq O(1)\int_0^{\frac{t-1}{2}} \left(\frac{1}{\sqrt{\tau}}e^{-\frac{K}{\mu}(t-\tau)}+\frac{1}{(t-\tau)^{\frac{3}{2}}}+e^{-\sigma^*_0(t-\tau)}\right)\frac{\delta^2}{\sqrt{\tau+1}}d\tau\notag\\
		&\hspace{0.4cm}+O(1)\int_{\frac{t-1}{2}}^{t-1}\left(e^{-\frac{K}{\mu}(t-\tau)}+\frac{1}{(t-\tau)}+e^{-\sigma^*_0(t-\tau)}\right)\frac{\delta^2}{\sqrt{\tau}\sqrt{1+\tau}}d\tau\notag\\
		&\hspace{0.4cm}+O(1)\int_{t-1}^{t-2\nu_0}\left(e^{-\frac{K}{\mu}(t-\tau)}+\frac{1}{(t-\tau)}+1+e^{-\sigma^*_0(t-\tau)}\right)\frac{\delta^2}{\sqrt{\tau}\sqrt{1+\tau}}d\tau\notag\\
		&\leq O(1)\frac{(1+|\log(\nu_0)|)\delta^2}{\sqrt{t}},\notag
	\end{align}
	Note, if $t\leq 1$, we only have the integration is from $0$ to $t-2\nu_0$. Then we have $1+\tau\geq 1\geq t$, and the above estimates still hold. Next, for $\mathcal{R}^u_2$, we recall the estimates of the heat kernel and comparison principle in Lemma \ref{L2.2}, and combine the estimates \eqref{5-27}, \eqref{5-42}, \eqref{5-43} and \eqref{5-44} to obtain that, 
	\begin{align}
		&\left|\partial_x\mathcal{R}^u_2(x,t)\right|\notag\\
		&\leq \left|\int_{t-2\nu_0}^{t-\nu_0}\int_\bbr \partial_{xy}\bbg_{22}(x-y,t-\tau)\left(\frac{K(v-1)^2}{v}+\frac{K(\theta-1)(1-v)}{v}+\frac{\mu u_y(v-1)}{v}\left(1-\mathcal{X}\right)-\frac{Ku^2}{2c_v}\right)dyd\tau\right|\notag\\
		&\hspace{0.4cm}+\left|\int_{t-2\nu_0}^{t-\nu_0}\int_\bbr \frac{1}{\nu_0}\mathcal{X}'\left(\frac{t-\tau}{\nu_0}\right)\left(\partial_x\bbg_{22}(x-y;t-\tau)-H_x\left(x,t;y,\tau;\frac{\mu}{v}\right)\right)u(y,\tau)dyd\tau\right|\notag\\
		&\hspace{0.4cm}+\left|\int_{t-2\nu_0}^{t-\nu_0}\int_\bbr \mathcal{X}\left(\frac{t-\tau}{\nu_0}\right)\left(K\partial_{xy}\bbg_{23}(x-y,t-\tau)-\partial_{xy}\bbg_{21}(x-y,t-\tau)\right)u(y,\tau)dyd\tau\right|\notag\\
		&\hspace{0.4cm}+\left|\int_{t-2\nu_0}^{t-\nu_0}\int_\bbr \mathcal{X}\left(\frac{t-\tau}{\nu_0}\right)\left(H_{xy}\left(x,t;y,\tau;\frac{\mu}{v}\right)-\partial_{xy}\bbg_{22}(x-y;t-\tau)\right)\frac{K(\theta-v)}{v}dyd\tau\right|\notag\\
		&\hspace{0.4cm}+\left|\int_{t-2\nu_0}^{t-\nu_0}\int_\bbr \partial_{xy}\bbg_{23}(x-y;t-\tau)\left(\frac{\kappa\theta_y(v-1)}{v}+\frac{Ku(\theta-v)}{v}+\left(\frac{\kappa}{c_v}-\frac{\mu}{v}\right)uu_y\right)dyd\tau\right|\notag\\
		&\equiv T_1 +T_2 +T_3 +T_4+T_5. \notag
	\end{align}
	One applies \eqref{5-42},\eqref{5-43} and \eqref{5-44} to obtain
	\begin{align*}
		&T_1+T_3+T_5\\
		& \leq O(1)\int_{t-2\nu_0}^{t-\nu_0}e^{-\frac{K}{\mu}(t-\tau)}\frac{\delta^2}{\sqrt{1+\tau}\sqrt{\tau}}d\tau+O(1)\int_{t-2\nu_0}^{t-\nu_0} \frac{1}{t-\tau}\frac{\delta^2}{1+\tau}d\tau+O(1)\int_{t-2\nu_0}^{t-\nu_0}\left(1+\frac{1}{\sqrt{t-\tau}}\right)\frac{\delta}{\sqrt{1+\tau}}d\tau\\
		& \leq O(1) \left(	\frac{\nu_0 \delta^2}{t+1}+ \frac{\delta^2}{t+1}+ \frac{\sqrt{\nu_0}\delta}{\sqrt{t+1}}	\right) \leq O(1) \frac{\sqrt{\nu_0}\delta +\delta^2}{\sqrt{t+1}}.
	\end{align*}
	For $T_2$, thanks to \eqref{5-27}, \eqref{5-42}--\eqref{5-44} and Lemma \ref{L2.2}, one has
	\begin{align*}
		T_2 & \leq  O(1)\int_{t-2\nu_0}^{t-\nu_0}\int_\bbr \frac{1}{\nu_0}\left|H_x\left(x,t;y,\tau;\mu\right)-H_x\left(x,t;y,\tau;\frac{\mu}{v}\right)\right| |u(y,\tau)|dyd\tau\notag\\
		&\hspace{0.4cm}+O(1)\int_{t-2\nu_0}^{t-\nu_0}\int_\bbr \frac{1}{\nu_0}\left|\int_\tau^t\int_\bbr H_z(z,s;y,\tau;\mu)\left(\partial_x\bbg_{21}(x-z,t-s)\right)dzds\right||u(y,\tau)|dyd\tau\notag\\
		&\hspace{0.4cm}+O(1)\int_{t-2\nu_0}^{t-\nu_0}\int_\bbr \frac{1}{\nu_0}\left|\int_\tau^t\int_\bbr H_z(z,s;y,\tau;\mu)\left(K\partial_x\bbg_{23}(x-z,t-s)\right)dzds\right||u(y,\tau)|dyd\tau\notag\\
		&\leq O(1)\int_{t-2\nu_0}^{t-\nu_0}\int_\bbr \frac{1}{\nu_0} \frac{e^{-\frac{(x-y)^2}{C_{*}(t-\tau)}	}}{t-\tau} \left[\left|\log(t-\tau)\right| \sup_{t-2\nu_0 < s<t}\|v(\cdot,s)-1\|_{L^\infty} + \sup_{t-2\nu_0 < s<t}\|v(\cdot,s)-1\|_{BV}\right] \frac{\delta}{\sqrt{1+\tau}}dyd\tau\notag\\
		&\hspace{0.4cm}+O(1)\int_{t-2\nu_0}^{t-\nu_0}\int_\bbr \frac{1}{\nu_0} \frac{e^{-\frac{(x-y)^2}{C_{*}(t-\tau)}	}}{t-\tau}\sqrt{t-\tau} \left[\sup_{t-2\nu_0 < s<t}\|\sqrt{s}u_x(\cdot,s)\|_{L^\infty} + \sup_{t-2\nu_0 < s<t}\|v(\cdot,s)-1\|_{L^1}\right] \frac{\delta}{\sqrt{1+\tau}}dyd\tau\notag\\
		&\hspace{0.4cm}+O(1)\int_{t-2\nu_0}^{t-\nu_0}\frac{1}{\nu_0}\int_{\tau}^t\frac{1}{\sqrt{s-\tau}}ds\frac{\delta}{\sqrt{1+\tau}}d\tau\notag\\
		&\leq O(1) \frac{|\log \nu_0| \delta^2}{\sqrt{\nu_0} \sqrt{t+1}} + O(1) \frac{\sqrt{\nu_0}\delta}{\sqrt{t+1}}.
	\end{align*}
	Similarly by \eqref{5-27},  one gets
	\begin{align*}
		T_4 & \leq O(1)\int_{t-2\nu_0}^{t-\nu_0}\int_\bbr \left|H_{xy}\left(x,t;y,\tau;\mu\right)-H_{xy}\left(x,t;y,\tau;\frac{\mu}{v}\right)\right| |\theta(y,\tau)-v(y,\tau)|dyd\tau\notag\\
		&\hspace{0.4cm}+O(1)\int_{t-2\nu_0}^{t-\nu_0}\int_\bbr \left|\int_\tau^t\int_\bbr H_y(z,s;y,\tau;\mu)\left(\partial_{xz}\bbg_{21}(x-z,t-s)-\frac{K}{\mu}e^{-\frac{K(t-s)}{\mu}}\delta'(x-z)\right)dzds\right||\theta(y,\tau)-v(y,\tau)|dyd\tau\notag\\
		&\hspace{0.4cm}+O(1)\int_{t-2\nu_0}^{t-\nu_0}\left|\int_\bbr \int_\tau^t\int_\bbr H_y(z,s;y,\tau;\mu)\frac{K}{\mu}e^{-\frac{K(t-s)}{\mu}}\delta'(x-z)dzds\left(\theta(y,\tau)-v(y,\tau)\right)dy\right|d\tau\notag\\
		&\hspace{0.4cm}+O(1)\int_{t-2\nu_0}^{t-\nu_0}\int_\bbr \left|\int_\tau^t\int_\bbr H_y(z,s;y,\tau;\mu)\left(K\partial_{xz}\bbg_{23}(x-z,t-s)\right)dzds\right||\theta(y,\tau)-v(y,\tau)|dyd\tau\notag\\
		& \leq O(1)\int_{t-2\nu_0}^{t-\nu_0}\int_\bbr  \frac{e^{-\frac{(x-y)^2}{C_{*}(t-\tau)}	}}{(t-\tau)^{\frac{3}{2}}} \left[\left|\log(t-\tau)\right| \sup_{t-2\nu_0 < s<t}\|v(\cdot,s)-1\|_{L^\infty} + \sup_{t-2\nu_0 < s<t}\|v(\cdot,s)-1\|_{BV}\right] \frac{\delta}{\sqrt{1+\tau}}dyd\tau\notag\\
		&\hspace{0.4cm}+O(1)\int_{t-2\nu_0}^{t-\nu_0}\int_\bbr \frac{e^{-\frac{(x-y)^2}{C_{*}(t-\tau)}	}}{(t-\tau)^{\frac{3}{2}}} \sqrt{t-\tau}\left[\sup_{t-2\nu_0 < s<t}\|\sqrt{s}u_x(\cdot,s)\|_{L^\infty} + \sup_{t-2\nu_0 < s<t}\|v(\cdot,s)-1\|_{L^1}\right] \frac{\delta}{\sqrt{1+\tau}}dyd\tau\notag\\
		&\hspace{0.4cm}+O(1)\int_{t-2\nu_0}^{t-\nu_0}\left|\int_\bbr \int_\tau^t H_{xy}(x,s;y,\tau;\mu)e^{-\frac{K(t-s)}{\mu}}ds\left(\theta(y,\tau)-v(y,\tau)\right)dy\right|d\tau\notag\\
		&\hspace{0.4cm}+O(1)\int_{t-2\nu_0}^{t-\nu_0}\int_{\tau}^t\frac{1}{\sqrt{t-s}\sqrt{s-\tau}}ds\frac{\delta}{\sqrt{1+\tau}}d\tau\notag\\
		&\leq O(1) \frac{|\log \nu_0 |\delta^2}{\sqrt{t+1}}+ O(1) \frac{\nu_0 \delta}{\sqrt{1+t}}	\\
		&\hspace{0.4cm}+O(1)\int_{t-2\nu_0}^{t-\nu_0}\left|\int_\bbr \int_\tau^t H_{s}(x,s;y,\tau;\mu)e^{-\frac{K(t-s)}{\mu}}ds\left(\theta(y,\tau)-v(y,\tau)\right)dy\right|d\tau\notag\\
		&\leq O(1) \frac{|\log \nu_0 |\delta^2}{\sqrt{t+1}}+ O(1) \frac{\nu_0 \delta}{\sqrt{1+t}},
	\end{align*}
	where we used $H_{xy}(x,s;y,\tau;\mu)=-H_s(x,s;y,\tau;\mu)$ and integration by parts with respect to $s$ since $\mu$ is constant heat conductivity. Combing the estimates of $T_1$--$T_5$, one obtains
	\begin{equation} \label{5-46}
		\left|\partial_{x} \mathcal{R}^{u}_{2}\right| \leq O(1) \frac{|\log \nu_0|}{\sqrt{\nu_0}} \frac{\delta^2}{\sqrt{t+1}} + O(1) \frac{\sqrt{\nu_0} \delta}{\sqrt{t+1}}.
	\end{equation}

		Next, in order to deal with $\mathcal{R}^u_3$, we  apply Corollary \ref{Cor5.1} to obtain the following estimates, 
		\begin{align}
			&\left|\partial_x\mathcal{R}^u_3\right|\label{5-47}\\
			&\leq \int_{-\infty}^x \int_{-\infty}^y\frac{K}{\mu}\left|\partial_x\bbg_{22}(x-z,\nu_0)-H_x\left(x,t;z,t-\nu_0;\frac{\mu}{v}\right)\right|dz\left|v(y,t-\nu_0)-\theta(y,t-\nu_0)\right|dy\notag\\
			&\hspace{0.4cm}+\int_{t-\nu_0}^t\int_{-\infty}^x \int_{-\infty}^y\frac{K}{\mu}\left|\partial_x\bbg_{22}(x-z,t-\tau)-H_x\left(x,t;z,\tau;\frac{\mu}{v}\right)\right|dz|v_\tau(y,\tau)|dyd\tau\notag\\
			&\hspace{0.4cm}+\int_{t-\nu_0}^t\int_{-\infty}^x \int_{-\infty}^y\frac{K}{\mu}\left|\partial_x\bbg_{22}(x-z,t-\tau)-H_x\left(x,t;z,\tau;\frac{\mu}{v}\right)\right|dz\left(\left|\frac{p}{c_v}u_y\right|+\left|\frac{\mu}{c_vv}(u_y)^2\right|\right)dyd\tau\notag\\
			&\hspace{0.4cm}+\int_x^{+\infty} \int_y^{+\infty}\frac{K}{\mu}\left|\partial_x\bbg_{22}(x-z,\nu_0)-H_x\left(x,t;z,t-\nu_0;\frac{\mu}{v}\right)\right|dz|v(y,t-\nu_0)-\theta(y,t-\nu_0)|dy\notag\\
			&\hspace{0.4cm}+\int_{t-\nu_0}^t\int_x^{+\infty} \int_y^{+\infty}\frac{K}{\mu}\left|\partial_x\bbg_{22}(x-z,t-\tau)-H_x\left(x,t;z,\tau;\frac{\mu}{v}\right)\right|dz|v_\tau(y,\tau)|dyd\tau\notag\\
			&\hspace{0.4cm}+\int_{t-\nu_0}^t\int_x^{+\infty} \int_y^{+\infty}\frac{K}{\mu}\left|\partial_x\bbg_{22}(x-z,t-\tau)-H_x\left(x,t;z,\tau;\frac{\mu}{v}\right)\right|dz\left(\left|\frac{p}{c_v}u_y\right|+\left|\frac{\mu}{c_vv}(u_y)^2\right|\right)dyd\tau\notag\\
			&\hspace{0.4cm}+\int_{t-\nu_0}^t\int_\bbr \frac{K}{\mu}\left|\partial_x\bbg_{21}(x-y,t-\tau)-\frac{K}{\mu}e^{-K(t-\tau)/\mu}\delta(x-y)\right||v(y,\tau)-\theta(y,\tau)|dyd\tau\notag\\
			&\hspace{0.4cm}+\int_{t-\nu_0}^t\frac{K^2}{\mu^2}e^{-K(t-\tau)/\mu}|v(x,\tau)-\theta(x,\tau)|d\tau\notag\\
			&\hspace{0.4cm}+\int_{t-\nu_0}^t\int_\bbr \frac{K^2}{\mu}\left|\partial_x\bbg_{23}(x-y,t-\tau)\right||v(y,\tau)-\theta(y,\tau)|dyd\tau\notag\\
			&\hspace{0.4cm}+\int_{t-\nu_0}^t\int_\bbr\left|\partial_x\bbg_{21}(x-y,t-\tau)-\frac{K}{\mu}e^{-K(t-\tau)/\mu}\delta(x-y)\right||u_y(y,\tau)|dyd\tau\notag\\
			&\hspace{0.4cm}+\int_{t-\nu_0}^t\frac{K}{\mu}e^{-K(t-\tau)/\mu}|u_x(x,\tau)|d\tau\notag\\
			&\hspace{0.4cm}+\int_{t-\nu_0}^t\int_\bbr \left|\partial_x\bbg_{22}(x-y,t-\tau)\right|\frac{K|u||u_y|}{c_v}dyd\tau\notag\\
			&\hspace{0.4cm}+\int_{t-\nu_0}^t\int_\bbr \left|\partial_{xy}\bbg_{23}(x-y,t-\tau)\right|\left(|pu|+\frac{\kappa|v-1||\theta_y|}{|v|}+\left(\frac{\kappa}{c_v}+\frac{\mu}{|v|}\right)|u||u_y|\right)dyd\tau.\notag
		\end{align}
		Next, for the first six terms in \eqref{5-47}, we should take advantage of \eqref{5-27} and comparison principle Lemma \ref{L2.2}. Actually, we have the following estimates, 
		\begin{align}
			&\int_{-\infty}^x \int_{-\infty}^y\frac{K}{\mu}\left|\partial_x\bbg_{22}(x-z,\nu_0)-H_x\left(x,t;z,t-\nu_0;\frac{\mu}{v}\right)\right|dzdy\label{5-48}\\
			&\leq O(1)\int_{-\infty}^x \int_{-\infty}^y\left|H_x(x,t;z,t-\nu_0;\mu)-H_x\left(x,t;z,t-\nu_0;\frac{\mu}{v}\right)\right|dzdy\notag\\
			&\hspace{0.4cm}+O(1)\int_{-\infty}^x \int_{-\infty}^y\left|\int_{t-\nu_0}^t\int_\bbr H_w(w,s;z,t-\nu_0;\mu)\left(\partial_x\bbg_{21}(x-w,t-s)-\frac{K}{\mu}e^{-K(t-s)/\mu}\delta(x-w)\right)dwds\right|dzdy\notag\\
			&\hspace{0.4cm}+O(1)\int_{-\infty}^x \int_{-\infty}^y\left|\int_{t-\nu_0}^t H_x(x,t;z,s;\mu)ds\right|dzdy\notag\\
			&\hspace{0.4cm}+O(1)\int_{-\infty}^x \int_{-\infty}^y\left|\int_{t-\nu_0}^t\int_\bbr H_w(w,s;z,t-\nu_0;\mu)\left(\partial_x\bbg_{23}(x-w,t-s)\right)dwds\right|dzdy\notag\\
			&\leq O(1)\int_{-\infty}^x \int_{-\infty}^y  \frac{e^{-\frac{(x-z)^2}{C_{*}(\nu_0)}}}{\nu_0}dzdy|\log(\nu_0)|\delta+O(1)\int_{t-\nu_0}^t \int_{-\infty}^x \int_{-\infty}^y\frac{e^{-\frac{(x-z)^2}{C_*(s-t+\nu_0)}}}{s-t+\nu_0}dzdyds\notag\\
			&\hspace{0.4cm}+O(1) \int_{-\infty}^{x} \int_{-\infty}^{y} \int_{t-\nu_0}^{t} \int_{\R}	\frac{e^{-\frac{(w-z)^2}{C_*(s-t+\nu_0)}}}{s-t+\nu_0}\left(		\frac{e^{-\frac{(x-w)^2}{C_*(t-s)}}}{\sqrt{t-s}} + e^{-\sigma_0 |x-w|}	\right)	dw ds dz dy	 \notag	\\
			&\leq O(1)|\log(\nu_0)|\delta+O(1)\nu_0 + O(1)\sqrt{\nu_0}.\notag
		\end{align}
		Similarly as \eqref{5-48}, we have the estimates of the other part of comparison principle as follows,
		\bq\label{5-49}
		\left\{\begin{aligned}
			&\int_x^{+\infty} \int_y^{+\infty}\frac{K}{\mu}\left|\partial_x\bbg_{22}(x-z,\nu_0)-H_x\left(x,t;z,t-\nu_0;\frac{\mu}{v}\right)\right|dzdy\\
			&\hspace{6cm}\leq O(1)|\log(\nu_0)|\delta+O(1)\nu_0+O(1)\sqrt{\nu_0},\\
			&\int_x^{+\infty} \int_y^{+\infty}\frac{K}{\mu}\left|\partial_x\bbg_{22}(x-z,t-\tau)-H_x\left(x,t;z,\tau;\frac{\mu}{v}\right)\right|dzdy\\
			&\hspace{6cm}\leq O(1)|\log(t-\tau)|\delta+O(1)(t-\tau)+O(1)\sqrt{t-\tau},\\
			&\int_{-\infty}^x \int_{-\infty}^y\frac{K}{\mu}\left|\partial_x\bbg_{22}(x-z,t-\tau)-H_x\left(x,t;z,\tau;\frac{\mu}{v}\right)\right|dzdy\\
			&\hspace{6cm}\leq O(1)|\log(t-\tau)|\delta+O(1)(t-\tau)+O(1)\sqrt{t-\tau}.
		\end{aligned}
		\right.
		\eq
		Now, we combine the representation \eqref{5-47}, the comparison principle in \eqref{5-48} and \eqref{5-49}, and apply similar arguments as before to obtain the estimates of $\partial_x\mathcal{R}^u_3$ as follows,
		\begin{align}
			\left|\partial_x\mathcal{R}^u_3\right|
			&\leq O(1)\left(|\log(\nu_0)|\delta+\nu_0+\sqrt{\nu_0}\right)\frac{\delta}{\sqrt{1+t-\nu_0}}\label{5-50}\\
			&\hspace{0.4cm}+O(1)\left(|\log(\nu_0)|\nu_0\delta+\nu_0^2+(\nu_0)^{\frac{3}{2}}\right)\frac{\delta}{\sqrt{t-\nu_0}}\notag\\
			&\hspace{0.4cm}+O(1)\left(|\log(\nu_0)|\nu_0\delta+\nu_0^2+(\nu_0)^{\frac{3}{2}}\right)\left(\frac{\delta}{\sqrt{t-\nu_0}}+\frac{\delta^2}{t-\nu_0}\right)\notag\\
			&\hspace{0.4cm}+O(1)\left(\nu_0\frac{\delta}{\sqrt{t-\nu_0}}+\sqrt{\nu_0}\frac{\delta^2}{\sqrt{1+t-\nu_0}\sqrt{t-\nu_0}}+\sqrt{\nu_0}\frac{\delta}{\sqrt{t-\nu_0}}\right)\notag.
		\end{align} 
		Now, we combine \eqref{5-37}, \eqref{5-38}, \eqref{5-45}, \eqref{5-46} and \eqref{5-50} to obtain the following estimates of $\|u(\cdot,t)\|_{L^\infty}$, 
		\begin{align}
			|u_x(x,t)|
			&\leq \frac{C(\nu_0)\delta^*}{\sqrt{t}}+O(1)\frac{\frac{|\log(\nu_0)|}{\sqrt{\nu_0}}\delta^2}{\sqrt{t-\nu_0}}+O(1)\frac{\sqrt{\nu_0}\delta}{\sqrt{t-\nu_0}}\label{5-51}\\
			&\leq \frac{C(\nu_0)\delta^*}{\sqrt{t}}+O(1)\frac{\frac{|\log(\nu_0)|}{\sqrt{\nu_0}}\delta^2}{\sqrt{t}}+O(1)\frac{\sqrt{\nu_0}\delta}{\sqrt{t}},\notag
		\end{align}
		where the last inequality holds due to the fact that $4\nu_0\leq t_\sharp\leq t$, and thus $\frac{\sqrt{t}}{\sqrt{t-\nu_0}}\leq \frac{2}{\sqrt{3}}$. Moreover, similar as zeroth order estimates, we can follow the same arguments to conclude \eqref{5-51} also holds for $t\leq 1$, and we omit the details.\newline
		
		\noindent $\bullet$ ($\mathbf{\|u_x(\cdot,t)\|_{L^1}}$) For the $L^1$ estimates of $u_x$, we can follow the arguments as for $L^\infty$ estimates. First, the estimates of the homogeneous terms can be simply obtained as follows,
		\bq\label{5-52}
		\left\{
		\begin{aligned}
			&\int_\bbr\left|\int_\bbr\partial_x\bbg_{21}(x-y,t)(v(y,0)-1)dy\right|dx\leq C(\nu_0)\delta^*,\\
			&\int_\bbr\left|\int_\bbr\partial_x\bbg_{22}(x-y,t)u(y,0)dy\right|dx\leq C(\nu_0)\delta^*,\\
			&\int_\bbr\left|\int_\bbr \partial_x\bbg_{23}(x-y,t)\left(E(y,0)-c_v\right)dy\right|dx\leq C(\nu_0)\delta^*.
		\end{aligned}
		\right.
		\eq
		For the estimates of the remainders, we also apply similar estimates as before. Actually, we have for $t\geq 1$ that,
		\begin{align}
			\int_\bbr\left|\partial_x\mathcal{R}^u_1\right|dx
			&\leq O(1)\int_0^{\frac{t-1}{2}} \left(e^{-\frac{K}{2\mu}(t+1)}+\frac{1}{(t+1)}+e^{-\sigma^*_0(t+1)}\right)\frac{\delta^2}{\sqrt{\tau+1}}d\tau\label{5-53}\\
			&\hspace{0.4cm}+O(1)\int_{\frac{t-1}{2}}^{t-1}\left(\frac{\sqrt{t-\tau}e^{-\frac{K}{\mu}(t-\tau)}}{\sqrt{t-\tau}}+\frac{1}{(t-\tau)}+e^{-\sigma^*_0(t-\tau)}\right)\frac{\delta^2}{\sqrt{1+t}}d\tau\notag\\
			&\hspace{0.4cm}+O(1)\int_{t-1}^{t-2\nu_0}\left(\frac{\sqrt{t-\tau}e^{-\frac{K}{\mu}(t-\tau)}}{\sqrt{t-\tau}}+\frac{1}{(t-\tau)}+e^{-\sigma^*_0(t-\tau)}\right)\frac{\delta^2}{\sqrt{1+\tau}}d\tau\notag\\
			&\leq O(1)(1+|\log(\nu_0)|)\delta^2.\notag
		\end{align}
		Similar as before, when $t\leq 1$, we only have the third integration in \eqref{5-53}, which is from $0$ to $t-2\nu_0$. Thus, we immediately conclude \eqref{5-53} also holds for $t\leq 1$. Then, we omit the details and provide the estimates of $\|\mathcal{R}^u_2\|_{L^1}$ as follows,
		\bq\label{5-54}
		\begin{aligned}
			&\int_\bbr\left|\partial_x\mathcal{R}^u_2\right|dx\leq O(1)\frac{|\log(\nu_0)|}{\sqrt{\nu_0}}\delta^2+O(1)\sqrt{\nu_0}\delta.
		\end{aligned}
		\eq
		Next, for the estimates of $\|\partial_{x}\mathcal{R}^u_3\|_{L^1}$, as we have one more integration with respect to $x$, we can directly apply the representation \eqref{5-34} to obtain that,
		\begin{align}
			&\int_\bbr\left|\partial_x\mathcal{R}^u_3\right|dx\label{5-55}\\
			&\leq\int_\bbr\left|\int_{t-\nu_0}^t\int_\bbr K\partial_{xy}\bbg_{22}(x-y,t-\tau)(v(y,\tau)-1)dyd\tau\right|dx\notag\\
			&\hspace{0.4cm}+\int_\bbr\left|\int_{t-\nu_0}^t\int_\bbr\partial_{xy}\bbg_{21}(x-y,t-\tau)u(y,\tau)dyd\tau\right|dx\notag\\
			&\hspace{0.4cm}+\int_\bbr\left|\int_{t-\nu_0}^t\int_\bbr H_{xy}\left(x,t;y,\tau;\frac{\mu}{v}\right)\left(\frac{K\theta(y,\tau)-Kv(y,\tau)}{v(y,\tau)}\right)dyd\tau\right|dx\notag\\
			&\hspace{0.4cm}+\int_\bbr\left|\int_{t-\nu_0}^t\int_\bbr \partial_{xy}\bbg_{22}(x-y,t-\tau)\left(K(\theta-1)+\frac{Ku^2}{2c_v}\right)dyd\tau\right|dx\notag\\
			&\hspace{0.4cm}+\int_\bbr\left|\int_{t-\nu_0}^t\int_\bbr \partial_{xy}\bbg_{23}(x-y,t-\tau)\left((pu)+\frac{\kappa(v-1)\theta_y}{v}+\left(\frac{\kappa}{c_v}-\frac{\mu}{v}\right)uu_y\right)dyd\tau\right|dx\notag\\
			&\leq \int_\bbr\left|\int_{t-\nu_0}^t\int_{\bbr\setminus\mathscr{D}} K\partial_{x}\bbg_{22}(x-y,t-\tau)v_y(y,\tau)dyd\tau\right|dx\notag\\
			&\hspace{0.4cm}+\int_\bbr\left|\int_{t-\nu_0}^t K\sum_{z\in\mathscr{D}}\partial_{x}\bbg_{22}(x-z,t-\tau)v(z,\tau)\Big|_{z^-}^{z^+}d\tau\right|dx\notag\\
			&\hspace{0.4cm}+O(1)\int_\bbr\left|\int_{t-\nu_0}^t\int_\bbr\left(\frac{e^{-\frac{(x-y)^2}{4C(t-\tau)}}}{t-\tau}+e^{-\sigma_0^*(t-\tau)-\sigma_0|x-y|}+(t-\tau)e^{-\sigma_0 |x-y|}\right)u(y,\tau)dyd\tau\right|dx\notag\\
			&\hspace{0.4cm}+O(1)\int_\bbr\int_{t-\nu_0}^t|u_x(x,\tau)|d\tau dx\notag\\
			&\hspace{0.4cm}+\int_\bbr\left|\int_{t-\nu_0}^t\int_{\bbr\setminus\mathscr{D}} H_{x}\left(x,t;y,\tau;\frac{\mu}{v}\right)\left(\frac{K\theta_y(y,\tau)v(y,\tau)-K\theta(y,\tau)v_y(y,\tau)}{v^2(y,\tau)}\right)dyd\tau\right|dx\notag\\
			&\hspace{0.4cm}+\int_\bbr\left|\int_{t-\nu_0}^t\sum_{z\in\mathscr{D}}H_{x}\left(x,t;z,\tau;\frac{\mu}{v}\right)\frac{K\theta(z,\tau)}{v(z,\tau)}\Big|_{z=z^-}^{z=z^+}d\tau\right|dx\notag\\
			&\hspace{0.4cm}+\int_\bbr\left|\int_{t-\nu_0}^t\int_\bbr \partial_{x}\bbg_{22}(x-y,t-\tau)\left(K\theta_y+\frac{Kuu_y}{c_v}\right)dyd\tau\right|dx\notag\\
			&\hspace{0.4cm}+O(1)\int_\bbr\left|\int_{t-\nu_0}^t\int_\bbr \left(\frac{e^{-\frac{(x-y)^2}{4C(t-\tau)}}}{t-\tau}+e^{-\sigma_0 |x-y|}\right)\left((pu)+\frac{\kappa(v-1)\theta_y}{v}+\left(\frac{\kappa}{c_v}-\frac{\mu}{v}\right)uu_y\right)dyd\tau\right|dx\notag\\
			&\leq O(1)\int_{t-\nu_0}^t\int_{\bbr\setminus\mathscr{D}} \int_\bbr \left(\frac{e^{-\frac{(x-y)^2}{4C(t-\tau)}}}{t-\tau}+e^{-\sigma_0 |x-y|}\right)dx\left(|v_y(y,\tau)|+|\theta_y(y,\tau)|\right)dyd\tau \notag\\
			&\hspace{0.4cm}+\int_{t-\nu_0}^t K\sum_{z\in\mathscr{D}}\int_\bbr\left(\frac{e^{-\frac{(x-y)^2}{4C(t-\tau)}}}{t-\tau}+e^{-\sigma_0 |x-y|}\right)dx\left|v(z,\tau)\Big|_{z^-}^{z^+}\right|d\tau \notag\\
			&\hspace{0.4cm}+O(1)\int_{t-\nu_0}^t\int_\bbr\int_\bbr\left(\frac{e^{-\frac{(x-y)^2}{4C(t-\tau)}}}{t-\tau}+e^{-\sigma_0 |x-y|}\right)dx|u(y,\tau)|dyd\tau \notag\\
			&\hspace{0.4cm}+O(1)\int_{t-\nu_0}^t\int_\bbr|u_x(x,\tau)|dx d\tau \notag\\
			&\hspace{0.4cm}+O(1)\int_{t-\nu_0}^t\int_\bbr\int_\bbr\left(\frac{e^{-\frac{(x-y)^2}{4C(t-\tau)}}}{t-\tau}+e^{-\sigma_0 |x-y|}\right)dx\left(|\theta_y(y,\tau)|+|u||u_y|+|p||u|\right)dyd\tau \notag\\
			&\leq O(1)\sqrt{\nu_0}\delta.\notag
		\end{align}
		Now, we combine \eqref{5-52}, \eqref{5-53}, \eqref{5-54} and \eqref{5-55} to obtain that,
		\begin{equation}\label{5-56}
			\|u_x(\cdot,t)\|_{L^1}\leq  C(\nu_0)\delta^*+O(1)\frac{|\log(\nu_0)|}{\sqrt{\nu_0}}\delta^2+O(1)\sqrt{\nu_0}\delta.
		\end{equation}
		Now, we substitute the condition $t\geq t_\sharp\geq 4\nu_0$ into \eqref{5-56} to obtain the desired estimates.
	\end{proof}

\begin{lemma}\label{L5.7}
	$(\mathbf{\|\theta(\cdot,t)-1\|_{L^1},\ \|\sqrt{1+t}\left(\theta(\cdot,t)-1\right)\|_{L^\infty}})$ Let $(v,u,E)$, $C_\sharp$, $t_\sharp$ and $\delta$  be the local solution and corresponding parameters constructed in Proposition \ref{prop:local}. We further suppose the properties \eqref{5-19} hold for the solution.
	Then, $\theta(x,t)$ has the following zeroth order estimates for $t\geq t_\sharp$,
	\begin{equation*}
		\left\{
		\begin{aligned}
			&\|\theta(\cdot,t)-1\|_{L^1}\leq O(1)\left(C(\nu_0)\delta^*+\sqrt{\nu_0}\delta+\delta^2\right)+O(1)\left(C(\nu_0)\delta^*+\sqrt{\nu_0}\delta+\delta^2\right)^2,\\
			&\|\sqrt{1+t}(\theta(\cdot,t)-1)\|_{L^\infty}\leq O(1)\left(C(\nu_0)\delta^*+\sqrt{\nu_0}\delta+\delta^2\right)+O(1)\left(C(\nu_0)\delta^*+\sqrt{\nu_0}\delta+\delta^2\right)^2.
		\end{aligned}
		\right.
	\end{equation*}
	
\end{lemma}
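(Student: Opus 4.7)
The plan is to mirror the proof of Lemma \ref{L5.5} almost verbatim, with the representation of $E(x,t)-c_v$ from Lemma \ref{L5.2} replacing the representation of $u(x,t)$, and then pass to $\theta$ via the pointwise identity
\[
\theta(x,t)-1 = \frac{E(x,t)-c_v}{c_v} - \frac{u(x,t)^2}{2 c_v}.
\]
First I would observe that by Lemma \ref{L5.4} the bounds on $\bbg_{3k}(x-y,t-\tau)$ have the same qualitative form as the bounds on $\bbg_{2k}$ used in Lemma \ref{L5.5}: a delta-supported, exponentially-damped part plus heat-kernel-type pieces with Gaussian decay, the only change being in the numerical constants in the matrices $M_j^0, M_j^1$. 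Therefore the three homogeneous terms in \eqref{theta-repre}, namely $\int_\bbr \bbg_{31}(x-y,t)(v_0-1)dy$, $\int_\bbr \bbg_{32}(x-y,t) u_0 dy$ and $\int_\bbr G_{33}(x,t;y,0)(E_0-c_v)dy$, can be bounded exactly as in \eqref{5-22} and \eqref{5-31}, giving contributions of size $C(\nu_0)\delta^*$ in $L^1$ and $C(\nu_0)\delta^*/\sqrt{1+t}$ in $L^\infty$.

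Next I would handle the three nonlinear remainders $\mathcal{R}^\theta_1, \mathcal{R}^\theta_2, \mathcal{R}^\theta_3$ in the three corresponding time windows $[0,t-2\nu_0]$, $[t-2\nu_0,t-\nu_0]$, $[t-\nu_0,t]$. For $\mathcal{R}^\theta_1$ the integrands are algebraically identical to those in $\mathcal{R}^u_1$ (products of the small quantities $v-1, \theta-1, u, u_y, \theta_y$ weighted by bounded coefficients), so the splitting at $(t-1)/2$ and $t-1$ as in \eqref{5-24} and \eqref{5-32} produces $O(\delta^2)$ in $L^1$ and $O(\delta^2)/\sqrt{1+t}$ in $L^\infty$ after the standard time-convolution bookkeeping. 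For $\mathcal{R}^\theta_2$ the delicate contributions are those involving $\mathcal{X}'$ and the difference $H(x,t;y,\tau;\kappa/(c_v v)) - \bbg_{33}(x-y;t-\tau)$; these will be controlled by the analogue of the representation \eqref{5-27} (writing $\bbg_{33}$ via the constant-coefficient heat kernel and a correction involving $\partial_y\bbg_{31}, \partial_y\bbg_{32}$) together with the comparison estimates in Lemma \ref{L2.2} applied to $\|\kappa/c_v - \kappa/(c_v v)\|_{L^\infty}<\delta/\sqrt{1+\tau}$, yielding contributions of size $\sqrt{\nu_0}\delta + \nu_0\delta + \delta^2$. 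The remainder $\mathcal{R}^\theta_3$ is handled on the short window $[t-\nu_0,t]$ using only the $\sqrt{\nu_0}$ smallness of the interval together with the $1/\sqrt{\tau}$ bounds on $\theta_y, u_y$ built into $\mathcal{G}(\tau)$.

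Finally I would combine these $L^1$ and weighted $L^\infty$ bounds on $E-c_v$ with the identity above. Using the estimates of Lemma \ref{L5.5}, one has $\|u(\cdot,t)^2\|_{L^1} \leq \|u\|_{L^\infty}\|u\|_{L^1}$ and $\|\sqrt{1+t}\,u(\cdot,t)^2\|_{L^\infty} \leq \|\sqrt{1+t}\,u\|_{L^\infty}\|u\|_{L^\infty}$, each bounded by $\bigl(C(\nu_0)\delta^*+\sqrt{\nu_0}\delta+\delta^2\bigr)^2$. Combining with the bound on $E-c_v$ then produces the linear-plus-quadratic structure in the statement.

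The main obstacle I anticipate is in $\mathcal{R}^\theta_2$, specifically in the term $\int_{t-2\nu_0}^{t-\nu_0}\int_\bbr \mathcal{X}(\cdot) H_y(x,t;y,\tau;\kappa/(c_vv))\bigl[(pu)+(\kappa/(c_v v)-\mu/v)u u_y\bigr]dy d\tau$. The factor $u_y$ carries the singularity $\delta/\sqrt{\tau}$, and one must verify that after pairing with the Gaussian-type decay of $H_y$ the resulting time convolution $\int_{t-2\nu_0}^{t-\nu_0} (t-\tau)^{-1}\cdot \delta/(\sqrt{\tau}\sqrt{1+\tau})\,d\tau$ can be absorbed into a factor $O(\sqrt{\nu_0}\delta/\sqrt{1+t})$ rather than blowing up logarithmically near $\tau=t-\nu_0$; this requires trading one power of $1/\sqrt{t-\tau}$ against the $\sqrt{\nu_0}$ length of the window and exploiting the $L^\infty$-in-$y$ rather than $L^1$-in-$y$ integration for the $u u_y$ factor. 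All the other computations will be straightforward but lengthy adaptations of the arguments already set down for $u$.
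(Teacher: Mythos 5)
Your proposal is correct and follows essentially the same route as the paper: represent $E-c_v$ via Lemma \ref{L5.2}, bound the homogeneous terms by $C(\nu_0)\delta^*$, treat $\mathcal{R}^\theta_1,\mathcal{R}^\theta_2,\mathcal{R}^\theta_3$ by the same time-window splitting, heat-kernel representation of $\bbg_{33}$, and comparison estimates used for $u$, and then recover $\theta-1$ from the identity $c_v(\theta-1)=(E-c_v)-u^2/2$, which produces the quadratic term via $\|u\|_{L^\infty}\|u\|_{L^1}$ and $\|u\|_{L^\infty}^2$. The obstacle you flag in $\mathcal{R}^\theta_2$ is handled in the paper by exactly the mechanisms you describe (plus a minor recombination writing $pu=(p-K)u+Ku$ so that the linear-in-$u$ piece is paired with a comparison difference), and your proposed resolution is sound.
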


\begin{proof}
	Similar as the proof of $u(x,t)$, we first use the representation of energy $E(x,t)$ \eqref{theta-repre} to write the representation of $\theta(x,t)$ as follows, 
	\begin{align}
		&c_v\left(\theta(x,t)-1\right)\notag\\
		&=\int_\bbr\bbg_{31}(x-y,t)(v(y,0)-1)dy+\int_\bbr \bbg_{32}(x,t;y,0)u(y,0)dy+\int_\bbr G_{33}(x-y,t)\left(E(y,0)-c_v\right)dy\notag\\
		&\hspace{0.4cm}+\sum_{i=1}^3\mathcal{R}^\theta_i-\frac{(u(x,t))^2}{2},\notag
	\end{align}
	where the remainders $\mathcal{R}_i^\theta$ are listed in Lemma \ref{L5.2}. As we already have the estimates of $u(x,t)$ in Lemma \ref{L5.5} and Lemma \ref{L5.6}, we only need to deal with the first six terms in above representations.\newline 
	
	\noindent $\bullet$ ($\mathbf{\|\theta(\cdot,t)-1\|_{L^\infty}}$) We first study the zeroth order $L^\infty$ estimates. Actually, according to Lemma \ref{L5.4}, the Green's function $\bbg_{3j}$ have similar estimates as $\bbg_{2j}$. Therefore, we omit the details and directly obtain the estimates of the homogeneous terms as follows,  
	\bq\label{5-57}
	\left\{
	\begin{aligned}
		&\left|\int_\bbr\bbg_{31}(x-y,t)(v(y,0)-1)dy\right|\leq \frac{C(\nu_0)\delta^*}{\sqrt{1+t}},\\
		&\left|\int_\bbr\bbg_{32}(x-y,t)u(y,0)dy\right|\leq \frac{C(\nu_0)\delta^*}{\sqrt{1+t}},\\
		&\left|\int_\bbr \bbg_{33}(x-y,t)\left(E(y,0)-c_v\right)dy\right|\leq \frac{C(\nu_0)\delta^*}{\sqrt{1+t}}.
	\end{aligned}
	\right.
	\eq
	Next, for the remainders $\mathcal{R}^\theta_j$, we apply a simple recombination to gain additional comparison terms, and thus obtain the following representations,
	\begin{align*}
		\mathcal{R}_1^\theta&=\int_0^{t-2\nu_0}\int_\bbr \partial_y\bbg_{32}(x,t;y,\tau)\left(\frac{K(v-1)^2}{v}+\frac{(K\theta-K)(1-v)}{v}+\frac{\mu u_y(v-1)}{v}-\frac{Ku^2}{2c_v}\right)dyd\tau\notag\\
		&\hspace{0.4cm}+\int_0^{t-2\nu_0}\int_\bbr \partial_y\bbg_{33}(x-y,t-\tau)\left((\frac{K\theta}{v}-K)u+\left(\kappa-\frac{\kappa}{v}\right)\theta_y+\left(\frac{\kappa}{c_v}-\frac{\mu}{v}\right)uu_y\right)dyd\tau,\notag\\
		\mathcal{R}_2^\theta&=\int_{t-2\nu_0}^{t-\nu_0}\int_\bbr \partial_y\bbg_{32}(x,t;y,\tau)\left(\frac{K(\theta-1)}{v}+\frac{\mu (v-1)u_y}{v}+\frac{K(v-1)^2}{v}\right)dyd\tau\notag\\
		&\hspace{0.4cm}-\int_{t-2\nu_0}^{t-\nu_0}\int_\bbr  \left(1-\mathcal{X}\left(\frac{t-\tau}{\nu_0}\right)\right)\left(\frac{K}{c_v}\partial_y\bbg_{32}\right)\left(E(y,\tau)-c_v\right)dyd\tau\notag\\
		&\hspace{0.4cm}+\int_{t-2\nu_0}^{t-\nu_0}\int_\bbr \frac{1}{\nu_0}\mathcal{X}'\left(\frac{t-\tau}{\nu_0}\right) \left(-H\left(x,t;y,\tau;\frac{\kappa}{c_vv}\right)+\bbg_{33}(x-y;t-\tau)\right)\left(E(y,\tau)-c_v\right)dyd\tau\notag\\
		&\hspace{0.4cm}+\int_{t-2\nu_0}^{t-\nu_0}\int_\bbr K\mathcal{X}\left(\frac{t-\tau}{\nu_0}\right)\left(H_y\left(x,t;y,\tau;\frac{\kappa}{c_vv}\right)-\partial_y\bbg_{33}(x-y;t-\tau)\right)u(y,\tau)dyd\tau\notag\\
		&\hspace{0.4cm}+\int_{t-2\nu_0}^{t-\nu_0}\int_\bbr \mathcal{X}\left(\frac{t-\tau}{\nu_0}\right)H_y\left(x,t;y,\tau;\frac{\kappa}{c_vv}\right)\left((p-K)u+\left(\frac{\kappa}{c_vv}-\frac{\mu}{v}\right)uu_y\right)dyd\tau\notag\\
		&\hspace{0.4cm}+\int_{t-2\nu_0}^{t-\nu_0}\int_\bbr \left(1-\mathcal{X}\left(\frac{t-\tau}{\nu_0}\right)\right)\partial_y\bbg_{33}(x-y;t-\tau)\left((p-K)u+\frac{\kappa(v-1)}{v}\theta_y+\left(\frac{\kappa}{c_v}-\frac{\mu}{v}\right)uu_y\right)dyd\tau,\notag\\
		\mathcal{R}_3^\theta&=\int_{t-\nu_0}^t\int_\bbr \partial_y\bbg_{32}(x-y,t-\tau)\left(K(v(y,\tau)-1)+\mu u_y\frac{v-1}{v}+\frac{K\theta-Kv}{v}\right)dyd\tau\notag\\
		&\hspace{0.4cm}+\int_{t-\nu_0}^t\int_\bbr K\left(\partial_yH\left(x,t;y,\tau;\frac{\kappa}{c_vv}\right)-\partial_y\bbg_{33}(x-y,t-\tau)\right)u(y,\tau)dyd\tau\notag\\
		&\hspace{0.4cm}+\int_{t-\nu_0}^t\int_\bbr \partial_yH\left(x,t;y,\tau;\frac{\kappa}{c_vv}\right)\left((p-K)u+\left(\frac{\kappa}{c_vv}-\frac{\mu}{v}\right)uu_y\right)dyd\tau.\notag
	\end{align*}
	For the first term $\mathcal{R}^\theta_1$, note the quadratic terms are same as in $\mathcal{R}^u_1$. Moreover, the Green's function $\partial_y\bbg_{32}$ and $\partial_y\bbg_{33}$ are similar as the estimates of $\partial_y\bbg_{22}$ and $\partial_y\bbg_{23}$. Therefore, we follow the arguments in Lemma \ref{L5.5} to obtain the desired estimates as follows,
	\[\mathcal{R}^\theta_1\leq O(1)\frac{\delta^2}{\sqrt{1+t}}.\]
	It is worth to mention that, when $t\geq 1$, the time decay of $\mathcal{R}^\theta_1$ comes from the Green's function $\bbg_{32}$ and $\bbg_{33}$ for $\tau\leq \frac{t-1}{2}$, and from $\mathcal{G}(\tau)$ when $\tau\geq \frac{t-1}{2}$. 
	
	As for $\mathcal{R}^\theta_2$, similar to \eqref{5-27}, we can represent $\mathbb{G}_{22}$ in terms of $H(x,t;y,\tau;\frac{\kappa}{c_v})$ and the difference between $\bbg_{33}$ and $H$ provides us one more $\delta$. One also observes that all other terms in $\mathcal{R}^\theta_2$  contain quadratic terms except for the first two terms, which contain the linear term $(\theta-1)$. Fortunately, the first two terms only depend on $\bbg_{32}$, which is much regular than $\bbg_{33}$. Thus, we can follow similar arguments as in Lemma \ref{L5.5} to yield that,
	\[\mathcal{R}^\theta_2\leq O(1)\left(\sqrt{\nu_0}\delta+\delta^2\right)\frac{1}{\sqrt{1+t}}.\]
	With similar arguments, we follow \eqref{5-34} to obtain the estimates of the third part $\mathcal{R}^\theta_3$ as below,
	\[\mathcal{R}^\theta_3\leq O(1)\frac{\sqrt{\nu_0}\delta}{\sqrt{1+t}}.\]
	
	Same as before, above estimates hold for $t\geq 1$. When $t\leq 1$, we can follow the arguments in Lemma \ref{L5.5} to find similar estimates as $t\geq 1$. Therefore, we combine all above estimates to conclude that,
	\begin{align*}
		\|\theta(\cdot,t)-1\|_{L^\infty}&\leq \frac{C(\nu_0)\delta^*}{\sqrt{1+t}}+O(1)\frac{\sqrt{\nu_0}\delta+\delta^2}{\sqrt{1+t}}+\frac{\|u(\cdot,t)\|_{L^{\infty}}^2}{2}\\
		&\leq \left(\frac{C(\nu_0)\delta^*}{\sqrt{1+t}}+O(1)\frac{\sqrt{\nu_0}\delta+\delta^2}{\sqrt{1+t}}\right)+\left(\frac{C(\nu_0)\delta^*}{\sqrt{1+t}}+O(1)\frac{\sqrt{\nu_0}\delta+\delta^2}{\sqrt{1+t}}\right)^2.
	\end{align*}
	
	\noindent $\bullet$ ($\mathbf{\|\theta(\cdot,t)-1\|_{L^1}}$) For the $L^1$ estimates of $\theta-1$, we can follow the $L^1$ estimates of $u$ in Lemma \ref{L5.5} to obtain the desired results. As the arguments are very similar as the $L^\infty$ estimates, we omit the details and directly provide the following results,
	\begin{align*}
		\|\theta(\cdot,t)-1\|_{L^1}&\leq O(1)\left(C(\nu_0)\delta^*+\sqrt{\nu_0}\delta+\delta^2\right)+\frac{1}{2}\|u(\cdot,t)\|_{L^\infty}\|u(\cdot,t)\|_{L^1}\\
		&\leq O(1)\left(C(\nu_0)\delta^*+\sqrt{\nu_0}\delta+\delta^2\right)+\frac{O(1)\left(C(\nu_0)\delta^*+\sqrt{\nu_0}\delta+\delta^2\right)^2}{\sqrt{1+t}}\\
		&\leq O(1)\left(C(\nu_0)\delta^*+\sqrt{\nu_0}\delta+\delta^2\right)+O(1)\left(C(\nu_0)\delta^*+\sqrt{\nu_0}\delta+\delta^2\right)^2.
	\end{align*}
	
\end{proof}

The first order estimates of $\theta-1$ can also be obtained in a similar way. The difference appears only when $\tau$ is closed to $t$, i.e., for the remainder $\mathcal{R}^\theta_3$. Actually, we have the following lemma. 
\begin{lemma}\label{L5.8}
	$(\mathbf{\|\theta_x(\cdot,t)\|_{L^1},\ \|\sqrt{t}\theta_x(\cdot,t)\|_{L^\infty}})$ Let $(v,u,E)$, $C_\sharp$, $t_\sharp$ and $\delta$  be the local solution and corresponding parameters constructed in Proposition \ref{prop:local}. We further suppose the properties \eqref{5-19} hold for the solution. Then, $\theta(x,t)$ has the following first order estimates at $t\geq t_\sharp$,
	\begin{equation*}
		\left\{
		\begin{aligned}
			&\|\theta_x(\cdot,t)\|_{L^1}\leq C(\nu_0)\delta^*+O(1)\frac{|\log(\nu_0)|}{\sqrt{\nu_0}}\delta^2+O(1)\sqrt{\nu_0}\delta,\\
			&\|\sqrt{t}\theta_x(\cdot,t)\|_{L^\infty}\leq C(\nu_0)\delta^*+O(1)\frac{|\log(\nu_0)|}{\sqrt{\nu_0}}\delta^2+O(1)\sqrt{\nu_0}\delta.
		\end{aligned}
		\right.
	\end{equation*}
	
\end{lemma}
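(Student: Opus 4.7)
The plan is to mirror the structure used for the first order estimates of $u$ in Lemma \ref{L5.6}, starting from the representation of $E(x,t)-c_v$ in Lemma \ref{L5.2}, taking one $x$-derivative, and then recovering $\theta_x$ through the algebraic relation $c_v \theta_x = E_x - u u_x$. Since the bounds on $\|u(\cdot,t)\|_{L^\infty}$, $\|u_x(\cdot,t)\|_{L^\infty}$, $\|u(\cdot,t)\|_{L^1}$ and $\|u_x(\cdot,t)\|_{L^1}$ are already provided by Lemmas \ref{L5.5} and \ref{L5.6}, the product $u u_x$ only contributes a harmless quadratic term of size $O(1)\delta^2/\sqrt{t}$ in $L^\infty$ and $O(1)\delta^2$ in $L^1$; the whole point is to estimate $\partial_x E$.

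First I would handle the three homogeneous terms $\int\partial_x \bbg_{3j}(x-y,t)(\cdot)\,dy$. By the large-time and short-time estimates for $\bbg_{3j}$ from Lemma \ref{L5.4} (analogous to those for $\bbg_{2j}$), they are bounded by $C(\nu_0)\delta^{*}/\sqrt{t}$ in $L^\infty$ and by $C(\nu_0)\delta^{*}$ in $L^1$ exactly as in \eqref{5-38} and \eqref{5-52}. Next I would estimate $\partial_x \mathcal{R}^\theta_1$ on $[0,t-2\nu_0]$. The integrand is quadratic in the perturbation and contains at most one $x$-derivative on the solution; splitting the interval into $[0,(t-1)/2]$, $[(t-1)/2,t-1]$ and $[t-1,t-2\nu_0]$ as in \eqref{5-45} and \eqref{5-53}, and using the pointwise decay $|\partial_{xy}\bbg_{3j}| \lesssim (t-\tau)^{-3/2}e^{-(x-y+\beta_j(t-\tau))^2/(C(t-\tau))}$ from Lemma \ref{L5.4}, yields a bound of $O(1)(1+|\log\nu_0|)\delta^{2}/\sqrt{t}$ in $L^\infty$ and $O(1)(1+|\log\nu_0|)\delta^2$ in $L^1$.

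For $\partial_x \mathcal{R}^\theta_2$ on $[t-2\nu_0,t-\nu_0]$ I would use the representation of $\bbg_{33}$ in terms of $H(x,t;y,\tau;\frac{\kappa}{c_v v})$ analogous to \eqref{5-27}, together with the comparison estimates of Lemma \ref{L2.2} and Lemma \ref{lem:heat-comp} applied to $H_{xy}$. The key cancellation is that the linear-in-$(\theta-1)$ term in $\mathcal{R}^\theta_2$ is paired with $\partial_y \bbg_{32}$, which is more regular, and the $(\theta-1)$-free pieces either carry a cutoff factor $1-\mathcal{X}$ or the combination $H_{xy}-\partial_{xy}\bbg_{33}$. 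Integration by parts in $s$ on the terms coming from $H_s = -H_{xy}$ (as done for $T_4$ in the proof of Lemma \ref{L5.6}) produces the expected $O(1)\frac{|\log\nu_0|}{\sqrt{\nu_0}}\delta^2+O(1)\sqrt{\nu_0}\,\delta$.

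The main obstacle is $\partial_x \mathcal{R}^\theta_3$ on $[t-\nu_0,t]$, because a naive differentiation of the integrals containing $H_{yy}$ in $\mathcal{R}^\theta_3$ produces a $1/\sqrt{t-\tau}$ singularity that is not integrable when combined with the natural $1/\sqrt{t-\tau}$ from $H_x$. This is exactly why Corollary \ref{Cor5.1} was proved: one replaces $H_{zz}$ (and $\partial_{zz}\bbg_{33}$) via the backward equation by $\partial_\tau H$ (resp.\ $\partial_\tau \bbg_{33}$ plus lower-order $y$-derivatives of the other entries), then integrates by parts in $\tau$, using $\bbg_{33}(x-z,0)=H(x,t;z,t;\frac{\kappa}{c_v v})=\delta(x-z)$ to cancel the $\tau=t$ boundary contribution and pick up only a boundary term at $\tau=t-\nu_0$ plus an integrable remainder. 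After this rewriting, the resulting expression for $\partial_x\mathcal{R}^\theta_3$ is controlled by the same type of quantities as in \eqref{5-50}, namely comparison integrals of $\partial_x\bbg_{33}-H_x$ of the form \eqref{5-48}--\eqref{5-49}, which give $O(1)(|\log\nu_0|\delta + \sqrt{\nu_0})$ factors, plus pointwise contributions controlled by $\mathcal{G}(\tau)<\delta$, yielding the final bound $O(1)\frac{|\log\nu_0|}{\sqrt{\nu_0}}\delta^2 + O(1)\sqrt{\nu_0}\,\delta$. Combining all pieces and using $t\ge t_\sharp\ge 4\nu_0$ to absorb $1/\sqrt{t-\nu_0}$ into $1/\sqrt{t}$, and finally adding the contribution from $u u_x$, gives both claimed estimates. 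The $t\le 1$ case is identical but shorter, since the only surviving time interval is $[0,t-2\nu_0]$ and there is no long-time tail to manage.
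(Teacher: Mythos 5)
Your overall strategy matches the paper's: differentiate the representation of $E-c_v$ from Lemma \ref{L5.2}, recover $\theta_x$ via $c_v\theta_x=E_x-uu_x$, treat the homogeneous terms and $\partial_x\mathcal{R}^\theta_1$, $\partial_x\mathcal{R}^\theta_2$ exactly as in Lemma \ref{L5.6}, and resolve the non-integrable singularity in $\partial_x\mathcal{R}^\theta_3$ by passing to the representation of Corollary \ref{Cor5.1}. (The paper actually uses the cruder Lemma \ref{L5.2} form of $\mathcal{R}^\theta_3$ for the $L^1$ bound, exploiting the extra $x$-integration, and only switches to Corollary \ref{Cor5.1} for the $L^\infty$ bound; your uniform use of Corollary \ref{Cor5.1} for both is a harmless variation.)

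One point in your treatment of $\partial_x\mathcal{R}^\theta_3$ is not justified as stated: you claim the remaining pointwise contributions are ``controlled by $\mathcal{G}(\tau)<\delta$.'' After the integration by parts in $\tau$, the representation in Corollary \ref{Cor5.1} contains the time derivatives $u_\tau$, $v_\tau$, $\theta_\tau$ of the solution (e.g.\ the terms $\tfrac{(\theta_\tau v-\theta v_\tau)}{v^2}u+\tfrac{\theta-v}{v}u_\tau$), and these are \emph{not} controlled by $\mathcal{G}$, which only bounds $v-1$, $u$, $\theta-1$ and the first spatial derivatives. You need the additional local regularity estimates obtained by re-applying Proposition \ref{prop:local} on $[t-t_\sharp,t]$ under the a priori assumption $\mathcal{G}(\tau)<\delta$ (as in Lemma \ref{L5.3}), namely $\|u_\tau\|_{L^1}\le O(1)\delta/\sqrt{t_\sharp}$, $\|u_\tau\|_{L^\infty},\|\theta_\tau\|_{L^\infty}\le O(1)\delta/t_\sharp$ for $\tau\in[t-\nu_0,t]$; this is precisely estimate \eqref{u_t-local} in the paper's proof. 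With that ingredient supplied, the rest of your argument closes as described.
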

\begin{proof}
	According to Lemma \ref{L5.2} and Corollary \ref{Cor5.1}, $\theta_x$ has the following representation,
	\begin{align}
		&c_v\theta_x(x,t)\notag\\
		&=\int_\bbr\partial_x\bbg_{31}(x-y,t)(v(y,0)-1)dy+\int_\bbr \partial_x\bbg_{32}(x,t;y,0)u(y,0)dy+\int_\bbr \partial_xG_{33}(x-y,t)\left(E(y,0)-c_v\right)dy\notag\\
		&\hspace{0.4cm}+\sum_{i=1}^3\partial_x\mathcal{R}^\theta_i-uu_x,\notag
	\end{align}
	where the only difference between the representations in Lemma \ref{L5.2} and Corollary \ref{Cor5.1} is $\partial_x\mathcal{R}^\theta_3$. As the Green's function $\bbg_{3j}$ and $\bbg_{2j}$ have similar estimates in Lemma \ref{L5.4}, it is obvious that the estimates of the homogeneous terms, $\partial_x\mathcal{R}^\theta_1$ and $\partial_x\mathcal{R}^\theta_2$ can be obtained in a similar way as Lemma \ref{L5.6}. On the other hand, the way to handle the singularity at $t$ in $\mathcal{R}^\theta_3$ will be a little bit different from the methods we used for $\mathcal{R}^u_3$. Thus it is sufficient to provide the details of the estimates of $\partial_x\mathcal{R}^\theta_3$.\newline
	
	\noindent $\bullet$ ($\mathbf{\|\theta_x(\cdot,t)\|_{L^1}}$) For the $L^1$ estimates of $\theta_x$, we omit the details and provide the estimates of the homogeneous terms, $\partial_x\mathcal{R}^\theta_1$ and $\partial_x\mathcal{R}^\theta_2$ as follows,
	\bq\label{5-58}
	\left\{
	\begin{aligned}
		&\int_\bbr\left|\int_\bbr\partial_x\bbg_{21}(x-y,t)(v(y,0)-1)dy\right|dx\leq C(\nu_0)\delta^*,\\
		&\int_\bbr\left|\int_\bbr\partial_x\bbg_{22}(x-y,t)u(y,0)dy\right|dx\leq C(\nu_0)\delta^*,\\
		&\int_\bbr\left|\int_\bbr \partial_x\bbg_{23}(x-y,t)\left(E(y,0)-c_v\right)dy\right|dx\leq C(\nu_0)\delta^*,\\
		&\int_\bbr\left|\partial_x\mathcal{R}^\theta_1\right|dx\leq O(1)(1+|\log(\nu_0)|)\delta^2,\\
		&\int_\bbr\left|\partial_x\mathcal{R}^\theta_2\right|dx\leq O(1)\frac{|\log(\nu_0)|}{\sqrt{\nu_0}}\delta^2+O(1)\sqrt{\nu_0}\delta.
	\end{aligned}
	\right.
	\eq
	Now, for the third term $\partial_x\mathcal{R}^\theta_3$, as we only need to consider the case $t\geq t_\sharp$ and we have already assumed $\mathcal{G}(\tau)<\delta$ for $\tau<t$, we directly apply Lemma \ref{L5.3} to obtain that the solution can be extended to $t+t_\sharp$ and has the following estimates,
	\[\|u_\tau(\cdot,\tau)\|_{L^1}\leq \frac{2C_\sharp\delta}{\sqrt{t_\sharp}},\quad\|u_\tau(\cdot,\tau)\|_{L^\infty}\leq \frac{2C_\sharp\delta}{t_\sharp},\quad \|\theta_\tau(\cdot,\tau)\|_{L^\infty}\leq \frac{2C_\sharp\delta}{t_\sharp},\quad t_\sharp\leq \tau\leq t.\]
	Then, we combine above estimates and the representation \eqref{theta-repre} to have the following estimates,
	\begin{align}
		&\int_\bbr\left|\partial_x\mathcal{R}_3^\theta\right|dx\label{5-59}\\
		&\leq \int_\bbr\left|\int_{t-\nu_0}^t\int_\bbr \partial_{xy}\bbg_{32}(x-y,t-\tau)\left(K(v(y,\tau)-1)+\mu u_y\frac{v-1}{v}+\frac{K\theta-Kv}{v}\right)dyd\tau\right|dx\notag\\
		&\hspace{0.4cm}+\int_\bbr\left|\int_{t-\nu_0}^t\int_\bbr K\left(\partial_x\bbg_{33}(x-y,t-\tau)\right)u_y(y,\tau)dyd\tau\right|dx\notag\\
		&\hspace{0.4cm}+\int_\bbr\left|\left(1-\frac{\kappa}{c_v\mu}\right)\int_{t-\nu_0}^t\int_\bbr \partial_xH\left(x,t;y,\tau;\frac{\kappa}{c_vv}\right)\left(u_y\left(-p+\frac{\mu u_y}{v}\right)+uu_\tau\right)dyd\tau\right|dx\notag\\
		&\hspace{0.4cm}+\int_\bbr\left|\int_{t-\nu_0}^t\int_\bbr \partial_xH\left(x,t;y,\tau;\frac{\kappa}{c_vv}\right)\left(\frac{\kappa}{c_v\mu}pu_y\right)dyd\tau\right|dx\notag\\
		&\hspace{0.4cm}+\int_\bbr\left|\int_{t-\nu_0}^t\int_{\bbr\setminus\mathscr{D}} \partial_xH\left(x,t;y,\tau;\frac{\kappa}{c_vv}\right)\left(\frac{\kappa K}{c_v\mu}\frac{\theta_y v-\theta v_y}{v^2}u\right)dyd\tau\right|dx\notag\\
		&\hspace{0.4cm}+\int_\bbr\left|\int_{t-\nu_0}^t\sum_{z\in\mathscr{D}}\partial_xH\left(x,t;z,\tau;\frac{\kappa}{c_vv}\right)\left(\frac{\kappa}{c_v\mu}\frac{K\theta(z,\tau)u(z,\tau)}{v(z,\tau)}\right)\Big|_{z=z^-}^{z=z^+}d\tau\right|dx\notag\\
		&\leq O(1)\sqrt{\nu_0}\left(\delta+\delta^2+\delta^3\right)+O(1)\left(\delta^2+\frac{\delta^2}{\sqrt{t-\nu_0}}\right)\notag\\
		&\leq O(1)\left(\sqrt{\nu_0}\delta+\frac{\delta^2}{\sqrt{\nu_0}}\right).\notag
	\end{align}
	Now, we combine \eqref{5-58} and \eqref{5-59} to obtain that,
	\[\|\theta_x(\cdot,t)\|_{L^1}\leq C(\nu_0)\delta^*+O(1)\frac{|\log(\nu_0)|}{\sqrt{\nu_0}}\delta^2+O(1)\sqrt{\nu_0}\delta.\]
	
	\noindent $\bullet$ ($\mathbf{\|\theta_x(\cdot,t)\|_{L^\infty}}$) For the $L^\infty$ estimates, the above representation is not proper, since the last two terms in \eqref{5-59} is not integrable with respect to $\tau$ if we do not involve the integration of $x$. Therefore, we switch to the representation in Corollary \ref{Cor5.1} to obtain that, 
	\begin{align*}
		\partial_x\mathcal{R}_3^\theta
		&=\int_{t-\nu_0}^t\int_\bbr \partial_{xy}\bbg_{32}(x-y,t-\tau)\left(K(v(y,\tau)-1)+\mu u_y\frac{v-1}{v}+\frac{K\theta-Kv}{v}\right)dyd\tau\notag\\
		&\hspace{0.4cm}+\int_{t-\nu_0}^t\int_\bbr K\left(\partial_x\bbg_{33}(x-y,t-\tau)-H_x\left(x,t;y,\tau;\frac{\kappa}{c_vv}\right)\right)u_y(y,\tau)dyd\tau\notag\\
		&\hspace{0.4cm}+\int_{t-\nu_0}^t\int_\bbr H_x\left(x,t;y,\tau;\frac{\kappa}{c_vv}\right)\left(1-\frac{\kappa}{c_v\mu}\right)uu_\tau dyd\tau\notag\\
		&\hspace{0.4cm}+\int_{t-\nu_0}^t\int_\bbr H_x\left(x,t;y,\tau;\frac{\kappa}{c_vv}\right)\left(1-\frac{\kappa}{c_v\mu}\right)u_y\left(K-p+\frac{\mu}{v}u_y\right)dyd\tau\notag\\
		&\hspace{0.4cm}+\int_{-\infty}^x\int_{-\infty}^y H_x\left(x,t;z,t-\nu_0;\frac{\kappa}{c_vv}\right)dz\left(\frac{\kappa K}{c_v\mu}\right)\frac{(\theta(y,t-\nu_0)-v(y,t-\nu_0))}{v(y,t-\nu_0)}u(y,t-\nu_0)dy\notag\\
		&\hspace{0.4cm}+\left(\frac{\kappa K}{c_v\mu}\right)\frac{(\theta(x,t-\nu_0)-v(x,t-\nu_0))}{v(x,t-\nu_0)}u(x,t-\nu_0)\notag\\
		&\hspace{0.4cm}+\int_{t-\nu_0}^t\int_{-\infty}^x\int_{-\infty}^y H_x\left(x,t;z,\tau;\frac{\kappa}{c_vv}\right)dz\left(\frac{\kappa K}{c_v\mu}\right)\left(\frac{\left(\theta_\tau v-\theta v_\tau\right)}{v^2}u(y,\tau)+\frac{(\theta-v)}{v}u_\tau(y,\tau)\right)dyd\tau\notag\\
		&\hspace{0.4cm}+\int_{t-\nu_0}^t\left(\frac{\kappa K}{c_v\mu}\right)\left(\frac{\left(\theta_\tau(x,\tau) v(x,\tau)-\theta(x,\tau) v_\tau(x,\tau)\right)}{v^2(x,\tau)}u(x,\tau)+\frac{(\theta(x,\tau)-v(x,\tau))}{v(x,\tau)}u_\tau(x,\tau)\right)d\tau\notag\\
		&\hspace{0.4cm}-\int_x^{+\infty}\int_y^{+\infty} H_x\left(x,t;z,t-\nu_0;\frac{\kappa}{c_vv}\right)dz\left(\frac{\kappa K}{c_v\mu}\right)\left(\frac{\theta(y,t-\nu_0)-v(y,t-\nu_0)}{v(y,t-\nu_0)}\right)u(y,t-\nu_0)dy\notag\\
		&\hspace{0.4cm}-\int_{t-\nu_0}^t\int_x^{+\infty}\int_y^{+\infty} H_x\left(x,t;z,\tau;\frac{\kappa}{c_vv}\right)dz\left(\frac{\kappa K}{c_v\mu}\right)\left(\frac{\left(\theta_\tau v-\theta v_\tau\right)}{v^2}u(y,\tau)+\frac{(\theta-v)}{v}u_\tau(y,\tau)\right)dyd\tau.\notag
	\end{align*}
	Similar as in previous $L^1$ estimates, we apply Lemma \ref{L5.3} to obtain the following estimates,
	\bq\label{u_t-local}\|u_\tau(\cdot,\tau)\|_{L^1}\leq \frac{O(1)\delta}{\sqrt{t_\sharp}},\quad\|u_\tau(\cdot,\tau)\|_{L^\infty}\leq \frac{O(1)\delta}{t_\sharp},\quad \|\theta_\tau(\cdot,\tau)\|_{L^\infty}\leq \frac{O(1)\delta}{t_\sharp},\quad t_\sharp\leq \tau\leq t.\eq
	Then, we apply above estimates of time derivatives, the comparison principle Lemma \ref{L2.2} in Section \ref{sec:2}, the estimates of the Green's function $\bbg$ to obtain that,
	\begin{align}
		\left|\partial_x\mathcal{R}_3^\theta\right|&\leq O(1)\sqrt{\nu_0}\frac{\delta}{\sqrt{1+t-\nu_0}}+O(1)\frac{\delta^2}{\sqrt{1+t-\nu_0}(\sqrt{\nu_0})}\label{5-60}\\
		&\hspace{0.4cm}+O(1)\sqrt{\nu_0}\frac{\delta^2}{\sqrt{1+t-\nu_0}\sqrt{t-\nu_0}}+O(1)\sqrt{\nu_0}\frac{\delta^2}{(t-\nu_0)}\notag\\
		&\hspace{0.4cm}+O(1)\frac{\delta^2}{1+t-\nu_0}+O(1)\nu_0\left(\frac{\delta^2}{\nu_0\sqrt{1+t-\nu_0}}+\frac{\delta^2}{\sqrt{t-\nu_0}\sqrt{1+t-\nu_0}}\right)\notag\\
		&\hspace{0.4cm}+\int_{t-\nu_0}^t\int_\bbr K\left|H_x(x,t;y,\tau;\frac{\kappa}{c_v})-H_x\left(x,t;y,\tau;\frac{\kappa}{c_vv}\right)\right||u_y(y,\tau)|dyd\tau\notag\\
		&\hspace{0.4cm}+\int_{t-\nu_0}^t\int_\bbr K\int_{\tau}^t \left|H_z(z,s;y,\tau;\frac{\kappa}{c_v})\right|\left|-\frac{K}{c_v}\partial_{x}\bbg_{32}(x-z,t-s)\right|dzds|u_y(y,\tau)|dyd\tau\notag\\
		&\leq O(1)\sqrt{\nu_0}\frac{\delta}{\sqrt{t}}+O(1)\frac{\delta^2}{t}+O(1)\sqrt{\nu_0}|\log(\nu_0)|\frac{\delta^2}{\sqrt{t}}\notag\\
		&\leq O(1)\sqrt{\nu_0}\frac{\delta}{\sqrt{t}}+O(1)\frac{\delta^2}{\sqrt{\nu_0}\sqrt{t}}+O(1)\sqrt{\nu_0}|\log(\nu_0)|\frac{\delta^2}{\sqrt{t}},\notag
	\end{align}
	where the last inequality is due to the fact $t\geq t_\sharp\geq 4\nu_0$. For the homogeneous terms and the other remainder terms, we omit the details and follow previous estimates to obtain that,
	\begin{align}
		&\left|\int_\bbr\partial_x\bbg_{21}(x-y,t)(v(y,0)-1)dy\right|\leq C(\nu_0)\frac{\delta^*}{\sqrt{t}},\notag\\
		&\left|\int_\bbr\partial_x\bbg_{22}(x-y,t)u(y,0)dy\right|\leq C(\nu_0)\frac{\delta^*}{\sqrt{t}},\notag\\
		&\left|\int_\bbr \partial_x\bbg_{23}(x-y,t)\left(E(y,0)-c_v\right)dy\right|\leq C(\nu_0)\frac{\delta^*}{\sqrt{t}},\label{5-61}\\
		&\left|\partial_x\mathcal{R}^\theta_1\right|\leq O(1)(1+|\log(\nu_0)|)\frac{\delta^2}{\sqrt{t}},\notag\\
		&\left|\partial_x\mathcal{R}^\theta_2\right|\leq O(1)\frac{|\log(\nu_0)|}{\sqrt{\nu_0}}\frac{\delta^2}{\sqrt{t}}+O(1)\sqrt{\nu_0}\frac{\delta}{\sqrt{t}}.\notag
	\end{align}
	Now, we combine \eqref{5-60} and \eqref{5-61} to obtain the desired estimates, and finish the proof of the lemma. 
\end{proof}

Finally, we study the estimates of the specific volume $v(x,t)$. We recall the representation of $v(x,t)$ in Lemma \ref{L5.2} as below,
\begin{align}
	&v(x,t)-1\notag\\
	&=\int_\bbr\bbg_{11}(x-y,t)(v(y,0)-1)dy+\int_\bbr \bbg_{12}(x,t;y,0)u(y,0)dy+\int_\bbr \bbg_{13}(x-y,t)\left(E(y,0)-c_v\right)dy\notag\\
	&\hspace{0.4cm}+\int_0^{t}\int_\bbr \partial_y\bbg_{12}(x,t;y,\tau)\left(\frac{K(v-1)^2}{v}+\frac{(K\theta-K)(1-v)}{v}+\frac{\mu u_y(v-1)}{v}-\frac{Ku^2}{2c_v}\right)dyd\tau\notag\\
	&\hspace{0.4cm}+\int_0^{t}\int_\bbr \partial_y\bbg_{13}(x-y,t-\tau)\left((\frac{K\theta}{v}-K)u+\left(\kappa-\frac{\kappa}{v}\right)\theta_y+\left(\frac{\kappa}{c_v}-\frac{\mu}{v}\right)uu_y\right)dyd\tau.\notag
\end{align}
According to the estimates of the Green's function $\bbg$ in Theorem \ref{T4.1}, Theorem \ref{T4.2} and Theorem \ref{T4.3}, we immediately know that $\bbg_{13}$ has similar estimates as $\bbg_{31}$, and $\bbg_{12}$ has similar estimates as $\bbg_{21}$, which are more regular than the diagonal parts. Then, we have the following lemma.

\begin{lemma}\label{L5.9}
	$(\mathbf{\|v(\cdot,t)\|_{BV},\ \sqrt{t+1}\|v(\cdot,t)-1\|_{L^\infty},\ \|v(\cdot,t)-1\|_{L^1}})$ Let $(v,u,E)$, $C_\sharp$, $t_\sharp$ and $\delta$  be the local solution and corresponding parameters constructed in Proposition \ref{prop:local}. We further suppose the properties \eqref{5-19} hold for the solution. Then, $v(x,t)$ has the following estimates at $t\geq t_\sharp$,
	\begin{equation*}
		\left\{
		\begin{aligned}
			&\|v(\cdot,t)-1\|_{L^1}\leq C(\nu_0)\delta^*+O(1)\delta^2,\\
			&\|\sqrt{1+t}(v(\cdot,t)-1)\|_{L^\infty}\leq C(\nu_0)\delta^*+O(1)\delta^2,\\
			&\|v(\cdot,t)\|_{BV}\leq C(\nu_0)\delta^*+O(1)\frac{\delta^2}{\sqrt{\nu_0}}.
		\end{aligned}
		\right.
	\end{equation*}
	
\end{lemma}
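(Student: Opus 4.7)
The plan is to deploy the integral representation \eqref{v-repre} from Lemma \ref{L5.2} and control each piece using the Green's function bounds of Lemma \ref{L5.4} together with the a priori assumption $\mathcal{G}(\tau)<\delta$ in \eqref{5-19}. The structural feature that makes $v$ easier than $u$ and $\theta$ is that, among the three Green's function entries feeding into $v$, only $\bbg_{11}$ carries a Dirac-delta singularity (the piece $e^{-Kt/\mu}\delta(x-y)$ reflecting the hyperbolic character of the volume equation), whereas $\bbg_{12}$ and $\bbg_{13}$ vanish as $\tau\uparrow t$ and merely exhibit Gaussian-type kernels with a $(t-\tau)^{-1/2}$ singularity after one $x$- or $y$-derivative. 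The Dirac contribution produces only the isolated term $e^{-Kt/\mu}(v_0(x)-1)$, which is already $\delta^*$-small and decays exponentially in $t$, so it is harmless in every norm we consider.

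For the $L^\infty$ and $L^1$ bounds, I would follow the same template as the proofs of Lemmas \ref{L5.5} and \ref{L5.7}, which is in fact considerably simpler here because no interpolation between heat kernel and Green's function (i.e., no $G_{22}$-style effective kernel) is needed. The homogeneous terms give $C(\nu_0)\delta^*/\sqrt{1+t}$ (resp.\ $C(\nu_0)\delta^*$) via the diffusive decay of the $\bbg_{1j}$'s. For the nonlinear remainders, the quadratic structure of $N_1$ and $N_2$ together with \eqref{5-19} yields $\|N_j(\cdot,\tau)\|_{L^\infty}=O(\delta^2/(1+\tau))$ and $\|N_j(\cdot,\tau)\|_{L^1}=O(\delta^2/\sqrt{1+\tau})$; the $(t-\tau)^{-1/2}$ kernel from $\partial_y\bbg_{1j}$ integrates to $O(1)$ in $\tau$ via a standard Beta-type integral, closing both bounds at order $\delta^2$ and $\delta^2/\sqrt{1+t}$ respectively, after the usual splitting $[0,(t-1)/2]\cup[(t-1)/2,t-1]\cup[t-1,t]$ for $t\geq 1$.

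For the BV bound I differentiate the representation in $x$. The jumps of $v(\cdot,t)-1$ across $\mathscr{D}$ come exclusively from the Dirac part of $\partial_x\bbg_{11}\star(v_0-1)$, yielding $\sum_{z\in\mathscr{D}}|v(z^+,t)-v(z^-,t)|\leq e^{-Kt/\mu}\|v_0-1\|_{BV}\leq \delta^*$. The absolutely continuous part of $v_x$ splits into (i) convolutions of $\partial_x\bbg_{1j}$ with the initial data, controlled by $C(\nu_0)\delta^*$ through the uniform bounds $\int|\partial_x\bbg_{1j}(x,t)|\,dx=O(1)$, and (ii) the nonlinear time integrals. For (ii) I would split $\int_0^t=\int_0^{t-\nu_0}+\int_{t-\nu_0}^t$: on $[0,t-\nu_0]$, direct estimation via $\int|\partial_{xy}\bbg_{1j}(x,t-\tau)|\,dx\lesssim (t-\tau)^{-1}$ paired with $\|N_j(\cdot,\tau)\|_{L^1}\lesssim\delta^2/\sqrt{1+\tau}$ gives an $O(\delta^2)$ contribution; on the short window $[t-\nu_0,t]$ I integrate by parts once in $y$ to transfer the $\partial_y$ onto $N_j$, then use the momentum equation to rewrite the emerging $u_{yy}$ in terms of $u_t$, $p_y$ and a product of first derivatives. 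The local regularity supplied by Proposition \ref{prop:local} applied to the datum $(v,u,\theta)(\cdot,t-\nu_0)$ then provides the needed $L^1$-in-$y$ control on $u_t$ with a $1/\sqrt{\tau-(t-\nu_0)}$ weight, whose time integration against the remaining $(t-\tau)^{-1/2}$ from $\partial_x\bbg_{1j}$ produces the announced $\delta^2/\sqrt{\nu_0}$ factor.

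The main obstacle is precisely this last window $[t-\nu_0,t]$: the mixed derivative $\partial_{xy}\bbg_{1j}$ has a $(t-\tau)^{-1}$ singularity which is not time-integrable, so naive pointwise Green's function estimates cannot close the BV bound near $\tau=t$. The remedy is the integration-by-parts-in-$y$ manoeuvre combined with the strong short-time regularity of Proposition \ref{prop:local}, at the cost of a $\delta^2/\sqrt{\nu_0}$ remainder which, for $\nu_0$ fixed and $\delta$ sufficiently small, is still $o(\delta)$ and hence compatible with the bootstrap closure of the continuity argument initiated in Lemma \ref{L5.3}.
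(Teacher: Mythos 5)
Your plan fails on a structural premise. By Lemma \ref{L5.4} (and the explicit estimates \eqref{5-65}, \eqref{5-66}, \eqref{5-69}, \eqref{5-71}, \eqref{5-72}, \eqref{5-75} used in the paper's proof), the off-diagonal entries are \emph{not} delta-free after differentiation: $\partial_x\bbg_{12}(x,t)$ contains the term $-\frac{1}{\mu}e^{-Kt/\mu}\delta(x)$, $\partial_{xx}\bbg_{12}$ contains $-\frac1\mu e^{-Kt/\mu}\delta'(x)$, and $\partial_{xx}\bbg_{13}$ contains a $\delta(x)$ term as well. Hence the nonlinear part of the representation \eqref{v-repre} produces the local-in-$x$ term
\begin{equation*}
\int_0^{t}\frac{1}{\mu}e^{-\frac{K(t-\tau)}{\mu}}\Bigl(\tfrac{K(v-1)^2}{v}+\tfrac{(K\theta-K)(1-v)}{v}+\tfrac{\mu u_x(v-1)}{v}-\tfrac{Ku^2}{2c_v}\Bigr)(x,\tau)\,d\tau,
\end{equation*}
exactly as isolated in \eqref{5-83}. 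This has two consequences you do not account for. First, the jumps of $v(\cdot,t)$ are \emph{not} inherited solely from $e^{-Kt/\mu}(v_0-1)$: the local term above jumps across $\mathscr{D}$ at every time $\tau$ (through $v$, $\theta$, $u_x$), and must be estimated as in \eqref{5-84}; it contributes $O(1)\delta^2$, so the final bound survives, but your claim that the jump part is $\leq e^{-Kt/\mu}\|v_0\|_{BV}$ is false as an argument. Second, and more seriously, your far-field BV estimate on $[0,t-\nu_0]$ via ``$\int|\partial_{xy}\bbg_{1j}|\,dx\lesssim(t-\tau)^{-1}$'' is invalid for $j=2$: that kernel contains a $\delta'(x-y)$ component (with coefficient $e^{-K(t-\tau)/\mu}$, which is order one on, say, $[t-1,t-\nu_0]$), so pairing it with $N_1$ forces a derivative onto $N_1$ and brings in $u_{xx}(v-1)$ for \emph{all} $\tau\in[0,t]$, not only on the last window $[t-\nu_0,t]$ where you invoke the momentum equation.

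The repair is essentially what the paper does in \eqref{5-85}: keep the delta-induced local term over the whole interval, differentiate it in $x$, replace $\frac{\mu u_{xx}}{v}$ by $u_\tau+p_x+\frac{\mu u_xv_x}{v^2}$ via the momentum equation, and bound $\|u_\tau(\cdot,\tau)\|_{L^1}\leq O(1)\delta/\sqrt{t_\sharp}\leq O(1)\delta/\sqrt{\nu_0}$ uniformly for $\tau\geq t_\sharp$ using Lemma \ref{L5.3} (estimate \eqref{u_t-local}); the regular (delta-subtracted) parts of $\partial_{xx}\bbg_{12},\partial_{xx}\bbg_{13}$ have $L^1_x$ norm $O((t-\tau)^{-1/2})$ and are harmless. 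Note also that this uniform $u_\tau$ bound is the true source of the factor $\delta^2/\sqrt{\nu_0}$ in the lemma: the near-field mechanism you propose (restart at $t-\nu_0$, weight $1/\sqrt{\tau-(t-\nu_0)}$ against $(t-\tau)^{-1/2}$) is a convergent Beta integral and would yield $O(\delta^2)$, which is another sign that the singular structure you are trying to tame is not located where you placed it. Your zeroth-order $L^1$/$L^\infty$ estimates are salvageable once the delta contribution of $\partial_y\bbg_{12}$ is added (it is benign there), modulo the minor point that $\|N_1(\cdot,\tau)\|_{L^\infty}$ carries an extra $\tau^{-1/2}$ from $u_x,\theta_x$, which your time splitting absorbs.
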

\begin{proof}
	First of all, we will apply  Theorem \ref{T4.1} and Theorem \ref{T4.3} and list the estimates of $\bbg_{11}$, $\bbg_{12}$ and $\bbg_{13}$ for the convenience. For $t\geq 1$, we have the following estimates of $\bbg_{1j}$,
	\begin{align}
		&|\bbg_{11}(x,t)-e^{-\frac{Kt}{\mu}}\delta(x)|\leq \sum_{j=1}^3\frac{O(1)e^{-\frac{(x+\beta_jt)^2}{4Ct}}}{\sqrt{t}}+O(1)e^{-\sigma_0^*t-\sigma_0|x|},\quad t\geq 1,\label{5-62}\\
		&|\partial_x\bbg_{11}(x,t)-e^{-\frac{Kt}{\mu}}\delta'(x)|\leq \sum_{j=1}^3\frac{O(1)e^{-\frac{(x+\beta_jt)^2}{4Ct}}}{t}+O(1)e^{-\sigma_0^*t-\sigma_0|x|},\quad t\geq 1,\label{5-63}\\
		&|\bbg_{12}(x,t)|\leq \sum_{j=1}^3\frac{O(1)e^{-\frac{(x+\beta_jt)^2}{4Ct}}}{\sqrt{t}}+O(1)e^{-\sigma_0^*t-\sigma_0|x|},\quad t\geq 1,\label{5-64}\\
		&|\partial_x\bbg_{12}(x,t)+\frac{1}{\mu}e^{-\frac{Kt}{\mu}}\delta(x)|\leq \sum_{j=1}^3\frac{O(1)e^{-\frac{(x+\beta_jt)^2}{4Ct}}}{t}+O(1)e^{-\sigma_0^*t-\sigma_0|x|},\quad t\geq 1,\label{5-65}\\
		&|\partial_{xx}\bbg_{12}(x,t)+\frac{1}{\mu}e^{-\frac{Kt}{\mu}}\delta'(x)|\leq \sum_{j=1}^3\frac{O(1)e^{-\frac{(x+\beta_jt)^2}{4Ct}}}{t^{\frac{3}{2}}}+O(1)e^{-\sigma_0^*t-\sigma_0|x|},\quad t\geq 1,\label{5-66}\\
		&|\bbg_{13}(x,t)|\leq \sum_{j=1}^3\frac{O(1)e^{-\frac{(x+\beta_jt)^2}{4Ct}}}{\sqrt{t}}+O(1)e^{-\sigma_0^*t-\sigma_0|x|},\quad t\geq 1,\label{5-67}\\
		&|\partial_x\bbg_{13}(x,t)|\leq \sum_{j=1}^3\frac{O(1)e^{-\frac{(x+\beta_jt)^2}{4Ct}}}{t}+O(1)e^{-\sigma_0^*t-\sigma_0|x|},\quad t\geq 1,\label{5-68}\\
		&|\partial_{xx}\bbg_{13}(x,t)-\frac{c_vK^2}{\kappa\mu}e^{-\frac{Kt}{\mu}}\delta(x)|\leq \sum_{j=1}^3\frac{O(1)e^{-\frac{(x+\beta_jt)^2}{4Ct}}}{t^{\frac{3}{2}}}+O(1)e^{-\sigma_0^*t-\sigma_0|x|},\quad t\geq 1.\label{5-69}
	\end{align}
	And for $t\leq 1$, we have the following estimates of $\bbg_{12}$ and $\bbg_{13}$, 
	\begin{align}
		&|\bbg_{12}(x,t)|\leq O(1)e^{-\sigma_0^*t-\sigma_0|x|}+O(1)te^{-\sigma_0 |x|},\quad t\leq 1,\label{5-70}\\
		&|\partial_x\bbg_{12}(x,t)+\frac{1}{\mu}e^{-\frac{Kt}{\mu}}\delta(x)|\leq \frac{O(1)e^{-\frac{x^2}{4Ct}}}{\sqrt{t}}+O(1)e^{-\sigma_0^*t-\sigma_0|x|}+O(1)te^{-\sigma_0 |x|},\quad t\leq 1,\label{5-71}\\
		&|\partial_{xx}\bbg_{12}(x,t)+\frac{1}{\mu}e^{-\frac{Kt}{\mu}}\delta'(x)|\leq \frac{O(1)e^{-\frac{x^2}{4Ct}}}{t}+O(1)e^{-\sigma_0^*t-\sigma_0|x|}+O(1)te^{-\sigma_0 |x|},\quad t\leq 1,\label{5-72}\\
		&|\bbg_{13}(x,t)|\leq O(1)e^{-\sigma_0^*t-\sigma_0|x|}+O(1)te^{-\sigma_0 |x|},\quad t\leq 1,\label{5-73}\\
		&|\partial_x\bbg_{13}(x,t)|\leq O(1)e^{-\sigma_0^*t-\sigma_0|x|}+O(1)te^{-\sigma_0 |x|},\quad t\leq 1,\label{5-74}\\
		&|\partial_{xx}\bbg_{13}(x,t)-\frac{K}{\kappa\mu}e^{-\frac{Kt}{\mu}}\delta(x)|\leq \frac{O(1)e^{-\frac{x^2}{4Ct}}}{\sqrt{t}}+O(1)e^{-\sigma_0^*t-\sigma_0|x|}+O(1)te^{-\sigma_0 |x|},\quad t\leq 1.\label{5-75}
	\end{align}
	In the following, we will use two steps to provide the zeroth order and first order estimates of $v(x,t)$ respectively.\newline
	
	\noindent $\bullet$ ($\mathbf{\sqrt{1+t}\|v(\cdot,t)-1\|_{L^\infty},\ \|v(\cdot,t)-1\|_{L^1}}$) 
	We will only show the proof of the infinity norm estimates, and the $L^1$ estimates can be obtained similarly. For the homogeneous terms, as we have $t\geq t_\sharp$, we can follow previous arguments to obtain the  estimates of $\bbg_{1j}$ for any $t\geq t_\sharp$, which are similar as \eqref{5-62}, \eqref{5-64} and \eqref{5-67} with coefficients $C(\nu_0)$. Then, we omit the details and obtain that,
	\bq\label{5-76}
	\left\{
	\begin{aligned}
		&\left|\int_\bbr\bbg_{11}(x-y,t)(v(y,0)-1)dy\right|\leq \frac{C(\nu_0)\delta^*}{\sqrt{1+t}},\\
		&\left|\int_\bbr\bbg_{12}(x-y,t)u(y,0)dy\right|\leq \frac{C(\nu_0)\delta^*}{\sqrt{1+t}},\\
		&\left|\int_\bbr \bbg_{13}(x-y,t)\left(E(y,0)-c_v\right)dy\right|\leq \frac{C(\nu_0)\delta^*}{\sqrt{1+t}}.
	\end{aligned}
	\right.
	\eq
	Next, for the inhomogeneous terms, we split it into three parts. This is because the estimates of the Green's function are different for small and large time, and also the time decay comes from different parts for small time and large time. Actually, we may define for $t\geq 1$ that,
	\begin{align}
		\mathcal{R}^v_1&=\int_0^{\frac{t-1}{2}}\int_\bbr \partial_y\bbg_{12}(x,t;y,\tau)\left(\frac{K(v-1)^2}{v}+\frac{(K\theta-K)(1-v)}{v}+\frac{\mu u_y(v-1)}{v}-\frac{Ku^2}{2c_v}\right)dyd\tau\label{5-77}\\
		&\hspace{0.4cm}+\int_0^{\frac{t-1}{2}}\int_\bbr \partial_y\bbg_{13}(x-y,t-\tau)\left((\frac{K\theta}{v}-K)u+\left(\kappa-\frac{\kappa}{v}\right)\theta_y+\left(\frac{\kappa}{c_v}-\frac{\mu}{v}\right)uu_y\right)dyd\tau\notag\\
		\mathcal{R}^v_2&=\int_{\frac{t-1}{2}}^{t-1}\int_\bbr \partial_y\bbg_{12}(x,t;y,\tau)\left(\frac{K(v-1)^2}{v}+\frac{(K\theta-K)(1-v)}{v}+\frac{\mu u_y(v-1)}{v}-\frac{Ku^2}{2c_v}\right)dyd\tau\label{5-78}\\
		&\hspace{0.4cm}+\int_{\frac{t-1}{2}}^{t-1}\int_\bbr \partial_y\bbg_{13}(x-y,t-\tau)\left((\frac{K\theta}{v}-K)u+\left(\kappa-\frac{\kappa}{v}\right)\theta_y+\left(\frac{\kappa}{c_v}-\frac{\mu}{v}\right)uu_y\right)dyd\tau\notag\\
		\mathcal{R}^v_3&=\int_{t-1}^{t}\int_\bbr \partial_y\bbg_{12}(x,t;y,\tau)\left(\frac{K(v-1)^2}{v}+\frac{(K\theta-K)(1-v)}{v}+\frac{\mu u_y(v-1)}{v}-\frac{Ku^2}{2c_v}\right)dyd\tau\label{5-79}\\
		&\hspace{0.4cm}+\int_{t-1}^{t}\int_\bbr \partial_y\bbg_{13}(x-y,t-\tau)\left((\frac{K\theta}{v}-K)u+\left(\kappa-\frac{\kappa}{v}\right)\theta_y+\left(\frac{\kappa}{c_v}-\frac{\mu}{v}\right)uu_y\right)dyd\tau.\notag
	\end{align}
	Similar as before, we only have $\mathcal{R}^v_3$ when $t\leq 1$, and the integration in $\mathcal{R}^v_3$ will be from $0$ to $t$. Now, for $\mathcal{R}^v_1$, the Green's function $\bbg_{1j}$ will provide the decay, and we apply \eqref{5-65}, \eqref{5-68} to have for $t\geq 1$ that, 
	\begin{align}
		|\mathcal{R}^v_1|
		&=\int_0^{\frac{t-1}{2}}\left|\int_\bbr \frac{1}{\mu}e^{-\frac{K(t-\tau)}{\mu}}\delta(x-y)\left(\frac{K(v-1)^2}{v}+\frac{(K\theta-K)(1-v)}{v}+\frac{\mu u_y(v-1)}{v}-\frac{Ku^2}{2c_v}\right)dy\right|d\tau\notag\\
		&\hspace{0.4cm}+\int_0^{\frac{t-1}{2}}\left|\int_\bbr \sum_{j=1}^3\frac{O(1)e^{-\frac{(x-y+\beta_j(t-\tau))^2}{4C(t-\tau)}}}{t-\tau}\left(\frac{K(v-1)^2}{v}+\frac{(K\theta-K)(1-v)}{v}+\frac{\mu u_y(v-1)}{v}-\frac{Ku^2}{2c_v}\right)dy\right|d\tau\notag\\
		&\hspace{0.4cm}+\int_0^{\frac{t-1}{2}}\left|\int_\bbr O(1)e^{-\sigma_0^*(t-\tau)-\sigma_0|x-y|}\left(\frac{K(v-1)^2}{v}+\frac{(K\theta-K)(1-v)}{v}+\frac{\mu u_y(v-1)}{v}-\frac{Ku^2}{2c_v}\right)dy\right|d\tau\notag\\
		&\hspace{0.4cm}+\int_0^{\frac{t-1}{2}}\left|\int_\bbr \sum_{j=1}^3\frac{O(1)e^{-\frac{(x-y+\beta_j(t-\tau))^2}{4C(t-\tau)}}}{t-\tau}\left((\frac{K\theta}{v}-K)u+\left(\kappa-\frac{\kappa}{v}\right)\theta_y+\left(\frac{\kappa}{c_v}-\frac{\mu}{v}\right)uu_y\right)dy\right|d\tau\notag\\
		&\hspace{0.4cm}+\int_0^{\frac{t-1}{2}}\left|\int_\bbr O(1)e^{-\sigma_0^*(t-\tau)-\sigma_0|x-y|}\left((\frac{K\theta}{v}-K)u+\left(\kappa-\frac{\kappa}{v}\right)\theta_y+\left(\frac{\kappa}{c_v}-\frac{\mu}{v}\right)uu_y\right)dy\right|d\tau\notag\\
		&\leq O(1)\frac{\delta^2}{\sqrt{1+t}}.\label{5-80}
	\end{align}
	Next, for $\mathcal{R}^v_2$, the integrability is same as \eqref{5-80}, while the decay now comes from $\mathcal{G}(\tau)$. More precisely, we apply \eqref{5-65} and \eqref{5-68} again to have,
	\begin{align}
		|\mathcal{R}^v_2|
		&=\int_{\frac{t-1}{2}}^{t-1}\left|\int_\bbr \frac{1}{\mu}e^{-\frac{K(t-\tau)}{\mu}}\delta(x-y)\left(\frac{K(v-1)^2}{v}+\frac{(K\theta-K)(1-v)}{v}+\frac{\mu u_y(v-1)}{v}-\frac{Ku^2}{2c_v}\right)dy\right|d\tau\notag\\
		&\hspace{0.4cm}+\int_{\frac{t-1}{2}}^{t-1}\left|\int_\bbr \sum_{j=1}^3\frac{O(1)e^{-\frac{(x-y+\beta_j(t-\tau))^2}{4C(t-\tau)}}}{t-\tau}\left(\frac{K(v-1)^2}{v}+\frac{(K\theta-K)(1-v)}{v}+\frac{\mu u_y(v-1)}{v}-\frac{Ku^2}{2c_v}\right)dy\right|d\tau\notag\\
		&\hspace{0.4cm}+\int_{\frac{t-1}{2}}^{t-1}\left|\int_\bbr O(1)e^{-\sigma_0^*(t-\tau)-\sigma_0|x-y|}\left(\frac{K(v-1)^2}{v}+\frac{(K\theta-K)(1-v)}{v}+\frac{\mu u_y(v-1)}{v}-\frac{Ku^2}{2c_v}\right)dy\right|d\tau\notag\\
		&\hspace{0.4cm}+\int_{\frac{t-1}{2}}^{t-1}\left|\int_\bbr \sum_{j=1}^3\frac{O(1)e^{-\frac{(x-y+\beta_j(t-\tau))^2}{4C(t-\tau)}}}{t-\tau}\left((\frac{K\theta}{v}-K)u+\left(\kappa-\frac{\kappa}{v}\right)\theta_y+\left(\frac{\kappa}{c_v}-\frac{\mu}{v}\right)uu_y\right)dy\right|d\tau\notag\\
		&\hspace{0.4cm}+\int_{\frac{t-1}{2}}^{t-1}\left|\int_\bbr O(1)e^{-\sigma_0^*(t-\tau)-\sigma_0|x-y|}\left((\frac{K\theta}{v}-K)u+\left(\kappa-\frac{\kappa}{v}\right)\theta_y+\left(\frac{\kappa}{c_v}-\frac{\mu}{v}\right)uu_y\right)dy\right|d\tau\notag\\
		&\leq O(1)\frac{\delta^2}{\sqrt{1+t}}.\label{5-81}
	\end{align}
	Next, for $\mathcal{R}^v_3$, the decay comes from $\mathcal{G}(\tau)$. Moreover, as $\tau$ is close to $t$, we have to use the small time estimates of $\bbg$, and actually we apply \eqref{5-71} and \eqref{5-74} to obtain that, 
	\begin{align}
		|\mathcal{R}^v_3|
		&=\int_{t-1}^t\left|\int_\bbr \frac{1}{\mu}e^{-\frac{K(t-\tau)}{\mu}}\delta(x-y)\left(\frac{K(v-1)^2}{v}+\frac{(K\theta-K)(1-v)}{v}+\frac{\mu u_y(v-1)}{v}-\frac{Ku^2}{2c_v}\right)dy\right|d\tau\notag\\
		&\hspace{0.4cm}+\int_{t-1}^t\left|\int_\bbr \frac{O(1)e^{-\frac{(x-y)^2}{4C(t-\tau)}}}{\sqrt{t-\tau}}\left(\frac{K(v-1)^2}{v}+\frac{(K\theta-K)(1-v)}{v}+\frac{\mu u_y(v-1)}{v}-\frac{Ku^2}{2c_v}\right)dy\right|d\tau\notag\\
		&\hspace{0.4cm}+\int_{t-1}^t\left|\int_\bbr O(1)e^{-\sigma_0|x-y|}\left(\frac{K(v-1)^2}{v}+\frac{(K\theta-K)(1-v)}{v}+\frac{\mu u_y(v-1)}{v}-\frac{Ku^2}{2c_v}\right)dy\right|d\tau\notag\\
		&\hspace{0.4cm}+\int_{t-1}^t\left|\int_\bbr O(1)e^{-\sigma_0|x-y|}\left((\frac{K\theta}{v}-K)u+\left(\kappa-\frac{\kappa}{v}\right)\theta_y+\left(\frac{\kappa}{c_v}-\frac{\mu}{v}\right)uu_y\right)dy\right|d\tau\notag\\
		&\leq O(1)\frac{\delta^2}{\sqrt{t}}\leq O(1)\frac{\delta^2}{\sqrt{1+t}},\label{5-82}
	\end{align}
	where the last inequality is due to the fact $t\geq 1$. Now if $t\leq 1$, the integration of \eqref{5-82} becomes from $0$ to $t$, and we can simply obtain that similar results. Therefore, we combine \eqref{5-76}, \eqref{5-77}, \eqref{5-78}, \eqref{5-79}, \eqref{5-80}, \eqref{5-81}, \eqref{5-82} and above discussion for the case $t\leq 1$ to obtain that,
	\[\sqrt{1+t}\|v(\cdot,t)-1\|_{L^\infty}\leq C(\nu_0)\delta^*+O(1)\delta^2.\]
	Then, we take one more integration with respect to $x$ to obtain the $L^1$ estimates. We omit the details and conclude that,
	\[\|v(\cdot,t)-1\|_{L^1}\leq C(\nu_0)\delta^*+O(1)\delta^2.\]\newline
	
	\noindent $\bullet$ ($\mathbf{\|v(\cdot,t)\|_{BV}}$) Now, we study the BV norm of the specific volume $v(x,t)$. To this end, we need to further investigate the discontinuity. Actually, we can rewrite the representation of $v(x,t)$ as follows,
	\begin{align}
		&v(x,t)-1\label{5-83}\\
		&=e^{-\frac{Kt}{\mu}}(v(x,0)-1)+\int_0^{t}\frac{1}{\mu}e^{-\frac{K(t-\tau)}{\mu}}\left(\frac{K(v-1)^2}{v}+\frac{(K\theta-K)(1-v)}{v}+\frac{\mu u_x(v-1)}{v}-\frac{Ku^2}{2c_v}\right)d\tau\notag\\
		&\hspace{0.4cm}+\int_\bbr\left(\bbg_{11}(x-y,t)-e^{-\frac{Kt}{\mu}}\delta(x-y)\right)(v(y,0)-1)dy\notag\\
		&\hspace{0.4cm}+\int_\bbr \bbg_{12}(x,t;y,0)u(y,0)dy+\int_\bbr \bbg_{13}(x-y,t)\left(E(y,0)-c_v\right)dy\notag\\
		&\hspace{0.4cm}+\int_0^{t}\int_\bbr \left(\partial_y\bbg_{12}(x,t;y,\tau)-\frac{1}{\mu}e^{-\frac{K(t-\tau)}{\mu}}\delta(x-y)\right)\left(\frac{K(v-1)^2}{v}+\frac{(K\theta-K)(1-v)}{v}+\frac{\mu u_y(v-1)}{v}-\frac{Ku^2}{2c_v}\right)dyd\tau\notag\\
		&\hspace{0.4cm}+\int_0^{t}\int_\bbr \partial_y\bbg_{13}(x-y,t-\tau)\left((\frac{K\theta}{v}-K)u+\left(\kappa-\frac{\kappa}{v}\right)\theta_y+\left(\frac{\kappa}{c_v}-\frac{\mu}{v}\right)uu_y\right)dyd\tau.\notag
	\end{align}
	From the representation \eqref{5-83} and properties of Green's function, we immediately see that the discontinuity only appears in the first two terms. Therefore, we can estimates the jumps as follows,
	\begin{align}
		&\sum_{z\in\mathscr{D}}\left|v(z,t)\Big|_{z=z^-}^{z=z^-}\right|\label{5-84}\\
		&\leq e^{-\frac{Kt}{\mu}}\sum_{z\in\mathscr{D}}\left|v(z,0)\Big|_{z=z^-}^{z=z^-}\right|+O(1)\sum_{z\in\mathscr{D}}\int_0^{t}\frac{1}{\mu}e^{-\frac{K(t-\tau)}{\mu}}\left(\|v-1\|_{L^\infty}+\|\theta-1\|_{L^\infty}+\|u_x\|_{L^\infty}\right)\left|v(z,\tau)\Big|_{z=z^-}^{z=z^-}\right|d\tau\notag\\
		&\leq  e^{-\frac{Kt}{\mu}}\delta^*+O(1)\int_0^{t}\frac{1}{\mu}e^{-\frac{K(t-\tau)}{\mu}}\left(\frac{\delta}{\sqrt{1+\tau}}+\frac{\delta}{\sqrt{\tau}}\right)\delta d\tau\notag\\
		&\leq  e^{-\frac{Kt}{\mu}}\delta^*+O(1)\min\left(\delta^2,\frac{\delta^2}{\sqrt{t}}\right).\notag
	\end{align}
	Next, for $x\notin\mathscr{D}$, we can take derivative to \eqref{5-83} and then integrate in the region $\bbr\setminus\mathscr{D}$. More precisely, we apply \eqref{5-66}, \eqref{5-69}, \eqref{5-72} and \eqref{5-75} to obtain that,
	\begin{align}
		&\int_{\bbr\setminus\mathscr{D}}\left|v_x(x,t)\right|\label{5-85}\\
		&\leq e^{-\frac{Kt}{\mu}}\int_{\bbr\setminus\mathscr{D}}\left|v_x(x,0)\right|dx+O(1)\int_{\bbr\setminus\mathscr{D}}\int_0^{t}\frac{1}{\mu}e^{-\frac{K(t-\tau)}{\mu}}\left(|v-1||u_\tau|+|v-1||\theta_x|+|v-1||v_x|\right)d\tau dx\notag\\
		&\hspace{0.4cm}+O(1)\int_{\bbr\setminus\mathscr{D}}\int_0^{t}\frac{1}{\mu}e^{-\frac{K(t-\tau)}{\mu}}\left(|v-1||v_x|+|\theta-1||v_x|+|\theta_x||v-1|+|u||u_x|+|u_x||v_x|\right)d\tau dx\notag\\
		&\hspace{0.4cm}+\int_{\bbr\setminus\mathscr{D}}\int_\bbr\left|\partial_x\bbg_{11}(x-y,t)-e^{-\frac{Kt}{\mu}}\delta'(x-y)\right||v(y,0)-1|dydx\notag\\
		&\hspace{0.4cm}+\int_{\bbr\setminus\mathscr{D}}\left|\int_\bbr \partial_x\bbg_{12}(x-y,t)u(y,0)dy\right|dx+\int_{\bbr\setminus\mathscr{D}}\left|\int_\bbr \partial_x\bbg_{13}(x-y,t)\left(E(y,0)-c_v\right)dy\right|\notag\\
		&\hspace{0.4cm}+\int_{\bbr\setminus\mathscr{D}}\int_0^{t}\int_\bbr \left|-\partial_{xx}\bbg_{12}(x,t;y,\tau)-\frac{1}{\mu}e^{-\frac{K(t-\tau)}{\mu}}\delta'(x-y)\right|\left(\frac{K(v-1)^2}{v}+\frac{|K\theta-K||1-v|}{v}\right)dyd\tau dx\notag\\
		&\hspace{0.4cm}+\int_{\bbr\setminus\mathscr{D}}\int_0^{t}\int_\bbr \left|-\partial_{xx}\bbg_{12}(x,t;y,\tau)-\frac{1}{\mu}e^{-\frac{K(t-\tau)}{\mu}}\delta'(x-y)\right|\left(\frac{\mu |u_y||v-1|}{v}+\frac{Ku^2}{2c_v}\right)dyd\tau dx\notag\\
		&\hspace{0.4cm}+\int_{\bbr\setminus\mathscr{D}}\int_0^{t}\int_\bbr \left|\partial_{xx}\bbg_{13}(x-y,t-\tau)\right|\left(\left|\frac{K\theta}{v}-K\right||u|+\left|\kappa-\frac{\kappa}{v}\right||\theta_y|+\left|\frac{\kappa}{c_v}-\frac{\mu}{v}\right||u||u_y|\right)dyd\tau dx\notag\\
		&\leq C(\nu_0)\delta^*+O(1)\frac{\delta^2}{\sqrt{\nu_0}},\notag
	\end{align}
	 where the term $\sqrt{\nu_0}$ in the last term comes from the estimate of $u_\tau$ in \eqref{u_t-local}. Now, we combine \eqref{5-84} and \eqref{5-85} to obtain that, 
	\[\|v(\cdot,t)\|_{BV}\leq C(\nu_0)\delta^*+O(1)\frac{\delta^2}{\sqrt{\nu_0}}.\]
	
\end{proof}
With the a priori estimates in Lemma \ref{L5.5}, Lemma \ref{L5.6}, Lemma \ref{L5.7}, Lemma \ref{L5.8} and Lemma \ref{L5.9} in hands, we are now ready to prove the global existence of the solution constructed in Proposition \ref{prop:local} for sufficiently small initial data. 

\begin{theorem}\label{T5.1}
	Suppose the initial data $(v_0,u_0,\theta_0)$ of Navier-Stokes equation \eqref{NS2} satisfies
	\bq\label{5-88}
	\|v_0-1\|_{BV}+\|u_0\|_{BV}+\|\theta_0-1\|_{BV}+\|v_0-1\|_{L^1}+\|u_0\|_{L^1}+\|\theta_0-1\|_{L^1}<\delta^*.
	\eq
	for $\delta^*$ sufficiently small. Then the solution constructed in Proposition \ref{prop:local} exists globally in time, and there exists a positive constant $\mathscr{C}$ such that,
	\bq
	\|\sqrt{t+1}(v-1)\|_\infty+\|\sqrt{t+1}u\|_\infty+\|\sqrt{t+1}(\theta-1)\|_\infty+\|\sqrt{t}u_x\|_\infty+\|\sqrt{t}\theta_x\|_\infty<\mathscr{C} \delta^{*}.\label{large-time}
	\eq
\end{theorem}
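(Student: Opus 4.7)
The global existence will be established by a continuity (bootstrap) argument built upon the stopping time $T$ introduced in \eqref{5-15}, using the a priori estimates of Lemmas \ref{L5.5}--\ref{L5.9} as the key mechanism for self-improvement. The plan is to first fix a hierarchy of small parameters $\nu_0 \gg \delta \gg \delta^*$, then show that assuming $T < +\infty$ leads to a contradiction with the definition of $T$, and finally bootstrap to upgrade the $\delta$-type bound on $\mathcal{G}(t)$ to the desired $\mathscr{C}\delta^*$-type bound.

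\textbf{Step 1 (Parameter choice and initial window).} Choose $\nu_0 \in (0, t_\sharp/4)$ small enough that the universal constants appearing in Lemmas \ref{L5.5}--\ref{L5.9} satisfy $O(1)\sqrt{\nu_0} \leq 1/8$. Then choose $\delta$ small enough so that, in addition to the smallness required by Proposition \ref{prop:local}, the quadratic contribution satisfies $O(1)\,\tfrac{|\log \nu_0|}{\sqrt{\nu_0}}\,\delta \leq 1/8$. Finally pick $\delta^* < \delta/(220\, C_\sharp C(\nu_0))$. By Lemma \ref{L5.3}, the local solution exists on $[0,T+t_\sharp)$ with $T \geq t_\sharp$, so the stopping time is strictly positive and $\mathcal{G}(\tau) < \delta$ holds on $[0, t_\sharp]$.

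\textbf{Step 2 (Closing the continuity argument).} Suppose for contradiction that $T < +\infty$. Then by definition $\mathcal{G}(\tau) < \delta$ for all $\tau < T$, so the hypotheses of Lemmas \ref{L5.5}--\ref{L5.9} are satisfied on the whole interval $[t_\sharp, T)$. Adding the bounds supplied by these lemmas yields
\begin{equation*}
\mathcal{G}(t) \;\leq\; C(\nu_0)\,\delta^* \;+\; O(1)\,\sqrt{\nu_0}\,\delta \;+\; O(1)\,\tfrac{|\log \nu_0|}{\sqrt{\nu_0}}\,\delta^{2} \;\leq\; \tfrac{\delta}{4} + \tfrac{\delta}{4} \;=\; \tfrac{\delta}{2}
\end{equation*}
for every $t \in [t_\sharp, T)$, where the parameter choices in Step 1 were used. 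By the time-continuity properties built into the function space \eqref{K-108} (in particular the $L^1\cap L^\infty \cap BV$ continuity of $v$ and the $L^\infty$ continuity of $u,\theta, u_x, \theta_x$ away from $t=0$), the map $\tau \mapsto \mathcal{G}(\tau)$ is continuous, so $\mathcal{G}(T) \leq \delta/2 < \delta$. Applying Proposition \ref{prop:local} with initial data $(v,u,\theta)(\cdot, T)$ then extends the solution past $T$ while keeping $\mathcal{G} < \delta$ on a small right-neighborhood, contradicting the maximality of $T$. Hence $T = +\infty$ and $\mathcal{G}(t) < \delta$ for all $t > 0$.

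\textbf{Step 3 (Bootstrap to the sharp decay bound).} With global existence in hand, one observes that the $\delta$ appearing on the right-hand side of Lemmas \ref{L5.5}--\ref{L5.8} is really a stand-in for the quantity being estimated: each occurrence comes from the factors $\|v-1\|_\infty$, $\|u\|_\infty$, $\|\theta-1\|_\infty$, $\|u_x\|_\infty$, $\|\theta_x\|_\infty$ multiplying the nonlinear terms in the Duhamel representations of Lemmas \ref{L5.1}--\ref{L5.2} and Corollary \ref{Cor5.1}. Setting
\[
M \;\equiv\; \sup_{t>0} \Bigl[ \|\sqrt{t+1}(v-1)\|_\infty + \|\sqrt{t+1}\,u\|_\infty + \|\sqrt{t+1}(\theta-1)\|_\infty + \|\sqrt{t}\,u_x\|_\infty + \|\sqrt{t}\,\theta_x\|_\infty\Bigr],
\]
and rerunning the same estimates with every appearance of $\delta$ (in the nonlinear contributions) replaced by $M$, one obtains $M \leq C(\nu_0)\delta^* + O(1)\sqrt{\nu_0}\,M + O(1)\tfrac{|\log\nu_0|}{\sqrt{\nu_0}} M^{2}$. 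By the parameter choice of Step 1 and $M \leq \delta$, the $M$ and $M^2$ contributions on the right absorb into the left, producing $M \leq 2\,C(\nu_0)\,\delta^* =: \mathscr{C}\,\delta^*$, which is exactly \eqref{large-time}.

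\textbf{Main obstacle.} The delicate point is orchestrating the parameter hierarchy so that the a priori estimates genuinely self-improve. The constant $C(\nu_0)$ degenerates as $\nu_0 \to 0$ (the effective Green's function loses its heat-kernel regularization in the terminal layer), while the factor $|\log \nu_0|/\sqrt{\nu_0}$ multiplying $\delta^{2}$ blows up; meanwhile $\sqrt{\nu_0}$ needs to be small to close the linear-in-$\delta$ terms originating from the comparison estimates of Lemma \ref{L2.2} and Lemma \ref{lem:heat-comp}. The three parameters $\nu_0$, $\delta$, $\delta^*$ must therefore be chosen strictly in that order, and the bootstrap in Step 3 relies crucially on the fact that every nonlinear contribution in Lemmas \ref{L5.5}--\ref{L5.9} carries at least one of the five norms defining $M$, so that no residual ``$\delta$'' remains after the substitution.
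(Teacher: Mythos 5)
Your Steps 1--2 are essentially the paper's argument: the same stopping time \eqref{5-15}, the same hierarchy ($\nu_0$ first, then $\delta$, then $\delta^*$), and the same contradiction via summing Lemmas \ref{L5.5}--\ref{L5.9} to force $\mathcal{G}(T)<\delta$, which is exactly how the paper shows $T=+\infty$ and $\mathcal{G}(t)<\delta$ for all $t$. The difference, and the problem, is your Step 3. Your bootstrap rests on the claim that every nonlinear contribution in Lemmas \ref{L5.5}--\ref{L5.9} carries at least one of the five sup-norm quantities defining $M$, so that every residual $\delta$ can be replaced by $M$. That is not true for the terms that control precisely the quantities $\sqrt{t}\|u_x\|_\infty$ and $\sqrt{t}\|\theta_x\|_\infty$: the representations of Corollary \ref{Cor5.1} contain $v_\tau-\theta_\tau$, $u_\tau$ and $\theta_\tau$, and these time derivatives are bounded only through the local-in-time estimates \eqref{u_t-local} (equivalently Lemma \ref{L5.3}), i.e. by $O(1)\delta/\sqrt{t_\sharp}$, which is not one of the five norms in $M$ (only $v_\tau=u_x$ is). Moreover the comparison factors from Lemmas \ref{L2.2} and \ref{lem:heat-comp} contain $\delta$-free pieces of size $O(\nu_0+\sqrt{\nu_0})$ and pieces involving $\vertiii{v-1}_{BV}$, $\vertiii{v-1}_{1}$, which again are $\mathcal{G}$-norms but not $M$-norms. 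Consequently, after your substitution there remain contributions of size $O(1)\sqrt{\nu_0}\,\delta$ (and $O(1)\delta^2$ with neither factor in $M$), independent of both $M$ and $\delta^*$; the inequality $M\le C(\nu_0)\delta^*+O(1)\sqrt{\nu_0}M+O(1)\frac{|\log\nu_0|}{\sqrt{\nu_0}}M^2$ does not follow, and absorption only returns $M\lesssim\delta$, not $M\lesssim\delta^*$.

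The paper closes this last step in a much simpler way, which you should adopt: all the smallness requirements on $\delta$ fixed in Step 1 are upper bounds, and the only coupling to $\delta^*$ is the linear relation \eqref{eq:4-89-1}, $C(\nu_0)\delta^*\le\delta/6$. Hence, with $\nu_0$ and the admissible threshold for $\delta$ fixed once and for all, one may, for each sufficiently small $\delta^*$, simply take $\delta:=6C(\nu_0)\delta^*$. Then the already-established bound $\mathcal{G}(t)<\delta$ for all $t>0$ immediately gives \eqref{large-time} with $\mathscr{C}=6C(\nu_0)$, since the five weighted sup-norms are part of $\mathcal{G}$. No second bootstrap is needed, and the issue with $u_t$, $\theta_t$ and the BV/$L^1$ norms never arises.
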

\begin{proof}
	We let $C_\sharp$, $t_\sharp$ and $\delta$ be the parameters in Proposition \ref{prop:local}, then Propositions \ref{prop:local} and \ref{T3.3} guarantee the existence and uniqueness of weak solution $(v,u,\theta)$ in $[0,t_\sharp)$. Now we let the initial data satisfy the following smallness condition 
	\[\|v_0-1\|_{BV}+\|u_0\|_{BV}+\|\theta_0-1\|_{BV}+\|v_0-1\|_{L^1}+\|u_0\|_{L^1}+\|\theta_0-1\|_{L^1}<\delta^*.\]
	From Lemma \ref{L5.3}, for sufficiently small $\delta^*$, we can define a stopping time $T$ as in \eqref{5-15}, which has the following properties, 
	\begin{align*}
		&T>t_\sharp,\qquad \mathcal{G}(T)\geq\delta,\qquad \mathcal{G}(\tau)<\delta\ \mbox{ for  all $\tau<T$},
	\end{align*}
	where $\mathcal{G}(\tau)$ is defined in \eqref{5-15}. According the the definition of $\mathcal{G}(\tau)$ and Lemma \ref{L5.3}, we immediately conclude that the life span of the solution $(v,u,\theta)$ is larger than $T$. Therefore, in order to prove the global existence of the solution, it suffices to show $T=\infty$ for sufficient small $\delta^*$. We will prove this by contradiction. Suppose not, then we have
	\begin{equation}\label{5-89}
	T<+\infty,\quad \mbox{for  arbitrary postive $\delta^*$ }.
	\end{equation}
	On the other hand, we combine Lemma \ref{L5.5}, Lemma \ref{L5.6}, Lemma \ref{L5.7}, Lemma \ref{L5.8} and Lemma \ref{L5.9} together to find that, the solution at $T$ has the following estimates, 
	\[\mathcal{G}(T)\leq C(\nu_0)\delta^*+O(1)\frac{|\log(\nu_0)|}{\sqrt{\nu_0}}\delta^2+O(1)\sqrt{\nu_0}\delta+\left(C(\nu_0)\delta^*+O(1)\frac{|\log(\nu_0)|}{\sqrt{\nu_0}}\delta^2+O(1)\sqrt{\nu_0}\delta\right)^2,\]
	where $\nu_0$ is some small positive constant such that $\nu_0\leq \frac{t_\sharp}{4}$. Actually, it is observed that when we choose $\delta^*$ and $\delta$ to be smaller, all the other parameters such as $C_\sharp$ and $t_\sharp$ are uniformly bounded, and thus can be viewed as unchanged. Moreover, all the $O(1)$ coefficients in above formula are independent of $\nu_0$, and thus the $O(1)$ coefficients will be uniformly bounded when $\delta$, $\delta^*$ and $\nu_0$ are changed to be smaller. Therefore, we can first choose $\nu_0$ to be sufficiently small such that 
	\[O(1)\sqrt{\nu_0}\delta\leq \frac{\delta}{6}.\]
	Then, we fix the $\nu_0$, and let $\delta$ to be sufficiently small so that 
	\[O(1)\frac{|\log(\nu_0)|}{\sqrt{\nu_0}}\delta^2\leq \frac{\delta}{6}.\]
	Finally, for fixed small $\nu_0$ and $\delta$, we let $\delta^*$ to be sufficiently small, so that we can obtain 
	\begin{equation}\label{eq:4-89-1}
	C(\nu_0)\delta^*\leq \frac{\delta}{6}.
	\end{equation}
	Now, for these well chosen $\nu_0$, $\delta$ and $\delta^*$, we combine all above estimates to obtain that,
	\[\mathcal{G}(T)\leq \frac{\delta}{2}+\frac{\delta^2}{4}\leq\frac{3\delta}{4}<\delta,\]
	which  obviously contradicts to the assumption \eqref{5-89}. Thus, the assumption \eqref{5-89} is false and we conclude that, for sufficiently small positive constant $\delta^*$ such that \eqref{5-88} holds, we can find a small positive constant $\delta$ such that,
	\bq\label{5-90}
	\mathcal{G}(t)<\delta,\quad \mbox{for all  $t>0$}.
	\eq
	From previous discussion and the definition of $\mathcal{G}(t)$ in \eqref{5-15}, \eqref{5-90} immediately implies the global existence and uniqueness of the weak solution. Moreover, the large time behavior \eqref{large-time} directly follows \eqref{5-90}. In view of the relation \eqref{eq:4-89-1} between $\delta$ and $\delta^{*}$,  we finish the proof of the theorem.
	
\end{proof}

\newpage
\appendix

\section{Coefficients of the approximated eigenvalues $\lambda_j^*$}\label{sec:A}
In this part, we list the coefficients obtained in the expansion of $\lambda_j$ at infinity.
\begin{align*}
	\beta_1^*=&\frac{v p_v}{\mu },\\
	A_{1,1}=& -\frac{v^3 \left(\kappa  \theta _e p_v^2+\mu  p p_e p_v\right)}{ \kappa  \mu ^3 \theta _e},\\
	A_{1,2}=&\frac{v^3 \left(\mu ^2 p^2 v^2 p_e^2 p_v+2 \kappa ^2 v^2 \theta _e^2 p_v^3+3 \kappa  \mu 
		p v^2 \theta _e p_e p_v^2+\mu ^2 p v^2 p_e p_v^2-\kappa ^2 \mu ^2 \theta _e^2
		p_v^2+\kappa  \mu ^3 (-p) \theta _e p_e p_v\right)}{ \kappa ^2
		\mu ^5 \theta _e^2},\\
	A_{1,3}=&-\frac{v^3 p_v \left(\mu ^3 p^3 v^4 p_e^3+\mu ^2 p^2 v^2 p_e^2
		\left(3 v^2 p_v \left(2 \kappa  \theta _e+\mu \right)-2 \kappa  \mu ^2 \theta
		_e\right)+\kappa ^3 \theta _e^3 p_v \left(\mu ^4+5 v^4 p_v^2-4 \mu ^2 v^2
		p_v\right)\right)}{ \kappa
		^3 \mu ^7 \theta _e^3}\\
	&-\frac{v^3 p_v \left(\mu  p p_e \left(\kappa ^2 \mu ^4 \theta _e^2+v^4 p_v^2 \left(10 \kappa ^2
		\theta _e^2+4 \kappa  \mu  \theta _e+\mu ^2\right)-2 \kappa  \mu ^2 v^2 \theta _e p_v
		\left(3 \kappa  \theta _e+\mu \right)\right)\right)}{\kappa
		^3 \mu ^7 \theta _e^3}.\\
\end{align*}

\begin{align*}
	\alpha^*_2=&\frac{\mu}{v},\\
	\beta^*_2=&\frac{v \left(\mu  p p_e+\kappa  \theta _e p_v-\mu 
		p_v\right)}{\mu  \left(\mu -\kappa  \theta _e\right)},\\
	A_{2,1}=&\frac{v^3 \left(\mu ^3 p^2 p_e^2-\mu  p p_e p_v \left(\kappa ^2 \theta _e^2-3 \kappa  \mu
		\theta _e+2 \mu ^2\right)+p_v^2 \left(\mu -\kappa  \theta
		_e\right){}^3\right)}{\mu ^3 \left(\mu -\kappa  \theta
		_e\right){}^3},\\
	A_{2,2}=&\frac{v^3 \left(\mu  p p_e+p_v \left(\kappa  \theta _e-\mu
		\right)\right) \left(2 \mu ^4 p^2 v^2 p_e^2+\mu  p p_e \left(\mu -\kappa  \theta
		_e\right) \left(\mu ^3 \left(\mu -\kappa  \theta _e\right)-v^2 p_v \left(\kappa 
		\theta _e-2 \mu \right){}^2\right)\right)}{ \mu ^5 \left(\mu -\kappa 
		\theta _e\right){}^5}\\
	&+\frac{v^3 \left(\mu  p p_e+p_v \left(\kappa  \theta _e-\mu
		\right)\right) \left(p_v \left(2 v^2 p_v-\mu ^2\right) \left(\mu -\kappa
		\theta _e\right){}^4\right)}{ \mu ^5 \left(\mu -\kappa 
		\theta _e\right){}^5},\\
	A_{2,3}=&\frac{v^3 \left(5 \mu ^7 p^4 v^4 p_e^4+\mu ^3 p^3 v^2 p_e^3 \left(\mu -\kappa  \theta
		_e\right) \left(4 \mu ^4 \left(\mu -\kappa  \theta _e\right)+v^2 p_v \left(\kappa ^3
		\theta _e^2-6 \kappa ^2 \mu  \theta _e^2+15 \kappa  \mu ^2 \theta _e-20 \mu
		^3\right)\right)\right)}{ \mu ^7 \left(\mu
		-\kappa  \theta _e\right){}^7}\\
	&+\frac{v^3 \mu ^2 p^2 p_e^2 \left(\mu -\kappa  \theta _e\right){}^2 \left(\mu ^5
		\left(\mu -\kappa  \theta _e\right){}^2+3 v^4 p_v^2 \left(-2 \kappa ^3 \theta _e^3+9
		\kappa ^2 \mu  \theta _e^2-15 \kappa  \mu ^2 \theta _e+10 \mu ^3\right)\right)}{ \mu ^7 \left(\mu
		-\kappa  \theta _e\right){}^7}\\
	&-\frac{v^3 \mu ^2 p^2 p_e^2 \left(\mu -\kappa  \theta _e\right){}^2 2 \mu ^2 v^2
		p_v \left(-\kappa ^3 \theta _e^3+5 \kappa ^2 \mu  \theta _e^2-10 \kappa  \mu ^2 \theta
		_e+6 \mu ^3\right)}{ \mu ^7 \left(\mu
		-\kappa  \theta _e\right){}^7}\\
	&-\frac{v^3 \mu  p p_e p_v \left(\mu -\kappa  \theta _e\right){}^3
		\left(\mu ^4 \left(\mu -\kappa  \theta _e\right){}^2 \left(2 \mu -\kappa  \theta
		_e\right)+v^4 p_v^2 \left(-10 \kappa ^3 \theta _e^3+36 \kappa ^2 \mu  \theta _e^2-45
		\kappa  \mu ^2 \theta _e+20 \mu ^3\right)\right)}{ \mu ^7 \left(\mu
		-\kappa  \theta _e\right){}^7}\\
	&-\frac{v^3 \mu  p p_e p_v \left(\mu -\kappa  \theta _e\right){}^3
		\left(2 \mu ^2 v^2 p_v \left(3 \kappa ^3 \theta
		_e^2-11 \kappa ^2 \mu  \theta _e^2+14 \kappa  \mu ^2 \theta _e-6 \mu
		^3\right)\right)}{ \mu ^7 \left(\mu
		-\kappa  \theta _e\right){}^7}\\
	&+\frac{v^3 p_v^2 \left(\mu ^4+5 v^4 p_v^2-4 \mu ^2 v^2 p_v\right) \left(\mu
		-\kappa  \theta _e\right){}^7}{ \mu ^7 \left(\mu
		-\kappa  \theta _e\right){}^7}.
\end{align*}

\begin{align*}
	\alpha^*_3=&\frac{\kappa\theta_e}{v},\\
	\beta^*_3=&\frac{p v p_e}{\kappa  \theta _e-\mu },\\
	A_{3,1}= &\frac{p v^3 p_e \left(\kappa  p \theta _e p_e+p_v \left(\kappa 
		\theta _e-\mu \right)\right)}{\kappa  \theta _e \left(\kappa 
		\theta _e-\mu \right){}^3},\\
	A_{3,2}= &\frac{p v^3 p_e \left(2 \kappa ^2 p^2 v^2 \theta _e^2 p_e^2+p p_e \left(\mu -\kappa 
		\theta _e\right) \left(\kappa ^2 \theta _e^2 \left(\mu -\kappa  \theta _e\right)+v^2
		p_v \left(\mu -4 \kappa  \theta _e\right)\right)\right)}{ \kappa ^2 \theta _e^2 \left(\kappa  \theta
		_e-\mu \right){}^5}\\
	&+\frac{p v^3 p_e \left(p_v \left(\mu -\kappa  \theta
		_e\right){}^2 \left(\kappa  \theta _e \left(\kappa  \theta _e-\mu \right)+v^2
		p_v\right)\right)}{ \kappa ^2 \theta _e^2 \left(\kappa  \theta
		_e-\mu \right){}^5},\\
	A_{3,3}=&\frac{p v^3 p_e \left(5 \kappa ^3 p^3 v^4 \theta _e^3 p_e^2-p^2 v^2 p_e^2 \left(\mu
		-\kappa  \theta _e\right) \left(4 \kappa ^3 \theta _e^3 \left(\kappa  \theta _e-\mu
		\right)+v^2 p_v \left(15 \kappa ^2 \theta _e^2-6 \kappa  \mu  \theta _e+\mu
		^2\right)\right)\right)}{ \kappa ^3 \theta _e^3 \left(\kappa  \theta _e-\mu \right){}^7}\\
	&+\frac{p v^3 p_e p p_e \left(\mu -\kappa  \theta
		_e\right){}^2 \left(\kappa ^3 \theta _e^3 \left(\mu -\kappa  \theta _e\right){}^2-3
		v^4 p_v^2 \left(\mu -3 \kappa  \theta _e\right)+2 \kappa  v^2 \theta _e p_v \left(4
		\kappa ^2 \theta _e^2-5 \kappa  \mu  \theta _e+\mu ^2\right)\right)}{ \kappa ^3 \theta _e^3 \left(\kappa  \theta _e-\mu \right){}^7}\\
	&+\frac{p v^3 p_e \left(-p_v \left(\mu -\kappa  \theta _e\right){}^3 \left(\kappa  \theta _e
		\left(\kappa  \theta _e-\mu \right)+v^2 p_v\right){}^2\right)}{ \kappa ^3 \theta _e^3 \left(\kappa  \theta _e-\mu \right){}^7}.
\end{align*}

\section{The expression of $M_j^{*,k}$ in Lemma \ref{L4.2}}\label{sec:B}
In this part, we will list the $M_j^{*,k}$ in Lemma \ref{L4.2}.
\bq\label{M_1^*}
\begin{aligned}
	&M_1^{*,0}=\left(
	\begin{array}{ccc}
		1 & 0 & 0 \\
		0 & 0 & 0 \\
		0 & 0 & 0 \\
	\end{array}\right),\quad M_1^{*,1}=\left(
	\begin{array}{ccc}
		0 & \frac{v}{\mu } & 0 \\
		-\frac{v p_v}{\mu } & 0 & 0 \\
		-\frac{u v p_v}{\mu } & 0 & 0 \\
	\end{array}
	\right),\quad M_1^{*,2}=\left(
	\begin{array}{ccc}
		-\frac{v^2 p_v}{\mu ^2} & -\frac{u v^2 p_e}{\kappa  \mu  \theta _e} & \frac{v^2
			p_e}{\kappa  \mu  \theta _e} \\
		0 & \frac{v^2 p_v}{\mu ^2} & 0 \\
		-\frac{p v^2 p_v}{\kappa  \mu  \theta _e} & \frac{u v^2 p_v}{\mu ^2} & 0 \\
	\end{array}
	\right),\\
	&M_1^{*,3}=\left(
	\begin{array}{ccc}
		0 & -\frac{ v^3 \left(p \mu  p_e+2 \kappa  p_v \theta _e\right)}{\kappa  \mu ^3 \theta
			_e} & 0 \\
		\frac{ v^3 p_v \left(p \mu  p_e+2 \kappa  p_v \theta _e\right)}{\kappa  \mu ^3 \theta
			_e} & \frac{ u v^3 p_e p_v}{\kappa  \mu ^2 \theta _e} & -\frac{ v^3 p_e p_v}{\kappa 
			\mu ^2 \theta _e} \\
		\frac{ u v^3 p_v \left(p \mu  p_e+2 \kappa  p_v \theta _e\right)}{\kappa  \mu ^3 \theta
			_e} & -\frac{ v^3 \left(p p_v-u^2 p_e p_v\right)}{\kappa  \mu ^2 \theta _e} &
		-\frac{ u v^3 p_e p_v}{\kappa  \mu ^2 \theta _e} \\
	\end{array}
	\right),\\
	&M_1^{*,4}=\left(
	\begin{array}{ccc}
		\frac{v^4 \left(p p_e p_v \mu ^2+2 p \kappa  p_e p_v \theta _e \mu +3 \kappa ^2 p_v^2
			\theta _e^2\right)}{\kappa ^2 \mu ^4 \theta _e^2} & \frac{u v^4 p_e \left(p \mu 
			p_e+\mu  p_v+2 \kappa  p_v \theta _e\right)}{\kappa ^2 \mu ^3 \theta _e^2} &
		-\frac{v^4 p_e \left(p \mu  p_e+\mu  p_v+2 \kappa  p_v \theta _e\right)}{\kappa ^2 \mu
			^3 \theta _e^2} \\
		0 & \frac{-2 p \mu  p_e p_v v^3-3 \kappa  p_v^2 \theta _e v^4}{\kappa  \mu ^4 \theta _e}
		& 0 \\
		\frac{p v^4 p_v \left(p \mu  p_e+\mu  p_v+2 \kappa  p_v \theta _e\right)}{\kappa ^2 \mu
			^3 \theta _e^2} & \frac{v^4 \left(p u p_e p_v \mu ^2-2 p u \kappa  p_e p_v \theta _e
			\mu -3 u \kappa ^2 p_v^2 \theta _e^2\right)}{\kappa ^2 \mu ^4 \theta _e^2} & -\frac{p
			v^4 p_e p_v}{\kappa ^2 \mu ^2 \theta _e^2}
	\end{array}
	\right).
\end{aligned}
\eq
\bq\label{M_2^*}
\begin{aligned}
	&M_2^{*,0}=\left(
	\begin{array}{ccc}
		0 & 0 & 0 \\
		0 & 1 & 0 \\
		0 & u & 0 \\
	\end{array}
	\right),\quad M_2^{*,1}=\left(
	\begin{array}{ccc}
		0 & -\frac{ v}{\mu } & 0 \\
		\frac{ v p_v}{\mu } & -\frac{u v p_e}{\mu -\kappa  \theta _e} & \frac{v p_e}{\mu
			-\kappa  \theta _e} \\
		\frac{u v p_v}{\mu } & \frac{v \left(p-u^2 p_e\right)}{\mu -\kappa  \theta _e} &
		\frac{ u v p_e}{\mu -\kappa  \theta _e} \\
	\end{array}
	\right),\\
	&M_2^{*,2}=\left(
	\begin{array}{ccc}
		\frac{v^2 p_v}{\mu ^2} & -\frac{u v^2 p_e}{\mu ^2-\kappa  \mu  \theta _e} & \frac{v^2
			p_e}{\mu ^2-\kappa  \mu  \theta _e} \\
		0 & \frac{v^2 \left(p \mu ^2 p_e-p_v \left(\mu -\kappa  \theta _e\right){}^2\right)}{\mu
			^2 \left(\mu -\kappa  \theta _e\right){}^2} & 0 \\
		-\frac{p v^2 p_v}{\mu ^2-\kappa  \mu  \theta _e} & \frac{u v^2 \left(2 p \mu ^2 p_e-p_v
			\left(\mu -\kappa  \theta _e\right){}^2\right)}{\mu ^2 \left(\mu -\kappa  \theta
			_e\right){}^2} & -\frac{p v^2 p_e}{\left(\mu -\kappa  \theta _e\right){}^2} \\
	\end{array}
	\right),\\
	&M_2^{*,3}=\left(
	\begin{array}{ccc}
		0 & \frac{v^3 \left(2 p_v+\frac{p \mu  p_e \left(\kappa  \theta _e-2 \mu
				\right)}{\left(\mu -\kappa  \theta _e\right){}^2}\right)}{\mu ^3} & 0 \\
		-\frac{ v^3 p_v \left(2 p_v+\frac{p \mu  p_e \left(\kappa  \theta _e-2 \mu
				\right)}{\left(\mu -\kappa  \theta _e\right){}^2}\right)}{\mu ^3} & -\frac{ u v^3 p_e
			\left(2 p \mu ^2 p_e-p_v \left(2 \mu ^2-3 \kappa  \theta _e \mu +\kappa ^2 \theta
			_e^2\right)\right)}{\mu ^2 \left(\mu -\kappa  \theta _e\right){}^3} & \frac{ v^3 p_e
			\left(2 p \mu ^2 p_e-p_v \left(2 \mu ^2-3 \kappa  \theta _e \mu +\kappa ^2 \theta
			_e^2\right)\right)}{\mu ^2 \left(\mu -\kappa  \theta _e\right){}^3} \\
		-\frac{ u v^3 p_v \left(2 p_v \left(\mu -\kappa  \theta _e\right){}^2+p \mu  p_e
			\left(\kappa  \theta _e-2 \mu \right)\right)}{\mu ^3 \left(\mu -\kappa  \theta
			_e\right){}^2} & \frac{ v^3 \left(p-u^2 p_e\right) \left(2 p \mu ^2 p_e-p_v \left(2
			\mu ^2-3 \kappa  \theta _e \mu +\kappa ^2 \theta _e^2\right)\right)}{\mu ^2 \left(\mu
			-\kappa  \theta _e\right){}^3} & \frac{ u v^3 p_e \left(2 p \mu ^2 p_e-p_v \left(2
			\mu ^2-3 \kappa  \theta _e \mu +\kappa ^2 \theta _e^2\right)\right)}{\mu ^2 \left(\mu
			-\kappa  \theta _e\right){}^3} \\
	\end{array}
	\right),\\
	&M_2^{*,4}=(\xi_1,\xi_2,\xi_3),\quad \xi_1=\left(
	\begin{array}{ccc}
		\frac{v^4 p_v \left(\frac{p \mu  p_e \left(3 \mu -2 \kappa  \theta _e\right)}{\left(\mu
				-\kappa  \theta _e\right){}^2}-3 p_v\right)}{\mu ^4}  \\
		0 \\
		-\frac{p v^4 p_v \left(p \mu  p_e \left(3 \mu -\kappa  \theta _e\right)+p_v \left(-3 \mu
			^2+5 \kappa  \theta _e \mu -2 \kappa ^2 \theta _e^2\right)\right)}{\mu ^3 \left(\mu
			-\kappa  \theta _e\right){}^3}\\
	\end{array}
	\right),\\
	&(\xi_2,\xi_3)=\left(
	\begin{array}{ccc}
		-\frac{u v^4 p_e \left(p \mu 
			p_e \left(3 \mu -\kappa  \theta _e\right)+p_v \left(-3 \mu ^2+5 \kappa  \theta _e \mu
			-2 \kappa ^2 \theta _e^2\right)\right)}{\mu ^3 \left(\mu -\kappa  \theta
			_e\right){}^3} & \frac{v^4 p_e \left(p \mu  p_e \left(3 \mu -\kappa  \theta
			_e\right)+p_v \left(-3 \mu ^2+5 \kappa  \theta _e \mu -2 \kappa ^2 \theta
			_e^2\right)\right)}{\mu ^3 \left(\mu -\kappa  \theta _e\right){}^3} \\
		\frac{v^4 \left(3 p^2 p_e^2 \mu ^4+2 p p_e p_v \left(-3 \mu ^3+6 \kappa  \theta _e
			\mu ^2-4 \kappa ^2 \theta _e^2 \mu +\kappa ^3 \theta _e^3\right) \mu +3 p_v^2
			\left(\mu -\kappa  \theta _e\right){}^4\right)}{\mu ^4 \left(\mu -\kappa  \theta
			_e\right){}^4} & 0 \\
		\frac{u v^4 \left(6 p^2 p_e^2 \mu ^4+p p_e p_v
			\left(-9 \mu ^3+16 \kappa  \theta _e \mu ^2-9 \kappa ^2 \theta _e^2 \mu +2 \kappa ^3
			\theta _e^3\right) \mu +3 p_v^2 \left(\mu -\kappa  \theta _e\right){}^4\right)}{\mu ^4
			\left(\mu -\kappa  \theta _e\right){}^4} & \frac{p v^4 p_e \left(p_v \left(3 \mu ^2-4
			\kappa  \theta _e \mu +\kappa ^2 \theta _e^2\right)-3 p \mu ^2 p_e\right)}{\mu ^2
			\left(\mu -\kappa  \theta _e\right){}^4} \\
	\end{array}
	\right).
\end{aligned}
\eq
\bq\label{M_3^*}
\begin{aligned}
	&M_3^{*,0}=\left(
	\begin{array}{ccc}
		0 & 0 & 0 \\
		0 & 0 & 0 \\
		0 & -u & 1 \\
	\end{array}
	\right),\quad M_3^{*,1}=\left(
	\begin{array}{ccc}
		0 & 0 & 0 \\
		0 & \frac{u v p_e}{\mu -\kappa  \theta _e} & -\frac{v p_e}{\mu -\kappa  \theta _e} \\
		0 & -\frac{v \left(p-u^2 p_e\right)}{\mu -\kappa  \theta _e} & -\frac{u v p_e}{\mu
			-\kappa  \theta _e} \\
	\end{array}
	\right),\\
	&M_3^{*,2}=\left(
	\begin{array}{ccc}
		0 & \frac{u v^2 p_e}{\kappa  \theta _e \left(\mu -\kappa  \theta _e\right)} & -\frac{v^2
			p_e}{\kappa  \theta _e \left(\mu -\kappa  \theta _e\right)} \\
		0 & -\frac{p v^2 p_e}{\left(\mu -\kappa  \theta _e\right){}^2} & 0 \\
		\frac{p v^2 p_v}{\kappa  \theta _e \left(\mu -\kappa  \theta _e\right)} & -\frac{2 p u
			v^2 p_e}{\left(\mu -\kappa  \theta _e\right){}^2} & \frac{p v^2 p_e}{\left(\mu -\kappa
			\theta _e\right){}^2} \\
	\end{array}
	\right),\\
	&M_3^{*,3}=\left(
	\begin{array}{ccc}
		0 & \frac{p v^3 p_e}{\kappa  \theta _e \left(\mu -\kappa  \theta _e\right){}^2} & 0 \\
		-\frac{p v^3 p_e p_v}{\kappa  \theta _e \left(\mu -\kappa  \theta _e\right){}^2} &
		\frac{u v^3 p_e \left(p_v \left(\mu -\kappa  \theta _e\right)-2 p \kappa  p_e \theta
			_e\right)}{\kappa  \theta _e \left(\kappa  \theta _e-\mu \right){}^3} & \frac{v^3 p_e
			\left(2 p \kappa  p_e \theta _e+p_v \left(\kappa  \theta _e-\mu \right)\right)}{\kappa
			\theta _e \left(\kappa  \theta _e-\mu \right){}^3} \\
		-\frac{p u v^3 p_e p_v}{\kappa  \theta _e \left(\mu -\kappa  \theta _e\right){}^2} &
		\frac{v^3 \left(p-u^2 p_e\right) \left(2 p \kappa  p_e \theta _e+p_v \left(\kappa 
			\theta _e-\mu \right)\right)}{\kappa  \theta _e \left(\kappa  \theta _e-\mu
			\right){}^3} & \frac{u v^3 p_e \left(2 p \kappa  p_e \theta _e+p_v \left(\kappa 
			\theta _e-\mu \right)\right)}{\kappa  \theta _e \left(\kappa  \theta _e-\mu
			\right){}^3} \\
	\end{array}
	\right),\\
	&M_3^{*,4}=\left(
	\begin{array}{ccc}
		-\frac{p v^4 p_e p_v}{\kappa ^2 \theta _e^2 \left(\mu -\kappa  \theta _e\right){}^2} &
		\frac{u v^4 p_e \left(p p_e \left(\mu -3 \kappa  \theta _e\right)+p_v \left(\mu
			-\kappa  \theta _e\right)\right)}{\kappa ^2 \theta _e^2 \left(\kappa  \theta _e-\mu
			\right){}^3} & \frac{v^4 p_e \left(p_v \left(\kappa  \theta _e-\mu \right)-p p_e
			\left(\mu -3 \kappa  \theta _e\right)\right)}{\kappa ^2 \theta _e^2 \left(\kappa 
			\theta _e-\mu \right){}^3} \\
		0 & \frac{p v^4 p_e \left(2 p_v \left(\mu -\kappa  \theta _e\right)-3 p \kappa  p_e
			\theta _e\right)}{\kappa  \theta _e \left(\mu -\kappa  \theta _e\right){}^4} & 0 \\
		\frac{p v^4 p_v \left(p p_e \left(\mu -3 \kappa  \theta _e\right)+p_v \left(\mu -\kappa 
			\theta _e\right)\right)}{\kappa ^2 \theta _e^2 \left(\kappa  \theta _e-\mu
			\right){}^3} & -\frac{p u v^4 p_e \left(6 p \kappa ^2 p_e \theta _e^2+p_v \left(\mu
			^2-6 \kappa  \theta _e \mu +5 \kappa ^2 \theta _e^2\right)\right)}{\kappa ^2 \theta
			_e^2 \left(\mu -\kappa  \theta _e\right){}^4} & \frac{p v^4 p_e \left(3 p \kappa ^2
			p_e \theta _e^2+p_v \left(\mu ^2-4 \kappa  \theta _e \mu +3 \kappa ^2 \theta
			_e^2\right)\right)}{\kappa ^2 \theta _e^2 \left(\mu -\kappa  \theta _e\right){}^4} \\
	\end{array}
	\right).
\end{aligned}
\eq

\section{The expression of $M_j^k$ in \eqref{B53}}\label{sec:C}
The zeroth order coefficient of the expansion of $\hat{\bbm}_j$ are listed below,
\begin{align*}
	&M_1^0=\left(
	\begin{array}{ccc}
		\frac{p p_e}{p p_e-p_v} & -\frac{u p_e}{p p_e-p_v} & \frac{p_e}{p p_e-p_v} \\
		0 & 0 & 0 \\
		-\frac{p p_v}{p p_e-p_v} & \frac{u p_v}{p p_e-p_v} & -\frac{p_v}{p p_e-p_v} \\
	\end{array}
	\right),\\
	&M_2^0=\left(
	\begin{array}{ccc}
		-\frac{p_v}{2 p p_e-2 p_v} & \frac{u p_e-\sqrt{p p_e-p_v}}{2 p p_e-2 p_v} &
		-\frac{p_e}{2 p p_e-2 p_v} \\
		\frac{p_v}{2 \sqrt{p p_e-p_v}} & \frac{1}{2}-\frac{u p_e}{2 \sqrt{p p_e-p_v}} &
		\frac{p_e}{2 \sqrt{p p_e-p_v}} \\
		\frac{\left(p+u \sqrt{p p_e-p_v}\right) p_v}{2 p p_e-2 p_v} & \frac{-p_e \sqrt{p
				p_e-p_v} u^2-p_v u+p \sqrt{p p_e-p_v}}{2 p p_e-2 p_v} & \frac{p_e \left(p+u \sqrt{p
				p_e-p_v}\right)}{2 p p_e-2 p_v} \\
	\end{array}
	\right),\\
	&M_3^0=\left(
	\begin{array}{ccc}
		-\frac{p_v}{2 p p_e-2 p_v} & \frac{u p_e+\sqrt{p p_e-p_v}}{2 p p_e-2 p_v} &
		-\frac{p_e}{2 p p_e-2 p_v} \\
		-\frac{p_v}{2 \sqrt{p p_e-p_v}} & \frac{1}{2} \left(\frac{u p_e}{\sqrt{p
				p_e-p_v}}+1\right) & -\frac{p_e}{2 \sqrt{p p_e-p_v}} \\
		\frac{\left(p-u \sqrt{p p_e-p_v}\right) p_v}{2 p p_e-2 p_v} & -\frac{-p_e \sqrt{p
				p_e-p_v} u^2+p_v u+p \sqrt{p p_e-p_v}}{2 p p_e-2 p_v} & \frac{p_e \left(p-u \sqrt{p
				p_e-p_v}\right)}{2 p p_e-2 p_v} \\
	\end{array}
	\right).
\end{align*}
The first order coefficient of the expansion of $\hat{\bbm}_j$ are listed below,
\begin{align*}
	& M_1^1=\left(
	\begin{array}{ccc}
		0 & \frac{ p \kappa  p_e \theta _e}{v \left(p_v-p p_e\right){}^2} & 0 \\
		-\frac{p \kappa  p_e p_v \theta _e}{v \left(p_v-p p_e\right){}^2} & \frac{u \kappa 
			p_e p_v \theta _e}{v \left(p_v-p p_e\right){}^2} & -\frac{\kappa  p_e p_v \theta
			_e}{v \left(p_v-p p_e\right){}^2} \\
		-\frac{p u \kappa  p_e p_v \theta _e}{v \left(p_v-p p_e\right){}^2} & -\frac{\kappa 
			\left(p-u^2 p_e\right) p_v \theta _e}{v \left(p_v-p p_e\right){}^2} & -\frac{u
			\kappa  p_e p_v \theta _e}{v \left(p_v-p p_e\right){}^2} \\
	\end{array}
	\right),\\
	&M_2^1=(\xi_1,\xi_2,\xi_3),\\
	&\xi_1=\left(
	\begin{array}{ccc}
		\frac{p_v \left(\mu  p_v-p p_e \left(\mu +3 \kappa  \theta _e\right)\right)}{4 v
			\left(p p_e-p_v\right){}^{5/2}}  \\
		\frac{p \kappa  p_e p_v \theta _e}{2 v \left(p_v-p p_e\right){}^2}  \\
		\frac{p p_v \left(p_e \left(p \mu +\kappa  \left(p+2 u \sqrt{p p_e-p_v}\right) \theta
			_e\right)-p_v \left(\mu -2 \kappa  \theta _e\right)\right)}{4 v \left(p
			p_e-p_v\right){}^{5/2}}
	\end{array}
	\right),\\
	&\xi_2=\left(
	\begin{array}{ccc}
		\frac{p_e \left(-2 p \kappa  \sqrt{p p_e-p_v}
			\theta _e-u p_v \left(\mu -2 \kappa  \theta _e\right)+p u p_e \left(\mu +\kappa 
			\theta _e\right)\right)}{4 v \left(p p_e-p_v\right){}^{5/2}}  \\
		-\frac{
			\left(p_e \left(p \sqrt{p p_e-p_v} \mu +\left(2 u \kappa  p_v-p \kappa  \sqrt{p
				p_e-p_v}\right) \theta _e\right)-\mu  \sqrt{p p_e-p_v} p_v\right)}{4 v \left(p_v-p
			p_e\right){}^2} 
		\\
		-\frac{\left(2 p^2 u \left(\mu -\kappa  \theta _e\right)
			p_e^2+u p_v \left(\kappa  \left(5 p+2 u \sqrt{p p_e-p_v}\right) \theta _e-3 p \mu
			\right) p_e+p_v \left(u \mu  p_v-2 p \kappa  \sqrt{p p_e-p_v} \theta
			_e\right)\right)}{4 v \left(p p_e-p_v\right){}^{5/2}}
	\end{array}
	\right),\\
	&\xi_3=\left(
	\begin{array}{ccc}
		-\frac{p_e \left(p
			p_e \left(\mu +\kappa  \theta _e\right)-p_v \left(\mu -2 \kappa  \theta
			_e\right)\right)}{4 v \left(p p_e-p_v\right){}^{5/2}} \\
		\frac{\kappa  p_e p_v \theta _e}{2 v \left(p_v-p p_e\right){}^2}\\
		\frac{p_e \left(p_e
			\left(\mu -\kappa  \theta _e\right) p^2+p_v \left(2 \kappa  \left(2 p+u \sqrt{p
				p_e-p_v}\right) \theta _e-p \mu \right)\right)}{4 v \left(p p_e-p_v\right){}^{5/2}} \\
	\end{array}
	\right),\\
	&M_3^1=(\zeta_1,\zeta_2,\zeta_3),\\
	&\zeta_1=\left(
	\begin{array}{ccc}
		\frac{p_v \left(p p_e \left(\mu +3 \kappa  \theta _e\right)-\mu  p_v\right)}{4 v
			\left(p p_e-p_v\right){}^{5/2}}  \\
		\frac{p \kappa  p_e p_v \theta _e}{2 v \left(p_v-p p_e\right){}^2} \\
		\frac{p_v \left(p p_v \left(\mu -2 \kappa  \theta _e\right)-p p_e \left(p \mu +\kappa 
			\left(p-2 u \sqrt{p p_e-p_v}\right) \theta _e\right)\right)}{4 v \left(p
			p_e-p_v\right){}^{5/2}} 
	\end{array}
	\right),\\
	&\zeta_2=\left(
	\begin{array}{ccc}
		-\frac{p_e \left(2 p \kappa  \sqrt{p p_e-p_v}
			\theta _e-u p_v \left(\mu -2 \kappa  \theta _e\right)+p u p_e \left(\mu +\kappa 
			\theta _e\right)\right)}{4 v \left(p p_e-p_v\right){}^{5/2}} \\
		\frac{\left(p_e
			\left(p \mu  \sqrt{p p_e-p_v}-\kappa  \left(\sqrt{p p_e-p_v} p+2 u p_v\right) \theta
			_e\right)-\mu  \sqrt{p p_e-p_v} p_v\right)}{4 v \left(p_v-p p_e\right){}^2} \\
		\frac{\left(2 p^2 u \left(\mu -\kappa  \theta _e\right)
			p_e^2+u p_v \left(\kappa  \left(5 p-2 u \sqrt{p p_e-p_v}\right) \theta _e-3 p \mu
			\right) p_e+p_v \left(u \mu  p_v+2 p \kappa  \sqrt{p p_e-p_v} \theta
			_e\right)\right)}{4 v \left(p p_e-p_v\right){}^{5/2}} 
	\end{array}
	\right),\\
	&\zeta_3=\left(
	\begin{array}{ccc}
		\frac{p_e \left(p p_e
			\left(\mu +\kappa  \theta _e\right)-p_v \left(\mu -2 \kappa  \theta
			_e\right)\right)}{4 v \left(p p_e-p_v\right){}^{5/2}} \\
		\frac{
			\kappa  p_e p_v \theta _e}{2 v \left(p_v-p p_e\right){}^2} \\
		\frac{p_e \left(p_e
			\left(\kappa  \theta _e-\mu \right) p^2+p_v \left(p \mu +\left(2 u \kappa  \sqrt{p
				p_e-p_v}-4 p \kappa \right) \theta _e\right)\right)}{4 v \left(p
			p_e-p_v\right){}^{5/2}} \\
	\end{array}
	\right).
\end{align*}

\section*{Acknowledgments}

The work of H.T. Wang is supported by National Nature Science Foundation of China under Grant
No. 11901386 and 12031013, the Strategic Priority Research Program of Chinese Academy of Sciences under Grant No. XDA25010403. The work of X.T. Zhang is supported by the National Natural Science Foundation of China (Grant No. 11801194 and 11971188), Hubei Key Laboratory of Engineering Modeling and Scientific Computing.

%
%
%
%

\end{document}